\documentclass[12pt]{amsart}

\usepackage[usenames,dvipsnames]{xcolor}
\usepackage{enumitem,kantlipsum}

\usepackage{mathtools}
\usepackage{xfrac}
\usepackage[utf8]{inputenc}
\usepackage{amssymb}
\usepackage{amsmath}
\usepackage{amsfonts}
\usepackage{amsthm}
\usepackage{tikz}
\usepackage{algorithmic}
\usepackage{asymptote}
\usepackage{url}
\usepackage{hyperref}
\usepackage{bbm}
\usepackage{enumitem,kantlipsum}

\newtheorem{theorem}{Theorem}[section]
\newtheorem{corollary}{Corollary}[theorem]
\newtheorem{lemma}[theorem]{Lemma}
\newtheorem{proposition}[theorem]{Proposition}
\newtheorem{conjecture}[theorem]{Conjecture}
\newtheorem{definition}[theorem]{Definition}

\usepackage[OT2,T1]{fontenc}
\DeclareSymbolFont{cyrletters}{OT2}{wncyr}{m}{n}
\DeclareMathSymbol{\Sha}{\mathalpha}{cyrletters}{"58}

\theoremstyle{definition}

\newtheorem{innercustomprop}{Proposition}
\newenvironment{customprop}[1]
  {\renewcommand\theinnercustomprop{#1}\innercustomprop}
  {\endinnercustomprop}

\newcommand{\ceil}[1]{\lceil#1\rceil}
\newcommand{\floor}[1]{\lfloor#1\rfloor}
\newcommand\restr[2]{{% we make the whole thing an ordinary symbol
  \left.\kern-\nulldelimiterspace % automatically resize the bar with \right
  #1 % the function
  \vphantom{\big|} % pretend it's a little taller at normal size
  \right|_{#2} % this is the delimiter
  }}

\newcommand{\Hom}{\text{Hom}}
\newcommand{\Aut}{\text{Aut}}
\newcommand{\Sym}{\text{Sym}}
\newcommand{\Sur}{\text{Sur}}
\newcommand{\im}{\text{Im}}
\newcommand{\Coker}{\text{Coker}}

\newcommand{\C}{\mathbb{C}}

\newcommand{\Z}{\mathbb{Z}}
\newcommand{\E}{\mathbb{E}}
\newcommand{\EE}{\mathcal{E}}
\newcommand{\D}{\mathcal{D}}

\newcommand{\MM}{\mathbb{M}}
\newcommand{\PPP}{\mathbb{P}}
\newcommand{\CC}{\mathcal{C}}
\newcommand{\G}{\mathcal{G}}

\newcommand{\Zp}{\mathbb{Z}_p}
\newcommand{\Zpn}{\mathbb{Z}_p^n}
\newcommand{\Zpno}{\mathbb{Z}_p^{n_1}}
\newcommand{\Zpnt}{\mathbb{Z}_p^{n_2}}
\newcommand{\Zpnot}{\mathbb{Z}_p^{n_1+n_2}}
\newcommand{\Znot}{Z_{n_1+n_2}}

\newcommand{\sg}[1]{\left\langle #1 \right\rangle}
\newcommand{\sgo}[1]{\left\langle #1 \right\rangle_{1}}
\newcommand{\set}[1]{\text{set}(#1)}
\newcommand{\abs}[1]{\left\vert#1\right\vert}

\newcommand{\Lnot}{L_{n_1,n_2}}
\newcommand{\Lna}{L_{n}^{(\alpha)}}
\newcommand{\Erho}{\E_{\rho}}
\newcommand{\Erhop}{\E_{\rho}'}

\newcommand{\msympinf}{P^{\Sym}_{\infty,p}}
\newcommand{\SSS}[3]{\mathcal{S}_{#1}(#2,#3)}
\newcommand{\treq}{\trianglelefteq_{\text{tr}}}

\newcommand{\rank}{\text{rank}}
\newcommand{\Diag}{\text{Diag}}

\newcommand{\qpochn}[3]{\left(#1;#2\right)_{#3}}

\begin{document}
\title{Distribution of Sandpile groups of random  bipartite graphs}
\author{Deepesh Singhal}

\begin{abstract}
Fix a prime $p$ and a constant $\frac{1}{p}<\alpha\leq 1$. Consider the random Erd\H{o}s--R\'enyi bipartite graph $G_{\alpha}(n,u)$ with bipartition $(V_1,V_2)$ of sizes $|V_1|=n$ and $|V_2|=\ceil{\alpha n}$, and edge probability $0<u<1$. 
The authors of \cite{bhargava2023rank} and \cite{FulmanKaplanSinghalWarnaar_SylowSandpileBipartite} conjectured a limiting distribution for the $p$-Sylow subgroup of the sandpile group of $G_{\alpha}(n,u)$ as $n\to\infty$.
We prove this conjecture for odd primes~$p$.

Similar results have previously been proved by computing the expected number of surjections from the random abelian $p$-group to $H$, for each finite abelian $p$-group $H$.
However, in our setting, these surjective moments often diverge to infinity, despite the conjectured limiting distribution having finite moments.
We resolve this issue by discarding the graphs for which too many vertices have degrees divisible by $p$. Once we remove the contribution of this rare set of graphs, then the surjective moments converge to the expected values.
When $p$ is odd, applying Wood's universality theorem yields the desired convergence in distribution.

For $p=2$, our computed moments (after excluding the rare set of graphs) match those of the conjectured distribution. However, these moments do not uniquely determine a distribution.
\end{abstract}

\maketitle

\section{Introduction}

Let $\Gamma$ be a graph on vertices $1,\dots,n$ without loops.  Write $\deg(i,j)$ for the number of edges between $i$ and $j$, and $\deg(j)=\sum_{i:i\neq j}\deg(i,j)$.
Define the Laplacian $L(\Gamma)$ by
\[
L_{ij}=\begin{cases}
\deg(i,j), & i\neq j,\\
-\deg(j), & i=j,
\end{cases}
\qquad\text{so that}\qquad
\sum_{i=1}^n L_{ij}=0\ \text{ for all }j.
\]
Let $\Z_0^n$ be the subset of $\Z^n$ consisting of vectors that sum up to $0$.
Hence $L:\Z^n\to\Z^n$ satisfies $\im(L)\subseteq \Z_0^n$.
The \emph{sandpile group} of $\Gamma$ is $S_{\Gamma}:=\Z_0^n/\im(L)$. 
If $\Gamma$ is connected, then $\text{corank}(L)=1$ and $S_{\Gamma}$ is a finite abelian group.

Let $G(n,v)$ denote the Erd\H{o}s-R\'enyi random graph on $n$ vertices in which each edge is included independently with probability $v\in(0,1)$. For fixed $v$, $G(n,v)$ is connected with probability tending to $1$ as $n\to\infty$. Consequently, the sandpile group $S_{G(n,v)}$ is a finite abelian group with probability tending to $1$.

Wood \cite[Theorem~1.1]{wood2017distribution} shows that as $n \rightarrow \infty$, the distribution of Sylow $p$-subgroups of the sandpile group of an Erd\H{o}s-R\'enyi random graph with $n$ vertices converges to $\msympinf$. The probability distribution $\msympinf$ on finite abelian $p$-groups is defined as follows
\[
\msympinf(G)
:=\frac{\abs{\{\text{symmetric, bilinear, perfect pairings }G\times G\to \C^{\times}\}}}{\abs{G} \abs{\Aut(G)}}
\prod_{k\geq 0} (1-p^{-1-2k}).
\]
\begin{theorem}\cite[Theorem~1.1]{wood2017distribution}
Consider a finite abelian $p$-group $G$. Then for a random graph~$G(n,v)$,
\[
\lim_{n \rightarrow \infty} 
\PPP[(S_{ G(n,v)})_p \cong G] 
=\msympinf(G).
\]
\end{theorem}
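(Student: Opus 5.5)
The plan is the moment method. For each finite abelian $p$-group $H$ we compute the limiting surjective moments $\E\bigl[\abs{\Sur(S_{G(n,v)},H)}\bigr]$. We then show that they equal $\abs{\wedge^2 H}$, which are the moments of $\msympinf$. Since these moments are $O(\abs{\wedge^2 H})$, Wood's uniqueness theorem says they determine the distribution, and convergence in distribution follows.

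\textbf{Step 1: reduction to the Laplacian.} A surjection $S_\Gamma\to H$ is the same as a homomorphism $F:\Z_0^n\to H$ with $F\circ L=0$ that is onto. Extending to $\Z^n$ and modding out by the choice of value on a fixed vector, we rewrite $\abs{\Sur(S_\Gamma,H)}$ as $\abs{H}^{-1}$ times the number of surjective $F\in\Hom(\Z^n,H)$ with $FL=0$, up to a correction for $\im L\subseteq \Z_0^n$. Taking expectations gives
\[
\E\abs{\Sur(S_\Gamma,H)}=\frac{1}{\abs{H}}\sum_{F}\PPP[FL=0].
\]
For each fixed $F$, the entries $F(Le_j)=\sum_{i\neq j}\deg(i,j)(F(e_i)-F(e_j))$ are sums over independent edge indicators. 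Each edge $\{i,j\}$ contributes the pair $(F(e_i)-F(e_j))$ and its negative to the coordinates $j$ and $i$. Consequently $\PPP[FL=0]$ is a Fourier sum over $C\in\Hom(H,\C^\times)^n$:
\[
\PPP[FL=0]=\abs{H}^{-n}\sum_C\prod_{i<j}\Bigl(1-v+v\,\xi_{ij}\Bigr),
\]
where $\xi_{ij}=C_j(F_i-F_j)\overline{C_i(F_i-F_j)}$ is a root of unity.

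\textbf{Step 2: code/depth classification.} Following Wood, we call $F$ a \emph{code} if it is not close to being supported on a proper subgroup. Precisely, for every subset $\sigma$ of at most $\delta n$ vertices, $F$ restricted to the complement of $\sigma$ still generates $H$. For codes, the only $C$ with non-negligible contribution are those for which the bilinear data $(F,C)$ come from a symmetric structure, namely $C_i$ determined by $F_i$ through a homomorphism $H\to\hat H$ that is antisymmetric-compatible. Counting these gives the main term $\abs{H}^{n-1}\cdot\abs{\wedge^2 H}$ up to $1+o(1)$; this is the source of $\abs{\wedge^2H}$ rather than $\abs{\Sym^2 H}$. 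For all other $C$, many edges have $\xi_{ij}\neq1$, so the product is exponentially small. For non-codes, we stratify by the depth: the largest proper subgroup $H'$ such that $F$ lands in $H'$ off a small set. We bound the number of such $F$ and their probability, and sum over the finitely many subgroups. This gives an error $o(1)$, in the same way as Wood's argument.

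\textbf{Step 3: conclusion.} We obtain $\lim_n\E\abs{\Sur(S_{G(n,v)},H)}=\abs{\wedge^2H}$. We then note that $\msympinf$ has exactly these moments and check that they grow slowly enough for the moment problem to have a unique solution. Wood's moment theorem then gives the stated convergence for each $G$. Here $p$ may be any prime: the diagonal is determined by the off-diagonal entries, so no parity issue arises.

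\textbf{Main obstacle.} The hardest step is Step 2. On the one hand, we must bound $\sum_{C}\prod(1-v+v\xi_{ij})$ for codes: we need to show that a non-trivial "character pair" forces at least $\epsilon n^2$ edges to have $\xi_{ij}\ne1$. On the other hand, we must handle the non-code $F$ uniformly. I would first try Wood's counting of $F$ by depth. For each depth, the key input would be an estimate $\PPP[FL=0]\le \abs{H}^{-n}\abs{H/H'}^{\ldots}e^{-cn}$ combined with the number $\binom{n}{\delta n}\abs{H'}^n$ of such $F$, and the exponents would have to be balanced.
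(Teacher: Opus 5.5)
The paper quotes this theorem from Wood without proof. Your outline is Wood's argument: Fourier expansion of $\PPP[FL=0]$, codes versus depth, special characters, then Wood's universality theorem. This is also the template the paper adapts to the bipartite model, so the route is right, but two steps fail as you state them.

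\textbf{Non-special characters.} The step you call the main obstacle is set up incorrectly: it is false that every non-special $C$ forces $\epsilon n^2$ nontrivial factors. Take a special $C$ and multiply $C_{i_0}$, at a single vertex, by a nontrivial $\psi\in\Hom(H,\C^\times)$. Then $\xi_{ij}$ changes only on edges $\{i_0,j\}$ with $\psi(F_{i_0}-F_j)\neq 1$, so at most $n-1$ factors differ from $1$. Since the sum runs over $\abs{H}^n$ characters, a bound of size $e^{-cn}$ for each non-special $C$ cannot be summed. The missing ingredient is Wood's robust/non-robust dichotomy:
\begin{itemize}
\item robust $C$ give quadratically many nontrivial factors;
\item non-special, non-robust $C$ (those agreeing with a map that factors through $F$ off fewer than $\gamma n$ vertices) give only linearly many, but there are at most $K(4\abs{H})^{\sqrt{\gamma}n}$ of them, which is absorbed by taking $\gamma$ small.
\end{itemize}
This is exactly what Sections~\ref{Sec: Robust} and~\ref{Sec: Non special} do in the bipartite setting.

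\textbf{The reduction and translation invariance.} Your reduction sums over the wrong set. The correct identity is $\abs{\Sur(S_\Gamma,H)}=\abs{H}^{-1}\#\{F\in\Hom(\Z^n,H): FL=0,\ F|_{\Z_0^n}\text{ onto}\}$. Here $F|_{\Z_0^n}$ onto means the differences $F(e_i)-F(e_j)$ generate $H$, which is not the same as $F$ onto. For $H=\Z/p\Z$, the $p-1$ nonzero constant maps are onto, are codes in your sense, and have $\PPP[FL=0]=1$. So your sum has limit $1+\frac{p-1}{p}$ instead of $1$.

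For the same reason ($FL$ depends only on differences), codes and depth must be defined translation-invariantly. Use the subgroup generated by differences off $\sigma$, equivalently the coset $\sgo{\cdot}$, as in the paper's translated codes. The structural fact that a special $C$ factors through $F$ uses precisely that these differences generate $H$ off every small set.

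\textbf{The main term.} With these corrections, take a code with $\{F(e_i)\}=H$. Its special $C$ are $C_i=\theta\cdot\phi(F(e_i))$, with $\theta\in\Hom(H,\C^\times)$ and $\phi$ an alternating pairing. There are $\abs{H}\,\abs{\Lambda^2 H}$ of them, so $\PPP[FL=0]\approx\abs{H}^{1-n}\abs{\Lambda^2 H}$, not $\abs{H}^{n-1}\abs{\Lambda^2 H}$. Summing over the roughly $\abs{H}^n$ such $F$ and dividing by $\abs{H}$ gives $\abs{\Lambda^2 H}$.
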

M\'esz\'aros \cite{meszaros2020distribution} determined the distribution of Sylow $p$-subgroups of sandpile groups of random $d$-regular graphs on $n$ vertices, as $n\to \infty$.  Suppose $d \ge 3$ and $n$ is even.  The random $d$-regular graph $H_n$ is obtained by taking the union of $d$ independent uniform random perfect matchings on the set of $n$ vertices.
Given a finite abelian $p$-group $G$, its \emph{rank} is $\log_p(\abs{G/pG})$.
\begin{theorem}\cite[Theorem 1.2]{meszaros2020distribution}\label{mes_thm}
Consider a prime $p$ and a finite abelian $p$-group $G$.
\begin{enumerate}
\item If $d$ is odd, or $d$ is even and $p$ is odd, we have 
\[
\lim_{n \rightarrow \infty} 
\PPP[(S_{H_n})_p \cong G] 
=\msympinf(G).
\]
\item Suppose $d$ is even and $p = 2$.  Then $(S_{H_n})_2$ always has odd rank.
Moreover, if $G$ is a finite abelian $2$-group of odd rank, then
\[
\lim_{n \rightarrow \infty} 
\PPP[(S_{H_n})_2 \cong G] 
= \abs{G/2G} P^{\Sym}_{\infty,2}(G).
\]
\end{enumerate}
\end{theorem}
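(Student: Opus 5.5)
This is Mészáros's theorem, which the paper cites rather than proves. The natural proof route is the moment method, in the style of Wood's universality framework.

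The first step is to set up the moment target. For a finite abelian $p$-group $G$ we want
\[
\E\,\#\Sur(S_{H_n},G)\to \abs{\wedge^2 G}
\]
in case (1), which is the moment of $\msympinf$. In case (2) the target is the corresponding moment of the modified measure $\abs{G/2G}\,P^{\Sym}_{\infty,2}$ restricted to odd rank. Since these moments grow only like $\abs{\wedge^2G}=O(\abs{G}^{\text{rank}/2})$, Wood's uniqueness theorem for moments applies, so moment convergence gives convergence in distribution.

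Next we write surjections as sums over vectors. A surjection $S_{H_n}\to G$ corresponds to a map $F:\Z^n\to G$, modulo constants, such that $F\circ L=0$ and $F$ is surjective on $\Z_0^n$. Because $H_n$ is a union of $d$ independent matchings $M_1,\dots,M_d$, the condition $FL=0$ says that for every vertex $v$,
\[
\sum_{k}\bigl(F(M_k(v))-F(v)\bigr)=0 .
\]
We expand $\E\#\Sur=\sum_F\PPP(FL=0)$ and estimate $\PPP(FL=0)$ by Fourier analysis on $G^n$:
\[
\PPP(FL=0)=\abs{G}^{-n}\sum_{C\in\Hom(G,\C^\times)^n}\E\prod_v C_v(\cdot).
\]
Each factor then becomes a product over the $d$ matchings of a random matching sum. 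Such a sum is computable through a pairing or Gaussian-type estimate on the distribution of $F$-values.

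The vectors $F$ are then classified by how equidistributed they are, following Wood's codes/depth split. For typical $F$, meaning those whose values are equidistributed in every quotient of $G$, the matching averages are approximately $\abs{G}^{-n}$ times a contribution from "symmetric" characters. These are the pairs $(F,C)$ with $C$ determined by a symmetric bilinear map. Their count yields the $\abs{\wedge^2G}$, or more precisely $\abs{\Sym^2G}/\abs{G}$, factor, with the answer adjusted by the parity constraint. For atypical $F$ the contribution must be shown to be negligible, using large-deviation bounds for random matchings.

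The parity phenomenon enters through a linear constraint. When $d$ is even and $p=2$, every row sum of $L$ mod 2 is even, so $L \bmod 2$ has a forced extra symmetric structure: the diagonal is zero mod 2, so $L$ is alternating mod 2. This forces corank$(L\bmod 2)$ to be odd, hence odd rank, and it doubles the relevant character count by the factor $\abs{G/2G}$. When $d$ is odd, or $p$ is odd, no such constraint arises.

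The main obstacle is the atypical-$F$ estimate. Matching sums are not independent across vertices, so the sparse regime ($d$ fixed) needs a careful switching or configuration-model bound showing that
\[
\PPP(FL=0)\le \abs{G}^{-n}\, e^{-c\cdot\text{(non-equidistribution)}}\,,
\]
uniformly enough to beat the number of such $F$. I would first try to prove this via an explicit local limit theorem for the vector of $G$-valued edge-label counts of a random matching.
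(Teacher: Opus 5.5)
The paper does not prove this theorem; it only cites M\'esz\'aros. So your sketch has to stand on its own.

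\textbf{The uniqueness step fails in part (2).} There the target law is exactly $P^{M,1}_{2,-1}$: it equals $\abs{G/2G}\,P^{\Sym}_{\infty,2}(G)$ on odd rank and $0$ on even rank. By Theorem~\ref{Thm: moments of PM1_a} its moments are $\abs{G/2G}\,\abs{\Lambda^2 G}$.

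These moments are not bounded by $\abs{\Lambda^2 G}$, so Theorem~\ref{Thm: Wood universality} does not apply. Worse, they do not determine a distribution at all. Every $P^{M,1}_{2,a}$ with $-1\le a\le 1$ has the same moments, including the conjectured bipartite limit $P^{M,1}_{2,0}$. Hence ``moment convergence gives convergence in distribution'' is false in case (2).

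You do notice that the $2$-rank is forced to be odd, but you use this only descriptively. It has to enter the uniqueness argument. Concretely, you need a separate statement that a law supported on odd-rank $2$-groups is determined by the moments $\abs{G/2G}\,\abs{\Lambda^2 G}$.

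This is a genuine step, not a formality. Theorem~\ref{Thm: wood determine dist of rank} recovers only the rank distribution from moments plus parity. The paper also explicitly leaves the upgrade to the full group distribution open in its own setting.

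There is also a smaller slip. With $n$ even, $L \bmod 2$ is an $n\times n$ alternating matrix, so its corank is \emph{even}. What is odd is the corank of the reduced Laplacian, which equals $\rank((S_{H_n})_2)=\mathrm{corank}(L\bmod 2)-1$. The operative facts are the zero diagonal (since $d\equiv 0 \pmod 2$) and the evenness of $n$. Row sums are not the point; they vanish for every Laplacian.

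\textbf{The analytic part is only an outline, and the step you treat as routine is not routine.} In the dense setting, non-special characters are killed because each one produces linearly or quadratically many independent factors $\E[\xi^{a_{ij}}]$, each of modulus at most $e^{-\epsilon/b^2}$.

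For a union of $d$ matchings there is nothing like this. A character that agrees with a special one except at $k$ vertices changes only about $dk$ edge factors. Each such factor is an average $\frac{1}{n-1}\sum_{w}\psi(F(w)-F(v))$ of a nontrivial character. Its size is controlled by the discrepancy of the value counts of $F$, not by any independence. So the term is only polynomially small, while there are about $\binom{n}{k}\abs{G}^k$ such characters.

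Thus even for typical $F$, the claim that the character sum is ``approximately the special contribution'' needs a quantitative equidistribution argument that gains only polynomially per deviating vertex. This, together with the atypical-$F$ bound you leave as a plan (``I would first try\ldots''), is where the real difficulty of the theorem lies.

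As written, you have the right moment framework and the right explanation of the mod-$2$ alternating structure, but not a proof.
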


We focus on the study of Sylow $p$-subgroups of random bipartite graphs.  Let $n_1 \ge n_2$ be positive integers and $u$ be a fixed real number $0<u<1$.
The Erd\H{o}s-R\'enyi random bipartite graph $G(n_1, n_2, u)$ is a
bipartite graph with vertex sets $V_1$ of size $n_1$ and $V_2$ of size
$n_2$, where each of the $n_1 n_2$ potential edges between a vertex in
$V_1$ and a vertex in $V_2$ is included independently with probability
$u$.  We are most interested in the case where $n_1$ and $n_2$ go to
infinity together and $n_2=\ceil{\alpha n_1}$ for a fixed constant
$0<\alpha\leq 1$. We denote $G_{\alpha}(n, u)=G(n, \ceil{\alpha
n}, u)$.
Let $S_{G_{\alpha}(n,u)}$ denote the sandpile group of the graph $G_{\alpha}(n, u)$.
In \cite{bhargava2023rank}, Bhargava, de Pascale, and Koenig give a conjecture for the distribution of $p$-Sylow subgroups of $S_{G_{\alpha}(n,u)}$ for odd $p$.
In \cite{FulmanKaplanSinghalWarnaar_SylowSandpileBipartite}, the authors gave the corresponding conjecture for $p=2$.

\begin{conjecture}\label{conj: p-Syl_conj}
Let $p$ be a prime, $0<u<1$, $ \frac{1}{p}<\alpha \leq 1$ and $G$ be a finite abelian $p$-group.  
\begin{enumerate}
\item If $p$ is odd, then
\[
\lim_{n \rightarrow \infty} 
\PPP[(S_{G_{\alpha}(n,u)})_p \cong G] 
= \msympinf(G).
\]
\item If $p = 2$, then
\[
\lim_{n \rightarrow \infty} 
\PPP[(S_{G_{\alpha}(n,u)})_2 \cong G] 
= \frac{\abs{G/2G}}{2} P^{\Sym}_{\infty,2}(G).
\]
\end{enumerate}
\end{conjecture}

Our main result is to prove Conjecture~\ref{conj: p-Syl_conj} for odd primes $p$.

\begin{theorem}\label{Thm: dist sandpile odd p}
Let $p$ be an odd prime, $0<u<1$, $ \frac{1}{p}<\alpha \leq 1$ and $G$ be a finite abelian $p$-group. Then we have
\[
\lim_{n \rightarrow \infty} 
\PPP[(S_{G_{\alpha}(n,u)})_p \cong G] 
= \msympinf(G).
\]
\end{theorem}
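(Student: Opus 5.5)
We follow the moment method, in the form suggested by the abstract. For each finite abelian $p$-group $H$ we aim to show
\[
\lim_{n\to\infty}\E\bigl[\abs{\Sur(S_{G_\alpha(n,u)},H)}\,\mathbbm{1}_{\mathcal{A}_n}\bigr]=\abs{\wedge^2 H}
\]
for a suitable sequence of events $\mathcal{A}_n$ with $\PPP[\mathcal{A}_n]\to 1$. The right-hand side is the $H$-moment of $\msympinf$. Because the events have probability tending to one, restricting to $\mathcal{A}_n$ does not change limiting probabilities. Once these truncated moments are known, Wood's moment theorem applies: the moments $\abs{\wedge^2 H}=O(\abs{H}^{1+\epsilon})$ determine $\msympinf$ uniquely. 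A standard truncation argument, approximating by finite-level quotients $S/p^kS$, then gives convergence in distribution of $(S_{G_\alpha(n,u)})_p$.

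\textbf{Step 1: reduce to a count of vectors.} A surjection $S\to H$ is the same as a surjection $F:\Z^{V}\to H$, with $V=V_1\sqcup V_2$ and $F$ killing the all-ones direction appropriately, such that $F\circ L=0$. Write $F=(f_1,f_2)$ with $f_i\in H^{V_i}$. The condition $F L=0$ says the following. For each $v\in V_1$, $\sum_{w\sim v}(f_2(w)-f_1(v))=0$. For each $w\in V_2$, $\sum_{v\sim w}(f_1(v)-f_2(w))=0$. The moment is then $\sum_F \PPP[FL=0]$.

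\textbf{Step 2: classify the $F$.} Following Wood, we split the $F$ by depth or codes. Consider first the ``codes'': those $F$ for which no $\delta n$ coordinates in $V_1$, nor $\delta n$ in $V_2$, generate a proper subgroup of $H$. For these, the column equations are independent across vertices. A Fourier / Halász-type estimate then gives
\[
\PPP[FL=0]\approx \abs{H}^{-(n_1+n_2)}\cdot(\text{correction from the symmetric structure}).
\]
Summing over all $F$ yields $\abs{\wedge^2H}$, in the same way as in the symmetric Laplacian computation. The bipartite structure enters because the equations at $V_1$ only involve $f_1(v)$ together with $f_2$ on its neighbours.

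\textbf{Step 3: the non-code $F$ (the main obstacle).} The problematic $F$ are those nearly constant, for instance $f_1\equiv a$ on most of $V_1$. The equation at $v$ then becomes $\deg(v)\,a=\sum_{w\sim v}f_2(w)$. The probability of this is inflated whenever $p\mid \deg(v)$, and this is exactly why the untruncated moments diverge. We therefore take $\mathcal{A}_n$ to be the event that at most $\epsilon n$ vertices on each side have degree divisible by $p$. For $p\nmid\deg(v)$, the coefficient $\deg(v)$ is a unit, so each such equation still costs a factor of order $\abs{H}^{-1}$, or $\abs{H/pH}^{-1}$ at worst. Organising the bound by depth of $F$ (the largest subgroup generated after discarding a small set of coordinates), we would show that the total contribution of non-codes on $\mathcal{A}_n$ tends to $0$. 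The counts should go as follows. The number of $F$ of depth corresponding to index $D$ is at most $\binom{n}{\delta n}\abs{H}^{\delta n}\abs{H/D}^{\ldots}$. On $\mathcal{A}_n$ the probability gains a factor of roughly $[H:D]^{-(1-\epsilon)(n_1+n_2)}$.

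The hypothesis $\alpha>1/p$ should enter here. When $f_2$ is constant on $V_2$, the $V_1$-equations impose nothing new modulo $p$ for the roughly $1/p$-fraction of vertices whose degree is divisible by $p$. The gain from the $V_2$ side must beat the entropy of choosing $f_1$, and that comparison needs $n_2>n_1/p$. Finally, $p$ odd is used in two places. It makes the symmetric pairing count $\abs{\wedge^2H}$ pin down $\msympinf$ via Wood's universality theorem. It also rules out the parity degeneracy that appears for $p=2$.
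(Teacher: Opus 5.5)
\textbf{There is a genuine gap, and it starts with the truncation event.} For $v\in V_1$ the degree is $\mathrm{Bin}(\ceil{\alpha n},u)$, which is asymptotically uniform modulo $p$. Degrees of distinct vertices of $V_1$ are independent, so about $n/p$ vertices of $V_1$ have degree divisible by $p$. Your $\mathcal{A}_n$ (at most $\epsilon n$ such vertices on each side) therefore has probability tending to $0$ for every $\epsilon<1/p$. Conditioning on it then says nothing about the unconditional law. It also means the Step 3 heuristic is unavailable: it assumes all but an $\epsilon$-fraction of equations cost a full factor, giving $[H:D]^{-(1-\epsilon)(n_1+n_2)}$.

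The usable events are of the paper's form $\eta(\Lna)\in\EE_{n,\rho}$: fewer than $(\frac1p+\rho)n$ vertices of $V_1$ have degree divisible by $p$, with $0<\rho<\alpha-\frac1p$. On this event a positive proportion (about $1/p$) of the $V_1$-equations stay degenerate, which is exactly why $\alpha>\frac1p$ is needed. Your last paragraph guesses this correctly, but that guess contradicts the event you actually chose.

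Your Step 3 example also points in the harmless direction. When $f_1$ is nearly constant, the coset spanned by the bulk of $f_1$ is no larger than that of $f_2$, and these configurations decay with no truncation at all (Proposition~\ref{Prop: Error term type 1 to zero}). The moments blow up because of configurations where $f_2$ lives on a smaller coset than $f_1$. There, on the vertices of $V_1$ with degree divisible by $p$, the values of $f_1$ are largely unconstrained.

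\textbf{The second missing piece is how the event enters the estimates.} Since $\sum_F\PPP[FL=0]$ can itself diverge, the bound $\PPP[FL=0,\mathcal{A}_n]\le\PPP[FL=0]$ is useless. You must control the joint law of $FL$ and the degree vector modulo $p$. The paper does this by splitting over $\eta(L)=\vec t$ and adding a Fourier variable $\vec s\in(\Z/p\Z)^{n_1}$. It then uses the character $\chi_{T_2,g}$ of Lemma~\ref{Lem: char existence}, which reads off $t_i$ from the $i$-th equation whenever $F_1(i)\notin T_2$.
\begin{itemize}
\item In the error terms with $\abs{T_1}>\abs{T_2}$, this gives a per-vertex factor at most $1$ when $t_i\neq 0$ (Lemma~\ref{Lem: l neq0 sum <=1}) and at most $\abs{T_1}/\abs{T_2}$ when $t_i=0$. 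The result is decay like $(\abs{T_2}/\abs{T_1})^{(\alpha-\frac1p-\rho)n}$.
\item In the main term, it shows $\PPP[FL=0,\eta=\vec t]$ is asymptotically independent of $\vec t$. The reason is that only $\vec s=0$ admits special characters, exactly $\abs{\Delta^2\G(T)}\abs{T}$ of them (for $F_1,F_2$ taking every value of $T$).
\end{itemize}

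Step 2 also needs the right objects. Maps on $\Z_0^n$ are defined only up to a constant, so images are cosets, and $f_1$ and $f_2$ must be treated as separate translated codes with possibly different images $T_1,T_2$. The equations are also not independent across vertices: each $a_{ij}$ appears in two of them, and this coupling is precisely what produces $\abs{\Lambda^2 H}$.

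Finally, $\abs{\Lambda^2 H}$ is not $O(\abs{H}^{1+\epsilon})$. Theorem~\ref{Thm: Wood universality} applies because the moments are at most $\abs{\Lambda^2 H}$, not because they are small.
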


The authors of \cite{FulmanKaplanSinghalWarnaar_SylowSandpileBipartite} also conjectured the limiting distribution of ranks of $(S_{G_{\alpha}(n,u)})_p$. For this we denote the q-shifted factorials
\[
\qpochn{x}{q}{i}
:= (1-x) (1-xq) \cdots (1-xq^{i-1}),
\quad
\qpochn{x}{q}{\infty} 
:= \prod_{j=0}^{\infty} (1-xq^j).
\]

\begin{conjecture}\label{p-Syl_ranks_conj}\cite{FulmanKaplanSinghalWarnaar_SylowSandpileBipartite}
Let $p$ be prime, $\frac{1}{p}<\alpha\leq 1$, and $r$ be a nonnegative integer.
\begin{enumerate}
\item If $p$ is odd, then
\[
\lim_{n \rightarrow \infty} \PPP[\rank((S_{G_{\alpha}(n,u)})_p) = r]
=\frac{1}{\qpochn{-p^{-1}}{p^{-1}}{\infty}}  p^{-\binom{r+1}{2}} \frac{1}{\qpochn{p^{-1}}{p^{-1}}{r}}.
\]
\item If $p=2$, we have
\[
\lim_{n \rightarrow \infty} \PPP[\rank((S_{G_{\alpha}(n,u)})_2) = r] 
=\frac{1}{\qpochn{-1}{2^{-1}}{\infty}} 
2^{-\binom{r}{2}} 
\frac{1}{\qpochn{2^{-1}}{2^{-1}}{r}}.
\]
\end{enumerate}
\end{conjecture}

Our other main result is to prove Conjecture~\ref{p-Syl_ranks_conj} for all primes, including $p=2$.

\begin{theorem}\label{Thm: sandpile rank dist}
Let $p$ be prime, $\frac{1}{p}<\alpha\leq 1$, and $r$ be a nonnegative integer.
\begin{enumerate}
\item If $p$ is odd, then
\[
\lim_{n \rightarrow \infty} \PPP[\rank((S_{G_{\alpha}(n,u)})_p) = r]
=\frac{1}{\qpochn{-p^{-1}}{p^{-1}}{\infty}}  p^{-\binom{r+1}{2}} \frac{1}{\qpochn{p^{-1}}{p^{-1}}{r}}.
\]
\item If $p=2$, we have
\[
\lim_{n \rightarrow \infty} \PPP[\rank((S_{G_{\alpha}(n,u)})_2) = r] 
=\frac{1}{\qpochn{-1}{2^{-1}}{\infty}} 2^{-\binom{r}{2}} \frac{1}{\qpochn{2^{-1}}{2^{-1}}{r}}.
\]
\end{enumerate}
\end{theorem}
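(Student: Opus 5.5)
Theorem~\ref{Thm: sandpile rank dist} concerns only the rank, i.e. the distribution of $\dim_{\mathbb{F}_p}(S/pS)$ where $S=S_{G_{\alpha}(n,u)}$. Equivalently, it concerns the corank of the reduced Laplacian over $\mathbb{F}_p$. For this I will use the moments $\E[\#\Sur(S,(\Z/p\Z)^k)]$ and show they converge to the moments of the conjectured rank distribution. For a random vector space whose dimension $r$ has the stated law, the $k$-th surjection moment is $\E[(p^r-1)\cdots(p^r-p^{k-1})]$. By the $q$-binomial theorem these moments are $p^{\binom{k}{2}}$ for $p$ odd (the symmetric/Cohen--Lenstra type moments $|\wedge^2 H|$ with $H=(\Z/p)^k$) and $2^{\binom{k+1}{2}}$ for $p=2$ (the $|\Sym^2 H|$-type moments).

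These moments grow like $p^{O(k^2)}$. For elementary abelian targets, the moment problem on the nonnegative integer $r$ is determined: the known uniqueness criterion for moments of this size (Wood's theorem applied to elementary abelian groups, or directly the Heath-Brown type argument) applies even for $p=2$. This is why the rank statement holds for all $p$, whereas the full group statement fails for $p=2$.

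The moment computation proceeds as follows. A surjection $S\to H$ with $H=\mathbb{F}_p^k$ corresponds to $F\in\Hom(\Z^{V},H)$, modulo constants, such that $FL=0$ and $F$ restricted to $\Z_0^V$ is surjective. For bipartite $L$ with blocks $V_1,V_2$, write $F=(F_1,F_2)$. The condition $FL=0$ then says the following. For each $w\in V_2$, $\sum_{v\sim w}(F_1(v)-F_2(w))=0$. For each $v\in V_1$, $\sum_{w\sim v}(F_2(w)-F_1(v))=0$.

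I will split by the ``code'' structure of $F$, following Wood's method. For generic $F$ (each value hit by many vertices on both sides, with no $\mathbb{F}_p$-linear degeneracies), the column equations are nearly independent and uniformly distributed. Each contributes about $|H|^{-1}$, and summing over $F$ yields the symmetric-type count. The correction from the $2$-step dependence between the $V_1$ and $V_2$ equations produces the $\wedge^2$ versus $\Sym^2$ factor.

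The main obstacle is the non-generic $F$, exactly as the abstract warns. Take for instance $F_1\equiv a$ and $F_2\equiv b$ constant with $a\neq b$. Then the equations reduce to $\deg(w)(a-b)=0$ for all $w$, and also $\deg(v)(a-b)=0$. Each degree is divisible by $p$ with probability about $1/p$, which gives contributions like $p^{n}\cdot p^{-n}\cdot p^{-\alpha n}\cdots$. More degenerate mixtures, with $F_1$ constant on most of $V_1$, blow up when $\alpha$ is close to $1/p$.

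To handle this, I will remove the event $\mathcal{B}$ that more than $(1/p+\epsilon)n$ vertices of $V_1$, or more than $(1/p+\epsilon)\alpha n$ of $V_2$, have degree divisible by $p$. Chernoff bounds give $\PPP(\mathcal{B})\to0$. On $\mathcal{B}^c$, the near-constant $F$ contribute at most $p^{k n}\cdot p^{-k(1-1/p-\epsilon)\alpha n}$-type sums. Making this rigorous requires $\alpha>1/p$, and here the hypothesis enters. Together with a Littlewood--Offord/Halász bound for intermediate-degeneracy $F$, the total tends to $0$.

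Thus $\E[\#\Sur(S,H)\mathbbm{1}_{\mathcal{B}^c}]$ converges to the target moments. Since $\PPP(\mathcal{B})\to0$, the rank law of $S$ conditioned on $\mathcal{B}^c$ has the same limit. The moment uniqueness theorem then gives Theorem~\ref{Thm: sandpile rank dist}, with constants obtained by evaluating the standard $q$-series.
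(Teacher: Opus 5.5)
\textbf{The uniqueness step fails for $p=2$.} There the target moments are $2^{\binom{k+1}{2}}=\abs{(\Z/2\Z)^k}\cdot\abs{\Lambda^2(\Z/2\Z)^k}$. These exceed the bound $\abs{\Lambda^2 G}$ required in Theorem~\ref{Thm: Wood universality} by a factor $2^k$. At this size the moments do \emph{not} determine even the rank law. By Theorem~\ref{Thm: moments of PM1_a}, every $P^{M,1}_{2,a}$ with $-1\le a\le 1$ has surjective moments $\abs{G/2G}\cdot\abs{\Lambda^2 G}$, yet $\PPP[\rank(X)\text{ is odd}]=\frac{1-a}{2}$. For $a=-1$ this is M\'esz\'aros's even-degree law, which is supported on odd ranks. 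So even a complete moment computation only shows that any limit of the rank law lies in a one-parameter family. Your explanation that ``the rank statement holds for all $p$ whereas the full group statement fails for $p=2$'' is therefore mistaken. The group statement at $p=2$ is open, not false, and the rank statement faces the same obstruction.

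The missing ingredient is an independent computation of the limiting parity of the rank. The paper proves the exact identity $\rank_{\mathbb{F}_2}\overline{L(A)}\equiv\sum_{i,j}a_{ij}\pmod 2$, by induction on the number of nonzero entries of $A$, using that the diagonal of a symmetric $\mathbb{F}_2$-matrix lies in its image. Hence $\rank G(\Lna)\equiv n+\ceil{\alpha n}+1+\sum_{i,j}a_{ij}\pmod 2$, which is asymptotically a fair coin because the $a_{ij}$ are $\epsilon$-balanced. The paper transfers this to the conditioned model, since the excluded event has probability $o(1)$. It then applies Wood's Corollary~2.12 (Theorem~\ref{Thm: wood determine dist of rank}) with $v=\frac12$. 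Nothing in a moment method can supply this input.

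\textbf{Odd $p$.} Your route is fine in outline, and slightly more economical than the paper's, which proves the full $\msympinf$ limit and then reads off ranks. Applying Theorem~\ref{Thm: Wood universality} to $S/pS$ needs only elementary abelian moments, all of which are at most $\abs{\Lambda^2 G}$. However, the moment computation is only a sketch, and it misidentifies the dangerous configurations. They are not near-constant $F_1$. They are $F_2$ (on the smaller side) with image in a small translated subgroup $T_2$, while $F_1$ spreads over a larger $T_1$. Each $v\in V_1$ with $F_1(v)\notin T_2$ is then forced to have $p\mid\deg(v)$, via the character $\chi_{T_2,F_1(v)}$. Such terms gain a factor $\abs{T_1}/\abs{T_2}$ per such $v$ and lose $\abs{T_2}/\abs{T_1}$ per vertex of $V_2$. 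That is why only the $V_1$-degree condition, with $\frac1p+\rho<\alpha$, is needed; your $V_2$ condition is harmless but unnecessary. Making this rigorous requires encoding the conditioning on $\eta(\Lna)$ in the Fourier expansion. The paper does this with the extra frequency vector $\vec s$, the associated robustness notions, and the special-character count $\abs{\Delta^2\G(T)}\cdot\abs{T}$. Your proposal does not indicate how the degree conditioning would enter a Littlewood--Offord/Hal\'asz-type estimate.
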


\subsection{Random matrix model}

Let $\Z_p$ be the ring of $p$-adic integers.
Fix $0<\epsilon<1$ for the rest of the paper.
A probability measure $\mu$ on $\Zp$ is called \emph{$\epsilon$-balanced} if for each $a\in \Z/p\Z$,
\[
\mu[\{x\in\Zp: x\equiv a\pmod{p}\}]<1-\epsilon.
\]
For the rest of this paper, fix a $\epsilon$-balanced probability measure $\mu$.

We consider a random matrix model for the Laplacian of $G_{\alpha}(n,u)$ as a random matrix $\Lnot\in \MM_{n_1+n_2}(\Z_p)$ as follows:
\[
\Lnot
=\begin{bmatrix}
-\sum a_{1j} & 0&  \dots & 0 & a_{11}& a_{12} & \dots & a_{1n_2}\\
0 & -\sum a_{2j} &\dots  & 0 & a_{21}& a_{22} & \dots & a_{2n_2}\\
\vdots & &\ddots & \vdots & \vdots & & & \vdots\\
0 & 0 &\dots  & -\sum a_{n_1j} & a_{n_11}& a_{n_12} & \dots & a_{n_1n_2}\\
a_{11}& a_{21} & \dots & a_{n_11} &-\sum a_{i1} & 0&  \dots & 0 \\
a_{12}& a_{22} & \dots & a_{n_12} &0 & -\sum a_{i2} &\dots  & 0 \\
\vdots & & & \vdots &\vdots & &\ddots & \vdots \\
a_{1n_2}& a_{2n_2} & \dots & a_{n_1n_2} &0 & 0 &\dots  & -\sum a_{in_2}\\
\end{bmatrix},
\]
where all $a_{ij}$ are independent random variables distributed according to $\mu$.
Let $Z_n$ consist of the vectors in $\Z_p^n$ whose entries sum to $0$.
We have $\Lnot\in\Hom(\Zpnot,\Zpnot)$ and $\im(\Lnot) \subseteq \Znot$. We denote $G(\Lnot)=\Znot/\im(\Lnot)$.
For $0<\alpha\leq 1$, we denote $L_{n,\ceil{\alpha n}}$ as $\Lna$.

In particular, if $\mu$ was the measure such that $\mu(\{1\})=u$ and $\mu(\{0\})=1-u$, then $\Lna$ would be the random matrix arising from the Laplacian of the random graph $G_{\alpha}(n,u)$.
Moreover, in the (high probability) event that $G_{\alpha}(n,u)$ is connected, the random finite abelian $p$-group $G(\Lna)$ would become isomorphic to the $p$-Sylow subgroup of $S_{G_{\alpha}(n,u)}$.

We will prove the following.
\begin{theorem}\label{Thm: dist laplacian odd p}
Let $p$ be an odd prime, $\frac{1}{p}<\alpha \leq 1$ and $G$ be a finite abelian $p$-group. 
Then we have
\[
\lim_{n \rightarrow \infty} \PPP[G(\Lna) \cong G] 
= \msympinf(G).
\]
\end{theorem}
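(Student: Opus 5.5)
We will use the moment method, following the outline in the abstract. For each finite abelian $p$-group $H$ we will compute the surjective moments $\E[\#\Sur(G(\Lna),H)]$ after deleting a rare event, show they converge to $1$, and then invoke Wood's universality/moment theorem. The value $1$ is the $H$-moment of $\msympinf$ for every $H$. For odd $p$ these moments determine the distribution uniquely and robustly, so convergence of moments gives convergence in distribution.

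\textbf{Reduction to counting $F$.} Since $G(\Lna)=Z_{n_1+n_2}/\im(\Lna)$, a surjection $G(\Lna)\to H$ is the same as a homomorphism $F:Z_{n_1+n_2}\to H$ that is surjective and satisfies $F\Lna=0$. It is convenient to extend $F$ to $\Z_p^{n_1+n_2}$; such an $F$ is given by a vector of images $(x_1,\dots,x_{n_1},y_1,\dots,y_{n_2})\in H^{n_1+n_2}$, modulo constant shifts. The condition $F\Lna=0$ then becomes the system
\[
\sum_j a_{ij}(y_j-x_i)=0 \quad\text{for each } i,
\qquad
\sum_i a_{ij}(x_i-y_j)=0 \quad\text{for each } j .
\]
So we will compute
\[
\E[\#\Sur]=\sum_F \PPP[F\Lna=0]
\]
by Fourier expansion. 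Write
\[
\PPP[F\Lna=0]=\frac{1}{|H|^{n_1+n_2}}\sum_{C}\E\Bigl[\zeta^{\,C(F\Lna)}\Bigr],
\]
where $C$ ranges over the Pontryagin duals. By independence of the $a_{ij}$, each expectation factors as a product over edges $(i,j)$ of $\E[\zeta^{a_{ij}\,\phi_{ij}}]$, where
\[
\phi_{ij}=(c_i-d_j)(y_j-x_i)
\]
for some pairing of $H$ with $\hat H$.

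\textbf{Splitting codes and non-codes.} We follow Wood's framework. Say that $F$ is a \emph{code} if, after removing any $\delta n$ coordinates on either side, the remaining images still generate $H$; non-codes we classify by depth. For codes, the $\epsilon$-balanced hypothesis makes the product of characters exponentially small unless $C$ is nearly trivial on the relevant coordinates. The main terms then come from the pairs $(F,C)$ that are "symmetric" in Wood's sense, which gives a limit of $1$. The bipartite structure means $C$ and $F$ interact only through the differences $x_i-y_j$, and $\alpha>1/p$ is needed here. For non-codes, we bound the contribution by a depth argument, summing over the subgroup $H'$ generated by most coordinates.

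\textbf{Main obstacle: the rare event.} The hardest step is the divergence noted in the abstract. Consider $F$ that is constant (say $0$) on $V_1$ except for a few coordinates. Then row $i$ imposes $\sum_j a_{ij}y_j=0$, and the diagonal term $\sum_j a_{ij}$ contributes only through its residue mod $p$. When many vertices have degree divisible by $p$, these $F$ contribute about $|H|^{\#\{i:\ \deg i\equiv 0\}}$. That is enormous on an event of probability about $p^{-k}$, so the raw moments blow up when $|H|>p$.

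We will therefore define a bad event $B_n$: more than $\beta n$ vertices (for a small fixed $\beta$) on either side have degree $\equiv 0 \pmod p$. A Chernoff bound will show $\PPP[B_n]\to 0$, since the degrees are sums of roughly $n$ independent balanced variables and are nearly equidistributed mod $p$. We then compute $\E[\#\Sur(G(\Lna),H)\mathbbm{1}_{B_n^c}]$. On $B_n^c$ the troublesome near-constant $F$ contribute at most $|H|^{\beta n}p^{-cn}$, which is $o(1)$ once $\beta$ is chosen small relative to $\log|H|$ and the balance constant. The rest of the code/non-code analysis goes through unchanged.

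\textbf{Conclusion.} This gives $\lim_n \E[\#\Sur(G(\Lna),H)\mathbbm{1}_{B_n^c}]=1$ for all $H$. Applying Wood's moment-determines-distribution theorem to the conditioned measures, and using $\PPP[B_n]\to 0$, yields $\PPP[G(\Lna)\cong G]\to\msympinf(G)$.
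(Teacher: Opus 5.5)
Your skeleton matches the paper's: discard a rare set defined by degrees $\equiv 0 \pmod p$, compute surjective moments on the complement, then apply Wood's universality theorem. But the two steps that carry the weight are wrong.

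\textbf{The truncation.} Your $B_n$ is not rare. Each row sum $\sum_j a_{ij}$ is a sum of $\ceil{\alpha n}$ independent $\epsilon$-balanced variables, so it is nearly uniform mod $p$, and distinct rows are independent. Hence about $n/p$ vertices of $V_1$ have degree $\equiv 0 \pmod p$. For a fixed small $\beta$, Chernoff gives $\PPP[B_n]\to 1$, not $0$, so conditioning on $B_n^c$ says nothing about $G(\Lna)$. The cutoff must sit just above the mean. The paper keeps only $\eta(\Lna)$ with fewer than $(\frac1p+\rho)n$ zero entries, where $0<\rho<\alpha-\frac1p$ is fixed independently of $H$. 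Your $\beta$, tuned to $\log\abs{H}$, would not even give a single conditioned sequence to feed into Wood's theorem.

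At the correct cutoff, a bound like $\abs{H}^{\beta n}p^{-cn}$ with small $\beta$ is unavailable. What makes the truncated moments converge is a balance between the two sides. The dangerous case is when the affine image on the larger side is bigger, $\abs{T_1}>\abs{T_2}$; your ``constant on $V_1$'' example is the reverse and is harmless. The $i$-th row equation reads $\deg(i)F_1(i)=\sum_j a_{ij}F_2(j)$. So, in the paper's normalization and after summing over $F_1(i)$, a row with $\deg(i)\not\equiv 0$ contributes a factor at most $1$, while a row with $\deg(i)\equiv 0$ contributes up to $\abs{T_1}/\abs{T_2}$. The $V_2$ side contributes about $(\abs{T_2}/\abs{T_1})^{\alpha n}$. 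On the truncated event the net is $(\abs{T_2}/\abs{T_1})^{(\alpha-\frac1p-\rho)n}\to 0$. This is the only place $\alpha>\frac1p$ enters. The code/main-term analysis works for every $0<\alpha\le 1$, so you have put the hypothesis in the wrong place.

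\textbf{The target moments.} For $X$ distributed as $\msympinf$ one has $\E[\abs{\Sur(X,H)}]=\abs{\Lambda^2 H}$, not $1$. Moments identically $1$ characterize the Cohen--Lenstra measure $P_{\infty,1}$, so a computation producing $1$ would, via Wood's theorem, give $P_{\infty,1}$ rather than $\msympinf$. In the paper, for translated codes $F_1,F_2$ with common image $T$, the special pairs force $\vec s=0$ and are in bijection with $\G(T)^*\times(\Delta^2\G(T))^*$. This gives Hom-moments $\sum_{T}\abs{\Delta^2\G(T)}\cdot\abs{\G(T)}$ for $\Coker(\Lna)=G(\Lna)\oplus\Zp$, hence surjective moments $\abs{\Delta^2 H}=\abs{\Lambda^2 H}$ for odd $p$.

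\textbf{What ``goes through unchanged'' hides.} Three pieces of genuine new work are skipped. Since only the differences $x_i-y_j$ occur, codes must be translated (affine) codes rather than generating sets of $H$. Pairs with different affine images $T_1\neq T_2$ need separate treatment. Evaluating the moment on the truncated event needs an extra Fourier variable $\vec s$ for the constraint $\eta(\Lna)=\vec t$. That variable enlarges the special-character count by up to $p^{\abs{\{i:F_1(i)\notin\sgo{F_2}\}}}$ and requires its own robustness notion.
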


\begin{theorem}\label{Thm: laplacian rank dist}
Let $p$ be prime, $\frac{1}{p}<\alpha\leq 1$, and $r$ be a nonnegative integer.
\begin{enumerate}
\item If $p$ is odd, then
\[
\lim_{n \rightarrow \infty} \PPP[\rank(G(\Lna)) = r]
=\frac{1}{\qpochn{-p^{-1}}{p^{-1}}{\infty}}  p^{-\binom{r+1}{2}} \frac{1}{\qpochn{p^{-1}}{p^{-1}}{r}}.
\]
\item If $p=2$, we have
\[
\lim_{n \rightarrow \infty} \PPP[\rank(G(\Lna)) = r] 
=\frac{1}{\qpochn{-1}{2^{-1}}{\infty}} 
2^{-\binom{r}{2}} 
\frac{1}{\qpochn{2^{-1}}{2^{-1}}{r}}.
\]
\end{enumerate}
\end{theorem}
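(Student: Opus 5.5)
We prove Theorem~\ref{Thm: laplacian rank dist} by computing the limiting distribution of $\rank G(\Lna)=\dim_{\mathbb{F}_p}\bigl(G(\Lna)/pG(\Lna)\bigr)$. Only the reduction of $\Lna$ modulo $p$ matters. Since $\Lna$ is symmetric with column sums zero, the rank of $G(\Lna)$ equals $\dim\ker(\bar L)-1$ over $\mathbb{F}_p$, where $\bar L$ is the reduction of $\Lna$ mod $p$. The plan has two steps. First we compute the limits of the moments $\E\bigl[\#\Sur(G(\Lna),(\Z/p\Z)^k)\bigr]$, i.e.\ the expected number of surjections to elementary abelian groups. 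Then we show that these moments determine the distribution of $\dim\ker\bar L$. For elementary abelian targets this is unproblematic, even for $p=2$: the moments grow only like $p^{O(k^2)}$, and standard results (Wood's moment theorem, or a direct Gaussian-binomial inversion) give uniqueness because the candidate distribution is supported on $\mathbb{F}_p$-ranks.

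\textbf{Moment computation.} We write $\#\Sur(G(\Lna),V)$, with $V=\mathbb{F}_p^k$, as a count of surjections $F:Z_{n_1+n_2}\to V$ vanishing on $\im\Lna$. This becomes a sum over $F\in\Hom(\Z_p^{n_1+n_2},V)$ (modulo the constant-vector ambiguity) of $\PPP[F\Lna=0]$. Split $F=(F_1,F_2)$ along the bipartition, with columns $f_i$ for $i\in V_1$ and $g_j$ for $j\in V_2$. The condition $F\Lna=0$ reads
\[
\sum_j a_{ij}(g_j-f_i)=0\quad(i\in V_1),\qquad \sum_i a_{ij}(f_i-g_j)=0\quad(j\in V_2).
\]
The variables $a_{ij}$ appear only through the vectors $g_j-f_i$. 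Using characters, the probability factors as $\E$ over dual vectors $(\xi_i),(\eta_j)$ of $\prod_{i,j}\hat\mu\bigl((\xi_i-\eta_j)(g_j-f_i)\bigr)$. By $\epsilon$-balancedness, each factor with nonzero argument has size at most $1-c\epsilon$.

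For generic $F$, meaning that the multiset $\{f_i\}\cup\{g_j\}$ is not concentrated on a proper coset of a subspace, the probability is $|V|^{-(n_1+n_2)}(1+o(1))$ times a correction accounting for the symmetric structure. This yields the symmetric-type moment $|\wedge^2V|$ for odd $p$, matching the $P^{\Sym}_{\infty,p}$ moments. For $p=2$ it yields the different value whose inversion gives the shifted formula in part (2): the diagonal of a symmetric matrix over $\mathbb{F}_2$ carries an extra linear relation, and the degree structure forces a parity constraint.

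\textbf{The main obstacle} is the non-generic $F$, as the abstract warns: the moments can diverge. The bad terms come from $F$ with $f_i\equiv$ const on $V_1$, for instance. Then the $V_1$-equations reduce to a condition on the degrees $\sum_j a_{ij}$ mod $p$. This condition holds with probability about $p^{-n_1}$, while the number of such $F$ is about $|V|^{n_2}$, and $\alpha>1/p$ is exactly the regime where this is controlled only after truncation. Our first attempt is to restrict to the event $E_n$ that at most $\delta n$ vertices have degree divisible by $p$. Since degrees are nearly independent and each is $\equiv0$ with probability roughly $1/p$, a Chernoff bound shows that $\PPP[E_n^c]$ decays exponentially, provided $\delta$ is above $1/p$ but below $\alpha$. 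On $E_n$, a large-deviation bound on the counts of coordinates of $F$ lying in each coset shows that the non-generic contributions are $o(1)$. This gives the truncated moments $\E[\#\Sur(G,V)\mathbbm{1}_{E_n}]\to$ the target values.

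Since $\PPP[E_n^c]\to0$, convergence of the truncated moments, together with uniqueness (Wood's moment theorem applied to the elementary abelian quotient), gives convergence in distribution of $G(\Lna)/pG(\Lna)$. Finally, we evaluate the limiting rank probabilities by the standard $q$-series identity. This produces the stated expressions with $\qpochn{-p^{-1}}{p^{-1}}{\infty}$ and, for $p=2$, with $\qpochn{-1}{2^{-1}}{\infty}$.
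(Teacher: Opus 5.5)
There is a genuine gap in part (2). Your uniqueness step claims that the limiting moments against elementary abelian targets determine the law of $\rank G(\Lna)$ ``even for $p=2$'', because they grow like $p^{O(k^2)}$. That growth rate is not sufficient. Wood's moment theorem needs the limiting moments to be at most $\abs{\Lambda^2 V}=p^{\binom{k}{2}}$ for $V=(\Z/p\Z)^k$. At $p=2$ the moments are $\abs{V}\cdot\abs{\Lambda^2 V}=2^{\binom{k+1}{2}}$, just past that threshold, and there the moment problem is genuinely indeterminate.

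Indeed, the measures $P^{M,1}_{2,a}$ for $-1\le a\le 1$ all satisfy $\E[\abs{\Sur(X,G)}]=\abs{G/2G}\cdot\abs{\Lambda^2 G}$ for every $G$, yet $\PPP[\rank X\text{ odd}]=(1-a)/2$. For $a=-1$ (M\'esz\'aros's even-degree regular graphs) the rank is odd almost surely. Since $\abs{\Sur(X,(\Z/2\Z)^k)}$ depends only on $\rank X$, these are different rank distributions with identical moments. So neither Wood's theorem nor any Gaussian-binomial inversion can single out the formula in part (2). Your remark that the diagonal structure ``forces a parity constraint'' only explains the extra factor $\abs{V}$ in the moment value; it does not supply the missing information.

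The missing ingredient is an independent computation of $\lim_{n\to\infty}\PPP[\rank G(\Lna)\text{ odd}]$. The paper proves that over $\mathbb{F}_2$ one has $\rank(\bar L)\equiv\sum_{i,j}a_{ij}\pmod 2$. The argument is that toggling a single entry $a_{ij}$ changes the rank by exactly one. After deleting a redundant row and column, this reduces to showing that a certain vector lies in the image of a symmetric matrix, which in turn uses that the diagonal of a symmetric matrix over $\mathbb{F}_2$ lies in its image. Hence the rank parity is the parity of the number of edges, which is asymptotically uniform for $\epsilon$-balanced entries. You then transfer this to the truncated law using $\PPP[E_n]\to 1$. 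Finally, Wood's result that moments $\abs{V}\cdot\abs{\Lambda^2 V}$ together with $\lim\PPP[\text{odd rank}]=v$ determine the rank law, applied with $v=1/2$, yields part (2).

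For part (1) your outline is fine and matches the paper's moment method; restricting to elementary abelian targets is legitimate there since the moments are $\abs{\Lambda^2V}$. One correction to your heuristic, though: the sides are reversed. The dangerous pairs are those where $F_2$, on the smaller side $V_2$, has affine span $T_2$ strictly smaller than that of $F_1$. Then each $i\in V_1$ with $f_i\notin T_2$ forces $\sum_j a_{ij}\equiv 0\pmod p$. This is why only the $V_1$-degrees are truncated. It is also where the paper's real work lies (the special/robust character analysis and the two error-term estimates), which your ``large-deviation bound'' leaves unspecified.
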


It is clear that Theorem~\ref{Thm: dist laplacian odd p} implies Theorem~\ref{Thm: dist sandpile odd p} at once and Theorem~\ref{Thm: laplacian rank dist} implies Theorem~\ref{Thm: sandpile rank dist}.

\subsection{Previous results}

The Cohen-Lenstra distributions are defined as follows, for $0<u<p$,
\[
P_{\infty,u}(G)
:=\frac{\abs{G}^{\log_p(u)}}{\abs{\Aut(G)}}
\prod_{i=1}^{\infty}
\left(1-\frac{u}{p^i}\right).
\]
Friedman and Washington \cite{friedman1989distribution} showed that the limiting distribution of $\Coker(X)$, where $X$ is a Haar uniform element of $\mathbb{M}_n(\Zp)$ is $P_{\infty,1}$.
% \begin{theorem}\cite{friedman1989distribution}
% Let $X_n \in \mathbb{M}_n(\Z_p)$ be Haar-random. Then for every finite abelian $p$-group $G$,
% \[
% \lim_{n\to\infty} \PPP[\Coker(X_n)\cong G] = P_{\infty,1}(G).
% \]
% \end{theorem}
This result gives one of the earliest random matrix realizations of the Cohen-Lenstra distribution. It shows that the weighting by $\abs{\Aut(G)}^{-1}$ arises naturally from the cokernels of Haar-random $p$-adic matrices.

Wood \cite{wood2019random} proved a stronger result for rectangular matrices with independent $\epsilon$-balanced entries.

\begin{theorem}\cite{wood2019random}\label{Thm: wood coker rectangle}
Fix a prime $p$ and integer $u\geq 0$. Let $X_n\in\mathbb{M}_{n,n+u}(\Zp)$ be a random matrix whose entries are independent and $\epsilon$-balanced.
Then for every finite abelian $p$-group $G$,
\[
\lim_{n\to\infty} 
\PPP[\Coker(X_n) \cong G] 
= P_{\infty,p^{-u}}(G).
\]
\end{theorem}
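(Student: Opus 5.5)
The plan is the moment method: compute the limiting surjection moments of $\Coker(X_n)$ and then invoke uniqueness of a distribution with those moments. Concretely, for every finite abelian $p$-group $H$ I would show
\[
\lim_{n\to\infty}\E\bigl[\abs{\Sur(\Coker(X_n),H)}\bigr]=\abs{H}^{-u}.
\]
The numbers $\abs{H}^{-u}$ are exactly the $\Sur$-moments of $P_{\infty,p^{-u}}$, a standard computation from the definition of the Cohen--Lenstra measure. They grow slowly enough, being bounded by $1$, that they determine a unique distribution on finite abelian $p$-groups. Moreover, convergence of the moments to such well-behaved values implies convergence in distribution, by the usual moment-uniqueness/robustness theorem for abelian $p$-groups. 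So the whole proof reduces to the moment computation.

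To compute the moment, view $X_n$ as a map $\Z_p^{n+u}\to\Z_p^n$. A surjection $\Coker(X_n)\to H$ is the same as a surjection $F:\Z_p^n\to H$ with $F\circ X_n=0$. Because the $n+u$ columns $x_1,\dots,x_{n+u}$ of $X_n$ are independent,
\[
\E\bigl[\abs{\Sur(\Coker(X_n),H)}\bigr]=\sum_{F\in\Sur(\Z_p^n,H)}\ \prod_{j=1}^{n+u}\PPP[F(x_j)=0].
\]
Expand each factor by Fourier analysis on $H$:
\[
\PPP[F(x)=0]=\abs{H}^{-1}\sum_{\chi\in\widehat H}\prod_{i=1}^n\E\bigl[\chi(F(e_i)\,a)\bigr],
\]
where $a\sim\mu$.

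Next I would split the maps $F$ into \emph{codes} and \emph{non-codes}. Call $F$ a code of distance $\delta n$ if $F$ remains surjective after deleting any $\delta n$ coordinates. For a code, every nontrivial $\chi$ has $\chi\circ F(e_i)\neq 1$ for at least $\delta n$ indices $i$. For each such $i$, $\epsilon$-balancedness gives $\abs{\E\chi(F(e_i)a)}\le 1-c\epsilon$. Hence
\[
\PPP[F(x)=0]=\abs{H}^{-1}+O(e^{-c\epsilon\delta n}),
\]
and the code contribution is
\[
\abs{H}^{n}\cdot\abs{H}^{-(n+u)}(1+o(1))\to\abs{H}^{-u},
\]
using that all but a vanishing fraction of the $\abs{H}^n$ maps $F:\Z_p^n\to H$ are surjective codes.

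The main obstacle is the non-codes. For these I would use Wood's depth stratification. If $F$ fails to be a code, there is a set $\sigma$ of at most $\delta n$ coordinates and a proper subgroup $H'<H$ containing $F(e_i)$ for all $i\notin\sigma$. I would choose $H'$ of minimal index, and write $D=[H:H']$ for the depth.
\begin{itemize}
\item The number of such $F$ is at most $\binom{n}{\delta n}\abs{H}^{\delta n}\abs{H'}^{n-\delta n}$.
\item Applying the code estimate inside $H'$ gives $\PPP[F(x)=0]\le \abs{H'}^{-1}(1+o(1))$ up to an extra factor $D^{-1}$-type savings coming from the $\sigma$ coordinates. The $\sigma$ coordinates are in turn controlled by $\epsilon$-balancedness, via $\max_h\PPP[\,\cdot=h\,]\le 1-\epsilon$.
\end{itemize}
Multiplying these, each depth class contributes at most
\[
\binom{n}{\delta n}\abs{H}^{\delta n}\,\abs{H'}^{-u}\,D^{-n(1-\delta)}\,(1-\epsilon)^{\cdots},
\]
which tends to $0$ once $\delta$ is chosen small relative to $\epsilon$ and $\abs{H}$. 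Summing over the finitely many subgroups $H'$ then shows the non-code contribution is $o(1)$. Getting these exponents to balance is the delicate bookkeeping step. If the direct bound is too weak, I would follow Wood's approach of bounding the Fourier sum for non-codes by the pushforward to $H/H'$ together with a Halász-type inequality. Combining the code and non-code estimates gives the moment limit, and moment uniqueness finishes the proof.
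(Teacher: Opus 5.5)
The paper does not prove this statement; it only quotes it from Wood. Your skeleton is the right one: limiting $\Sur$-moments $\abs{H}^{-u}$, a code/non-code split, a depth stratification, then Wood's uniqueness theorem. Your treatment of codes is also fine. The gap is in the non-code step, where you apply ``the code estimate inside $H'$''. That estimate needs $F|_{[n]\setminus\sigma}$ to be a code of linear distance onto $H'$, and your choice of $H'$ does not give this.

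Taken literally, ``minimal index'' always yields $D=p$, because any admissible $H'$ lies in an admissible maximal subgroup. The bound then fails. Take $H=(\Z/p\Z)^2$, let $F(e_1),F(e_2)$ be a basis, and set $F(e_i)=0$ for $i\geq 3$. Your rule gives $\abs{H'}=p$, yet $\PPP[F(x_j)=0]=\PPP[a_{1j}\equiv 0]\,\PPP[a_{2j}\equiv 0]$ can be close to $(1-\epsilon)^2$. That exceeds $(1-\epsilon)\abs{H'}^{-1}$ by a factor near $(1-\epsilon)p$, and this factor is $>1$ because $\epsilon$-balancedness forces $(1-\epsilon)p>1$. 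The excess occurs in each of the $n+u$ columns.

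Taking the largest index with a fixed threshold $\abs{\sigma}\le\delta n$ does not repair this either. When $\abs{\sigma}$ is near $\delta n$, maximality at a fixed threshold cannot exclude the restriction degenerating after fewer than $\delta n$ further deletions.

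The missing idea is Wood's scaled depth, which this paper also uses. Take the \emph{largest} $D$ such that some $\sigma$ with $\abs{\sigma}<\log_p(D)\,\delta n$ has $[H:\sg{F(e_i):i\notin\sigma}]=D$. Suppose $F|_{[n]\setminus\sigma}$ were not a code of distance $\delta n$ onto $H'$. Then deleting some $\sigma'$ with $\abs{\sigma'}<\delta n$ would reach index $DD'$ with $\abs{\sigma\cup\sigma'}<\log_p(DD')\,\delta n$, contradicting maximality.

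Second, the saving from the $\sigma$-coordinates is not a factor $D^{-1}$ per column. The factor $D^{-n(1-\delta)}$ in your final display is not available for $\epsilon$-balanced entries, as the example above already shows. What surjectivity and balancedness actually give is the following. Some $i\in\sigma$ has $F(e_i)\notin H'$. The $a\in\Z_p$ placing $aF(e_i)$ in a prescribed coset of $H'$ lie in a single residue class mod $p$. Combined with the code property of the restriction, which bounds every point probability in $H'$, this yields
\[
\PPP[F(x_j)=0]\le(1-\epsilon)\abs{H'}^{-1}\bigl(1+O(e^{-cn})\bigr).
\]
For a given $H'$ there are at most $2^{2\sqrt{\log_p(D)\delta}\,n}\abs{H}^{\log_p(D)\delta n}\abs{H'}^{n}$ such $F$. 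So each depth class contributes
\[
O\bigl(2^{2\sqrt{\log_p(D)\delta}\,n}\abs{H}^{\log_p(D)\delta n}(1-\epsilon)^{n}\bigr).
\]
It is this $(1-\epsilon)^n$, not any power of $D$, that must beat the entropy factor, and that is what dictates how small $\delta$ must be. With these two corrections the direct bound suffices, and no Halász-type input is needed for fixed finite $H$.
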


Nguyen and Wood \cite{nguyen2022random} proved a global version for random integral matrices over $\Z$. 
Koplewitz \cite{koplewitz2017sandpile} showed that the Sylow $p$-subgroups of sandpile groups of random Erd\H{o}s--R\'enyi directed graphs converge to the distribution $P_{\infty,1/p}$.
Nguyen and Wood \cite{nguyen2022random} extended this result to the distribution of the entire sandpile group of random directed graphs.
In the companion paper \cite{Singhal_SandpileBipartite_Directed}, we study random directed bipartite graphs and determine their limiting distribution.

These results about cokernels of families of random $p$-adic matrices have been extended in several directions.
Cheong and Kaplan \cite{cheong2022generalizations} studied cokernels of polynomial expressions $P(A)$, where $A$ is a Haar-random matrix over $\Zp$.
Cheong and Yu \cite{cheong2023distribution} proved the $\epsilon$-balanced analogue for cokernels of polynomial expressions $P(A)$.
Cheong and Huang \cite{cheong2025cokernel} studied the distribution of $P(A+pB)$, where $P$ is a polynomial expression, $A$ is a fixed matrix and $B$ is Haar-random.
Lee \cite{lee2023joint} studied the joint distribution of several cokernels of the form $\Coker(P_1(A)),\dots,\Coker(P_r(A))$, showing that one can study finer questions than the distribution of a single cokernel. 
Finally, Nguyen and Wood \cite{nguyen2025local} developed a local and global universality theory for more structured random matrix ensembles, including symmetric, skew-symmetric, and Laplacian-type models.

For symmetric matrices, the limiting distribution differs from the Cohen-Lenstra distribution $P_{\infty,u}$. This is because the cokernel of a symmetric matrix naturally carries an additional algebraic structure, namely a canonical symmetric bilinear pairing.
Clancy, Kaplan, Leake, Payne, and Wood \cite{clancy2015cohen} proved that Haar-random symmetric matrices over $\Zp$ give rise to the distribution $\msympinf$.
Bhargava, Kane, Lenstra, Poonen, and Rains \cite{bhargava2015modeling} showed an analogous result for cokernels of Haar-random alternating matrices over $\Zp$. Let $\Aut(G,\delta)$ be the set of the automorphisms of $G$ that preserve a pairing $\delta$.

\begin{theorem}\cite{clancy2015cohen}
Let $X_n\in\mathbb{M}_n(\Zp)$ be Haar-random in the additive group of symmetric matrices. Let $G$ be a finite abelian $p$-group and let $\delta$ be a perfect symmetric bilinear pairing on $G$. Then
\[
\lim_{n\to\infty}
\PPP[(\Coker(X_n),\delta_{X_n})\cong (G,\delta)]
=
\frac{1}{\abs{G}\abs{\Aut(G,\delta)}}
\prod_{k\geq 0}(1-p^{-1-2k}).
\]
In particular,
\[
\lim_{n\to\infty}
\PPP[\Coker(X_n)\cong G]
=
\msympinf(G).
\]
\end{theorem}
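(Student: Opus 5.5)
The plan is to compute the probability directly from the $\mathrm{GL}_n(\Zp)$-invariance of Haar measure on symmetric matrices. Surjective moments alone will not suffice, because they do not see the pairing. Throughout, for symmetric $X$ with nonzero determinant, the pairing on $\Coker(X)$ is
\[
\delta_X(\bar x,\bar y)=y^{T}X^{-1}x \in \mathbb{Q}_p/\Zp .
\]
The whole proof rests on one local-density computation, and the second statement follows from it by summing.

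\textbf{Reduction to counting over surjections.} Fix $k$ with $p^kG=0$. The event $(\Coker(X),\delta_X)\cong(G,\delta)$ depends only on $X \bmod p^{k+1}$. I will double count pairs $(X,\pi)$ where $\pi:\Zp^n\to G$ is a surjection and $\pi$ induces an isomorphism $(\Coker X,\delta_X)\to(G,\delta)$. Each $X$ with the right cokernel-with-pairing admits exactly $\abs{\Aut(G,\delta)}$ such $\pi$. Hence
\[
\PPP[(\Coker X,\delta_X)\cong(G,\delta)]=\frac{1}{\abs{\Aut(G,\delta)}}\sum_{\pi}\PPP[\ker\pi=\im X,\ \delta_X=\delta\circ(\pi\times\pi)].
\]
Since $\mathrm{GL}_n(\Zp)$ acts transitively on surjections and congruence $X\mapsto AXA^{T}$ preserves Haar measure, every summand is equal. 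So the sum equals $\#\Sur(\Zp^n,G)$ times the probability for one standard $\pi$. Because $\#\Sur(\Zp^n,G)/\abs{G}^n\to1$, it suffices to show that this single probability is asymptotically $\abs{G}^{-n}\cdot\abs{G}^{-1}\prod_{k\ge0}(1-p^{-1-2k})$.

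\textbf{The local density (main step).} Let $r$ be the rank of $G$. Choose $\pi$ to kill the first $n-r$ basis vectors and to map the last $r$ coordinates onto $G\cong\bigoplus_{i=1}^{r}\Z/p^{e_i}$. The conditions "$\im X=\ker\pi$ and $\delta_X$ equals the pulled-back $\delta$" are then explicit congruence conditions on the blocks of $X$.

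Write
\[
X=\begin{bmatrix}X_{11}&X_{12}\\ X_{12}^{T}&X_{22}\end{bmatrix}.
\]
The condition $\im X\subseteq\ker\pi$ forces the last $r$ rows to lie in a prescribed lattice of index $\abs{G}^{n}$, up to the symmetric overlap correction. Given that, the block-elimination (Schur complement) form of $X$ shows that equality of images plus the prescribed pairing amounts to two things. First, $X_{11}$ is invertible mod $p$. Second, the Schur complement $X_{22}-X_{12}^{T}X_{11}^{-1}X_{12}$ lies in a coset, determined by $\delta$, of a lattice of symmetric matrices.

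I then compute the Haar measure of these conditions. The coset condition on the $r\times r$ symmetric Schur complement contributes the extra factor $\abs{G}^{-1}$; this is exactly where the symmetric structure replaces the $\abs{G}^{-n}\cdot\abs{G}^{-n}$ of the non-symmetric case. The invertibility of $X_{11}$ mod $p$ contributes the probability that a uniform symmetric $(n-r)\times(n-r)$ matrix over $\mathbb{F}_p$ is nonsingular. By the classical count of nonsingular symmetric matrices, this tends to $\prod_{k\ge0}(1-p^{-1-2k})$.

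I expect this bookkeeping to be the main obstacle. The difficulty is getting the index exactly right, because the entries are shared between $X_{12}$ and $X_{12}^{T}$, and checking that the Schur complement is still Haar-distributed on its coset after conditioning on $X_{11},X_{12}$. I would handle this by translation invariance: conditionally on $X_{11}$ and $X_{12}$, the block $X_{22}$ is Haar on symmetric matrices, so the Schur complement is as well.

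\textbf{Second statement.} Sum the first statement over the isomorphism classes of perfect symmetric pairings $\delta$ on $G$. By orbit-stabilizer for the action of $\Aut(G)$ on pairings, $\sum_{[\delta]}1/\abs{\Aut(G,\delta)}$ equals the number of symmetric perfect pairings divided by $\abs{\Aut(G)}$. This gives $\msympinf(G)$. Since the limit is taken for each fixed $G$, no tightness argument is needed.
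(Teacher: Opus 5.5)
The paper does not prove this statement. It quotes it from \cite{clancy2015cohen} as background, and, as you note, its own tools (surjective moments plus Wood's universality theorem) cannot detect the pairing. Your direct local-density route is sound. The double count over $\pi$, the reduction to one standard $\pi$ via $X\mapsto AXA^T$, the Schur-complement reduction with translation invariance in $X_{22}$, and the orbit--stabilizer step for the second statement are all correct. The argument even gives an exact identity for every $n\ge r$: $\PPP[(\Coker X_n,\delta_{X_n})\cong(G,\delta)]=\abs{\Sur(\Zp^n,G)}\,\abs{\Aut(G,\delta)}^{-1}\abs{G}^{-n-1}\,\PPP[\bar X_{11}\text{ nonsingular}]$, where $\bar X_{11}$ is uniform in $\Sym_{n-r}(\mathbb{F}_p)$. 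The cost is that it uses the full $\mathrm{GL}_n(\Zp)$- and translation-invariance of Haar measure. Unlike the moment method, it therefore says nothing about $\epsilon$-balanced entries.

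Two things in your sketch need repair.

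\textbf{The factor accounting.} As you describe it, it is not right block by block when $r\ge 2$. Put $D=\Diag(p^{e_1},\dots,p^{e_r})$. The event for your $\pi$ holds iff three conditions hold: $\bar X_{11}$ is nonsingular; $X_{12}\in\MM_{n-r,r}(\Zp)D$; and $S=X_{22}-X_{12}^TX_{11}^{-1}X_{12}\in S_0+D\,\Sym_r(\Zp)\,D$. Here $S_0=\tilde\Delta^{-1}$ for a symmetric lift $\tilde\Delta$ of the Gram matrix of $\delta$. Perfectness gives $\tilde\Delta D\in\mathrm{GL}_r(\Zp)$, so $S_0$ is integral with image $D\Zp^r$.

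The coset claim has a short proof. The valid $S$ are the symmetric $S=DW=W^TD$ with $W\in\mathrm{GL}_r(\Zp)$ and $S^{-1}-\tilde\Delta$ integral. For two valid ones, $S'-S=S'(S^{-1}-S'^{-1})S=DW'(S^{-1}-S'^{-1})W^TD$. This matrix is symmetric and lies in $D\,\MM_r(\Zp)\,D$, hence in $D\,\Sym_r(\Zp)\,D$. Conversely, $S+DMD=S(I+W^{-1}MD)$ has the same image because $D\equiv 0\pmod p$. Writing $S+DMD=DW'$, its inverse differs from $S^{-1}$ by the integral matrix $-W'^{-1}MW^{-T}$.

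The lattice $D\,\Sym_r(\Zp)\,D$ has index $\prod_{i\le j}p^{e_i+e_j}=\abs{G}^{r+1}$. So $X_{12}$ contributes $\abs{G}^{-(n-r)}$ and $S$ contributes $\abs{G}^{-(r+1)}$, for a total of $\abs{G}^{-n-1}$.

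The containment condition on $X_{22}$ is implied by the coset condition and should not be counted separately. If you do impose it, the last $r$ rows cut out index $\abs{G}^{n-r}\prod_{i\le j}p^{\max(e_i,e_j)}$, not $\abs{G}^n$. The conditional factor for $S$ is then $\prod_{i\le j}p^{-\min(e_i,e_j)}$, not $\abs{G}^{-1}$. Only the product agrees with what you claimed.

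\textbf{The remark about $X\bmod p^{k+1}$.} Drop the claim that the event depends only on $X\bmod p^{k+1}$; it is false for $p=2$. With $n=1$, $X=(8)$ and $X=(40)$ agree modulo $2^5$. They give the pairings $\tfrac18$ and $\tfrac58$ on $\Z/8$, which are not isometric because $5$ is not a square mod $8$. You never use this remark, so nothing else is affected.
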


Wood \cite{wood2017distribution} extended this result to random symmetric matrices, whose entries on and below the diagonal are independent and $\epsilon$-balanced.
Restricting her result to a single prime $p$, one obtains the following statement.

\begin{theorem}\cite{wood2017distribution}\label{Thm: Wood dist of sym matrix coker}
Let $X_n\in\mathbb{M}_n(\Zp)$ be a random symmetric matrix, whose entries on and below the diagonal are independent and $\epsilon$-balanced. Then for every finite abelian $p$-group $G$,
\[
\lim_{n\to\infty}
\PPP[\Coker(X_n)\cong G]
=
\msympinf(G).
\]
\end{theorem}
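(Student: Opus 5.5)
The plan is the moment method: compute the limiting surjection moments of $\Coker(X_n)$, show they match those of $\msympinf$, and use a uniqueness-and-robustness theorem to conclude convergence in distribution. Since $\Coker(X_n)$ is determined modulo a large power of $p$ by $X_n \bmod p^e$, it suffices to work with finite abelian $p$-groups $G$ of exponent dividing $p^e$ and with matrices over $\Z/p^e\Z$.

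\textbf{Step 1 (reduction to moments).} For every finite abelian $p$-group $G$, I will show
\[
\lim_{n\to\infty}\E\bigl[\#\Sur(\Coker(X_n),G)\bigr]=\abs{\wedge^2 G}.
\]
Separately, using the Haar symmetric model of \cite{clancy2015cohen}, I check that $\msympinf$ has exactly these moments. The moments $\abs{\wedge^2 G}$ grow like $\abs{G}^{(r-1)/2}$ up to bounded factors, so they are well behaved. Wood's moment-uniqueness theorem then says they determine a unique distribution, and that convergence of moments implies convergence in distribution. This gives the conclusion.

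\textbf{Step 2 (expressing the moment).} I write
\[
\#\Sur(\Coker X,G)=\sum_{F}\mathbf 1\{FX=0\},
\]
where $F$ runs over surjections $\Z_p^n\to G$ (that is, $F\in G^n$ generating $G$). Using discrete Fourier analysis on $G^n$, I express
\[
\PPP[FX=0]=\abs{G}^{-n}\sum_{C\in \Hom(G,\mathbb{Q}/\Z)^n}\E\bigl[\zeta^{C(FX)}\bigr].
\]
By symmetry of $X$, the exponent splits into independent contributions. The diagonal entry $x_{ii}$ contributes a term with coefficient $C_i(F_i)$, and each pair $i<j$ contributes $x_{ij}\bigl(C_i(F_j)+C_j(F_i)\bigr)$. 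Each factor has modulus at most $1$. By $\epsilon$-balancedness, the factor is at most $1-c\epsilon$ whenever its coefficient is nonzero.

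\textbf{Step 3 (codes).} Call $F$ a \emph{code} (of distance $\delta n$) if deleting any $\delta n$ coordinates leaves $F$ surjective. For codes, the only $C$ giving non-negligible contributions are those for which $C_i(F_j)+C_j(F_i)=0$ for most pairs. These are exactly the $C$ coming from a symmetric-killing structure, namely $C$ corresponding to elements of $\Hom(\Sym^2 G,\ldots)$ complementary to $\wedge^2 G$. Counting them shows
\[
\sum_{F\text{ code}}\PPP[FX=0]\to\abs{\wedge^2 G}.
\]
All other $C$ give exponentially small contributions, which is controlled by counting the $C$ with many nonzero coefficients.

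\textbf{Step 4 (non-codes; main obstacle).} I expect the hardest step to be controlling the non-codes. I would stratify the $F$ by their \emph{depth}, namely the largest proper subgroup $H<G$ such that $F$ restricted to the complement of some $\ell$ coordinates maps onto only $H$. For $F$ of depth with index $D=[G:H]$, I would bound $\PPP[FX=0]$ by roughly $\abs{G}^{-n}\abs{H}^{\ell}e^{-c\epsilon n}$-type quantities. I would do this by conditioning on the rows indexed by the $\ell$ exceptional coordinates and using that the remaining symmetric block is still random. The number of such $F$ is at most $\binom{n}{\ell}\abs{H}^{n-\ell}\abs{G}^{\ell}$. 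The difficulty is that symmetry couples the rows and columns, so one cannot simply condition on a column block as in the nonsymmetric case. I would handle this by revealing the entries in the exceptional rows and columns first, and then applying the code estimate from Step 3 to $F$ composed with the quotient map $G\to G/H$ on the remaining coordinates. Summing over depths, these contributions should be $o(1)$, which completes the moment computation.
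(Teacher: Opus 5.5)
The paper does not prove this statement; it quotes it from \cite{wood2017distribution} and uses Wood's moment strategy as the template for its bipartite argument. Your Steps 1--3 follow that strategy. Step 4, which you rightly call the crux, contains a concrete error.

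\textbf{The non-code bound is false.} Your bound $\PPP[FX=0]\lesssim\abs{G}^{-n}\abs{H}^{\ell}e^{-c\epsilon n}$ fails already in a simple case. Take $G=\Z/p\Z$, $H=0$, $F=(1,0,\dots,0)$ and $\ell=1$. Then $FX=0$ if and only if $x_{1j}\equiv 0\pmod p$ for all $j$, so $\PPP[FX=0]=\prod_j\PPP[x_{1j}\equiv 0\pmod p]$. For an $\epsilon$-balanced law this can be about $(1-\epsilon)^n\gg p^{-n}$.

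The true order is $K(1-\epsilon)^{n-\ell}\abs{H}^{-(n-\ell)}$, which is roughly $D^n$ times larger than yours, where $D=[G:H]$. Multiplying by your count $\binom{n}{\ell}\abs{H}^{n-\ell}\abs{G}^{\ell}$ leaves $K\binom{n}{\ell}\abs{G}^{\ell}(1-\epsilon)^{n-\ell}$. This is $o(1)$ only because $\ell$ is forced below $\log_p(D)\,\delta n$ with $\delta$ small in terms of $\epsilon$ and $\abs{G}$. Your version hides that tension behind a spurious factor $D^{-n}$.

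\textbf{The mechanism also needs changing.} Composing $F$ with $\pi:G\to G/H$ gives the zero map on the remaining coordinates $\tau$, so no code estimate applies there. What works is the following, which the paper mirrors in Lemma~\ref{Lem: bound prob withoit t} and Proposition~\ref{Prop: Error term type 1 to zero}.

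On the $\tau$-columns, $FX=0$ reads $F_\tau X_{\tau\tau}=-F_\sigma X_{\sigma\tau}$. The block $X_{\sigma\tau}$ is independent of $X_{\tau\tau}$ despite symmetry, so condition on it. The requirement $\sum_{i\in\sigma}x_{ij}F(e_i)\in H$ for each $j\in\tau$ costs $(1-\epsilon)^{\abs{\tau}}$; this is where $\pi\circ F$, which is supported on $\sigma$, actually enters. The Step 3 estimate, applied to $F_\tau$ as an $H$-valued code with an arbitrary target, then gives $K\abs{H}^{-\abs{\tau}}$.

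For $F_\tau$ to be a code into $H$, you need the opposite extremal choice to ``largest proper $H$''. Take the largest index $D$ attainable with $\abs{\sigma}<\log_p(D)\,\delta n$; this is the paper's $\delta$-depth convention. Maximality then forbids removing fewer than $\delta n$ further coordinates and shrinking the image, since that would give a witness of size $<(\log_p(D)+1)\delta n$ for a larger index. With ``largest $H$'' this fails. For example, let $G=\Z/p^2\Z$ and let $F$ be $0$ except for one coordinate equal to $p$ and one equal to $1$; then $F_\tau$ is not a code in $H=p\Z/p^2\Z$.

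\textbf{Two smaller problems in Step 3.} The special $C$ are those with \emph{all} coefficients zero. These are exactly $C=\phi\circ F$ with $\phi$ an alternating pairing, i.e.\ elements of $\Hom(\wedge^2G,\mathbb{Q}/\Z)$; nothing involving $\Sym^2G$ is needed. Also, ``counting the $C$ with many nonzero coefficients'' does not by itself show the rest are negligible. There are $\abs{G}^n$ characters, and a non-special $C$ is only guaranteed linearly many nonzero coefficients. You need Wood's robust/non-robust split: robust $C$ have $\gtrsim n^2$ nonzero coefficients, while non-robust $C$ number only $e^{O(\sqrt{\gamma}\,n)}$.
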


Hodges \cite{hodges2024distribution} computed the distribution of the sandpile group of a random graph along with its canonical pairing. 

\begin{theorem}\cite[Theorem 1.1]{hodges2024distribution}
Consider a finite abelian $p$-group $G$, equipped with a perfect symmetric bilinear pairing $\delta$. Then for a random graph~$G(n,v)$,
\[
\lim_{n \rightarrow \infty} 
\PPP[((S_{ G(n,v)})_p, \delta_{G(n,v),p} ) \cong (G,\delta)] 
=\frac{1}{\abs{G}\abs{\Aut(G,\delta)}}
\prod_{k\geq 0}(1-p^{-1-2k}).
\]
\end{theorem}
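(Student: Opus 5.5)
The plan is to prove this with the moment method, working with groups \emph{together with} their pairing. The target distribution is the one Clancy--Kaplan--Leake--Payne--Wood obtained for Haar-random symmetric matrices, so it suffices to show three things. First, the pairing-refined moments of $((S_{G(n,v)})_p,\delta)$ converge to those of the Haar symmetric model. Second, those limiting moments grow slowly enough to determine a unique distribution on pairs $(G,\delta)$. Third, convergence of moments implies convergence in distribution.

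For the refined moments I would dualize. The sandpile group is $\Z_0^n/\im L$ with $L$ symmetric, and $\delta$ is given by $\delta(x,y)=y^{T}L^{-1}x$ modulo $\Z_p$, computed on the torsion cokernel. A surjection $F\colon \Z_0^n\to G$ with $FL=0$ is dual to an injection $G^{\vee}\hookrightarrow S$. Pulling $\delta$ back along this injection gives a symmetric bilinear form $\beta_F$ on $G^{\vee}$. The refined moment is then
\[
\E\bigl[\#\{F\in\Sur(\Z_0^n,G): FL=0,\ \beta_F=\beta\}\bigr],
\]
one for each symmetric form $\beta$ on $G^\vee$, not necessarily perfect. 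I would write the conditions $FL=0$ and $\beta_F=\beta$ as a single condition of the shape $FL=\text{(prescribed value)}$ in a suitable extension $\widetilde G$ of $G$. Concretely, one lifts through $p^{k}$ with $p^k$ killing $G$, so that $L^{-1}$ is replaced by solving $LX=p^kY$. The indicator can then be expanded as an average of additive characters:
\[
\frac{1}{|\widetilde G|^{n}}\sum_{C}\E\bigl[\zeta(C\cdot (FL-\ast))\bigr].
\]
Since $L$ has independent entries $\pm\mathbf 1_{\text{edge}}$ above the diagonal, each expectation factors over the edges $\{i,j\}$, giving a product of terms $\E[\zeta(x_{ij}c_{ij})]$.

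The key step is the estimate splitting $F$ into ``codes'' and non-codes, as in Wood's proof of the moment result for $G(n,v)$, which is the first theorem quoted above. For codes, meaning $F$ that are far from being supported on few coordinates in every direction, the character sum is controlled by the $\epsilon$-balanced bound $|\E\zeta(xc)|\le \exp(-\epsilon\cdot(\cdots))$. Summed over all $C$ that are not symmetric-compatible, these contributions are exponentially small. The surviving compatible $C$ contribute exactly what the Haar symmetric model contributes. For non-codes, I would stratify by depth, meaning the largest subgroup of $G$ on which $F$ restricted to few coordinates fails to surject. On each stratum one recursively bounds both the number of such $F$ and the probability $\PPP[FL=\ast]$, exactly as Wood does, now carrying the extra $\beta$-condition along; it only cuts down the count. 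The outcome is that each refined moment tends to the Haar value, namely $|\wedge^2 G|$ split appropriately among the forms $\beta$.

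The main obstacle is this non-code and depth analysis with the refinement: checking that the $\beta$-constraint does not spoil the uniform error bounds, and that the Laplacian's column-sum relation, which ties the diagonal to the off-diagonal entries, is handled. For the latter I would work in $\Z_0^n$ and use that the diagonal entries are determined by the rows, as Wood does. Finally, I would invoke a uniqueness theorem for moments on the category of groups with pairing. These moments are bounded by $|\wedge^2 G|\cdot|G|^{O(1)}$, which is well within the range where the Wood/Sawin-type robustness theorem applies. That theorem gives convergence to $\frac{1}{|G||\Aut(G,\delta)|}\prod_{k\ge0}(1-p^{-1-2k})$. Summing over $\delta$ recovers $\msympinf(G)$.
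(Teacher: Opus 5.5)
This statement is not proved in the paper; it is quoted from \cite{hodges2024distribution}, so there is no internal argument to compare against. Judged on its own, your outline is a sensible plan: moments refined by a symmetric form $\beta$ on $G^\vee$, Fourier expansion over the edge variables, codes versus non-codes, then a moment theorem for groups with pairings. But the decisive last step is justified incorrectly, and it is missing.

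A bound of the form $\abs{\Lambda^2 G}\cdot\abs{G}^{O(1)}$ is not ``well within'' any robustness range. Wood's criterion (Theorem~\ref{Thm: Wood universality}) needs moments at most $\abs{\Lambda^2 G}$. Theorem~\ref{Thm: moments of PM1_a}, recorded in this paper, gives the one-parameter family $P^{M,1}_{p,a}$, all with moments $\abs{G/pG}\cdot\abs{\Lambda^2 G}$. So moments of that size do not determine a distribution. For the category of groups with pairings you give no criterion at all.

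What actually makes the refined problem tractable is that the refined moments are \emph{small}. In the Haar model, conditioned on $FL=0$, the form $\beta_F$ is equidistributed over the $\abs{G}\cdot\abs{\Lambda^2 G}$ symmetric forms on $G^\vee$. Hence each refined moment tends to $\abs{G}^{-1}$. You would still have to do two things:
\begin{itemize}
\item[(i)] Set up a category in which these numbers are the moments. Pairings pull back along the injections $G^\vee\hookrightarrow S$ rather than descend along surjections, so the relevant maps are pairing-preserving injections (epimorphisms of the opposite category), with degenerate forms allowed.
\item[(ii)] Verify the Sawin--Wood well-behavedness condition for these moments in that category.
\end{itemize}
This is the step that converts moment convergence into the stated law.

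Two smaller points. First, the encoding should involve neither $L^{-1}$ nor an auxiliary solve $LX=p^kY$; an existential condition of that kind would not factor over edges. Let $\tilde w_\phi\in p^{-k}\Zp^n$ lift $\phi\circ F$ for $\phi\in G^\vee$. Then $FL=0$ says $\tilde w_\phi^{T}L\in\Zp^n$ for all $\phi$, and in that case $\beta_F(\phi,\psi)\equiv\tilde w_\phi^{T}L\,\tilde w_\psi \pmod{\Zp}$. This is linear in the edge variables, which is what makes the character expansion legitimate.

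Second, your ``main obstacle'' is misplaced. For non-codes you may simply drop the $\beta$-condition, since $\PPP[FL=0,\ \beta_F=\beta]\le\PPP[FL=0]$, and Wood's non-code estimate applies verbatim. The genuinely new work is for codes. You must rerun the robust/non-robust estimates for characters $(C,\gamma)$, where $\gamma$ is a character of the group of symmetric forms on $G^\vee$. You must also check that the special characters, weighted by the phases coming from $\beta$, reproduce the Haar probability. That holds because the reduced Laplacian with Haar edge weights is a Haar symmetric matrix; you use this implicitly but never state it.
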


Shen \cite{shen2026quantative} later proved the corresponding universality theorem for cokernels of random symmetric matrices with their canonical pairings, using a different method and obtaining effective error bounds.

\section{Strategy}

Previous results about the distribution of sandpile groups of random graphs and cokernels of random matrices have been proven using the following strategy developed by Wood \cite{wood2017distribution}. Suppose $G_n$ is a sequence of random finite abelian $p$-groups and $\nu$ is the conjectured limiting distribution as $n\to\infty$.
The first step is to show that the surjective moments match. That is, for every finite abelian $p$-group $H$, we have
\begin{equation}\label{Eqn: step 1}
\lim_{n\to\infty} \E[\abs{\Sur(G_n,H)}]
=\E[\abs{\Sur(X,H)}],
\end{equation}
where $X$ is a random finite abelian $p$-group chosen according to $\nu$.
The next step is to use a result of Wood \cite{wood2017distribution}, which says that if \eqref{Eqn: step 1} is satisfied and the moments are not too big, then for every finite abelian $p$-group $H$
\[
\lim_{n\to\infty} \PPP[G_n\cong H]
=\nu(H).
\]

The moments of $\msympinf$ were computed by Clancy, Kaplan, Leake, Payne, and Wood \cite{clancy2015cohen}.
\begin{theorem}\cite[Theorem 11]{clancy2015cohen}\label{thm: moment of P Sym}
Suppose $p$ is a prime and $G$ is a finite abelian $p$-group. Let $X$ be a random finite abelian $p$-group distributed according to $\msympinf$. Then we have
\[
\E[\abs{\Sur(X,G)}]
=\abs{\Lambda^2 G}.
\]
\end{theorem}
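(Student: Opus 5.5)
The plan is to realize $\msympinf$ as the limit of cokernels of Haar-random symmetric matrices, using Clancy--Kaplan--Leake--Payne--Wood. We then compute the surjective moments of those cokernels exactly at finite $n$, and pass to the limit using uniform integrability.

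\textbf{Step 1: exact moment at finite $n$.} Let $M_n$ be Haar-random in the symmetric $n\times n$ matrices over $\Zp$, and let $X_n=\Coker(M_n)$. Every surjection $X_n\to G$ lifts uniquely to a surjection $F:\Zp^n\to G$ with $F\circ M_n=0$. Hence
\[
\E[\abs{\Sur(X_n,G)}]=\sum_{F\in\Sur(\Zp^n,G)}\PPP[FM_n=0].
\]
Fix a surjection $F$ and consider the additive map $\Phi_F:M\mapsto FM$ from symmetric matrices to $\Hom(\Zp^n,G)\cong G^n$. Since $M_n$ is Haar-random, $FM_n$ is uniform on $\im(\Phi_F)$. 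Therefore $\PPP[FM_n=0]=1/\abs{\im\Phi_F}$.

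I would identify $\im\Phi_F$ as the kernel of the homomorphism
\[
\psi_F:G^n\to\Lambda^2G,\qquad (\phi_1,\dots,\phi_n)\mapsto\sum_i F(e_i)\wedge\phi_i .
\]
Symmetry of $M$ gives $\psi_F(FM)=\sum_{i,j}M_{ji}\,F(e_i)\wedge F(e_j)=0$, so $\im\Phi_F\subseteq\ker\psi_F$. For the reverse inclusion, I would change basis so that $F$ sends $e_1,\dots,e_r$ to the cyclic generators of $G\cong\bigoplus\Z/p^{\lambda_i}$ and kills the remaining $e_j$. A direct coordinate computation then shows that the symmetric matrices hit exactly $\ker\psi_F$. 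Surjectivity of $F$ makes $\psi_F$ surjective, since $g\wedge h$ is hit by $\phi_i=h$ at an index with $F(e_i)=g$. Consequently $\abs{\im\Phi_F}=\abs{G}^n/\abs{\Lambda^2G}$, and
\[
\E[\abs{\Sur(X_n,G)}]=\abs{\Lambda^2G}\cdot\frac{\abs{\Sur(\Zp^n,G)}}{\abs{G}^n}\longrightarrow\abs{\Lambda^2G}.
\]
This basis-change computation is where I expect the real work to lie. It requires checking that symmetric matrices over $\Z/p^{\lambda}$-type blocks fill out the full kernel, and it must also be valid for $p=2$.

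\textbf{Step 2: passing to the limit.} By the theorem of Clancy et al.\ stated above, $X_n\to\msympinf$ in distribution. Convergence of expectations of the unbounded function $\abs{\Sur(\cdot,G)}$ needs uniform integrability. I would obtain this from
\[
\abs{\Sur(X,G)}^2\le\abs{\Hom(X,G)}^2=\abs{\Hom(X,G\oplus G)}=\sum_{H\le G\oplus G}\abs{\Sur(X,H)}.
\]
By Step 1, applied to each of the finitely many subgroups $H$, the right-hand side has expectation bounded uniformly in $n$. A uniform second-moment bound gives uniform integrability. Hence $\E_{\msympinf}[\abs{\Sur(X,G)}]=\lim_n\E[\abs{\Sur(X_n,G)}]=\abs{\Lambda^2G}$.
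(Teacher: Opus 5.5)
The paper does not prove this statement; it quotes it from Clancy--Kaplan--Leake--Payne--Wood. Your proposal therefore has to stand on its own, and it does: it is a correct, self-contained proof.

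Compared with citing, your route costs more work but yields an exact finite-$n$ formula, $\E[\abs{\Sur(\Coker M_n,G)}]=\abs{\Lambda^2G}\,\abs{\Sur(\Zp^n,G)}/\abs{G}^n$. Your Step~1 is also the Haar-entry version of the paper's own decomposition $\E[\abs{\Hom(\Coker(L),G)}]=\sum_F\PPP[F\circ L=0]$. With Haar entries the character sum collapses to $1/\abs{\im\Phi_F}$, so none of the code or robustness analysis is needed.

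The step you flagged as the real work is short.
\begin{itemize}
\item The basis change is legitimate: $A\,\Sym_n(\Zp)\,A^{T}=\Sym_n(\Zp)$ gives $\im\Phi_{FA}=(\im\Phi_F)A^{-T}$, so the cardinality is unchanged.
\item In the normalized basis, $\im\Phi_F$ is cut out by one condition for each $i<j\le r$: the $g_i$-coordinate of $\phi_j$ and the $g_j$-coordinate of $\phi_i$ must agree modulo $p^{\min(\lambda_i,\lambda_j)}$. All other coordinates are free.
\item Hence its index in $G^n$ is $\prod_{i<j}p^{\min(\lambda_i,\lambda_j)}=\abs{\Lambda^2G}$. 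This also holds at $p=2$, because the diagonal entries $M_{ii}$ are unconstrained. You therefore do not even need $\psi_F$.
\end{itemize}
If you keep $\psi_F$, fix the surjectivity argument. An index with $F(e_i)=g$ need not exist; instead write $g=\sum_i c_iF(e_i)$ and take $\phi_i=c_ih$.

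In Step~2, the bound $\E[\abs{\Sur(X_n,G)}^2]\le\sum_{H\le G\oplus G}\abs{\Lambda^2H}$ does give uniform integrability. However, it only controls the region where $\abs{\Sur(\cdot,G)}$ is large. Mass escaping to infinitely many groups on which $\abs{\Sur(\cdot,G)}$ is bounded and nonzero (e.g.\ $\Z/p^k$ with $k\to\infty$) would still spoil the limit.

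So state explicitly that $\msympinf$ has total mass $1$; this is part of the cited theorem. Scheff\'e's lemma then upgrades the pointwise convergence $\PPP[X_n\cong A]\to\msympinf(A)$ to total-variation convergence, and hence to convergence in distribution of $\abs{\Sur(X_n,G)}$.

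Alternatively, you can get tightness directly from your Step~1 bounds via Markov's inequality:
\begin{itemize}
\item $\PPP[\Sur(X_n,\Z/p^k)\neq\emptyset]\le 1/\abs{\Aut(\Z/p^k)}$;
\item $\PPP[\rank X_n\ge k]\le p^{\binom{k}{2}}/\abs{\mathrm{GL}_k(\mathbb{F}_p)}$.
\end{itemize}
With that addition, the argument is complete.
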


The authors of \cite{FulmanKaplanSinghalWarnaar_SylowSandpileBipartite} compute the surjective moments of several families of distributions. One of them is defined in terms of a parameter $-1\leq a\leq 1$ as
\[
P^{M,1}_{p,a}(G)
:= 
\begin{cases}
\frac{1-a}{2}\times \abs{G/pG}\times \msympinf(G) &\text{if }\log_p(G/pG)\text{ is odd},\\
\frac{1+a}{2}\times \abs{G/pG}\times \msympinf(G) &\text{if }\log_p(G/pG)\text{ is even}.
\end{cases}
\]
The superscript $M$ is for `M\'esz\'aros'. This is to highlight the fact that the measure $P^{M,1}_{p,-1}$ occurs in the special case of Theorem \ref{mes_thm} where $d$ is even and $p=2$.
Also note that the measure $P^{M,1}_{p,0}$ occurs in Conjecture~\ref{conj: p-Syl_conj} when $p=2$.

The surjective moments of $P^{M,1}_{p,a}$ are as follows.
\begin{theorem}\cite[Theorem 9]{FulmanKaplanSinghalWarnaar_SylowSandpileBipartite}\label{Thm: moments of PM1_a}
Suppose $p$ is a prime, $-1\leq a\leq 1$ and $G$ is a finite abelian $p$-group. Let $X$ be a random finite abelian $p$-group distributed according to $P^{M,1}_{p,a}$. Then we have
\[
\E[\abs{\Sur(X,G)}]
=\abs{G/pG}\times\abs{\Lambda^2 G}.
\]
In particular, the moments do not depend on $a$. Moreover,
\[
\PPP[\rank(X)\text{ is odd}]
=\frac{1-a}{2}.
\]
\end{theorem}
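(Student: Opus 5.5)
The plan is to write the measure in a form that isolates its dependence on $a$:
\[
P^{M,1}_{p,a}(X)=\abs{X/pX}\,\msympinf(X)\left(\tfrac12+\tfrac a2(-1)^{\rank X}\right).
\]
Writing $\E_{\Sym}$ for expectation with respect to $\msympinf$, this gives
\[
\E[\abs{\Sur(X,G)}]=\tfrac12\,\E_{\Sym}\!\left[\abs{X/pX}\,\abs{\Sur(X,G)}\right]+\tfrac a2\,\E_{\Sym}\!\left[(-1)^{\rank X}\abs{X/pX}\,\abs{\Sur(X,G)}\right].
\]
Everything then reduces to two identities for every finite abelian $p$-group $G$:
\[
\text{(A)}\quad \E_{\Sym}\!\left[\abs{X/pX}\abs{\Sur(X,G)}\right]=2\abs{G/pG}\abs{\Lambda^2G},
\qquad
\text{(B)}\quad \E_{\Sym}\!\left[(-1)^{\rank X}\abs{X/pX}\abs{\Sur(X,G)}\right]=0 .
\]

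\emph{Consequences of (A) and (B).} Taking $G$ trivial in (A) and (B) shows that $P^{M,1}_{p,a}$ has total mass $1$. It also shows that $\E_{\Sym}[\abs{X/pX}\mathbbm{1}_{\rank X\text{ odd}}]=\E_{\Sym}[\abs{X/pX}\mathbbm{1}_{\rank X\text{ even}}]=1$. From this,
\[
\PPP[\rank X\text{ odd}]=\tfrac12\cdot 1-\tfrac a2\cdot 1=\tfrac{1-a}{2}.
\]
For general $G$, (A) and (B) give moments equal to $\abs{G/pG}\abs{\Lambda^2G}$, independent of $a$.

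\emph{Proof of (A).} I use $\abs{X/pX}=\abs{\Hom(X,\Z/p)}$. Then $\abs{X/pX}\abs{\Sur(X,G)}$ counts homomorphisms $X\to G\oplus\Z/p$ whose composition with the projection to $G$ is surjective. Sort these by their image $H\le G\oplus \Z/p$, which must project onto $G$. Either $H=G\oplus\Z/p$, or $H$ is the graph of one of the $\abs{G/pG}$ homomorphisms $G\to\Z/p$, in which case $H\cong G$. Applying Theorem~\ref{thm: moment of P Sym} to each image gives
\[
\abs{\Lambda^2(G\oplus\Z/p)}+\abs{G/pG}\abs{\Lambda^2G}.
\]
Since $\Lambda^2(G\oplus \Z/p)\cong\Lambda^2G\oplus (G\otimes\Z/p)$, the first term equals $\abs{\Lambda^2G}\abs{G/pG}$, which proves (A).

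\emph{Proof of (B) — the main obstacle.} Surjective moments alone cannot see the sign $(-1)^{\rank X}$, so (B) needs an actual model of $\msympinf$. I would use the Haar random symmetric matrix $M_n$ over $\Zp$ from \cite{clancy2015cohen}, with $X=\Coker M_n$, and pass to $n\to\infty$. Justifying the interchange of limit and expectation should be routine domination by the moments in (A).

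Write $\abs{\Sur(X,G)}$ as a sum over surjections $F:\Zp^n\to G$ with $FM_n=0$. After a change of basis adapted to $\ker F$, the conditional law of the relevant block of $M_n$ should again be Haar symmetric on a complementary block, with the coupling entries uniform. The quantity $(-1)^{\rank X}p^{\rank X}$ is determined by the corank of $M_n \bmod p$. The plan is to show that, conditioned on $F$, this corank differs from the corank of a uniform symmetric matrix $\bar N$ over $\mathbb F_p$ of size $m\to\infty$ by a shift controlled by $G$.

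The key input is then the finite-field identity
\[
\sum_k(-1)^kp^k\,\PPP[\operatorname{corank}\bar N=k]\to0 \qquad (m\to\infty).
\]
This can be checked from the explicit count of $m\times m$ symmetric matrices of each rank over $\mathbb F_p$, or equivalently from the $G$-trivial case together with a transfer argument. It is consistent with the conjectural $p=2$ formula, where $P^{M,1}_{p,0}$ has the moments $\abs{G/pG}\abs{\Lambda^2 G}$.

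The hardest part will be making the conditioning on $F$ precise enough that the sign is exactly that of an unconstrained symmetric corank, up to a shift. If that fails, I would instead verify (B) by a direct generating-function computation using the explicit formula for $\msympinf$ summed over groups of fixed $p$-rank, which reduces it to a $q$-series identity.
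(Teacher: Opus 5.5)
The paper gives no proof of this statement: it is quoted from \cite{FulmanKaplanSinghalWarnaar_SylowSandpileBipartite}, so there is nothing in the text to compare against, and your proposal has to stand on its own. It does. Your route is to split off the $a$-dependence and reduce everything to (A) and (B).
\begin{itemize}
\item The reduction is exact, and (A) supplies the absolute convergence needed to split the expectation.
\item The total mass $1$ and $\PPP[\rank X\text{ odd}]=\frac{1-a}{2}$ follow correctly from the $G=0$ cases.
\item Your proof of (A), sorting pairs $(\phi,\psi)$ by their image $H\le G\oplus\Z/p\Z$, is complete. Either $H$ is everything or it is one of the $\abs{G/pG}$ graphs, and $\Lambda^2(\Z/p\Z)=0$ gives $\abs{\Lambda^2(G\oplus\Z/p\Z)}=\abs{G/pG}\abs{\Lambda^2G}$.
\end{itemize}

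For (B), your matrix-model plan works, and the step you flag as hardest is in fact exact. Take a surjection $F:\Zp^n\to G$, let $r=\rank G$, and identify $G$ with $\bigoplus_{i\le r}\Z/p^{\lambda_i}\Z$, where every $\lambda_i\ge 1$. Then $F=\pi\circ h$ with $h\in\mathrm{GL}_n(\Zp)$ and $\pi(x)=(x_i \bmod p^{\lambda_i})_{i\le r}$.
\begin{itemize}
\item $M'=hM_nh^T$ is again Haar symmetric, and $FM_n=0$ if and only if $M'_{ij}\in p^{\lambda_i}\Zp$ for all $i\le r$.
\item Since every $\lambda_i\ge 1$, the first $r$ rows and columns of $M'$ vanish mod $p$. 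Hence $\overline{M'}=0_r\oplus\overline{D}$, where $D$ is the lower-right $(n-r)\times(n-r)$ block.
\item $D$ is Haar symmetric and independent of the event $\{FM_n=0\}$, so the shift you anticipated is exactly $r$.
\end{itemize}
Now border a symmetric matrix over $\mathbb{F}_p$ by a uniform row and column. The rank jumps by $2$ if the new column lies outside the image; otherwise, after a congruence, the uniform new diagonal entry decides. This gives $\E[(-p)^{\operatorname{corank}\overline{D}_{m+1}}]=-p^{-1}\E[(-p)^{\operatorname{corank}\overline{D}_m}]$, so this expectation equals $(-p)^{-m}$ exactly. Summing over $F$,
\[
\E\big[(-1)^{\rank X_n}\abs{X_n/pX_n}\abs{\Sur(X_n,G)}\big]=(-1)^n p^{2r-n}\,\E\big[\abs{\Sur(X_n,G)}\big]\longrightarrow 0,
\]
with no error term. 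The same recursion with $+p$ gives $\E[p^{\operatorname{corank}\overline{D}_m}]=2-p^{-m}$, which is an independent check of (A).

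The one imprecise point is the limit interchange. The limiting moments in (A) do not by themselves give uniform integrability in $n$; you need a uniform-in-$n$ bound on a higher moment. For example, $\big(\abs{X/pX}\abs{\Sur(X,G)}\big)^2\le\abs{\Hom(X,(G\oplus\Z/p\Z)^2)}$. The expectation of the right side under $\Coker M_n$ is bounded in $n$, because the same adapted-basis count gives $\E[\abs{\Sur(\Coker M_n,H)}]\le\abs{H}^{\rank H}$. With that in place the argument is complete, works for every prime including $p=2$, and makes the $q$-series fallback unnecessary.
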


Based on this, we should expect that for each finite abelian $p$-group $H$, we have
\[
\lim_{n\to\infty}
\E\left[\abs{\Sur(G(\Lna),H)}\right]
=\begin{cases}
\abs{\Lambda^2H} &\text{if }p\text{ is odd},\\
\abs{H/2H}\times \abs{\Lambda^2H} &\text{if }p=2.
\end{cases}
\]

However, this is not always the case, as we will show in Section~\ref{Sec: Raw moment infty}. These limits sometimes blow up to infinity.
The issue here is that a small subset of matrices contribute greatly to the surjective moments while contributing minimally to the distribution. We can get around this problem by restricting ourselves to a subset of the matrices whose probability goes to $1$ as $n\to\infty$, and computing the moments on this subset.
An identical issue arises in \cite{Singhal_SandpileBipartite_Directed} where the author studies the distribution of sandpile groups of random directed bipartite graphs.

\subsection{Good subset of matrices}

Given $\rho>0$, we define 
\[
\EE_{n,\rho}
=\left\{
\begin{bmatrix}
    t_1\\\vdots\\t_n
\end{bmatrix}\in(\Z/p\Z)^n:
\abs{\{i\in [n]:t_i = 0\}}
<\left(\frac{1}{p}+ \rho\right) n
\right\}.
\]
Define
\[
\eta(\Lnot)
=\begin{bmatrix}
    \sum_{j} a_{1j} \pmod{p}\\
    \vdots\\
    \sum_{j} a_{n_1j} \pmod{p}
\end{bmatrix}
\in (\Z/p\Z)^{n_1}.
\]
Even though $a_{ij}$ are not necessarily Haar uniform, as $n_2$ gets bigger $\sum_{j=1}^{n_2}a_{ij}$ would get close to being uniformly distributed $\mod p$. Therefore, as $n_1$ and $n_2$ get big, we should expect $\eta(\Lnot)\in\EE_{n_1,\rho}$ with a high probability. It is shown in Lemma~\ref{Lem: Prob E rho to 1} that this is indeed the case.

Given a function $f$ that takes $\Lnot$ as input, we denote the conditional expectation as
\[
\Erho[f(\Lnot)]
=\E[f(\Lnot) \mid \eta(\Lnot)\in \EE_{n_1,\rho}].
\]
We also denote the expectation restricted to the event $\eta(\Lnot)\in \EE_{n_1,\rho}$ as
\[
\Erhop[f(\Lnot)]
=\PPP[\eta(\Lnot)\in \EE_{n_1,\rho}] \times \E_{\rho}[f(\Lnot)].
\]

Once we restrict ourselves to this subset, then the issue of moments blowing up to infinity gets resolved and we get the right moments.

\begin{proposition}\label{Prop: Sur moment Laplacian}
Let $p$ be a prime, $\frac{1}{p}<\alpha \leq 1$.
Given $0<\rho<\alpha-\frac{1}{p}$ and a finite abelian $p$-group $G$, we have
\[
\lim_{n\to\infty} 
\Erho\left[\abs{\Sur(G(\Lna),G)}\right] 
=\begin{cases}
    \abs{\Lambda^2 G} &\text{if }p\text{ is odd},\\
    \abs{G/2G}\times\abs{\Lambda^2 G} &\text{if }p=2.
\end{cases}
\]\end{proposition}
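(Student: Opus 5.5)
Write $n_1=n$, $n_2=\ceil{\alpha n}$, $N=n_1+n_2$ and $L=\Lna$. The plan is the standard moment computation of Wood. Surjections $G(L)=Z_N/\im(L)\to G$ correspond to surjections $F\colon Z_N\to G$ vanishing on $\im(L)$. Since $Z_N$ is a direct summand of $\Zp^N$, each such $F$ extends to $\Zp^N\to G$; fixing one extension per $F$ (for instance, send $e_N$ to $0$), we get
\[
\Erho'\bigl[\abs{\Sur(G(L),G)}\bigr]=\sum_{F}\PPP\bigl[F\circ L=0,\ \eta(L)\in\EE_{n_1,\rho}\bigr].
\]
By Lemma~\ref{Lem: Prob E rho to 1}, $\PPP[\eta(L)\in\EE_{n_1,\rho}]\to1$, so it suffices to compute the limit of the restricted expectation $\Erho'$.

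Write $F=(F_1,F_2)$ with $F_1\colon\Zp^{n_1}\to G$ and $F_2\colon\Zp^{n_2}\to G$, given by vectors $x_i=F(e_i)$ for $i\in V_1$ and $y_j=F(e_{n_1+j})$ for $j\in V_2$. The condition $F\circ L=0$ is equivalent to
\[
\sum_j a_{ij}(y_j-x_i)=0 \quad\text{for all } i\in V_1,\qquad \sum_i a_{ij}(x_i-y_j)=0 \quad\text{for all } j\in V_2.
\]
For the symmetric part, the standard trick is to pass to $\Hom(\Zp^N,G)\times$(pairing data). Following Wood, I will write $\PPP[FL=0]$ as a Fourier average over $C\in\Hom(\Zp^N,G)$, using characters of $\Hom(\Zp^N,G)$ restricted to a symmetric-type quotient. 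The contribution of each pair $(F,C)$ then factors over the edges $(i,j)$ as $\E[\zeta(a_{ij}\,\cdot\,\langle\text{edge term}\rangle)]$, with edge term
\[
\phi_{ij}=\bigl(C(e_i)-C(e_{n_1+j})\bigr)\otimes\bigl(x_i-y_j\bigr)\ \text{symmetrized}.
\]
The main term comes from codes: pairs where, for every coset $x_i-y_j$ pattern, the resulting edge terms are nonzero for $\gg n^2$ edges, so that $\epsilon$-balancedness gives $\E=\abs{G}^{-N}\abs{\text{sym quotient}}^{\cdots}$. Counting as in \cite{wood2017distribution} then yields $\abs{\Lambda^2G}$.

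The bipartite structure changes the count of invariants. If all $x_i$ equal a common $x$ and all $y_j$ equal a common $y$, the constraints reduce to $(y-x)\deg(i)=0$, and similar degenerate families appear. For $p=2$, the parity structure contributes the extra factor $\abs{G/2G}$: a translation $F\mapsto F+(g,\dots,g,-g,\dots,-g)$ with $2g=0$ preserves the relations up to a term $2g\deg$, which gives $\abs{G[2]}=\abs{G/2G}$ additional solutions for all $L$. For odd $p$ this translation is absent because $2g=0$ forces $g=0$.

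The main obstacle is bounding the non-code $F$: those where the $x_i$ lie in a coset of a proper subgroup for most $i$, or the $y_j$ do. A dangerous case is $F_2$ constant, $y_j\equiv y$. Then the first set of equations becomes $\deg(i)\,(y-x_i)=\sum_j a_{ij}\cdot 0$ (after reduction), and this is satisfied with probability about $p^{-(\#\{i:\deg(i)\not\equiv0\})}$ per element of $G[p]$. Without conditioning, the vertices with $\deg(i)\equiv0\pmod p$ give $\abs{G}^{\#\{i:\deg(i)\equiv0\}}$ free choices, and this makes the raw moment blow up.

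On $\EE_{n_1,\rho}$ at most $(\frac1p+\rho)n_1$ of the $V_1$-vertices have $\eta_i=0$. These free choices, numbering at most $\abs{G}^{(1/p+\rho)n}$, are then beaten by the cost $\abs{G}^{-n_2}$ imposed by the $V_2$-equations, because $n_2=\alpha n>(\frac1p+\rho)n$. This is exactly where the hypothesis $\rho<\alpha-\frac1p$ enters. I would organize the non-code $F$ by the depth filtration of Wood, with separate depths for $F_1$ and $F_2$. I would then bound $\PPP[FL=0]$ by $\exp(-cn)\abs{G}^{-N}$ times the index of the subgroup, conditioning on $\eta$ to control the diagonal entries $-\sum_j a_{ij}$ separately from the off-diagonal entries. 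Summing over depths then gives $o(1)$.
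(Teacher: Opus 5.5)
Your main-term computation, and in particular your explanation of the factor $\abs{G/2G}$, is wrong. When $2g=0$ we have $-g=g$, so the shift $F\mapsto F+(g,\dots,g,-g,\dots,-g)$ is just the constant shift $F\mapsto F+(g,\dots,g)$. Constant shifts preserve $F\circ L=0$ for every $g\in G$ and every $p$. They are invisible on $Z_N$ and are excluded by your normalization $F(e_N)=0$, so they produce no new solutions. For instance, for a tree $G(L)=0$ and the zero map is the only solution.

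The factor actually comes from the count of special characters, which in the bipartite model differs from Wood's. The characters on the two sides are independent functions $\beta_1,\beta_2$ of the values, constrained only by $\beta_1(g)(-g+h)\,\beta_2(h)(-h+g)=1$ for $g,h\in T$. These solutions are parametrized by $H^*\times(\Delta^2H)^*$ with $H=\G(T)$ (part (2) of Proposition~\ref{Prop: Count special}). Only antisymmetry is forced. In a non-bipartite Laplacian the same $\beta$ sits at both ends of every edge, which forces $\iota(x\wedge x)=1$ and gives $\Lambda^2$. So the correct count yields $\abs{\Delta^2G}$ directly, and for $p=2$ this already contains $\abs{G/2G}$. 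Multiplying by your shift factor would overcount.

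The error-term plan also fails: no pointwise bound on depth classes works here, with or without conditioning. Take $G=\Z/p\Z$ and $F_2\equiv y$. Let $F_1$ be a translated code with image $G$ having exactly $\lambda n$ coordinates $x_i\neq y$; this is a code for any fixed $\lambda\in[\delta,1-\delta]$. The $V_1$-equations $\deg(i)(x_i-y)=0$ cost $1/p$ only where $x_i\neq y$, so $\PPP[F\circ L=0]\approx p^{-n_2-\lambda n}$. This exceeds even the Wood-type bound $e^{-cn}\abs{G}^{-N}p^{\,n_2}=e^{-cn}p^{-n}$ once $\lambda<1-\alpha$. Summing over such $F_1$ gives about $p^{-\alpha n}(2-\frac1p)^n$, which diverges for $\alpha$ near $\frac1p$.

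Conditioning on $\eta(L)=\vec t$ makes the conditional probability of a compatible $F_1$ even larger, about $p^{-n_2}$. The whole gain is that compatibility forces $x_i=y$ whenever $t_i\neq0$, so only $p^{\#\{i:t_i=0\}}$ maps $F_1$ survive. This $\vec t$-dependent restriction on the set of $F_1$ is invisible to a depth filtration.

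The paper instead splits by the pair of $\delta$-types $(S_1,T_1),(S_2,T_2)$ of translated subgroups, with a main term only when $T_1=T_2=S_1=S_2$. For $\abs{T_1}\le\abs{T_2}$ it needs no conditioning: it uses $\frac{\abs{T_1}}{\abs{T_2}}\bigl(\frac{\abs{T_2}}{\abs{T_1}}\bigr)^{\alpha}\le1$ plus a $(1-\epsilon)$ gain from the exceptional coordinates (Proposition~\ref{Prop: Error term type 1 to zero}). For $\abs{T_1}>\abs{T_2}$ it bounds the \emph{sum} over $F_1$ for fixed $\vec t$. That sum factorizes over $i$, with factor at most $1$ if $t_i\neq0$ and at most $\abs{T_1}/\abs{T_2}$ if $t_i=0$. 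This is Lemma~\ref{Lem: l neq0 sum <=1}, which uses identically distributed entries and a Haar coupling to absorb the coordinates with $F_2(j)\notin T_2$. Only then do $\vec t\in\EE_{n,\rho}$ and $\rho<\alpha-\frac1p$ give decay (Proposition~\ref{Prop: Error term type 2 to zero}). Your heuristic paragraph sees this phenomenon, but the mechanism you propose does not capture it.
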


Note that the restriction $\rho<\alpha-\frac{1}{p}$ is to avoid the matrices for which 
\[
\abs{\Big\{i\in [n]: \sum_j a_{ij} \equiv 0\pmod{p}\Big\}}
> n_2
=\ceil{\alpha n}.
\] 
These are the matrices responsible for the large surjective moments.
If $\alpha=1$, then no matrix can have 
\[
\abs{\Big\{i\in [n]: \sum_j a_{ij} \equiv 0\pmod{p}\Big\}}
> n_2
=n.
\]  
In fact, if $\alpha=1$, then the issue of moments blowing up does not arise and one could work directly with the raw moments $\lim_{n\to\infty} \E[\big|\Sur(G(L_n^{(1)}),G)\big|]$.

Once we have the moments, we can show that the desired distribution occurs for odd $p$ using the following result of Wood.
\begin{theorem}\cite[Theorem 8.3]{wood2017distribution}\label{Thm: Wood universality}
If $X_n$ is a sequence of random finitely generated $\Zp$-modules and $Y$ is a random finitely generated $\Zp$-module such that for every finite abelian $p$-group $G$, we have
\[
\lim_{n\to\infty} \E[\abs{\Sur(X_n,G)}]
=\E[\abs{\Sur(Y,G)}]
\leq \abs{\Lambda^2G}.
\]
Then for every finite abelian $p$-group $G$, we have
\[
\lim_{n\to\infty}
\PPP[X_n\cong G]
=\PPP[Y\cong G].
\]    
\end{theorem}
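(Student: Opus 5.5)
The plan is to first prove the statement with every module reduced modulo a fixed power $p^e$, and then let $e\to\infty$. Fix $e\ge 1$, write $\bar X_n=X_n/p^eX_n$ and $\bar Y=Y/p^eY$, and restrict to finite abelian $p$-groups $G$ of exponent dividing $p^e$. Any homomorphism from $X_n$ to such a $G$ factors through $\bar X_n$, so $\abs{\Sur(X_n,G)}=\abs{\Sur(\bar X_n,G)}$, and the same holds for $Y$. The laws of $\bar X_n$ and $\bar Y$ are measures on the countable set of isomorphism classes of groups of exponent dividing $p^e$. On this set, the reduction to finite $e$ has two advantages: a module is determined by $\bar X_n$ and $X_n/p^{e+1}X_n$ together, and both are controlled by moments of groups of bounded exponent.

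\textbf{Tightness.} Apply the hypothesis to $G=(\Z/p\Z)^k$. Summing surjective moments over subgroups gives
\[
\E[\abs{\Hom(X_n,(\Z/p\Z)^k)}]=\sum_{H\le (\Z/p\Z)^k}\E[\abs{\Sur(X_n,H)}]\le \sum_{H}\abs{\Lambda^2 H}+o(1),
\]
which is bounded for each fixed $k$. Since $\abs{\Hom(X_n,(\Z/p\Z)^k)}=p^{k\,\rank X_n}$, Markov's inequality gives
\[
\PPP[\rank X_n\ge r]\le p^{-kr}\,C_k
\]
uniformly in $n$. Hence the ranks are tight, so the laws of $\bar X_n$ are tight. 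Passing to a subsequence, they converge pointwise to a probability measure $\nu_e$.

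\textbf{Moments pass to the limit.} Next I show $\nu_e$ has the same moments as $\bar Y$, namely $\E_{\nu_e}[\abs{\Sur(\cdot,G)}]=\E[\abs{\Sur(\bar Y,G)}]$ for every $G$ of exponent dividing $p^e$. Pointwise convergence alone is not enough; uniform integrability is needed. I would use the elementary domination
\[
\abs{\Sur(A,G)}^2\le\abs{\Hom(A,G)}^2=\abs{\Hom(A,G\times G)}.
\]
The right side has uniformly bounded expectation in $n$ by the hypothesis applied to subgroups of $G\times G$ (again using the bound by $\abs{\Lambda^2}$ to keep the sum finite). Bounded second moments give uniform integrability, so the first moments converge along the subsequence.

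\textbf{Uniqueness (main obstacle).} It remains to show that a measure on groups of exponent dividing $p^e$ is determined by moments $M_G$ satisfying $M_G\le\abs{\Lambda^2G}$. I would invert the relation
\[
M_G=\sum_A \nu(A)\,\abs{\Sur(A,G)}
\]
by a Möbius-type formula over the lattice of quotients, expressing $\nu(A)$ as an alternating sum $\sum_G c_{A,G}M_G$. The hard part is absolute convergence of this sum. This needs an estimate on the inversion coefficients, of the shape
\[
\sum_{G}\abs{c_{A,G}}\,\abs{\Lambda^2 G}<\infty,
\]
and this is exactly why the growth threshold $\abs{\Lambda^2 G}$ appears in the statement. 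My first approach would be to reduce to the count $\abs{\Sur}$ expressed through partitions and subgroup counts. I would then bound $\abs{\Lambda^2 G}$ by $p^{\sum_{i<j}\lambda_i'\lambda_j'}$ in terms of the conjugate partition of $G$ and show that the alternating sums decay like $p^{-\sum_i\binom{\lambda'_i}{2}}$-type Gaussian factors, which dominate.

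Given uniqueness, $\nu_e$ is the law of $\bar Y$, and since this holds for every subsequential limit, $\bar X_n\to\bar Y$ in distribution for each $e$. Finally, for a finite $G$ of exponent $p^{e-1}$, the event $X_n\cong G$ is the event that $X_n/p^{e}X_n\cong G$ and that $X_n/p^{e}X_n$ has no $\Z/p^e$ summand. These are conditions on $\bar X_n$ alone, so
\[
\PPP[X_n\cong G]\to\PPP[Y\cong G].
\]
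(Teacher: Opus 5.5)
Your reduction is sound. The paper gives no proof of this statement; it imports it from Wood, so the comparison has to be with what the theorem actually requires. The tightness of the $p$-ranks from the moments of $(\Z/p\Z)^k$, the diagonal subsequence, uniform integrability via $\abs{\Hom(A,G)}^2=\abs{\Hom(A,G\times G)}$, and the recovery of $\PPP[X_n\cong G]$ from $X_n/p^eX_n$ are all correct. However, this is only the routine passage from a uniqueness theorem to a limit theorem. The substance of the result is the uniqueness step for measures on groups of exponent dividing $p^e$ with moments at most $\abs{\Lambda^2 G}$. You leave that step as a plan, and the plan points at the wrong mechanism.

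\textbf{The bound on $\abs{\Lambda^2 G}$ is wrong.} The quantity $p^{\sum_{i<j}\lambda'_i\lambda'_j}$ does not bound $\abs{\Lambda^2G}$. The correct value is $\abs{\Lambda^2G}=p^{\sum_i\binom{\lambda'_i}{2}}$, and for $G=(\Z/p\Z)^k$ your expression equals $1$ while $\abs{\Lambda^2 G}=p^{\binom{k}{2}}$.

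\textbf{There is no Gaussian factor left over.} Via the M\"obius function of the subgroup lattice, the inversion you want is
\[
\nu(A)=\sum_{G}\frac{M_G}{\abs{\Aut(G)}}\sum_{\substack{K\le G,\ K\cong(\Z/p\Z)^k\\ G/K\cong A}}(-1)^k p^{\binom{k}{2}}.
\]
Already for $A=0$ and $G=(\Z/p\Z)^k$, the term bounded using $M_G\le\abs{\Lambda^2G}$ has size
\[
p^{2\binom{k}{2}}/\abs{\mathrm{GL}_k(\mathbb{F}_p)}\asymp p^{-k}.
\]
The quadratic exponents cancel exactly, and only a linear margin remains. If your coefficients decayed only like $p^{-\sum_i\binom{\lambda'_i}{2}}$, the terms would not decay at all.

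\textbf{The margin is genuinely tight.} With moments $\abs{G/pG}\,\abs{\Lambda^2G}$ the margin vanishes. The family $P^{M,1}_{p,a}$ in this very paper shows that uniqueness then fails.

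\textbf{What is missing.} You need an exact-exponent estimate showing that, for each fixed $A$,
\[
\sum_{G}\frac{\abs{\Lambda^2G}}{\abs{\Aut(G)}}\,\abs{\{K\le G: K\cong(\Z/p\Z)^k,\ G/K\cong A\}}\,p^{\binom{k}{2}}
\]
is summable in $k$. The sum runs over all extensions $G$ of $A$ by $(\Z/p\Z)^k$, not only elementary abelian ones. Split extensions have rank $\operatorname{rank}(A)+k$ and large $\abs{\Lambda^2 G}$, but they are rare among extension classes. Non-split extensions are common but have smaller rank. Both effects must be tracked exactly, with no slack at the quadratic level. This estimate, or an equivalent uniqueness argument, is the real content of Wood's theorem. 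Without it, your argument does not establish the statement.
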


We now show that Proposition~\ref{Prop: Sur moment Laplacian} implies Theorem~\ref{Thm: dist laplacian odd p}.

\begin{proof}[Proof of Theorem~\ref{Thm: dist laplacian odd p} assuming Proposition~\ref{Prop: Sur moment Laplacian}]
For each $n$, let $G_n$ be a random finite abelian $p$-group distributed as
\[
\PPP[G_n\cong G]
= \PPP\Big[ G(\Lna)\cong G \mid \eta(\Lna)\in\EE_{n,\rho}\Big].
\]
Therefore, Proposition~\ref{Prop: Sur moment Laplacian} says that
\[
\lim_{n\to\infty} \E[\abs{\Sur(G_n,G)}] =\abs{\Lambda^2 G}.
\]
Therefore, from Theorem~\ref{thm: moment of P Sym} and Theorem~\ref{Thm: Wood universality}, we see that for each finite abelian $p$-group $G$, we have
\[
\lim_{n\to\infty} \PPP[ G(\Lna)\cong G \mid \eta(\Lna)\in\EE_{n,\rho}]
=\lim_{n\to\infty} \PPP[G_n\cong G]
=\msympinf(G).
\]
Note that
\begin{align*}
\PPP[G(\Lna)\cong G]
&= \PPP[ G(\Lna)\cong G \mid \eta(\Lna)\in\EE_{n,\rho}]
\times \PPP[\eta(\Lna)\in \EE_{n,\rho}]\\
&\quad\quad+\PPP[ G(\Lna)\cong G \mid \eta(\Lna)\not\in\EE_{n,\rho}]
\times \PPP[\eta(\Lna)\notin\EE_{n,\rho}].
\end{align*}
The result follows since Lemma~\ref{Lem: Prob E rho to 1} says that $\lim_{n\to\infty} \PPP[\eta(\Lna)\in \EE_{n,\rho}]=1$.
\end{proof}

This shows that for odd $p$, Conjecture~\ref{conj: p-Syl_conj} will follow once we prove Proposition~\ref{Prop: Sur moment Laplacian}.
The following result of Wood computes the distribution of ranks of a random finite abelian $p$-group distributed according to $\msympinf$.
\begin{proposition}\cite[Corollary 9.4]{wood2017distribution}\label{Prop: Wood P Sym rank dist}
If $X$ is a random finite abelian $p$-group chosen according to $\msympinf$, then
\[
\PPP[\rank(X)=r]
= \frac{1}{\qpochn{-p^{-1}}{p^{-1}}{\infty}}  p^{-\binom{r+1}{2}} \frac{1}{\qpochn{p^{-1}}{p^{-1}}{r}}.
\]
\end{proposition}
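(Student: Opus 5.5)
Since $\rank(X)=\dim_{\mathbb{F}_p}(X/pX)$, the plan is to reduce everything to the elementary abelian quotient $X/pX$ and use the surjective moments from Theorem~\ref{thm: moment of P Sym}. Take $G=(\Z/p\Z)^k$. Every surjection $X\to(\Z/p\Z)^k$ factors through $X/pX\cong(\Z/p\Z)^{\rank(X)}$. Hence, writing $R=\rank(X)$,
\[
\abs{\Sur(X,(\Z/p\Z)^k)}=\prod_{i=0}^{k-1}(p^{R}-p^{i}).
\]
By Theorem~\ref{thm: moment of P Sym}, its expectation is $\abs{\Lambda^2(\Z/p\Z)^k}=p^{\binom{k}{2}}$. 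So the law of $R$ is a probability distribution $(q_r)_{r\ge 0}$ on $\Z_{\ge 0}$ satisfying
\begin{equation}\label{eq:rankmoments}
\sum_{r\ge 0} q_r\prod_{i=0}^{k-1}(p^{r}-p^{i}) = p^{\binom{k}{2}}\qquad\text{for all } k\ge 0.
\end{equation}

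The proof then has two steps. First, I would show that \eqref{eq:rankmoments} determines $(q_r)$ uniquely. The moments grow like $p^{k^2/2}$, which is $o(p^{\binom{k+1}{2}})$. This is exactly the regime where Wood's uniqueness argument for elementary abelian groups applies; it is also the special case of Theorem~\ref{Thm: Wood universality} for $\mathbb{F}_p$-vector spaces. Concretely, one inverts the triangular system relating $\E[\prod_{i<k}(p^R-p^i)]$ to the probabilities $\PPP[R=r]$. The inversion uses the $q$-binomial inversion
\[
q_r=\sum_{k\ge r}\frac{(-1)^{k-r}p^{\binom{k-r}{2}}}{\abs{\mathrm{GL}_r(\mathbb{F}_p)}\,\prod_{i=0}^{k-r-1}(p^{k-r}-p^{i})}\,M_k,
\]
where $M_k$ denotes the $k$-th moment $p^{\binom{k}{2}}$. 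The growth bound on $M_k$ guarantees that this alternating series converges absolutely. That convergence is what justifies the inversion.

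Second, I would either evaluate this series directly, or, more cleanly, verify that the claimed distribution
\[
q_r=\frac{1}{\qpochn{-p^{-1}}{p^{-1}}{\infty}}p^{-\binom{r+1}{2}}\frac{1}{\qpochn{p^{-1}}{p^{-1}}{r}}
\]
satisfies \eqref{eq:rankmoments}; uniqueness then finishes the proof. Put $q=p^{-1}$ and write $\prod_{i<k}(p^r-p^i)=p^{rk}\qpochn{q^{r-k+1}}{q}{k}$, which vanishes for $r<k$. Substituting $r=k+s$ reduces \eqref{eq:rankmoments} to an identity of the form
\[
\sum_{s\ge0}\frac{q^{\binom{s+1}{2}}q^{ks}\cdot(\text{explicit power of }q)}{\qpochn{q}{q}{s}}=\qpochn{-q^{k+1}}{q}{\infty}\cdot(\text{explicit power of }q).
\]
This is Euler's identity $\sum_s q^{\binom{s}{2}}z^s/\qpochn{q}{q}{s}=\qpochn{-z}{q}{\infty}$ with $z=q^{k+1}$. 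The $k=0$ case also checks that the normalizing constant $1/\qpochn{-q}{q}{\infty}$ makes the total mass $1$.

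An alternative route, which I would keep as a cross-check, uses the result of \cite{clancy2015cohen}: $\msympinf$ is the limit of cokernels of Haar symmetric matrices over $\Zp$. The rank of such a cokernel is the corank of a uniform symmetric $n\times n$ matrix over $\mathbb{F}_p$. MacWilliams' count of symmetric matrices of given rank, followed by $n\to\infty$, yields the same formula.

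I expect the main obstacle to be the bookkeeping of $q$-powers in the second step, namely getting exponents like $\binom{r+1}{2}-rk$ to collapse exactly into Euler's identity. The uniqueness step is standard given the growth rate, but the absolute convergence of the inversion series must be checked explicitly.
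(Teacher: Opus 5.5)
The paper gives no proof of this statement; it quotes it from Wood's Corollary~9.4. Your route is therefore an independent, self-contained derivation. You pass to $X/pX$, read off $\E\bigl[\prod_{i<k}(p^{R}-p^{i})\bigr]=p^{\binom{k}{2}}$ from Theorem~\ref{thm: moment of P Sym}, invoke uniqueness, and check the candidate law against these moments. The skeleton is correct. Uniqueness is justified by your remark about Theorem~\ref{Thm: Wood universality}: apply it to the constant sequence $X/pX$ and the candidate. An $\mathbb{F}_p$-vector space has no surjections onto non-elementary $G$, and $\abs{\Lambda^2(\Z/p\Z)^k}=p^{\binom{k}{2}}$, so the moment bound holds. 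However, both explicit computations you display are wrong as written.

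First, your inversion coefficient is missing a factor $p^{-r(k-r)}$. The number of $r$-dimensional subspaces of $\mathbb{F}_p^k$ divided by $\abs{\mathrm{GL}_k(\mathbb{F}_p)}$ is $1/\bigl(\abs{\mathrm{GL}_r(\mathbb{F}_p)}\abs{\mathrm{GL}_{k-r}(\mathbb{F}_p)}p^{r(k-r)}\bigr)$, the reciprocal of the order of a parabolic subgroup. So the correct formula is
\[
\PPP[R=r]=\sum_{k\ge r}\frac{(-1)^{k-r}p^{\binom{k-r}{2}}}{\abs{\mathrm{GL}_r(\mathbb{F}_p)}\,\abs{\mathrm{GL}_{k-r}(\mathbb{F}_p)}\,p^{r(k-r)}}\,M_k .
\]
Your version already fails for deterministic $R=2$ and $r=1$, where it returns $(p+1)-p(p+1)\neq 0$. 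It also breaks your convergence claim. With $M_k=p^{\binom{k}{2}}$, your $k$-th term has size of order $p^{\binom{r}{2}+(r-1)(k-r)}$, so the series diverges for every $r\ge 1$. With the correct factor, the terms are $O\bigl(p^{\binom{r}{2}-(k-r)}\bigr)$, absolute convergence holds, and Fubini justifies the inversion. In fact the corrected series evaluates in one line. Put $q=p^{-1}$, $m=k-r$, and use $\abs{\mathrm{GL}_m(\mathbb{F}_p)}=p^{m^2}\qpochn{q}{q}{m}$:
\[
\PPP[R=r]=\frac{p^{\binom{r}{2}}}{\abs{\mathrm{GL}_r(\mathbb{F}_p)}}\sum_{m\ge 0}\frac{(-q)^m}{\qpochn{q}{q}{m}}=\frac{q^{\binom{r+1}{2}}}{\qpochn{q}{q}{r}}\cdot\frac{1}{\qpochn{-q}{q}{\infty}}.
\]
This uses Euler's identity $\sum_m z^m/\qpochn{q}{q}{m}=1/\qpochn{z}{q}{\infty}$ at $z=-q$. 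So your ``evaluate directly'' option is the shortest proof.

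Second, in the verification step there is no leftover $q^{ks}$ and no $\qpochn{-q^{k+1}}{q}{\infty}$. For $r=k+s$ one has exactly $p^{rk}q^{\binom{r+1}{2}}=p^{\binom{k}{2}}q^{\binom{s+1}{2}}$ and $\qpochn{q^{r-k+1}}{q}{k}/\qpochn{q}{q}{r}=1/\qpochn{q}{q}{s}$. So the $k$-th moment of the candidate is
\[
\frac{p^{\binom{k}{2}}}{\qpochn{-q}{q}{\infty}}\sum_{s\ge 0}\frac{q^{\binom{s}{2}}q^{s}}{\qpochn{q}{q}{s}}=p^{\binom{k}{2}},
\]
which is Euler's identity at $z=q$ for every $k$. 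The identity you displayed does not arise from this reduction and cannot be repaired by adjusting powers of $q$. Its left side equals $\qpochn{-q^{k+1}}{q}{\infty}=\qpochn{-q}{q}{\infty}/\qpochn{-q}{q}{k}$, which differs from the sum that actually appears by the non-monomial factor $\qpochn{-q}{q}{k}$. Taken literally, it would give the wrong moments for every $k\ge 1$.

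Your cross-check through Haar symmetric matrices and MacWilliams' count is fine. You only need to note that pointwise convergence to the probability measure $\msympinf$ implies convergence of the rank probabilities (Scheff\'e).
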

We see from this that for odd $p$, Conjecture~\ref{conj: p-Syl_conj} implies Conjecture~\ref{p-Syl_ranks_conj}.

For $p=2$, we will compute the surjective moments while proving Proposition~\ref{Prop: Sur moment Laplacian}.
However, Theorem~\ref{Thm: moments of PM1_a} shows that there is a one parameter family of measures with the same surjective moments, and possibly there could be more.
The measures within this family can be distinguished based on $\PPP[\rank(X)\text{ is odd}]$.
We will prove the following which provides further evidence for Conjecture~\ref{conj: p-Syl_conj} being true at $p=2$ as well.

\begin{proposition}\label{Prop: p=2 rank parity}
Suppose $p=2$. For any $0<\alpha\leq 1$,
\[
\lim_{n\to\infty}
\PPP
\left[ 
\rank(G(\Lna))
\text{ is odd}
\right]
=
\frac{1}{2}.
\]
\end{proposition}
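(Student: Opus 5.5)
The plan is to reduce the statement to a parity question about a symmetric matrix over $\mathbb{F}_2$, and then exploit the randomness of a few edge entries to show that this parity is asymptotically unbiased.

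\textbf{Step 1: reduction to $\mathbb{F}_2$.} Write $N=n_1+n_2$ and let $\bar L$ be $\Lna$ reduced mod $2$, a symmetric matrix over $\mathbb{F}_2$. Since $\im(\Lna)\subseteq Z_N$ and $Z_N$ is a direct summand of $\Z_2^N$ of rank $N-1$, we have
\[
G(\Lna)/2G(\Lna)\cong Z_N/(\im(\Lna)+2Z_N),
\]
so $\rank(G(\Lna))=N-1-\rank_{\mathbb{F}_2}(\bar L)$. It therefore suffices to show that
\[
\E\big[(-1)^{\rank_{\mathbb{F}_2}\bar L}\big]\to 0 .
\]

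\textbf{Step 2: effect of one entry on the rank parity.} Changing $a_{ij}$ mod $2$ (with $i\le n_1<n_1+j$) replaces $\bar L$ by $\bar L+vv^T$, where $v=e_i+e_{n_1+j}$. The signs disappear mod $2$, and both diagonal entries change. For symmetric $A$ over $\mathbb{F}_2$, the matrix determinant lemma gives the following.
\begin{itemize}
\item If $v\notin\im A$, then $\rank(A+vv^T)=\rank A+1$.
\item If $v=Aw$, then the rank drops by one exactly when $w^TAw=v^Tw=1$, and is unchanged otherwise.
\end{itemize}
So the flip changes the rank parity iff $v\notin \im A$, or $v=Aw$ with $w^TAw=1$. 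Since $\mu$ is $\epsilon$-balanced, each $a_{ij}$ is odd with probability $q\in(\epsilon,1-\epsilon)$. Hence, conditionally on all other entries, the contribution of that entry to $\E[(-1)^{\rank}]$ has absolute value at most $|1-2q|<1-2\epsilon$ whenever the flip changes parity.

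\textbf{Step 3: amplify over many entries.} One entry only gives a bound $1-2\epsilon$; I want $o(1)$. I would fix $k$ entries $a_{1,1},\dots,a_{1,k}$ in a single row (or a matching $a_{1,1},a_{2,2},\dots$), and regard $f(b)=\rank(\bar L_0+\sum_j b_jv_jv_j^T)$ mod $2$ as a function of the bits $b\in\{0,1\}^k$, with the rest of the matrix $\bar L_0$ fixed. If $f$ is affine in $b$ with all linear coefficients equal to $1$, then
\[
\E\big[(-1)^{f}\mid \bar L_0\big]=\pm\prod_j (1-2q)=\pm(1-2q)^k .
\]
The aim is to show that, with probability tending to $1$ over $\bar L_0$, the perturbation directions $v_1,\dots,v_k$ are "generic" with respect to $\bar L_0$. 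Concretely, they should be linearly independent modulo $\im\bar L_0$, or at least the $k\times k$ Gram-type matrix governing $\rank(\bar L_0+VBV^T)$ should have a prescribed nondegenerate form. By the Schur complement formula, $\rank(\bar L_0+VBV^T)-\rank\bar L_0$ is then a function of $b$ given by the rank of a small $k\times k$ matrix, and its parity averages to something exponentially small in $k$.

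To get the genericity I would use the surjective-moment computations behind Proposition~\ref{Prop: Sur moment Laplacian} at $p=2$. These moments, restricted to $\EE_{n,\rho}$ (which has probability $\to1$ by Lemma~\ref{Lem: Prob E rho to 1}), show that the corank of $\bar L$ is tight. Tight corank makes a fixed bounded-size collection of vectors $e_i+e_{n_1+j}$ avoid any fixed-dimensional "bad" subspace with high probability. The symmetry of the first $n_1$ indices lets me choose these indices at random, as in Wood's argument for symmetric matrices. Finally I take $k\to\infty$ slowly.

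\textbf{Main obstacle.} The main obstacle is Step 3: controlling the joint behaviour of several rank-one flips, since each flip changes $\im\bar L$. What I would try first is the Schur-complement reduction to an explicit small random symmetric $k\times k$ matrix over $\mathbb{F}_2$, showing its rank parity is asymptotically unbiased under the biased bits $b_j$.
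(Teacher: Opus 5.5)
Your Steps 1 and 2 are correct; Step 1 is exactly the paper's reduction. The genuine gap is Step 3, and the mechanism you propose there runs the wrong way.

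Since $\mathbf{1}\in\ker\bar L_0$ and $\bar L_0$ is symmetric, $\im\bar L_0\subseteq\mathbf 1^\perp$. Every $v_j=e_i+e_{n_1+j}$ also lies in $\mathbf 1^\perp$, and $\dim(\mathbf 1^\perp/\im\bar L_0)$ equals the rank of the associated cokernel group. So tight corank means $\im\bar L_0$ has \emph{bounded codimension}. That pushes the $v_j$ into $\im\bar L_0$, rather than keeping them independent modulo it or away from a small bad subspace.

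In that case, $v_j=\bar L_0w_j$, and your Schur-complement computation (for independent $v_j$) gives a parity change of $|S|+\rank(I_S+M_S)$ on the support $S$ of $b$, where $M=(w_i^T\bar L_0w_j)$. This is $\equiv|S|$ exactly when every diagonal entry $w_j^T\bar L_0w_j$ equals $1$. If those entries were $0$, there would be no cancellation at all. Nothing in the surjective moments of $G(\Lna)$ controls these diagonal entries.

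There is a second problem. Proposition~\ref{Prop: Sur moment Laplacian} is only available for $\alpha>\frac12$ when $p=2$, while the statement is for all $0<\alpha\le1$. For $\alpha<\frac12$ the corank is not even tight: about $n/2$ rows have $t_i=0$, and their nonzero parts lie in a space of dimension $\lceil\alpha n\rceil$.

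The missing idea is that the parity is deterministic: $\rank_{\mathbb F_2}\bar L\equiv\sum_{i,j}a_{ij}\pmod 2$ for every $A$. In your Step 2 language, every single flip changes the parity; whenever $v=\bar Lw$, automatically $w^T\bar Lw=1$. Then $\E[(-1)^{\rank\bar L}]=\prod_{i,j}\E[(-1)^{a_{ij}}]$, which has absolute value at most $(1-2\epsilon)^{n\lceil\alpha n\rceil}$. This holds for every $\alpha$, with no genericity or moment input.

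The paper proves the identity by induction on the number of nonzero entries of $A$. It deletes the redundant row and column $n_1+1$, so flipping $a_{11}$ only toggles the corner entry $d$ of $\widetilde L=\left[\begin{smallmatrix} d&\vec x^T\\ \vec x&K\end{smallmatrix}\right]$. It then shows $\vec x\in\im K$ using Lemma~\ref{lem:diag-in-image} (the diagonal of a symmetric $\mathbb F_2$-matrix lies in its image). Hence the two toggled matrices have ranks $\rank K$ and $\rank K+1$.

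A shorter route to the same identity: for symmetric $B$ over $\mathbb F_2$ with $Bu=\mathrm{diag}(B)$, one has $\rank B\equiv u^TBu\pmod 2$. You can check this on the normal forms $I_r\oplus 0$ and (hyperbolic)$\oplus 0$. Here $u=(\mathbf 1_{n_1},0)$ satisfies $\bar Lu=\mathrm{diag}(\bar L)$, giving $u^T\bar Lu=\sum_{i,j}a_{ij}$.
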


Wood shows in \cite{wood2023probability} that if the surjective moments are $\abs{G/pG}\times \abs{\Lambda^2 G}$, then knowing the probability of odd rank determines the distribution of the ranks.

\begin{theorem}\cite[Corollary 2.12]{wood2023probability}\label{Thm: wood determine dist of rank}
If $X_n$ is a sequence of random abelian $p$-groups such that for each $k\in\Z_{\geq 0}$, we have
\[
\lim_{n\to\infty} \E[\abs{\Sur(X_n,(\Z/p\Z)^k)}]
= \abs{(\Z/p\Z)^k} \times \abs{\Lambda^2 (\Z/p\Z)^k},
\]
and there is $v\in [0,1]$ such that
\[
\lim_{n\to\infty} 
\PPP[\rank(X_n) \text{ is odd}]
=v.
\]
Then for every $r\in\Z_{\geq0}$, we have
\[
\lim_{n\to\infty}
\PPP[\rank(X_n)=r]
=\begin{cases}
2v\frac{1}{\qpochn{-1}{p^{-1}}{\infty}} p^{-\binom{r}{2}} \frac{1}{\qpochn{p^{-1}}{p^{-1}}{r}}  &\text{if $r$ is odd},\\
2(1-v)\frac{1}{\qpochn{-1}{p^{-1}}{\infty}} p^{-\binom{r}{2}} \frac{1}{\qpochn{p^{-1}}{p^{-1}}{r}}  &\text{if $r$ is even}.
\end{cases}
\]
\end{theorem}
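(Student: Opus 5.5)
The plan is to reduce everything to an infinite linear system for the limiting rank distribution and show that this system has a one-parameter solution space, which the parity condition then pins down. Write $r_n=\rank(X_n)$, so that $X_n/pX_n\cong(\Z/p\Z)^{r_n}$. Every surjection $X_n\to(\Z/p\Z)^k$ factors through $X_n/pX_n$, hence
\[
\abs{\Sur(X_n,(\Z/p\Z)^k)}=\prod_{i=0}^{k-1}(p^{r_n}-p^i).
\]
The hypothesis therefore says
\[
\lim_{n\to\infty}\E\Big[\prod_{i<k}(p^{r_n}-p^i)\Big]=p^{\binom{k+1}{2}}\qquad\text{for every }k.
\]
Since this polynomial in $p^{r_n}$ has leading term $p^{kr_n}$, induction on $k$ gives $\sup_n\E[p^{kr_n}]<\infty$ for each $k$. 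Markov's inequality then makes the laws of $r_n$ tight, and the bound on $\E[p^{(k+1)r_n}]$ makes $p^{kr_n}$ uniformly integrable.

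Fix any subsequential limit law $(q_r)_{r\ge0}$ of $r_n$. By uniform integrability, it satisfies
\[
\sum_r q_r\prod_{i<k}(p^r-p^i)=p^{\binom{k+1}{2}}\quad\text{for all }k\ge0,\qquad \sum_{r\text{ odd}}q_r=v.
\]
It also satisfies the pointwise bound $q_r\le p^{-rk}\sup_n\E[p^{kr_n}]\ll p^{\binom{k+1}{2}-rk}$. I will take $k=r$ here, which gives $q_r\le C\,p^{-\binom{r}{2}}$ up to a factor $p^{O(r)}$ that I will need to track carefully. The goal is to show that these conditions determine $(q_r)$ uniquely. Then every subsequential limit agrees, and the full sequence converges.

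For existence, the measures $P^{M,1}_{p,a}$ with $a=1-2v$ satisfy all the constraints by Theorem~\ref{Thm: moments of PM1_a}. Their rank distributions are exactly the displayed formula, which I would verify from the rank distribution of $\msympinf$ in Proposition~\ref{Prop: Wood P Sym rank dist}. For uniqueness, let $d_r$ be the difference of two solutions. The polynomials $\prod_{i<k}(x-p^i)$ span all polynomials in $x$, so $\sum_r d_r p^{rk}=0$ for every $k\ge0$. In other words, the entire function $D(z)=\sum_r d_r z^r$ vanishes at $z=p^k$ for all $k\ge0$. It is entire because $\abs{d_r}\le Cp^{-\binom r2+O(r)}$.

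By the $q$-binomial theorem, the theta-type function
\[
\Theta(z)=(z;p^{-1})_\infty=\sum_r\frac{(-1)^r p^{-\binom r2}}{(p^{-1};p^{-1})_r}z^r
\]
has simple zeros exactly at $z=p^k$, $k\ge0$. Hence $D/\Theta$ is entire. I will compare growth: both $D$ and $\Theta$ satisfy $\log M(R)\le (\log R)^2/(2\log p)+O(\log R)$. A minimum-modulus estimate for $\Theta$ on the circles $\abs z=p^{k+1/2}$, which lie halfway between zeros, should then show that $D/\Theta$ has polynomial growth, so it is a polynomial $P$. If $\deg P=m\ge1$, the coefficients of $P\Theta$ would be of size about $p^{-\binom{r-m}{2}}$, which violates the decay bound $\abs{d_r}\le Cp^{-\binom r2}$. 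So $P$ is a constant $c$, and $d_r=c(-1)^rp^{-\binom r2}/(p^{-1};p^{-1})_r$.

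Finally, the constraints $\sum_r d_r=0$ (the case $k=0$) and $\sum_{r\text{ odd}}d_r=0$ together give $\sum_r(-1)^r d_r=0$. For the $d_r$ above this sum equals $c\,(-1;p^{-1})_\infty$ with $(-1;p^{-1})_\infty\ne0$, so $c=0$.

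The main obstacle is the analytic step. It needs two sharp inputs: the precise decay $p^{-\binom r2}$, including its exact linear correction, with no slack in the exponent, and the lower bound for $\abs\Theta$ on the circles between zeros. The degree argument only works if both are exact; any loss of order $p^{cr}$ in the decay would allow polynomial factors of positive degree. If the lower bound for $\abs{\Theta}$ proves awkward, I would instead argue via Carlson-type uniqueness, applied to $r\mapsto d_r$ through a Mellin-style transform.
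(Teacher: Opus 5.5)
The paper gives no proof of this statement; it imports it from Wood's survey. So there is no internal argument to match, and your proof stands as a self-contained alternative. It is correct. Tightness and uniform integrability reduce everything to uniqueness for a limit law $(q_r)$ satisfying $\sum_r q_r\prod_{i<k}(p^r-p^i)=p^{\binom{k+1}{2}}$. Among sequences of size $O(p^{-\binom r2})$, the kernel of this moment problem is exactly the line spanned by the Taylor coefficients of $\Theta(z)=\qpochn{z}{p^{-1}}{\infty}$. The parity constraint kills that line because $\Theta(-1)=\qpochn{-1}{p^{-1}}{\infty}\neq 0$. Compared with the bare citation, your argument explains concretely why the moments $\abs{G/pG}\abs{\Lambda^2 G}$ miss uniqueness by exactly one dimension (the zeros $\Theta(p^k)=0$), and why the odd-rank probability is precisely the missing datum. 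It only controls ranks, but that is all the paper uses this result for.

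The two inputs you flag as the main obstacle both hold with no loss.

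\emph{Decay.} Rather than $\sup_n\E[p^{kr_n}]$ (which may be infinite for finitely many $n$), use the limiting identity with $k=r$ and the nonnegativity of every term. This gives $q_r\prod_{i<r}(p^r-p^i)\le p^{\binom{r+1}{2}}$, i.e.\ $q_r\le p^{-\binom r2}/\qpochn{p^{-1}}{p^{-1}}{\infty}$. The explicit target sequence satisfies the same bound, so $\abs{d_r}\le Cp^{-\binom r2}$ exactly.

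\emph{Minimum modulus.} On $\abs{z}=p^{m+1/2}$, the reverse triangle inequality gives $\abs{\Theta(z)}\ge\prod_{j\ge0}\abs{1-p^{m+1/2-j}}\ge c\,p^{(m+1)^2/2}$, with $c=\prod_{i\ge0}(1-p^{-i-1/2})^2>0$. Meanwhile $\abs{D(z)}\le C\sum_r p^{-\binom r2+r(m+1/2)}=Cp^{(m+1)^2/2}\sum_r p^{-(r-m-1)^2/2}$. So $D/\Theta$ is bounded on these circles, hence by the maximum principle it is a bounded entire function, hence constant. The polynomial-degree step and the Carlson fallback are not needed.

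\emph{Existence.} Your check also goes through. Multiply the rank probabilities of Proposition~\ref{Prop: Wood P Sym rank dist} by $\frac{1-a}{2}p^r$ (odd $r$) or $\frac{1+a}{2}p^r$ (even $r$), with $a=1-2v$. Using $p^rp^{-\binom{r+1}{2}}=p^{-\binom r2}$ and $\qpochn{-1}{p^{-1}}{\infty}=2\qpochn{-p^{-1}}{p^{-1}}{\infty}$, this gives exactly the displayed formula.
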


\begin{proof}[Proof of Theorem~\ref{Thm: sandpile rank dist} assuming Proposition~\ref{Prop: Sur moment Laplacian} and Proposition~\ref{Prop: p=2 rank parity}]
We consider the cases of odd primes and $p=2$ separately.
\begin{enumerate}
\item First consider the case of odd prime $p$. We have shown that Proposition~\ref{Prop: Sur moment Laplacian} implies Theorem~\ref{Thm: dist sandpile odd p}. By Proposition~\ref{Prop: Wood P Sym rank dist} this implies the limiting distribution of ranks, proving Theorem~\ref{Thm: sandpile rank dist} for odd $p$.
\item Next, consider the case $p=2$.
For each $n$, let $G_n$ be a random finite abelian $p$-group distributed as
\[
\PPP[G_n\cong G]
= \PPP\Big[ G(\Lna)\cong G \mid \eta(\Lna)\in\EE_{n,\rho}\Big].
\]
Therefore, Proposition~\ref{Prop: Sur moment Laplacian} says that
\[
\lim_{n\to\infty} \E[\abs{\Sur(G_n,G)}] 
= \abs{G/2G}\times \abs{\Lambda^2 G}.
\]
Proposition~\ref{Prop: p=2 rank parity} says that
\[
\lim_{n\to\infty}
\PPP
\left[ 
\rank(G(\Lna))
\text{ is odd}
\right]
=
\frac{1}{2}.
\]
The law of total probability tells us that
\begin{align*}
\PPP
\left[ 
\rank(G(\Lna))
\text{ is odd}
\right]
&=\PPP
\left[ 
\rank(G(\Lna))
\text{ is odd}
| \eta(\Lna)\in \EE_{n,\rho}
\right]
\times \PPP[\eta(\Lna)\in \EE_{n,\rho}]\\
&\quad+
\PPP
\left[ 
\rank(G(\Lna))
\text{ is odd}
| \eta(\Lna)\notin \EE_{n,\rho}
\right]
\times \PPP[\eta(\Lna)\notin \EE_{n,\rho}].
\end{align*}
Lemma~\ref{Lem: Prob E rho to 1} says that $\lim_{n\to\infty} \PPP[\eta(\Lna)\in \EE_{n,\rho}]=1$, so we see that
\[
\lim_{n\to\infty}
\PPP
\left[ 
\rank(G_n)
\text{ is odd}
\right]
=
\frac{1}{2}.
\]
Now, Theorem~\ref{Thm: wood determine dist of rank} tells us that
\[
\lim_{n\to\infty}
\PPP[\rank(G_n)=r]
=\frac{1}{\qpochn{-1}{p^{-1}}{\infty}} p^{-\binom{r}{2}} \frac{1}{\qpochn{p^{-1}}{p^{-1}}{r}}.
\]
Again, by the law of total probability, we see that
\begin{align*}
\PPP
\left[ 
\rank(G(\Lna))=r
\right]
&=\PPP
\left[ 
\rank(G(\Lna))=r
| \eta(\Lna)\in \EE_{n,\rho}
\right]
\times \PPP[\eta(\Lna)\in \EE_{n,\rho}]\\
&\quad+
\PPP
\left[ 
\rank(G(\Lna))=r
| \eta(\Lna)\notin \EE_{n,\rho}
\right]
\times \PPP[\eta(\Lna)\notin \EE_{n,\rho}].
\end{align*}
Since $\lim_{n\to\infty} \PPP[\eta(\Lna)\in \EE_{n,\rho}]=1$, the result follows. \qedhere
\end{enumerate}
\end{proof}

We suspect that for $p=2$, Proposition~\ref{Prop: Sur moment Laplacian} and Proposition~\ref{Prop: p=2 rank parity} should uniquely determine the distribution of finite abelian $2$-groups, not just the distribution of ranks.
This would imply Conjecture~\ref{conj: p-Syl_conj} for $p=2$. However, we have not been able to prove this.

\subsection{Hom-moments}

The notation $H\trianglelefteq G$ means $H$ is a subgroup of $G$.
We say that $T$ is a translated subgroup of $G$, if $T$ is a coset of some subgroup of $G$. The notation $T\treq G$ will mean that $T$ is a translated subgroup of $G$. If $T$ is a coset of a subgroup $H$, then we will denote $\G(T)=H$.
We will sometimes consider chains of translated subgroups. By $T\treq S\treq G$, we mean that $T$ and $S$ are translated subgroups of $G$ and $T$ is a subset of $S$.

If $G$ is a finite abelian $p$-group, then $\Lambda^2 G$ is the quotient of $G\otimes G$ by the subgroup generated by elements of the form $f\otimes f$.
We define $\Delta^2 G$ to be the quotient of $G\otimes G$ by the subgroup generated by elements of the form $f\otimes g+g\otimes f$.
It is clear that
\[
\abs{\Delta^2 G}
=\begin{cases}
\abs{\Lambda^2 G} &\text{if }p\text{ is odd},\\
\abs{\Lambda^2 G}\times\abs{G/2G} &\text{if }p=2.
\end{cases}
\]

The following result implies Proposition~\ref{Prop: Sur moment Laplacian}.
Note that we are replacing $G(\Lna)$ with $\Coker(\Lna)$ and surjective moments with Hom-moments.

\begin{proposition}\label{Prop: Hom moment coker}
Let $p$ be a prime, $\frac{1}{p}<\alpha \leq 1$.
Given $0<\rho<\alpha-\frac{1}{p}$ and a finite abelian $p$-group $G$, we have
\[
\lim_{n\to\infty} 
\Erho\left[\abs{\Hom(\Coker(\Lna),G)}\right] 
=\sum_{T\treq G}\abs{\Delta^2 \G(T)} \times \abs{\G(T)}.
\]
\end{proposition}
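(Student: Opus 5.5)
Write $N=n_1+n_2$ with $n_1=n$ and $n_2=\ceil{\alpha n}$. The plan is to expand the Hom-moment as a sum over maps from $\Zp^{N}$ to $G$ and compute the probability that each map kills $\im(\Lna)$. Since $\Hom(\Coker(\Lna),G)$ is the set of $F\in\Hom(\Zp^{N},G)$ with $F\circ\Lna=0$, we have
\[
\Erhop\left[\abs{\Hom(\Coker(\Lna),G)}\right]=\sum_{F}\PPP\big[F\Lna=0 \text{ and } \eta(\Lna)\in\EE_{n_1,\rho}\big].
\]
Write $F=(x,y)$, with $x_i\in G$ for $i\in V_1$ and $y_j\in G$ for $j\in V_2$. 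Each entry $a_{ij}$ enters $F\Lna$ through the element $a_{ij}(y_j-x_i)$. The condition $F\Lna=0$ says
\[
\sum_j a_{ij}(y_j-x_i)=0 \ \text{ for all } i, \qquad \sum_i a_{ij}(x_i-y_j)=0 \ \text{ for all } j.
\]
This is a symmetric, Laplacian-type condition on the bipartite "edge vectors" $y_j-x_i$. The key structural observation is that the condition is invariant under translating all $x_i,y_j$ by a common $g\in G$. Maps that are "concentrated on a coset" are what produce the sum over translated subgroups $T\treq G$.

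\textbf{Step 1: code the maps.} Following Wood's approach for symmetric matrices and Laplacians, I would classify $F$ by the translated subgroup $T$ generated by (the affine span of) the values $\{x_i,y_j\}$. I would restrict to \emph{codes}: $F$ such that removing any $\delta N$ coordinates still leaves the values affinely spanning $T$, and more strongly, the values on $V_1$ alone and on $V_2$ alone behave generically in $T$.

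\textbf{Step 2: count for codes.} For such an $F$, a Fourier expansion over $\Hom(G\otimes G\text{-type quotients},\C^\times)$ together with the $\epsilon$-balanced entries should show the following. The probability is about $\abs{\G(T)}^{-N}\cdot\abs{\G(T)}\cdot\abs{\Delta^2\G(T)}$. Here the extra $\abs{\G(T)}$ comes from the linear relation (the column sums) and $\Delta^2$ comes from the symmetric structure. The bilinear form $\sum_{i,j}a_{ij}\,\phi(y_j-x_i,\,\cdot)$ is symmetric in the two roles, so the characters trivial on the relevant span are those of $\Hom(\Delta^2\G(T),\C^\times)$. The number of codes with affine span $T$ is about $\abs{\G(T)}^{N}$, times the number of choices of translate. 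Summing over $T$ then gives $\sum_{T}\abs{\Delta^2\G(T)}\abs{\G(T)}$.

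\textbf{Step 3: non-codes.} Stratify them by depth, as in Wood's Laplacian argument, and bound their contribution by $o(1)$. This is where $\EE_{n_1,\rho}$ enters, and I expect it to be the main obstacle. Consider maps $F$ where the $y_j$ are all equal to some $g$ and the $x_i$ take arbitrary values subject to $\sum_j a_{ij}(g-x_i)=0$. The row sums $s_i=\sum_j a_{ij}$ then force $s_i(g-x_i)=0$. If $s_i\equiv 0 \pmod p$, this leaves $x_i$ free in a subgroup, which gives roughly $p^{\#\{i:s_i\equiv0\}}$ solutions. When this count exceeds $n_2$, this gain overwhelms the probability cost $\approx\abs{G}^{-n_2}$ paid on the $V_2$ side. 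On $\EE_{n_1,\rho}$ we have $\#\{i: s_i\equiv 0\}<(\frac1p+\rho)n<\alpha n\le n_2$, so I would bound these contributions by $\PPP$ of the zero pattern times the free count. For a fixed set $Z\subseteq V_1$ of such rows, I would condition on the row-sum pattern and use that the remaining $V_2$-equations still have $\epsilon$-balanced randomness. This gives a bound like $\sum_{|Z|<(1/p+\rho)n}\binom{n}{|Z|}\abs{G}^{|Z|-n_2}c^{n}$, which should be exponentially small because $|Z|<n_2-\rho' n$. The delicate part is making these depth-type estimates uniform over intermediate structures in which $F|_{V_2}$ lies in a small coset while $F|_{V_1}$ does not.

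\textbf{Step 4: conclude.} Finally, Lemma~\ref{Lem: Prob E rho to 1} converts $\Erhop$ into $\Erho$, since $\PPP[\eta(\Lna)\in\EE_{n,\rho}]\to1$.
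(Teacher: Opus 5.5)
Your skeleton matches the paper: expand over $F=(F_1,F_2)$, take the main term from maps that are translated codes with a common image $T$ on both $V_1$ and $V_2$, count special characters as $\abs{\G(T)}\cdot\abs{\Delta^2\G(T)}$, and use $\EE_{n,\rho}$ to control rows with $p\mid\sum_j a_{ij}$. But Step 3, which is where the real work lies, does not go through as described.

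\textbf{Step 3: the bad maps are not low-depth.} The dangerous configurations are pairs in which $F_1$ is a translated code with image $T_1$ and $F_2$ is a translated code with image $T_2\neq T_1$. Then all of $V_2$, a linear fraction of the coordinates, sits in the coset $T_2$. The joint map is typically itself a code with image $\sgo{T_1\cup T_2}$, so stratifying it by Wood's depth never isolates these pairs. You flag them as ``delicate'' but only treat $T_2=\{g\}$, where $s_i(g-x_i)=0$ can be read off by hand.

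The paper gives $F_1$ and $F_2$ separate $\delta$-types $(S_1,T_1)$ and $(S_2,T_2)$, then splits on $\abs{T_1}$ versus $\abs{T_2}$.
\begin{enumerate}
\item If $\abs{T_1}\le\abs{T_2}$, no restriction to $\EE_{n,\rho}$ is needed. Balancing the number of such pairs against their probability reduces to $\frac{\abs{T_1}}{\abs{T_2}}\bigl(\frac{\abs{T_2}}{\abs{T_1}}\bigr)^{\alpha}\le 1$. This case includes $T_1=T_2$ with some $S_k\neq T_k$, the analogue of Wood's depth; there the inequality is made strict by $\epsilon$-balanced leakage through the few coordinates outside $T_k$ (Proposition~\ref{Prop: Error term type 1 to zero}).
\item If $\abs{T_1}>\abs{T_2}$, you need the character $\chi_{T_2,g}\in\sg{-g+T_2}^*$ that is identically $\zeta_p$ on $-g+T_2$ (Lemma~\ref{Lem: char existence}). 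Applied to $\sum_j a_{ij}(F_2(j)-F_1(i))=0$, it forces $t_i=0$ whenever $F_1(i)\notin T_2$. This is the general form of your $p\mid s_i$ observation. It yields a factor of roughly $(\abs{T_1}/\abs{T_2})^{\#\{i:t_i=0\}-n_2}$, which is small on $\EE_{n,\rho}$ precisely because $\rho<\alpha-\frac1p$.
\end{enumerate}
In the second case you must also handle the few coordinates of $F_2$ outside $T_2$, through which rows with $t_i\neq0$ can still have $F_1(i)\notin T_2$. The paper does this in Lemma~\ref{Lem: l neq0 sum <=1}, by coupling with Haar-uniform auxiliary variables and using that the $a_{ij}$ are identically distributed (Proposition~\ref{Prop: Error term type 2 to zero}). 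None of this mechanism is in your plan.

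\textbf{The main term itself.} The quantity is $\Erhop$, so for codes you need $\PPP[F\circ\Lna=0,\ \eta(\Lna)\in\EE_{n,\rho}]$, not just $\PPP[F\circ\Lna=0]$; otherwise Step 2 gives only an upper bound. Your proposed device, conditioning on the row-sum pattern, destroys the independence you then invoke. Once $\sum_j a_{ij}$ is fixed modulo $p$, the entries of each row are coupled, so the $V_2$-equations for different $j$ are no longer independent.

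The paper instead Fourier-expands $\mathbbm{1}_{\eta=\vec t}$ with an extra variable $\vec s\in(\Z/p\Z)^{n_1}$. It adds a robustness notion for $\vec s$ (part (3) of Lemma~\ref{Lem: quad many non one} and Lemma~\ref{Lem: count non robust vec s}). Lemma~\ref{Lem: only one s} then shows that when $\set{F_1}=\set{F_2}=T$ the only special triples have $\vec s=0$. This gives $\PPP[F\circ\Lna=0,\eta=\vec t]\approx\abs{\Delta^2\G(T)}/(\abs{T}^{N-1}p^{n})$ uniformly in $\vec t$, so summing over $\EE_{n,\rho}$ costs only the factor $\abs{\EE_{n,\rho}}/p^n\to1$. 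The same $\vec t$-uniform bound (Proposition~\ref{Prop: Individual code bound}) is what makes the second-case estimate above possible.
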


\begin{proof}[Proof of Proposition~\ref{Prop: Sur moment Laplacian} assuming Proposition~\ref{Prop: Hom moment coker}]
Notice that $\Zpnot=\Znot\oplus \Zp$, so\\
$\Coker(\Lnot)\cong G(\Lnot)\oplus \Zp$. Hence, for any finite abelian $p$-group $G$, we have
\[
\abs{\Hom(\Coker(\Lna),G))}
=\abs{\Hom(G(\Lna),G)} \times \abs{G}. 
\]
Therefore, Proposition~\ref{Prop: Hom moment coker} implies that
\begin{align*}
\lim_{n\to\infty} 
\Erho
\left[
\abs{\Hom(G(\Lna),G)}
\right] 
&=\frac{1}{\abs{G}}
\lim_{n\to\infty}
\Erho
\left[
\abs{\Hom(\Coker(\Lna),G)}
\right] \\
&=\frac{1}{\abs{G}} 
\sum_{T\treq G}
\abs{\Delta^2 \G(T)} \times \abs{\G(T)}.
\end{align*}
Now, for each $H\trianglelefteq G$, there are $\frac{\abs{G}}{\abs{H}}$ translated subgroups $T\treq G$ for which $\G(T)=H$.
Therefore,
\[
\lim_{n\to\infty} 
\Erho\left[\abs{\Hom(G(\Lna),G)}\right] 
=\frac{1}{\abs{G}} \sum_{H\trianglelefteq G} \frac{\abs{G}}{\abs{H}} \abs{\Delta^2 H} \times \abs{H}
=\sum_{H\trianglelefteq G}  \abs{\Delta^2 H}.
\]
Since
\[
\abs{\Hom(G(\Lna),G)}
=\sum_{H\trianglelefteq G} \abs{\Sur(G(\Lna),H)},
\]
by induction on the size of $G$, we see that
\[
\lim_{n\to\infty} 
\Erho\left[\abs{\Sur(G(\Lna),G)}\right] 
=  \abs{\Delta^2 G}.\qedhere
\]
\end{proof}

Therefore, for odd $p$, Conjecture~\ref{conj: p-Syl_conj} will follow once we prove Proposition~\ref{Prop: Hom moment coker}.

\subsection{Decomposing Expectation}

We can decompose the expected number of homomorphisms to $G$ as a sum of probabilities similarly to Wood \cite{wood2017distribution}.
Each map in $\Hom(\Coker(\Lnot),G)$ can be lifted to a map in $\Hom(\Zpnot,G)$. Moreover, $F\in \Hom(\Zpnot,G)$ descends to a map in
$\Hom(\Coker(\Lnot),G)$ if and only if $F\circ \Lnot=0\in\Hom(\Zpnot,G)$.
Moreover,
\[
\Hom(\Zpnot,G)
=\Hom(\Zpno,G)\oplus\Hom(\Zpnt,G).
\]
Therefore,
\begin{align*}
&\Erhop[\abs{\Hom(\Coker(\Lnot),G)}]\\
&=\sum_{F\in\Hom(\Zpnot,G)} \PPP\big[F\circ \Lnot=0 \in \Hom(\Zpnot,G), \eta(\Lnot)\in \EE_{n,\rho} \big]\\
&=\sum_{F_1\in\Hom(\Zpno,G)} \sum_{F_2\in\Hom(\Zpnt,G)} \PPP[(F_1,F_2) \circ \Lnot=0 \in \Hom(\Zpnot,G), \eta(\Lnot)\in \EE_{n,\rho}].
\end{align*}

Since $G$ is a finite abelian $p$-group, it is also a $\Zp$-module.
Given a subset $X\subseteq G$, the subgroup of $G$ generated by $X$ is denoted by
\[
\sg{X}
=\left\{\sum a_k x_k : a_k\in \Zp, x_k\in X\right\}.
\]
For a subset $X\subseteq G$, we denote the smallest translated subgroup containing $X$ by
\[
\sgo{X}
=\left\{\sum a_k x_k : a_k\in \Zp, x_k\in X, \sum a_k=1\right\}.
\]
% Note that $\sgo{X}$ is a coset of the following subgroup of $G$
% \[
% \sgz{X}
% =\left\{\sum a_k x_k : a_k\in \Zp, x_k\in X, \sum a_k=0\right\}.
% \]
Let $[n]=\{1,\dots,n\}$.
Denote the standard basis of $\Zpn$ by $e_1,\dots,e_n$.
For $F\in\Hom(\Zpn,G)$, by abuse of notation we denote
\begin{align*}
&\set{F}
=\{F(e_i): i\in [n]\},
&
&\sg{F}
=\sg{F(e_i): i\in [n]},
&
&\sgo{F}
=\sgo{F(e_i): i\in [n]}.
\end{align*}

\begin{definition}
Let $T$ be a translated subgroup of $G$. We say that $F\in \Hom(\Zpn,G)$ is a \emph{translated code} of distance $d$ with image $T$, 
if for every $\sigma\subseteq [n]$ with $\abs{\sigma}<d$, we have
\[
\sgo{F(e_i):i\in [n]\setminus \sigma}
=T.
\]
\end{definition}
Note that if $F\in \Hom(\Zpn,G)$ is a \emph{translated code} of distance $d$ with image $T$, then $\sgo{F}=T$.
If $T_2\treq T_1\treq G$ is a chain of translated subgroups, then we denote $[T_1:T_2]=\frac{\abs{T_1}}{\abs{T_2}}$.

This is closely related to the notion of \emph{code} introduced by Wood \cite{wood2017distribution}.
In fact $F\in \Hom(\Zpn,G)$ is a translated code of distance $d$ with image $T$, if and only if for any $g\in T$, the map $-g+F\in \Hom(\Zpn,G)$ is a code of distance $d$ with image $\G(T)$.

\begin{definition}
Given $\delta>0$ and $F\in\Hom(\Zpn,G)$, the $\delta$-depth of $F$ is defined to be the largest positive integer $D$, 
for which there is a subset $\sigma\subseteq [n]$ with $\abs{\sigma}<\log_p(D)\delta n$ such that 
\[
[\sgo{F}:\sgo{F(e_i):i\in [n]\setminus\sigma}]=D,
\] 
or $1$ if there is no such $\sigma$.

If such a $\sigma$ exists, then we say that $F$ is of $\delta$-type $(\sgo{F},\sgo{F(e_i):i\in [n]\setminus\sigma})$. If there is no such $\sigma$, then we set the $\delta$-type to be $(\sgo{F}, \sgo{F})$.
\end{definition}

Our definition of $\delta$-depth is closely related to, but conflicts with the definition provided by Wood \cite{wood2017distribution}. 

Note that if $F$ has $\delta$-depth $1$, then it is a translated code of distance $\delta n$.
Given a chain of translated subgroups $T\treq S\treq G$, we denote
\[
\SSS{n,\delta}{S}{T}
:=\{F\in \Hom(\Zp^n,G): \delta-\text{type }(S,T)\}.
\]
This gives a cover
\[
\Hom(\Zp^{n},G)
=\bigcup_{T\treq S\treq G}
\SSS{n,\delta}{S}{T}.
\]
In general, this union need not be disjoint, since a given $F$ may have $\delta$-type $(S,T)$ for more than one choice of $T$. However, the pieces $\SSS{n,\delta}{S}{S}$ consist of translated codes with image $S$, and these pieces are pairwise disjoint.

Proposition~\ref{Prop: Hom moment coker} will follow from the following result.

\begin{proposition}\label{Prop: moments on parts}
Let $p$ be a prime, $\frac{1}{p}<\alpha \leq 1$, $0<\rho<\alpha-\frac{1}{p}$ and $G$ be a finite abelian $p$-group.
Consider chains of translated subgroups $T_1\treq S_1\treq G$, $T_2\treq S_2\treq G$. We have
\begin{align*}
&\lim_{n\to\infty}
\sum_{F_1\in\SSS{n,\delta}{S_1}{T_1}} \sum_{F_2\in\SSS{\ceil{\alpha n},\delta}{S_2}{T_2}} \PPP[(F_1,F_2) \circ \Lna=0, \eta(\Lna)\in \EE_{n,\rho}]\\
&\quad\quad=\begin{cases}
\abs{\Delta^2 \G(T_1)} \times \abs{\G(T_1)} &\text{if } T_1=T_2=S_1=S_2,\\
0 &\text{otherwise}.
\end{cases}
\end{align*}
\end{proposition}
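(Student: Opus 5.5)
The plan follows Wood's moment method \cite{wood2017distribution}, adapted to the bipartite block structure. First I would rewrite the event $(F_1,F_2)\circ\Lna=0$ explicitly. Put $f_i=F_1(e_i)$ for $i\in[n]$, $g_j=F_2(e_j)$ for $j\in[n_2]$, and $x_{ij}=f_i-g_j$. Reading off the columns of $\Lna$, the event is
\[
\sum_j a_{ij}x_{ij}=0 \ \ (i\in[n]),\qquad \sum_i a_{ij}x_{ij}=0 \ \ (j\in[n_2]).
\]
This system is invariant under $(F_1,F_2)\mapsto(F_1-g,F_2-g)$, which is why translated subgroups appear. The map $A=(a_{ij})\mapsto\big((\sum_j a_{ij}x_{ij})_i,(\sum_i a_{ij}x_{ij})_j\big)\in G^{n}\oplus G^{n_2}$ is a sum of independent terms $a_{ij}\,(x_{ij}e_i-x_{ij}e_j)$. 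Its image lies in a proper subgroup, cut out by two kinds of relations:
\begin{itemize}
\item the linear relation $\sum_i r_i=\sum_j c_j$;
\item the quadratic relation $\sum_i r_i\otimes f_i-\sum_j c_j\otimes g_j=\sum a_{ij}\,x_{ij}\otimes x_{ij}$, whose right-hand side dies in $\Delta^2$ modulo the appropriate symmetric part.
\end{itemize}
Via Fourier analysis on $G^{n}\oplus G^{n_2}$ I would write
\[
\PPP[(F_1,F_2)\circ\Lna=0]=|G|^{-(n+n_2)}\sum_{\chi}\prod_{i,j}\E\big[\chi(a_{ij}(x_{ij}e_i-x_{ij}e_j))\big].
\]
I would then split the characters into those trivial on all the vectors $x_{ij}e_i-x_{ij}e_j$ and the rest.

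\emph{Main term, $T_1=T_2=S_1=S_2=T$.} Here both $F_1$ and $F_2$ are translated codes of distance $\delta n$ with image $T$, and I would translate so that $\G(T)=H$. Each character $\chi$ that is nontrivial on many of the summands contributes a factor at most $(1-c\epsilon)$ per such summand, by $\epsilon$-balancedness. The code property guarantees that a nontrivial $\chi$, after deleting fewer than $\delta n$ indices, still sees $\gg n$ nontrivial summands. So the non-degenerate characters contribute $O(e^{-cn})$ in total, even after multiplying by the number of characters and of $F$'s. Counting the characters that annihilate every summand, as in Wood's symmetric computation, gives the factor $|\Delta^2 H|\,|H|$ relative to $|H|^{n+n_2}$. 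Then summing over the $\approx |G|/|H|\cdot |H|^{n+n_2}$ codes with image $T$ (fixing the translate) yields $|\Delta^2\G(T)|\,|\G(T)|$. The conditioning on $\EE_{n,\rho}$ does not matter here, since by Lemma~\ref{Lem: Prob E rho to 1} its probability tends to $1$ and the code bound is uniform.

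\emph{Remaining types.} For $(S_1,T_1)\ne(S_2,T_2)$ with both codes ($S_k=T_k$) but $S_1\neq S_2$, the relation $\sum_i r_i=\sum_j c_j$ together with the column equations forces a mismatch. This costs an extra factor $|G|^{-\Omega(n)}$, which kills the term. For types of depth $D>1$, I would count such $F$ by $\binom{n}{\log_p(D)\delta n}|G|^{\log_p(D)\delta n}\,|T|^{n}$. Bounding the probability by restricting to the coordinates outside $\sigma$ should give roughly $|T|^{-n}D^{-(1-\gamma)n}$, and for $\delta$ small the product tends to $0$.

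\emph{Main obstacle.} The hard case is when $F_2$ (the side of size $n_2=\ceil{\alpha n}$) is degenerate, the extreme being $F_2\equiv g$ constant. Then the row equations become $\eta_i\,(g-f_i)=0$, so every row with $\eta_i\equiv 0\pmod p$ imposes no constraint on $f_i$. This is exactly the source of the divergent raw moments. I would handle it using the event $\eta\in\EE_{n,\rho}$: at most $(\frac1p+\rho)n<\alpha n\le n_2$ rows are free. I would therefore condition on $\eta$ and use only the rows with $\eta_i\not\equiv 0$ and the $n_2$ column equations. The column equations involve $\sum_i a_{ij}(f_i-g_j)$, which after conditioning on row sums are still $\epsilon$-balanced in the remaining freedom. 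This gives a probability bound of $D^{-(\alpha-\frac1p-\rho)n+o(n)}$ per unit of depth on the $F_2$ side, which beats the $|G|^{\log_p(D)\delta n}$ counting loss once $\delta<\alpha-\frac1p-\rho$. Making this conditional anticoncentration rigorous, while keeping it uniform over depths and types, is the step I expect to require the most work.
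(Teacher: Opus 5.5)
Your overall architecture (Fourier expansion, special characters counted as $\abs{\Delta^2 H}\abs{H}$, codes versus depth, and $\EE_{n,\rho}$ used to neutralize rows with $\eta_i\equiv 0$) is sound. However, the error-term case analysis contains a step that fails.

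\textbf{The claimed unconditional saving for codes with different images is false.} You say that when $F_1,F_2$ are both codes with different images, the relation $\sum_i r_i=\sum_j c_j$ gives an extra factor $\abs{G}^{-\Omega(n)}$ without using $\eta$. That relation holds identically in $A$. It only makes the constraints dependent, so it can only raise the probability. The conclusion also fails whenever $\abs{T_1}>\abs{T_2}$.

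For example, take $G=\Z/3\Z$, Haar entries, $F_2\equiv 0$, and $F_1$ with $k\geq 1$ nonzero values. The event is: $3\mid\sum_j a_{ij}$ for each $i$ with $f_i\neq 0$, and $\sum_i a_{ij}f_i=0$ for all $j$. This $\mathbb{F}_3$-linear system has rank $k+n_2-1$, so the sum over $F_1$ equals $3^{1-n_2}\bigl((5/3)^n-1\bigr)$. This diverges for $\frac13<\alpha<\log_3(5/3)$. The dominant $F_1$ (about $0.4n$ nonzero entries, split evenly between $1$ and $2$) are genuine codes with image $\Z/3\Z$, and $F_2$ is a code with image $\{0\}$; compare Section~\ref{Sec: Raw moment infty}.

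So the decisive split is not codes versus depth but $\abs{T_1}\le\abs{T_2}$ versus $\abs{T_1}>\abs{T_2}$.
\begin{itemize}
\item In the first case no restriction on $\eta$ is needed. The saving comes from $\bigl(\sum_{g\in T_1}\abs{\sg{-g+T_2}}^{-1}\bigr)\bigl(\sum_{h\in T_2}\abs{\sg{-h+T_1}}^{-1}\bigr)^{\alpha}\le(\abs{T_1}/\abs{T_2})^{1-\alpha}\le 1$. This is strict when $T_1\neq T_2$, because an element of $T_1\setminus T_2$ (resp.\ $T_2\setminus T_1$) meets a group of size at least $p\abs{T_2}$ (resp.\ $p\abs{T_1}$). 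When $T_1=T_2$ but there is depth, the saving is a factor $1-\epsilon$ per affected row or column, coming from the exceptional indices $\sigma$.
\item In the second case $\EE_{n,\rho}$ is indispensable, whatever the depth.
\end{itemize}

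Your depth bound $\abs{T}^{-n}D^{-(1-\gamma)n}$ is not available for $\epsilon$-balanced entries. With $\abs{\sigma}=1$, the extra condition on each affected equation is that a single term $a_{i_0j}x$ lies in a subgroup, and this can have probability close to $1-\epsilon$ rather than $1/D$. The weaker saving still suffices for small $\delta$. Your ``main obstacle'' paragraph treats $F_2\equiv g$ in the right spirit, but it contradicts your earlier claim. It also attributes the difficulty to depth of $F_2$, when the real cause is $\abs{T_1}>\abs{T_2}$.

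\textbf{The key estimate you defer is missing, and your suggested route does not work as stated.} You propose to condition on $\eta$ and assert that the column sums stay $\epsilon$-balanced. Fixing $\sum_j a_{ij}\bmod p$ couples all entries of row $i$, and every column equation uses one entry from each row, so the column events lose their independence. The paper instead keeps $\eta$ inside the Fourier expansion through a dual variable $\vec s\in(\Z/p\Z)^{n_1}$. It then proves, uniformly in $\vec t$, that $\PPP[(F_1,F_2)\circ\Lnot=(F_1',F_2'),\ \eta(\Lnot)=\vec t]$ is at most
\[
K\,p^{-n_1}p^{\abs{\{i:F_1(i)\notin T_1\cap T_2\}}}\big/\bigl(\abs{\CC^*_{n_1}(T_2,F_1)}\,\abs{\CC^*_{n_2}(T_1,F_2)}\bigr).
\]
It also shows this probability vanishes unless $\chi_{T_2,F_1(i)}(F_1'(i))=\zeta_p^{t_i}$ for every $i$ with $F_1(i)\notin T_2$ (Lemmas~\ref{Lem: char existence}, \ref{Lem: ti right value} and Proposition~\ref{Prop: Individual code bound}).

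The character $\chi_{T_2,g}$, equal to $\zeta_p$ on $-g+T_2$, is the idea you are missing. It extends your observation ``$\eta_i\not\equiv 0$ forces $f_i=g$'' from constant $F_2$ to arbitrary $T_2$: when $t_i\neq 0$ it forces $F_1(i)\in T_2$. Summing over $F_1$ then gives a factor $\le 1$ for rows with $t_i\neq 0$ and $\le\abs{T_1}/\abs{T_2}$ for rows with $t_i=0$. The total is $(\abs{T_1}/\abs{T_2})^{(1/p+\rho)n}(\abs{T_2}/\abs{T_1})^{\alpha n}\to 0$. When $F_2$ also has depth, an additional comparison with auxiliary Haar variables is needed to keep these row factors bounded.

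The same uniform-in-$\vec t$ bound is also what your main term needs without saying so. Knowing $\PPP[\eta\in\EE_{n,\rho}]\to 1$ does not by itself show that $\sum_{F_1,F_2}\PPP[(F_1,F_2)\circ\Lna=0,\ \eta\notin\EE_{n,\rho}]\to 0$.
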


\begin{proof}[Proof of Proposition~\ref{Prop: Hom moment coker} assuming Proposition~\ref{Prop: moments on parts}]
Proposition~\ref{Prop: moments on parts} implies at once that
\[
\lim_{n\to\infty} 
\Erhop\left[\abs{\Hom(\Coker(\Lna),G)}\right] 
=\sum_{T\treq G}\abs{\Delta^2 \G(T)} \times \abs{\G(T)}.
\]
Since 
\[
\Erhop\left[\abs{\Hom(\Coker(\Lna),G)}\right]
=\PPP[\eta(\Lna)\in\EE_{n,\rho}] \times \Erho\left[\abs{\Hom(\Coker(\Lna),G)}\right],
\]
the result follows from Lemma~\ref{Lem: Prob E rho to 1}.
\end{proof}

\subsection{Characters}

Denote the standard basis of $\Zpno$ as $e_1,\dots,e_{n_1}$ and the standard basis of $\Zpnt$ as $\epsilon_1,\dots,\epsilon_{n_2}$.
Since $F_1\in\Hom(\Zpno,G)$, $F_2\in \Hom(\Zpnt,G)$ are determined by their values on the basis, we can treat them as functions in $F_1\in\Hom([n_1],G)$, $F_2\in\Hom([n_2],G)$ with $F_1(i)=F_1(e_i)$ and $F_2(j)=F_2(\epsilon_j)$.

For $F_1\in\Hom([n_1],G)$, $F_2\in \Hom([n_2],G)$, denote $T_1=\sgo{F_1}$, $T_2=\sgo{F_2}$.
Then $(F_1,F_2)\circ \Lnot$ is as follows
\begin{equation}\label{Eqn: F1, F2, Lnot}
\begin{split}
((F_1,F_2)\circ \Lnot) (e_i)
&= \sum_{j=1}^{n_2} a_{ij} (F_2(j)-F_1(i)) \in \sg{-F_1(i)+T_2},\\
((F_1,F_2)\circ \Lnot) (\epsilon_j)
&= \sum_{i=1}^{n_1} a_{ij} (F_1(i)-F_2(j)) \in \sg{-F_2(j)+T_1}.
\end{split}
\end{equation}

Given a translated subgroup $T\treq G$ and $F\in\Hom([n],G)$, we define $\CC_{n}(T,F)$ to be the set of maps
\[
F': [n] \to \bigcup_{i=1}^n \sg{-F(i)+T}, 
\]
for which each $F'(i)\in \sg{-F(i)+T}$.
Clearly,
\[
\abs{\CC_n(T,F)}
=\prod_{i=1}^n \abs{\sg{-F(i)+T}}.
\]
Note that $(F_1,F_2)\circ\Lnot\in \CC_{n_1}(\sgo{F_2},F_1)\times \CC_{n_2}(\sgo{F_1},F_2)$.

Denote
\[
G^*= \Hom(G,\C^\times);
\]
this is a multiplicative group.

For a translated subgroup $T\treq G$ and an element $g\in G$ such that $g\notin T$, we construct a character of $\sg{-g+T}$ which will help us determine which $(F_1',F_2')$ can occur as $(F_1,F_2)\circ \Lnot$ if we are given that $\eta(\Lnot)=\vec{t}$.

\begin{lemma}\label{Lem: char existence}
Given a translated subgroup $T\treq G$ and an element $g\in G$ such that $g\notin T$, there is a character $\chi_{T,g}\in \sg{-g+T}^*$ such that for every $x\in -g+T$, $\chi_{T,g}(x)=\zeta_p$.
\end{lemma}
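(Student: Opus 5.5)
Write $T=h+H$ with $H=\G(T)\trianglelefteq G$ and $h\in T$, and set $x_0:=-g+h$, so that $-g+T=x_0+H$ and $\sg{-g+T}=\sg{x_0}+H$. Since $g\notin T$, we have $x_0\notin H$. Let $K:=\sg{-g+T}$. The plan is to produce a character $\chi\in K^*$ that is trivial on $H$ and satisfies $\chi(x_0)=\zeta_p$. Then for every $x=x_0+y\in -g+T$ with $y\in H$ we get $\chi(x)=\chi(x_0)\chi(y)=\zeta_p$, as required.

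To construct $\chi$, pass to the quotient $K/H$. This is a cyclic $p$-group generated by the image $\bar x_0$ of $x_0$, and $\bar x_0\neq 0$ because $x_0\notin H$. So $K/H\cong \Z/p^m\Z$ for some $m\geq 1$, with $\bar x_0$ as a generator. Define $\bar\chi:K/H\to\C^\times$ by $\bar\chi(k\bar x_0)=\zeta_{p^m}^{k}$, choosing the primitive $p^m$-th root of unity $\zeta_{p^m}$ so that $\zeta_{p^m}^{p^{m-1}}=\zeta_p$.

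The main care point is that we need $\chi(x_0)=\zeta_p$ exactly, not merely some primitive root. The natural generator-to-$\zeta_{p^m}$ character gives $\chi(x_0)=\zeta_{p^m}$, which is only correct when $m=1$. So instead define
\[
\bar\chi(k\bar x_0)=\zeta_p^{\,k}.
\]
This is well defined on $\Z/p^m\Z$ since $p\mid p^m$, and it is a homomorphism. Composing with the projection $K\to K/H$ gives $\chi_{T,g}\in K^*$. By construction it is trivial on $H$ and has $\chi_{T,g}(x_0)=\zeta_p$.

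Finally, check independence of the choice of $h$. The set $-g+T$ is itself the coset $x_0+H$ regardless of which $h$ is chosen, and $\chi_{T,g}$ is constant on this coset, so any choice works. No earlier results are needed beyond the elementary structure of finite abelian groups; the only thing to get right is using $\zeta_p^k$ rather than a primitive $p^m$-th root.
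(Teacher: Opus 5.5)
Your proof is correct. It builds the same character as the paper: on $\sg{-g+T}$ it sends $\sum_k a_k(-g+h_k)$, with $h_k\in T$, to $\zeta_p^{\sum_k a_k}$. You verify its existence by a different route, though.

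The paper never picks a base point. It defines the character on the generators $-g+h$ and checks the one condition needed for well-definedness: a vanishing combination $\sum_k a_k(-g+h_k)=0$ forces $\sum_k a_k\equiv 0\pmod p$. Otherwise, dividing by the unit $\sum_k a_k$ would exhibit $g$ as an affine combination of elements of $T$, i.e.\ $g\in T$.

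You instead fix $h\in T$ and identify $\sg{-g+T}=\sg{x_0}+\G(T)$. You then pull back $k\mapsto\zeta_p^k$ from the cyclic quotient $\sg{-g+T}/\G(T)$, whose order is divisible by $p$ precisely because $x_0=-g+h\notin\G(T)$.

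Both arguments rest on the same fact in different language: no unit multiple of $x_0$ lies in $\G(T)$. The paper's version is base-point free and a little shorter. Yours makes the structure explicit: the character kills $\G(T)$, and $\sg{-g+T}\supsetneq\G(T)$, which the paper uses in the Type~2 error estimate.

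In a final write-up, drop the false start with $\zeta_{p^m}$. The definition $\bar\chi(k\bar x_0)=\zeta_p^{k}$, well defined since $m\geq 1$, is all you need.
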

\begin{proof}
To show this we need to show that whenever we have $h_1,\dots, h_m\in T$ and $a_1,\dots,a_m\in \Zp$ for which $\sum_k a_k (-g+h_k)=0$, then $\prod_k \zeta_p^{ a_k}=1$.

Suppose $\sum_k a_k (-g+h_k)=0$, then $(\sum_k a_k) g=\sum_k a_k h_k$. Now, if $\sum_k a_k\not\equiv 0\pmod{p}$, then it is a unit, which means
\[
g
= \sum_l \frac{a_l}{\sum_k a_k} h_l 
\in T. 
\]
This is a contradiction, so $\sum_k a_k\equiv 0\pmod{p}$. Thus $\prod_k \zeta_p^{ a_k}=1$.
\end{proof}

\begin{lemma}\label{Lem: ti right value}
Suppose $\eta(\Lnot)=\vec{t}\in(\Z/p\Z)^{n_1}$.
Consider $F_1\in\Hom([n_1],G)$ and $F_2\in\Hom([n_2],G)$ such that $\sgo{F_1}=T_1$ and $\sgo{F_2}=T_2$.
If 
\[
(F_1,F_2)\circ\Lnot
=(F_1',F_2')
\in \CC_{n_1}(\sgo{F_2},F_1)\times \CC_{n_2}(\sgo{F_1},F_2),
\]
then for every $i\in [n_1]$, if $F_1(i)\notin T_2$, then $\chi_{T_2,F_1(i)}(F_1'(i))=\zeta_p^{t_i}$.
\end{lemma}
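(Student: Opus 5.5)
We compute $F_1'(i)$ directly from \eqref{Eqn: F1, F2, Lnot} and then apply the character $\chi_{T_2,F_1(i)}$ of Lemma~\ref{Lem: char existence}. Fix $i\in[n_1]$ with $F_1(i)\notin T_2$. Since $F_1'=((F_1,F_2)\circ\Lnot)$ restricted to the $e_i$, we have
\[
F_1'(i)=\sum_{j=1}^{n_2} a_{ij}\bigl(F_2(j)-F_1(i)\bigr).
\]
Each $F_2(j)$ lies in $\sgo{F_2}=T_2$. Hence each summand $x_j:=F_2(j)-F_1(i)$ lies in $-F_1(i)+T_2$, and in particular in the subgroup $\sg{-F_1(i)+T_2}$ on which $\chi:=\chi_{T_2,F_1(i)}$ is defined.

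Because $\chi$ is a character of the $\Zp$-module $\sg{-F_1(i)+T_2}$, it is $\Zp$-linear in the sense that $\chi(a x)=\chi(x)^{a}$ for $a\in\Zp$. This exponent makes sense since $\chi(x)$ is a $p$-power root of unity: the group is a finite $p$-group, so $\Zp$ acts through some $\Z/p^k\Z$. Therefore
\[
\chi(F_1'(i))=\prod_{j=1}^{n_2}\chi(x_j)^{a_{ij}}=\prod_{j=1}^{n_2}\zeta_p^{a_{ij}}=\zeta_p^{\sum_j a_{ij}},
\]
where we used $\chi(x_j)=\zeta_p$ for every $x_j\in -F_1(i)+T_2$, as given by Lemma~\ref{Lem: char existence}.

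Finally, $\zeta_p^{a}$ depends only on $a\bmod p$. By definition of $\eta$, we have $\sum_j a_{ij}\equiv t_i\pmod p$, so $\chi(F_1'(i))=\zeta_p^{t_i}$.

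The only point requiring care is the justification that $\chi(ax)=\chi(x)^a$ for $p$-adic $a$, with $\zeta_p^{a}$ interpreted via $a\bmod p$. This follows because $\chi(x)^{p}=\chi(px)$, and $px$ is a multiple of $p$ times an element of $-g+T$ inside the subgroup, so $\chi(px)=\zeta_p^{p}=1$. Thus $\chi$ takes values in $\mu_p$ on the generators $-g+T$, hence on all of $\sg{-g+T}$. Consequently $\chi$ factors through reduction mod $p$ of the coefficients, which makes the exponent computation well defined. I expect no real obstacle beyond this bookkeeping.
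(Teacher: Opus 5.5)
Your proof is correct and is essentially the paper's argument. Both expand $F_1'(i)=\sum_j a_{ij}(-F_1(i)+F_2(j))$, use that $\chi_{T_2,F_1(i)}$ is a homomorphism equal to $\zeta_p$ on every $-F_1(i)+F_2(j)\in -F_1(i)+T_2$, and reduce $\zeta_p^{\sum_j a_{ij}}$ to $\zeta_p^{t_i}$. Your extra remark justifying $p$-adic exponents is harmless bookkeeping that the paper leaves implicit.
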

\begin{proof}
Since $\eta(\Lnot)=\vec{t}$, we know that $\sum_{j\in [n_2]}a_{ij} \equiv t_i\pmod{p}$.
Also since $(F_1,F_2) \circ \Lnot=(F_1',F_2')$, we know that 
\[
\sum_{j\in [n_2]}a_{ij}(-F_1(i)+F_2(j)) 
=F_1'(i).
\] 
This implies that
\begin{align*}
\chi_{T_2,F_1(i)}(F_1'(i))
&=\chi_{T_2,F_1(i)}\left( \sum_{j\in [n_2]}a_{ij}(-F_1(i)+F_2(j)) \right)\\
&=\prod_{j\in [n_2]}
\chi_{T_2,F_1(i)}\left(-F_1(i)+F_2(j) \right)^{a_{ij}}
=\prod_{j\in [n_2]}
\zeta_p^{a_{ij}}
=\zeta_p^{t_i}. \qedhere
\end{align*}
\end{proof}

This lemma implies that if there is an index $i\in [n_1]$, such that $F_1(i)\in T_1\setminus T_2$ and $t_i\neq 0$ then
\[
\PPP[(F_1,F_2)\circ \Lnot=0, \eta(\Lnot)=\vec{t}]
=0.
\]
This indicates that if $T_1\setminus T_2\neq \emptyset$, then we should expect
\[
\sum_{F_1\in\SSS{n,\delta}{T_1}{T_1}} \sum_{F_2\in\SSS{\ceil{\alpha n},\delta}{T_2}{T_2}} 
\PPP[(F_1,F_2) \circ \Lna=0, \eta(\Lna)=\vec{t}]
\]
to be much larger for $\vec{t}\notin \EE_{n,\rho}$ than for $\vec{t}\in \EE_{n,\rho}$. 
This suggests that restricting to the event $\vec{t}\in \EE_{n,\rho}$, might prevent the moments from blowing up to $\infty$.

\subsection{Coefficients}

Given a translated subgroup $T\treq G$ and $F\in\Hom([n],G)$, we define $\CC^*_{n}(T,F)$ to be the set of the maps
\[
C: [n] \to \bigcup_{i=1}^n \sg{-F(i)+T}^*, 
\]
for which each $C(i)\in \sg{-F(i)+T}^*$.
Clearly,
\[
\abs{\CC^*_n(T,F)}
=\prod_{i=1}^n \abs{\sg{-F(i)+T}}.
\]

We can decompose the probability of $(F_1,F_2)\circ \Lnot =(F_1',F_2')$ as a sum over characters, similarly to Wood \cite{wood2017distribution}.
For any $F_1'\in \CC_{n_1}(\sgo{F_2},F_1)$, $F_2'\in \CC_{n_2}(\sgo{F_1},F_2)$, by \eqref{Eqn: F1, F2, Lnot}, we have
\begin{equation}\label{Eqn: Prob F1 F2 Lnot = F'}
\begin{split}
&\mathbbm{1}_{(F_1,F_2)\circ \Lnot =(F_1',F_2')}\\
&=\frac{1}{\abs{\CC^*_{n_1}(\sgo{F_2},F_1)}}
\frac{1}{\abs{\CC^*_{n_2}(\sgo{F_1},F_2)}}
\sum_{C_1\in \CC^*_{n_1}(\sgo{F_2},F_1)}
\sum_{C_2\in \CC^*_{n_2}(\sgo{F_1},F_2)}\\
&\quad\quad
\prod_{i=1}^{n_1} C_1(i)\Big((F_1,F_2)\circ \Lnot(e_i) -F_1'(i)\Big)
\prod_{j=1}^{n_2} C_2(j)\Big((F_1,F_2)\circ \Lnot(\epsilon_j) -F_2'(j)\Big)\\
&=\frac{1}{\abs{\CC^*_{n_1}(\sgo{F_2},F_1)}}
\frac{1}{\abs{\CC^*_{n_2}(\sgo{F_1},F_2)}}
\sum_{C_1\in \CC^*_{n_1}(\sgo{F_2},F_1)}
\sum_{C_2\in \CC^*_{n_2}(\sgo{F_1},F_2)}
\prod_{i=1}^{n_1} C_1(i)(-F_1'(i))\\
&\quad\quad
\prod_{j=1}^{n_2} C_2(j)(-F_2'(j))
\prod_{i=1}^{n_1} \prod_{j=1}^{n_2}
\Big(
C_1(i)\big(F_2(j)-F_1(i)\big)
C_2(j)\big(F_1(i)-F_2(j)\big)
\Big)^{a_{ij}}.
\end{split}
\end{equation}

Similarly to Wood \cite{wood2017distribution}, we define the coefficients
\[
E[F_1,F_2,C_1,C_2](i,j)
= C_1(i)\big(F_2(j)-F_1(i)\big)
C_2(j)\big(F_1(i)-F_2(j)\big).
\]

Suppose $\vec{x}\in (\Z/p\Z)^{n_1}$ is a random vector, then for fixed $\vec{t}\in (\Z/p\Z)^{n_1}$, we have
\[
\mathbbm{1}_{\vec{x}=\vec{t}}
=\frac{1}{p^{n_1}}
\sum_{\vec{s}\in (\Z/p\Z)^{n_1}} \zeta_p^{\vec{s}\cdot (\vec{x}-\vec{t})}.
\]

Therefore, for $F_1\in \Hom([n_1],G)$, $F_2\in\Hom([n_2],G)$, $\vec{t}\in (\Z/p\Z)^{n_1}$, $F_1'\in\CC_{n_1}(\sgo{F_2},F_1)$ and $F_2'\in\CC_{n_2}(\sgo{F_1},F_2)$,
we have
\begin{align*}
&\PPP[(F_1,F_2)\circ \Lnot=(F_1',F_2') \text{ and } \eta(\Lnot)=\vec{t}]\\
&= \E[\mathbbm{1}_{(F_1,F_2)\circ \Lnot=(F_1',F_2')} \mathbbm{1}_{\eta(\Lnot)=\vec{t}} ]\\
&=\frac{1}{\abs{\CC^*_{n_1}(\sgo{F_2},F_1)}}
\frac{1}{\abs{\CC^*_{n_2}(\sgo{F_1},F_2)}}
\frac{1}{p^{n_1}}
\sum_{C_1\in \CC^*_{n_1}(\sgo{F_2},F_1)}
\sum_{C_2\in \CC^*_{n_2}(\sgo{F_1},F_2)}
\sum_{\vec{s}\in (\Z/p\Z)^{n_1}} \\
&\quad\quad
\zeta_p^{-\vec{s}\cdot\vec{t}}
\prod_{i=1}^{n_1} C_1(i)(-F_1'(i))
\prod_{j=1}^{n_2} C_2(j)(-F_2'(j))
\E\left[
\prod_{i=1}^{n_1} \prod_{j=1}^{n_2}
\Big(
E[F_1,F_2,C_1,C_2](i,j)
\Big)^{a_{ij}}
\prod_{i=1}^{n_1} \zeta_p^{s_i \sum_{j=1}^{n_2}a_{ij}}
\right].
\end{align*}
We denote
\begin{align*}
E'[F_1,F_2,C_1,C_2,\vec{s}](i,j)
&=E[F_1,F_2,C_1,C_2](i,j) \zeta_p^{s_i}\\
&= C_1(i)\big(F_2(j)-F_1(i)\big)
C_2(j)\big(F_1(i)-F_2(j)\big) 
\zeta_p^{s_i}.
\end{align*}
Since the $a_{ij}$ are independent, we see that
\begin{equation}\label{Eqn: prob fourier sum}
\begin{split}
&\PPP[(F_1,F_2)\circ \Lnot=(F_1',F_2') \text{ and } \eta(\Lnot)=\vec{t}]\\
&=\frac{1}{\abs{\CC^*_{n_1}(\sgo{F_2},F_1)}}
\frac{1}{\abs{\CC^*_{n_2}(\sgo{F_1},F_2)}}
\frac{1}{p^{n_1}}
\sum_{C_1\in \CC^*_{n_1}(\sgo{F_2},F_1)}
\sum_{C_2\in\CC^*_{n_2}(\sgo{F_1},F_2)}
\sum_{\vec{s}\in (\Z/p\Z)^{n_1}}\\
&\quad\quad\quad\quad
\zeta_p^{-\vec{s}\cdot\vec{t}}
\prod_{i=1}^{n_1} C_1(i)(-F_1'(i))
\prod_{j=1}^{n_2} C_2(j)(-F_2'(j))
\prod_{i=1}^{n_1} \prod_{j=1}^{n_2}
\E
\left[
E'[F_1,F_2,C_1,C_2,\vec{s}](i,j) ^{a_{ij}}
\right].
\end{split}
\end{equation}

\begin{definition}
Given $F_1\in \Hom([n_1],G)$, $F_2\in \Hom([n_2],G)$, $C_1\in \CC^*_{n_1}(\sgo{F_2},F_1)$, $C_2\in \CC^*_{n_2}(\sgo{F_1},F_2)$ and $\vec{s}\in (\Z/p\Z)^{n_1}$, we say that $(C_1,C_2)$ is $(F_1,F_2,\vec{s})$-\emph{special} if for every $i\in [n_1]$ and every $j\in [n_2]$, we have $E'[F_1,F_2,C_1,C_2,\vec{s}](i,j)=1$.
\end{definition}
This is analogous to the definition of a special character given by Wood in \cite{wood2017distribution}.

As a heuristic,
\begin{align*}
&\PPP[(F_1,F_2)\circ \Lnot=0 \text{ and }\eta(\Lnot)=\vec{t}]\\
&\approx 
\frac{1}{\abs{\CC^*_{n_1}(\sgo{F_2},F_1)}\times \abs{\CC^*_{n_2}(\sgo{F_1},F_2)} p^{n_1}}
\sum_{\vec{s}\in (\Z/p\Z)^{n_1}} \zeta_p^{-\vec{s}\cdot \vec{t}}
\abs{\{(C_1,C_2): (F_1,F_2,\vec{s})\text{-special}\}}.
\end{align*}
This would be an exact equation if all $a_{ij}$ were Haar uniform.

In general, if the $a_{ij}$ are not necessarily Haar uniform, then $\E
\left[
E'[F_1,F_2,C_1,C_2,\vec{s}](i,j) ^{a_{ij}}
\right]$
might not be exactly zero. However, the following lemma of Wood suggests that it would be small as long as $a_{ij}$ are $\epsilon$-balanced.

\begin{lemma}\label{Lem: Wood bound xi^ epsilon balanced}\cite[Lemma 4.2]{wood2017distribution}
If $\xi\neq 1$ is a $b^{th}$ root of unity and $a\in\Zp$ is $\epsilon$-balanced, then
\[
\abs{\E[\xi^{a}]}
\leq \exp\left({-\frac{\epsilon}{b^2}}\right).
\]
\end{lemma}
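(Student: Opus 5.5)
This is Wood's Lemma 4.2, and I would prove it directly from the definition of $\epsilon$-balanced by a second-moment style estimate on $\abs{\E[\xi^a]}$.

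Since $\xi$ is a $b$-th root of unity and $\xi^a$ only makes sense for $a\in\Zp$ when $b$ is a power of $p$, write $b=p^k$. Then $\xi^a$ depends only on $a \bmod b$. Let $q_r=\PPP[a\equiv r \pmod b]$ for $r\in\Z/b\Z$. Then
\[
\E[\xi^a]=\sum_r q_r\xi^r .
\]

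\textbf{Step 1: the squared modulus as a pairwise sum.} Using $\sum_r q_r=1$,
\[
1-\abs{\E[\xi^a]}^2=\sum_{r,r'} q_rq_{r'}\bigl(1-\mathrm{Re}\,\xi^{r-r'}\bigr)=\frac{1}{2}\sum_{r,r'}q_rq_{r'}\abs{\xi^r-\xi^{r'}}^2 .
\]

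\textbf{Step 2: reduce to adjacent residues.} Let $\xi$ have exact order $m>1$ with $m\mid b$. Among the $p$ residue classes mod $p$, at least one class $c$ satisfies $\PPP[a\equiv c\pmod p]\le 1/p$. So the mass is not concentrated on one class mod $p$, and hence not on one class mod $m$ either. The key point is to find two residues $r,r'$ with $\xi^r\neq\xi^{r'}$ whose masses are both bounded below in terms of $\epsilon/b$.

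The cleanest route I would take: order the residues mod $b$ cyclically. The balanced condition says the total mass off any single class mod $p$ is at least $\epsilon$. Consider moving from a residue of maximal mass $q_{\max}\ge 1/b$ to residues in a different class. A path argument along consecutive residues $r, r+1,\dots$ (length at most $b$) shows the total change in mass exceeds $\epsilon$ times something. Alternatively, and more simply, bound the sum from below by its cross terms between the set $A$ of residues in the heaviest class mod $p$ and its complement. For each $r\in A$ and each $r'\notin A$ we have $r\not\equiv r'\pmod p$, so $\xi^{r-r'}\ne1$ because $m$ is a power of $p$ greater than $1$. Hence
\[
\abs{\xi^r-\xi^{r'}}\ge \abs{1-e^{2\pi i/b}}\ge \frac{4}{b}.
\]
The mass of $A$ is at least $1/p$ and the mass of $A^c$ is at least $\epsilon$, which gives
\[
1-\abs{\E[\xi^a]}^2\ge \frac{1}{p}\cdot\epsilon\cdot\frac{16}{b^2}\ge\frac{2\epsilon}{b^2},
\]
adjusting constants via $p\le b$; it may be cleaner to use $A$ of mass at least $1-\epsilon$ or at least $\epsilon$ appropriately.

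\textbf{Step 3: exponentiate.} From $\abs{\E[\xi^a]}^2\le 1-2\epsilon/b^2\le \exp(-2\epsilon/b^2)$, taking square roots gives
\[
\abs{\E[\xi^a]}\le \exp(-\epsilon/b^2).
\]

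\textbf{Main obstacle.} The main obstacle is the constant bookkeeping in Step 2, namely getting exactly $\epsilon/b^2$ rather than $\epsilon/(pb^2)$. If the crude $1/p$ lower bound for the mass of $A$ loses too much, I would instead split the residues into the heaviest class mod $p$ (mass $M\ge 1/p$, but also $M<1-\epsilon$) versus the rest, and use $M(1-M)\ge \min(M,1-M)\cdot\frac12$. Failing that, I would fall back on Wood's original argument.
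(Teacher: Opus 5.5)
The paper gives no proof of this lemma; it only cites Wood's Lemma~4.2, so your argument has to stand on its own. Steps~1 and~3 are fine. So is your observation that $\xi^{r-r'}\neq 1$ whenever $r\not\equiv r'\pmod p$, with one small caveat. $b$ need not be a power of $p$, so you should work with residues modulo the order $m$ of $\xi$, which is a power of $p$. Then use $\abs{1-\omega}\geq 2\sin(\pi/m)\geq 2\sin(\pi/b)\geq 4/b$ for every $m$-th root of unity $\omega\neq 1$.

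The gap is in Step~2. Cutting the residues into ``heaviest class mod $p$ versus the rest'' gives only
\[
1-\abs{\E[\xi^a]}^2\geq \frac{16M(1-M)}{b^2}.
\]
To reach the target you need $16M(1-M)/b^2\geq 2\epsilon/b^2$, i.e.\ $M(1-M)\geq\epsilon/8$. Your inequality $\frac{16\epsilon}{pb^2}\geq\frac{2\epsilon}{b^2}$ is false for every $p\geq 11$, and invoking $p\leq b$ pushes in the wrong direction.

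Your fallback does not rescue it either. When $M<1/2$ it yields only $M(1-M)\geq 1/(2p)$. Take $a$ uniform modulo a large prime $p$, so $M=1/p$, with $\epsilon$ close to $1-1/p$. Then $M(1-M)\approx 1/p$, far below $\epsilon/8$. Cutting off a single residue class therefore cannot give a $p$-independent constant.

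The missing idea is to keep all cross-class pairs, not one cut. By your own observation, every ordered pair $(r,r')$ with $r\not\equiv r'\pmod p$ contributes at least $\frac{16}{b^2}q_rq_{r'}$ to the Step~1 sum. Writing $Q_c=\PPP[a\equiv c\pmod p]$ for $c\in\Z/p\Z$,
\[
1-\abs{\E[\xi^a]}^2\;\geq\;\frac{8}{b^2}\sum_{c\neq c'}Q_cQ_{c'}
=\frac{8}{b^2}\Bigl(1-\sum_c Q_c^2\Bigr)
\;\geq\;\frac{8}{b^2}\Bigl(1-\max_c Q_c\Bigr)
\;>\;\frac{8\epsilon}{b^2}.
\]
This uses $\sum_c Q_c^2\leq\max_c Q_c<1-\epsilon$. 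Equivalently, $\abs{\E[\xi^a]}^2=\E[\xi^{a-a'}]$ for an independent copy $a'$, and $\PPP[a\not\equiv a'\pmod p]>\epsilon$. Step~3 then gives $\abs{\E[\xi^a]}\leq\exp(-4\epsilon/b^2)$, which is stronger than the stated bound.
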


In \cite{wood2017distribution}, Wood divides the set of non-special $C\in \Hom([n],G^*)$ based on a constant $\gamma>0$ into those that are robust and not robust.
She shows that most $C$ are robust, and that for robust $C$, at least quadratically many of the coefficients $E(i,j)$ are not $1$.
On the other hand for the few $C$ that are not special and not robust, at least linearly many of the coefficients $E(i,j)$ are not $1$.

We follow a similar approach to Wood, but we need to partition $\CC^*_{n_1}(T_2,F_1)$, $\CC^*_{n_2}(T_1,F_2)$ and $(\Z/p\Z)^{n_1}$. We will need slightly different definitions of robustness for the three.
\begin{definition}
\begin{enumerate}
\item Given a translated subgroup $T$ of $G$, $F\in \Hom([n],G)$ and $C\in \CC^*_{n}(T,F)$, we say that $C$ is $(d,F)$-\emph{robust} if for every $\sigma\subseteq [n]$  with $\abs{\sigma}<d$, there are $i_1,i_2\in [n]\setminus \sigma$ such that $F(i_1)=F(i_2)$ and $C(i_1)\neq C(i_2)$.

\item Given a translated subgroup $T$ of $G$, $F\in \Hom([n],G)$, $C\in \CC^*_{n}(T,F)$ and $\vec{s}\in (\Z/p\Z)^n$, we say that $C$ is $(d,F,\vec{s})$-\emph{robust} if for every $\sigma\subseteq [n]$  with $\abs{\sigma}<d$, there are $i_1,i_2\in [n]\setminus \sigma$ such that $F(i_1)=F(i_2)$, $s_{i_1}=s_{i_2}$ and $C(i_1)\neq C(i_2)$.

\item Given $F\in\Hom([n],G)$, $\vec{s}\in(\Z/p\Z)^n$ and a translated subgroup $T\treq G$, we say that $\vec{s}$ is $(d,F,T)$-robust, if for every $\sigma\subseteq [n]$ with $\abs{\sigma}<d$, there are $i_1,i_2\in [n]\setminus \sigma$ such that $F(i_1)=F(i_2)\in T$ and $s_{i_1}\neq s_{i_2}$.
\end{enumerate}
\end{definition}

\section{Robust $C_1$ or $C_2$}\label{Sec: Robust}

In this section, we consider $F_1\in \Hom([n_1],G)$, $F_2\in \Hom([n_2],G)$, $C_1\in \CC^*_{n_1}(\sgo{F_2},F_1)$, $C_2\in \CC^*_{n_2}(\sgo{F_1},F_2)$ and $\vec{s}\in (\Z/p\Z)^{n_1}$ such that $F_1$ and $F_2$ are translated codes and at least one of $C_1$, $C_2$ or $\vec{s}$ is robust.
Our goal is to show that in this case quadratically many of the coefficients $E'[F_1,F_2,C_1,C_2,\vec{s}](i,j)$ are $\neq1$.

\begin{lemma}\label{Lem: Coef one implies F determin C}
Suppose $F_1\in \Hom([n_1],G)$, $F_2\in \Hom([n_2],G)$, $C_1\in \CC^*_{n_1}(\sgo{F_2},F_1)$, $C_2\in \CC^*_{n_2}(\sgo{F_1},F_2)$ and $\vec{s}\in (\Z/p\Z)^{n_1}$.
Suppose $\tau_1\subseteq [n_1]$ and $\tau_2\subseteq [n_2]$ are such that for each $i\in \tau_1$, $j\in \tau_2$
\[
E'[F_1,F_2,C_1,C_2,\vec{s}](i,j)
=1.
\]
Denote $T_1=\sgo{F_1(i): i\in \tau_1}\subseteq \sgo{F_1}$ and $T_2=\sgo{F_2(j): j\in \tau_2}\subseteq \sgo{F_2}$. Then the following hold:
\begin{enumerate}
    \item If $T_1=\sgo{F_1}$, then for every $j_1, j_2\in \tau_2$ with $F_2(j_1)=F_2(j_2)$, we have $C_2(j_1)= C_2(j_2)$.
    \item If $T_2=\sgo{F_2}$, then for every $i_1, i_2\in \tau_1$ with $F_1(i_1)=F_1(i_2)$ and $s_{i_1}=s_{i_2}$, we have $C_1(i_1)= C_1(i_2)$.
    \item For every $i_1, i_2\in \tau_1$ with $F_1(i_1)=F_1(i_2)\in T_2$, we have $s_{i_1}=s_{i_2}$.
\end{enumerate}
\end{lemma}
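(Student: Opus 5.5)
The plan is to exploit the condition $E'(i,j)=1$ by comparing two indices that share the same $F$-value; the common terms then cancel, leaving a relation that holds for all $j\in\tau_2$ (resp. all $i\in\tau_1$). Linearity of characters extends that relation from the generators to the whole translated subgroup they generate.

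\textbf{Part (1).} Take $j_1,j_2\in\tau_2$ with $F_2(j_1)=F_2(j_2)=g$. Then $C_2(j_1)$ and $C_2(j_2)$ are both characters of the same group $\sg{-g+\sgo{F_1}}$. For each $i\in\tau_1$, the equations $E'(i,j_1)=1=E'(i,j_2)$ have identical factors $C_1(i)(g-F_1(i))\zeta_p^{s_i}$, so dividing one by the other gives
\[
C_2(j_1)\big(F_1(i)-g\big)=C_2(j_2)\big(F_1(i)-g\big)
\]
for all $i\in\tau_1$. The elements $F_1(i)-g$ with $i\in\tau_1$ generate $\sg{-g+T_1}$: any element of $-g+T_1$ is $\sum a_k(F_1(i_k)-g)$ with $\sum a_k=1$. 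When $T_1=\sgo{F_1}$, this group is the whole domain $\sg{-g+\sgo{F_1}}$, so the two characters coincide.

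\textbf{Part (2).} The argument is symmetric. Take $i_1,i_2\in\tau_1$ with $F_1(i_1)=F_1(i_2)=h$ and $s_{i_1}=s_{i_2}$. The factors $C_2(j)(h-F_2(j))$ and $\zeta_p^{s_i}$ cancel in the quotient, so $C_1(i_1)$ and $C_1(i_2)$ agree on $F_2(j)-h$ for every $j\in\tau_2$. These elements generate $\sg{-h+T_2}=\sg{-h+\sgo{F_2}}$, so $C_1(i_1)=C_1(i_2)$.

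\textbf{Part (3).} This needs more care, because the $C_1$ values need not agree. Take $i_1,i_2\in\tau_1$ with $F_1(i_1)=F_1(i_2)=h\in T_2$. Dividing $E'(i_1,j)=1$ by $E'(i_2,j)=1$ gives, for all $j\in\tau_2$,
\[
\big(C_1(i_1)C_1(i_2)^{-1}\big)\big(F_2(j)-h\big)=\zeta_p^{s_{i_2}-s_{i_1}}.
\]
Write $\psi=C_1(i_1)C_1(i_2)^{-1}$, a character on $\sg{-h+\sgo{F_2}}$. Since $h\in T_2$, we can write $h=\sum a_k F_2(j_k)$ with $j_k\in\tau_2$ and $\sum a_k=1$. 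Then $0=\sum a_k(F_2(j_k)-h)$, and applying $\psi$ gives
\[
1=\prod_k \zeta_p^{a_k(s_{i_2}-s_{i_1})}=\zeta_p^{s_{i_2}-s_{i_1}},
\]
so $s_{i_1}=s_{i_2}$ in $\Z/p\Z$.

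The only points needing attention are the following:
\begin{itemize}
\item $\zeta_p^{a}$ is well defined for $a\in\Zp$, via $a\bmod p$.
\item In Part (1) the two characters live on the same group, which holds because the domain depends only on $F_2(j)$.
\item Part (3) uses the affine combination with $\sum a_k=1$; this is the main (mild) obstacle, mirroring Lemma~\ref{Lem: char existence}.
\end{itemize}
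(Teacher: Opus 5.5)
Your proposal is correct and follows the paper's route: divide the two relations $E'=1$ for indices with the same $F$-value so the common factors cancel, then extend the agreement from the elements $F_1(i)-g$ (resp.\ $F_2(j)-h$) to all of $\sg{-g+T_1}$ (resp.\ $\sg{-h+T_2}$) via affine combinations. In part (3) you apply the ratio character directly to $0=\sum_k a_k(F_2(j_k)-h)$ with $\sum_k a_k=1$, while the paper first extends the twisted identity to all $x\in -g+T_2$ and then evaluates at $x=0$; this is the same computation written out explicitly.
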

\begin{proof}
First, assume $T_1=\sgo{F_1}$ and consider $j_1, j_2\in \tau_2$ with $F_2(j_1)=F_2(j_2)=h$. For each $i\in \tau_1$, we know that
\[
C_2(j_1)(F_1(i)-h)
=C_1(i)(F_1(i)-h) \zeta_p^{-s_i},
\] 
and 
\[
C_2(j_2)(F_1(i)-h)
=C_1(i)(F_1(i)-h)\zeta_p^{-s_i},
\] 
meaning
\[
C_2(j_1)(-h+F_1(i))
=C_2(j_2)(-h+F_1(i)).
\]
It follows that for every $x\in -h+T_1$, we have $C_2(j_1)(x)
=C_2(j_2)(x)$.
Since $T_1=\sgo{F_1}$ and the characters $C_2(j_1)$ and $C_2(j_2)$ are defined on $\sg{-h+\sgo{F_1}}$, we see that $C_2(j_1)=C_2(j_2)$.

Next, assume that $T_2=\sgo{F_2}$ and consider $i_1, i_2\in \tau_1$ with $F_1(i_1)=F_1(i_2)=g$ and $s_{i_1}=s_{i_2}$.
For each $j\in \tau_2$, we have
\[
C_1(i_1)(-g+F_2(j))
= C_2(j)(-g+F_2(j)) \zeta_p^{-s_{i_1}},
\] 
and
\[
C_1(i_2)(-g+F_2(j))
= C_2(j)(-g+F_2(j)) \zeta_p^{-s_{i_2}},
\]
meaning
\[
C_1(i_1)(-g+F_2(j))
=C_1(i_2)(-g+F_2(j)).
\]
It follows that for every $x\in -g+T_2$, we have $C_1(i_1)(x)=C_1(i_2)(x)$.
Since $T_2=\sgo{F_2}$ and the characters $C_1(i_1)$ and $C_1(i_2)$ are defined on $\sg{-g+\sgo{F_2}}$, we see that $C_1(i_1)=C_1(i_2)$.

For the last part, consider $i_1, i_2\in \tau_1$ for which $F_1(i_1)=F_1(i_2)\in T_2$. Denote $F_1(i_1)=F_1(i_2)=g$.
For each $j\in \tau_2$, we have
\[
C_1(i_1)(-g+F_2(j))
= C_2(j)(-g+F_2(j)) \zeta_p^{-s_{i_1}},
\] 
and
\[
C_1(i_2)(-g+F_2(j))
= C_2(j)(-g+F_2(j)) \zeta_p^{-s_{i_2}},
\]
meaning
\[
C_1(i_1)(-g+F_2(j))
= \zeta_p^{s_{i_2}-s_{i_1}}
C_1(i_2)(-g+F_2(j)).
\]
It follows that for every $x\in -g+T_2$, we have $C_1(i_1)(x)=\zeta_p^{s_{i_2}-s_{i_1}} C_1(i_2)(x)$. Since $g\in T_2$, we can take $x=0$, which implies $s_{i_1}=s_{i_2}$.
\end{proof}

\begin{lemma}\label{Lem: quad many non one}
Suppose $F_1\in \Hom([n_1],G)$, $F_2\in \Hom([n_2],G)$, $C_1\in \CC^*_{n_1}(\sgo{F_2},F_1)$, $C_2\in \CC^*_{n_2}(\sgo{F_1},F_2)$ and $\vec{s}\in (\Z/p\Z)^{n_1}$.
Then the following hold.
\begin{enumerate}
    \item If $F_1$ is a translated code of distance $d_1$ with image $T_1$ and $C_2$ is $(d_2,F_2)$-robust then
    \[
    \abs{\{(i,j)\in [n_1]\times [n_2]: E'[F_1,F_2,C_1,C_2,\vec{s}]\neq 1\}}
    \geq \frac{d_1 d_2}{p\abs{G}^2}.
    \]
    \item If $F_2$ is a translated code of distance $d_1$ with image $T_2$ and $C_1$ is $(d_2,F_1,\vec{s})$-robust then
    \[
    \abs{\{(i,j)\in [n_1]\times [n_2]: E'[F_1,F_2,C_1,C_2,\vec{s}]\neq 1\}}
    \geq \frac{d_1 d_2}{\abs{G}^2}.
    \]
    \item If $F_2$ is a translated code of distance $d_1$ with image $T_2$, and $\vec{s}$ is $(d_2,F_1, T_2)$-robust, then
    \[
    \abs{\{(i,j)\in [n_1]\times [n_2]: E'[F_1,F_2,C_1,C_2,\vec{s}]\neq 1\}}
    \geq \frac{d_1 d_2}{\abs{G}^2}.
    \]
\end{enumerate}
\end{lemma}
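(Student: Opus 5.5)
We prove all three parts with one greedy extraction argument, using Lemma~\ref{Lem: Coef one implies F determin C} as the engine. Let $N$ be the set of pairs $(i,j)$ with $E'[F_1,F_2,C_1,C_2,\vec{s}](i,j)\neq 1$. We build a large family of disjoint "witness" sets, each of which must contain an element of $N$ in every row (or column) outside a small exceptional set. The pigeonhole losses by factors of $\abs{G}$ (and $p$ in part (1)) come from grouping indices by the value of $F$ (and of $\vec{s}$ or $C$).

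For part (1), use the robustness of $C_2$ greedily to choose disjoint pairs $\{j_1^{(k)},j_2^{(k)}\}\subseteq[n_2]$ with $F_2(j_1^{(k)})=F_2(j_2^{(k)})$ and $C_2(j_1^{(k)})\neq C_2(j_2^{(k)})$. Each chosen pair is added to $\sigma$, so as long as $\abs{\sigma}<d_2$ we can continue. This yields about $d_2/2$ disjoint pairs.

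Fix one pair $(j_1,j_2)$ and let $\tau_1=\{i: E'(i,j_1)=E'(i,j_2)=1\}$. Apply Lemma~\ref{Lem: Coef one implies F determin C}(1) with $\tau_2=\{j_1,j_2\}$. If $\sgo{F_1(i):i\in\tau_1}=\sgo{F_1}=T_1$, then $C_2(j_1)=C_2(j_2)$, a contradiction. Hence $\tau_1$ misses a set $\sigma$ with $\sgo{F_1(i):i\notin\sigma}\neq T_1$, and the translated code property forces $\abs{[n_1]\setminus\tau_1}\geq d_1$. So each pair contributes at least $d_1$ elements of $N$ in the columns $j_1,j_2$. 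The columns are disjoint across pairs, so $\abs{N}\geq d_1d_2/2$, which already exceeds the stated bound $d_1d_2/(p\abs{G}^2)$.

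Parts (2) and (3) are symmetric: now the pairs are rows. Robustness of $C_1$ (respectively of $\vec{s}$) gives about $d_2/2$ disjoint pairs $\{i_1,i_2\}$ with $F_1(i_1)=F_1(i_2)$, $s_{i_1}=s_{i_2}$ and $C_1(i_1)\neq C_1(i_2)$ (respectively $F_1(i_1)=F_1(i_2)\in T_2$ and $s_{i_1}\neq s_{i_2}$). Set $\tau_2=\{j: E'(i_1,j)=E'(i_2,j)=1\}$ and apply Lemma~\ref{Lem: Coef one implies F determin C}(2), respectively (3), with $\tau_1=\{i_1,i_2\}$.

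There is one subtlety in part (3). The lemma's conclusion involves $\sgo{F_2(j):j\in\tau_2}$ rather than $T_2=\sgo{F_2}$. But if $\tau_2$ has complement smaller than $d_1$, the code property gives $\sgo{F_2(j):j\in\tau_2}=T_2$. Then $F_1(i_1)\in T_2$ forces $s_{i_1}=s_{i_2}$, a contradiction. So again each pair yields at least $d_1$ non-one coefficients, giving $\abs{N}\geq d_1d_2/2$.

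The main obstacle is making the greedy extraction rigorous and checking the degenerate cases. If $d_2\leq 2$, we may only get one pair, but the bound still holds since $d_1d_2/\abs{G}^2\leq d_1$ up to constants. If the factor-$2$ accounting is off, the slack in $\abs{G}^2\geq 4$ (note $G$ is nontrivial whenever robustness can hold) absorbs it. The stated weaker constants suggest the intended argument instead groups indices by $F$-value and pigeonholes, so we would fall back to that if the pair-disjointness argument runs into a snag.
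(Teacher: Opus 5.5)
Your argument is correct, and it takes a different route from the paper's. The paper uses the code property once, to build a single index set on the code side. That set consists of one homogeneous class of indices (same $F$-, $C$- and $s$-values) of size at least $d_1/(p\abs{G}^2)$ for each frequent value of $F$, and together these classes still affinely span the image. It then applies Lemma~\ref{Lem: Coef one implies F determin C} once, with the complementary index set being all columns (resp.\ rows) that see no failure from this set. Robustness shows the failing columns (resp.\ rows) number at least $d_2$, and each one fails on a whole class. The pigeonholing is where the factors $\abs{G}^2$ and $p$ come from.

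You instead peel off $\ceil{d_2/2}$ disjoint witness pairs directly from the robustness hypothesis. For each pair you apply the relevant part of Lemma~\ref{Lem: Coef one implies F determin C} with a two-element index set on the robust side. You then use the code property in contrapositive form: the indices on which both members of the pair give coefficient $1$ cannot affinely span the image, so at least $d_1$ indices fail. This avoids pigeonholing and gives $d_1\ceil{d_2/2}$, which is uniform in $G$ and $p$. Your handling of the subtlety in part (3) is right: Lemma~\ref{Lem: Coef one implies F determin C}(3) uses $\sgo{F_2(j):j\in\tau_2}$, and this equals $T_2$ once the complement of $\tau_2$ has fewer than $d_1$ elements. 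The only price is the factor $2$ from each pair using up two indices. Your closing hedge about falling back to pigeonholing is unnecessary, and the remark about $d_2\le 2$ is also unnecessary because $\ceil{d_2/2}\ge d_2/2$ covers every $d_2>0$.

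The factor $2$ does cause one genuine, if degenerate, failure. Your claim that $G$ is nontrivial whenever robustness can hold is true for parts (1) and (2), but false for part (3). Take $G=0$, so $F_1\equiv 0$, $F_2\equiv 0$, $T_2=\{0\}$ and $E'(i,j)=\zeta_p^{s_i}$. Take $n_2=d_1$, and let $\vec{s}$ have exactly $d_2$ entries equal to $1$ and at least $d_2$ entries equal to $0$. Then $F_2$ is a translated code of distance $d_1$ and $\vec{s}$ is $(d_2,F_1,T_2)$-robust. The non-one set has size exactly $d_1d_2$, which is the stated bound $d_1d_2/\abs{G}^2$, while your argument only certifies $d_1\ceil{d_2/2}$. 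So for $G=0$ in part (3) you need a separate direct argument, which is easy. The non-one set is $\{i:s_i\neq 0\}\times[n_2]$. The code property forces $n_2\ge d_1$, since otherwise $\sigma=[n_2]$ leaves an empty affine span. Robustness applied to $\sigma=\{i:s_i\neq 0\}$ forces $\abs{\sigma}\ge d_2$, since otherwise all remaining $s_i$ equal $0$. In every other case your bound $d_1d_2/2$ already dominates the stated ones.
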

\begin{proof}
\begin{enumerate}[align=left, leftmargin=*]
    \item First, suppose that $F_1$ is a translated code of distance $d_1$ with image $T_1$ and $C_2$ is $(d_2,F_2)$-robust.
    Consider 
    \[
    X_1
    =\left\{g\in T_1 : \abs{\{i\in [n_1]:F_1(i)=g\}}\geq \frac{d_1}{\abs{G}} \right\}.
    \]
    It is clear that $\abs{\{i\in [n_1]: F_1(i)\notin X_1\}} <d_1$. Since $F_1$ is a translated code of distance $d_1$ with image $T_1$, this means $\sgo{X_1}=T_1$.
    
    Enumerate the elements of $X_1$ as $A_1,\dots,A_m$.
    By the pigeonhole principle, we know that there is some $B_k\in \sg{-A_k+\sgo{F_2}}^*$ and $u_k\in \Z/p\Z$ for which
    \[
    \abs{S(A_k,B_k,u_k)}
    =\abs{\{i\in [n_1]: F_1(i)=A_k, C_1(i)=B_k, s_i=u_k\}}
    \geq \frac{1}{p\abs{\sg{-A_k+\sgo{F_2}}}} \frac{d_1}{\abs{G}}
    \geq \frac{d_1}{p\abs{G}^2}.
    \]
    Let $\tau_1=\bigcup_{k=1}^{m} S(A_k,B_k,u_k)$.
    Denote
    \begin{align*}
    &\sigma
    =\{j\in [n_2]: \exists i\in \tau_1 \text{ such that } E'[F_1,F_2,C_1,C_2,\vec{s}](i,j)\neq 1\},
    &
    &\tau_2=[n_2]\setminus \sigma.
    \end{align*}
    
    This means that for each $i\in\tau_1$ and $j\in \tau_2$, we have $ E'[F_1,F_2,C_1,C_2,\vec{s}](i,j)=1$.
    Since $\sgo{X_1}=\sgo{F_1}$, Lemma~\ref{Lem: Coef one implies F determin C} implies that for $j_1,j_2\in \tau_2$, if $F_2(j_1)=F_2(j_2)$ then $C_2(j_1)=C_2(j_2)$.
    Since $C_2$ is $(d_2,F_2)$-robust, it follows that $\abs{\sigma}\geq d_2$.
    
    For each $j\in \sigma$, there is at least one $i\in \tau_1$ for which $E'[F_1,F_2,C_1,C_2,\vec{s}](i,j)\neq 1$. If $i\in S(A_k,B_k,u_k)$, then it follows that for every $i'\in S(A_k,B_k,u_k)$, $E'[F_1,F_2,C_1,C_2,\vec{s}](i',j)\neq 1$.
    The result follows.

    \item Next, suppose that $F_2$ is a translated code of distance $d_1$ with image $T_2$ and $C_1$ is $(d_2,F_1,\vec{s})$-robust. 
    Consider 
    \[
    X_2
    =\left\{h\in T_2 : \abs{\{j\in [n_2]:F_2(j)=h\}}\geq \frac{d_1}{\abs{G}} \right\}.
    \]
    It is clear that $\abs{\{j\in [n_2]: F_2(j)\notin X_2\}} <d_1$. Since $F_2$ is a translated code of distance $d_1$ with image $T_2$, this means $\sgo{X_2}=T_2$.
    
    Enumerate the elements of $X_2$ as $A_1,\dots,A_m$.
    By the pigeonhole principle, we know that there is some $B_k\in \sg{-A_k+\sgo{F_1}}^*$ for which
    \[
    \abs{S(A_k,B_k)}
    =\abs{\{j\in [n_2]: F_2(j)=A_k, C_2(j)=B_k\}}
    \geq \frac{d_1}{\abs{\sg{-A_k+\sgo{F_1}}} \abs{G}}
    \geq \frac{d_1}{\abs{G}^2}.
    \]
    Let $\tau_2=\bigcup_{k=1}^{m} S(A_k,B_k)$.
    Denote
    \begin{align*}
    &\sigma
    =\{i\in [n_1]: \exists j\in \tau_2 \text{ such that } E'[F_1,F_2,C_1,C_2,\vec{s}](i,j)\neq 1\},
    &
    &\tau_1=[n_1]\setminus \sigma.
    \end{align*}
    This means that for each $i\in\tau_1$ and $j\in \tau_2$, we have $ E'[F_1,F_2,C_1,C_2,\vec{s}](i,j)=1$.
    
    Since $\sgo{X_2}=\sgo{F_2}$, Lemma~\ref{Lem: Coef one implies F determin C} implies that for $i_1,i_2\in \tau_1$, if $F_1(i_1)=F_1(i_2)$ and $s_{i_1}=s_{i_2}$ then $C_1(i_1)=C_1(i_2)$.
    Since $C_1$ is $(d_2,F_1,\vec{s})$-robust, it follows that $\abs{\sigma}\geq d_2$.
    
    For each $i\in \sigma$, there is at least one $j\in \tau_2$ for which $E'[F_1,F_2,C_1,C_2,\vec{s}](i,j)\neq 1$. If $j\in S(A_k,B_k)$, then it follows that for every $j'\in S(A_k,B_k)$, $E'[F_1,F_2,C_1,C_2,\vec{s}](i,j')\neq 1$.
    The result follows.

    \item
    For the last part, suppose $F_2$ is a translated code of distance $d_1$ with image $T_2$ and $\vec{s}$ is $(d_2,F_1, T_2)$-robust.
    Consider 
    \[
    X_2
    =\left\{h\in T_2 : \abs{\{j\in [n_2]:F_2(j)=h\}}\geq \frac{d_1}{\abs{G}} \right\}.
    \]
    It is clear that $\abs{\{j\in [n_2]: F_2(j)\notin X_2\}} <d_1$. Since $F_2$ is a translated code of distance $d_1$ with image $T_2$, this means $\sgo{X_2}=T_2$.
    
    Enumerate the elements of $X_2$ as $A_1,\dots,A_m$.
    For each $1\leq k\leq m$, by the pigeonhole principle, we know that there is some $B_k\in \sg{-A_k+\sgo{F_1}}^*$ for which
    \[
    \abs{S(A_k,B_k)}
    =\abs{\{j\in [n_2]: F_2(j)=A_k, C_2(j)=B_k\}}
    \geq \frac{d_1}{\abs{\sg{-A_k+\sgo{F_1}}} \abs{G}}
    \geq \frac{d_1}{\abs{G}^2}.
    \]
    Let $\tau_2=\bigcup_{k=1}^{m} S(A_k,B_k)$.
    Denote
    \begin{align*}
    &\sigma
    =\{i\in [n_1]: \exists j\in \tau_2 \text{ such that } E'[F_1,F_2,C_1,C_2,\vec{s}](i,j)\neq 1\},
    &
    &\tau_1=[n_1]\setminus \sigma.
    \end{align*}
    This means that for each $i\in\tau_1$ and $j\in \tau_2$, we have $ E'[F_1,F_2,C_1,C_2,\vec{s}](i,j)=1$.

    Lemma~\ref{Lem: Coef one implies F determin C} tells us that for $i_1,i_2\in \tau_1$, if $F_1(i_1)=F_1(i_2)\in T_2$, then $s_{i_1}=s_{i_2}$.
    Since $\vec{s}$ is $(d_2,F_1,T_2)$-robust, it follows that $\abs{\sigma}\geq d_2$.

    For each $i\in \sigma$, there is at least one $j\in \tau_2$ for which $E'[F_1,F_2,C_1,C_2,\vec{s}](i,j)\neq 1$. If $j\in S(A_k,B_k)$, then it follows that for every $j'\in S(A_k,B_k)$, $E'[F_1,F_2,C_1,C_2,\vec{s}](i,j')\neq 1$.
    The result follows. \qedhere
\end{enumerate}
\end{proof}

\section{Special $(C_1,C_2)$}\label{Sec: Special}

Consider $F_1\in \Hom([n_1],G)$, $F_2\in \Hom([n_2],G)$ and $\vec{s}\in (\Z/p\Z)^{n_1}$.
In this section, our goal is to count the number of $(C_1,C_2)$, with $C_1\in \CC^*_{n_1}(\sgo{F_2},F_1)$ and $C_2\in \CC^*_{n_2}(\sgo{F_1},F_2)$, that are $(F_1,F_2,\vec{s})$-special. 
%We denote the set of such $(C_1,C_2)$ as $\Special(F_1,F_2,\vec{s})$.

We will show the following.

\begin{proposition}\label{Prop: Count special}
Consider $F_1\in \Hom([n_1],G)$ and $F_2\in \Hom([n_2],G)$. Then the following hold.
\begin{enumerate}
\item The number of triples $(\vec{s},C_1,C_2)$, with $\vec{s}\in (\Z/p\Z)^{n_1}$, $C_1\in \CC^*_{n_1}(\sgo{F_2},F_1)$ and $C_2\in \CC^*_{n_2}(\sgo{F_1},F_2)$, such that $(C_1,C_2)$ is $(F_1,F_2,\vec{s})$-special is at most
\[
p^{\abs{G}}
\times \abs{G}^{(p+1)\abs{G}}
\times p^{\abs{ \{i\in [n_1]: F_1(i)\notin \sgo{F_2}\} }}.
\]
\item If $\set{F_1}=\set{F_2}=T$ for some $T\treq G$, then the number of $C_1\in \CC^*_{n_1}(\sgo{F_2},F_1)$ and $C_2\in \CC^*_{n_2}(\sgo{F_1},F_2)$, such that $(C_1,C_2)$ is $(F_1,F_2,\vec{s})$-special is exactly
\[
=\begin{cases}
\abs{\Delta^2 \G(T)} \times \abs{T} &\text{if }\vec{s}=0,\\
0 &\text{if }\vec{s}\neq0.
\end{cases}
\]
\end{enumerate}
\end{proposition}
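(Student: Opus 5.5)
The plan is to use one structural fact throughout: the special condition $E'[F_1,F_2,C_1,C_2,\vec{s}](i,j)=1$ depends on $i$ only through the triple $(F_1(i),C_1(i),s_i)$, and on $j$ only through $(F_2(j),C_2(j))$. This lets me trade the $n_1+n_2$ free characters for a bounded amount of data indexed by elements of $G$.

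\textbf{Part (1).} Fix $g=F_1(i)$. The special condition prescribes $C_1(i)(-g+F_2(j))=C_2(j)(-g+F_2(j))\,\zeta_p^{-s_i}$ for every $j$. The elements $-g+F_2(j)$ generate $\sg{-g+\sgo{F_2}}$, so $C_1(i)$ is determined by $g$, $s_i$ and $C_2$.

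Symmetrically, $C_2(j)$ is determined by $F_2(j)$ together with the data $(F_1(i),C_1(i),s_i)$, since the $-F_2(j)+F_1(i)$ generate $\sg{-F_2(j)+\sgo{F_1}}$. Consequently, in a special configuration:
\begin{itemize}
\item $C_1(i)$ is a function of the pair $(F_1(i),s_i)$, giving at most $\abs{G}^{p\abs{G}}$ choices of this function;
\item $C_2(j)$ is a function of $F_2(j)$, giving at most $\abs{G}^{\abs{G}}$ choices.
\end{itemize}

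Next I control $\vec{s}$. If $F_1(i)=g\in\sgo{F_2}$, write $g=\sum a_j F_2(j)$ with $\sum a_j=1$. Then $0=\sum a_j(-g+F_2(j))$, and applying $C_1(i)$ gives
\[
1=\prod_j C_2(j)(-g+F_2(j))^{a_j}\,\zeta_p^{-s_i}.
\]
Hence $s_i$ depends only on $g$ and the data of $C_2$; this is the argument of Lemma~\ref{Lem: Coef one implies F determin C}(3). So across such $i$ there are at most $p^{\abs{G}}$ possibilities. For indices with $F_1(i)\notin\sgo{F_2}$, $s_i$ is free, giving the factor $p^{\abs{\{i:F_1(i)\notin\sgo{F_2}\}}}$. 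Multiplying the counts gives the stated bound.

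\textbf{Part (2).} Now $\set{F_1}=\set{F_2}=T$. Choose $i,j$ with $F_1(i)=F_2(j)=g$. The coefficient at $(i,j)$ is $C_1(i)(0)\,C_2(j)(0)\,\zeta_p^{s_i}=\zeta_p^{s_i}$, so every special configuration forces $\vec{s}=0$. This gives the vanishing for $\vec{s}\neq0$.

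For $\vec{s}=0$, set $H=\G(T)$. Since each $F_1(i)\in T$, we have $\sg{-F_1(i)+T}=H$, and likewise for $F_2(j)$. By the structural fact and surjectivity of $F_1,F_2$ onto $T$, special pairs correspond bijectively to pairs of maps $\phi,\psi:T\to H^*$ satisfying
\[
\phi(g)(h-g)=\psi(h)(h-g)\qquad\text{for all } g,h\in T.
\]
Writing $g=t_0+x$ and $h=t_0+y$ with $x,y\in H$, this becomes a functional equation on $H$.

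My plan for solving it:
\begin{enumerate}
\item Setting $y=0$ and $x=0$ separately relates $\phi$ and $\psi$ to the two characters $c=\phi(t_0)$ and $c'=\psi(t_0)$.
\item Comparing the equation for $(x,y)$, $(x,0)$ and $(0,y)$ shows that $x\mapsto\phi(t_0+x)-c$ is additive. This gives a bilinear form $A:H\times H\to\C^\times$ with $\phi(t_0+x)=c\cdot A(x,\cdot)$, and similarly $\psi(t_0+y)=c\cdot B(y,\cdot)$ with the same constant $c$ and $B$ tied to $A$.
\item The remaining constraint should say exactly that $A(x,z)$ vanishes whenever $z$ is a multiple of $x$'s difference, i.e.\ $A(z,z)=0$ and symmetric defects cancel. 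This identifies the admissible $A$ with characters of $\Delta^2 H$ (or of its dual, which has the same size).
\end{enumerate}
The free choice of $c\in H^*$ then contributes $\abs{H}=\abs{T}$, giving the total $\abs{\Delta^2\G(T)}\times\abs{T}$.

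The main obstacle is step 2, the affineness and the exact identification of the constraint on $A$. I expect a careful inclusion of the diagonal terms to be needed to land on $\Delta^2$ (relations $f\otimes g+g\otimes f$) rather than $\Lambda^2$, which matters when $p=2$. I would first verify the count directly for $H$ cyclic, including the case $p=2$, and then extend to general $H$ by bilinearity.
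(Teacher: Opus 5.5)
Your Part (1), the vanishing for $\vec{s}\neq 0$, and your reduction of the case $\vec{s}=0$ to pairs $\phi,\psi\colon T\to H^*$ with $\phi(g)(h-g)=\psi(h)(h-g)$ are correct and match the paper (Lemmas~\ref{Lem: special iff factor through} and~\ref{Lem: only one s}, and the passage from $D(T,T)$ to $D(H,H)$). The additivity claim in your step 2 is also sound. The equations at $(x,y)$, $(x,0)$, $(0,y)$ give $\psi(t_0+y)(x)=c(y)\,c'(x)\,\phi(t_0+x)(y)^{-1}$. Since $\psi(t_0+y)$ is a character, this forces $A(x,\cdot):=\phi(t_0+x)\,c^{-1}$ to be additive in $x$. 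The gap is in what follows, which is the actual content of the exact count. As you sketch it, it gives the wrong number when $p=2$.

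\textbf{The constant is not shared.} It is false that $\psi(t_0+y)=c\cdot B(y,\cdot)$ with the same constant $c$. The $(x,0)$ equation says $c'(x)=\phi(t_0+x)(x)=c(x)A(x,x)$, so $c'=\psi(t_0)$ differs from $c=\phi(t_0)$ by $x\mapsto A(x,x)$, and one gets $\psi(t_0+y)(x)=c'(x)A(x,y)^{-1}$.

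\textbf{The constraint on $A$ is not $A(z,z)=1$.} The only remaining condition is that $c'$ be a character, i.e.\ that $x\mapsto A(x,x)$ be multiplicative. This is exactly skew-symmetry, $A(x,y)A(y,x)=1$. Conversely, every $c\in H^*$ and every skew bilinear $A$ give an admissible pair; you also need to verify this converse, and you do not. Skew bilinear forms are precisely the characters of $\Delta^2 H$, whereas forms with $A(z,z)=1$ are the characters of $\Lambda^2 H$. These agree for odd $p$ but not for $p=2$.

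\textbf{A counterexample at $p=2$.} Take $H=T=\Z/2\Z$, $t_0=0$, and $\chi$ the nontrivial character. The pair $\phi(0)=\psi(1)=1$, $\phi(1)=\psi(0)=\chi$ is admissible, yet $A(1,1)=-1$ and $\psi(t_0)\neq\phi(t_0)$. Imposing either of your conditions counts $2=\abs{\Lambda^2H}\abs{T}$ pairs instead of the correct $4=\abs{\Delta^2H}\abs{T}$.

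Checking cyclic $H$ and ``extending by bilinearity'' does not replace proving the bijection between admissible pairs and $H^*\times(\Delta^2H)^*$. The paper proves it as follows. It eliminates $\psi$, which is determined by $\phi$, to get the single condition \eqref{Property 0} on $\alpha_1$. It then uses the map $f(\theta,\iota)(x)(y)=\theta(y)\iota(x\wedge y)$. Skewness of $\lambda$ comes from \eqref{Property 0} at base point $x+y$. Linearity in the first slot follows from skewness plus linearity in the second. The converse direction is the identity $\iota(z_1\wedge z_2+z_2\wedge z_1)=1$. Your steps 1--2 could be completed into a valid alternative route, but only once you replace ``$c'=c$, $A(z,z)=1$'' by ``$c'=c\cdot A(\cdot,\cdot)|_{\mathrm{diag}}$ is a character, i.e.\ $A$ is skew'' and check the converse.
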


\subsection{Upper bound}

Lemma~\ref{Lem: Coef one implies F determin C} tells us that if $(C_1,C_2)$ is $(F_1,F_2,\vec{s})$-special, then the map $C_1$ factors through 
\[
C_1: [n_1]\xrightarrow[]{(F_1,s)} G\times (\Z/p\Z)\xrightarrow[]{\beta_1} \bigcup_{g\in \sgo{F_1}} \sg{-g+\sgo{F_2}}^*,
\] 
for some $\beta_1$ and
the map $C_2$ factors through
\[
C_2: [n_2]\xrightarrow[]{F_2} G\xrightarrow[]{\beta_2} \bigcup_{h\in \sgo{F_2}} \sg{-h+\sgo{F_1}}^*,
\]
for some $\beta_2$.

Given $F\in \Hom([n],G)$ and $\vec{s}\in (\Z/p\Z)^n$, we denote
\[
\set{F,\vec{s}}
=\{(F(e_i),s_i): i\in [n]\} 
\subseteq G\times (\Z/p\Z).
\]
We will denote the projections as $\pi_1:G\times (\Z/p\Z)\to G$ and $\pi_2:G\times (\Z/p\Z)\to \Z/p\Z$.

Given subsets $Y_1\subseteq G\times (\Z/p\Z)$ and $X_2\subseteq G$, let $X_1=\pi_1(Y_1)$. We denote by $\D(Y_1,X_2)$, the set of pairs of functions $(\beta_1,\beta_2)$ such that the following hold:
\begin{enumerate}
    \item $\beta_1$ maps $Y_1$ to $\bigcup_{g\in X_1}\sg{-g+ X_2}^*$ and for every $(g,s)\in Y_1$, $\beta_1(g,s)\in \sg{-g+X_2}^*$;
    \item 
    $\beta_2$ maps $X_2$ to $\bigcup_{h\in X_2}\sg{-h+X_1}^*$ and for every $h\in X_2$, $\beta_2(h)\in \sg{-h+X_1}^*$;
    \item For every $(g,s)\in Y_1$ and $h\in X_2$ we have
    \[
    \beta_1(g,s)(-g+h)\beta_2(h)(-h+g) \zeta_p^{s}=1.
    \]
\end{enumerate}
Clearly, 
\[
\abs{\D(Y_1,X_2)} \leq \abs{G}^{(p+1)\abs{G}}.
\]

\begin{lemma}\label{Lem: special iff factor through}
Consider $F_1\in \Hom(\Zpno,G)$, $F_2\in \Hom(\Zpnt,G)$ and $\vec{s}\in (\Z/p\Z)^{n_1}$. 
For $C_1\in \CC^*(\sgo{F_2},F_1)$, $C_2\in \CC^*(\sgo{F_1},F_2)$, the pair $(C_1,C_2)$ is $(F_1,F_2,\vec{s})$-special if and only if there is $(\beta_1,\beta_2)\in \mathcal{D}(\set{F_1,\vec{s}},\set{F_2})$ such that $C_1(i)=\beta_1(F_1(i),s_i)$ and $C_2(j)=\beta_2(F_2(j))$.

Therefore, the number of pairs $(C_1,C_2)$ that are $(F_1,F_2,\vec{s})$-special is $\abs{\mathcal{D}(\set{F_1,\vec{s}},\set{F_2})}$.
\end{lemma}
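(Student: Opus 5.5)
We prove the two directions separately and then observe that the correspondence $(\beta_1,\beta_2)\mapsto(C_1,C_2)$ is a bijection onto the special pairs.

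For the ``if'' direction, suppose $(\beta_1,\beta_2)\in\D(\set{F_1,\vec{s}},\set{F_2})$ and define $C_1(i)=\beta_1(F_1(i),s_i)$ and $C_2(j)=\beta_2(F_2(j))$. We first check that these lie in $\CC^*_{n_1}(\sgo{F_2},F_1)$ and $\CC^*_{n_2}(\sgo{F_1},F_2)$. The point is that $\sg{-g+\set{F_2}}=\sg{-g+\sgo{F_2}}$. Indeed, each element of $\sgo{F_2}$ has the form $\sum a_k F_2(j_k)$ with $\sum a_k=1$, so
\[
-g+\sum a_k F_2(j_k)=\sum a_k(-g+F_2(j_k))\in\sg{-g+\set{F_2}}.
\]
Similarly $\sg{-h+\set{F_1}}=\sg{-h+\sgo{F_1}}$, and $\pi_1(\set{F_1,\vec{s}})=\set{F_1}$. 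So $\beta_1(F_1(i),s_i)$ is a character of the correct group, and likewise for $\beta_2$. Condition (3) of $\D$, applied at $(g,s)=(F_1(i),s_i)$ and $h=F_2(j)$, is exactly the statement $E'[F_1,F_2,C_1,C_2,\vec{s}](i,j)=1$. Hence $(C_1,C_2)$ is special.

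For the ``only if'' direction, suppose $(C_1,C_2)$ is special. Apply Lemma~\ref{Lem: Coef one implies F determin C} with $\tau_1=[n_1]$ and $\tau_2=[n_2]$. Then $T_1=\sgo{F_1}$ and $T_2=\sgo{F_2}$, so parts (1) and (2) of that lemma give the following. $C_2(j)$ depends only on $F_2(j)$, and $C_1(i)$ depends only on the pair $(F_1(i),s_i)$. We can therefore define $\beta_1$ on $\set{F_1,\vec{s}}$ by $\beta_1(F_1(i),s_i)=C_1(i)$, and $\beta_2$ on $\set{F_2}$ by $\beta_2(F_2(j))=C_2(j)$; both are well defined. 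By the identification of groups in the previous paragraph, these maps satisfy conditions (1) and (2) of $\D$. Condition (3) holds because every element of $\set{F_1,\vec{s}}\times\set{F_2}$ is realized by some pair of indices $(i,j)$, where the coefficient $E'$ equals $1$.

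Finally, the map $(\beta_1,\beta_2)\mapsto(C_1,C_2)$ is injective, because the domains $\set{F_1,\vec{s}}$ and $\set{F_2}$ are exactly the sets of values attained by $i\mapsto(F_1(i),s_i)$ and $j\mapsto F_2(j)$. Combined with the two directions above, it is a bijection onto the special pairs, which gives the count $\abs{\D(\set{F_1,\vec{s}},\set{F_2})}$.

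The only subtle step is the group identification $\sg{-g+\set{F_2}}=\sg{-g+\sgo{F_2}}$. It is what makes the character domains in the definition of $\D$ agree with those in $\CC^*$, and it is a short computation using $\sum a_k=1$.
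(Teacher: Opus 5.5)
Your proof is correct and follows the paper's argument: the ``only if'' direction uses parts (1) and (2) of Lemma~\ref{Lem: Coef one implies F determin C} with $\tau_1=[n_1]$, $\tau_2=[n_2]$ to make $\beta_1,\beta_2$ well defined, and the ``if'' direction is a direct check that every coefficient $E'$ equals $1$. You go beyond the paper in two places, which it leaves implicit: you verify explicitly that $\sg{-g+\set{F_2}}=\sg{-g+\sgo{F_2}}$, and you state the injectivity that is needed for the count.
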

\begin{proof}
Denote $Y_1=\set{F_1,\vec{s}}$, $X_1=\set{F_1}$ and $X_2=\set{F_2}$.
If $(C_1,C_2)$ is $(F_1,F_2,\vec{s})$-special, then by Lemma~\ref{Lem: Coef one implies F determin C}, we can construct following functions
\begin{itemize}
    \item $\beta_1:Y_1\to \bigcup_{g\in X_1}\sg{-g+\sgo{X_2}}^*$. For $(g,s)\in Y_1$, choose $i\in [n_1]$ such that $F_1(i)=g$, $s_i=s$, and define $\beta_1(g,s)=C_1(i)\in \sg{-g+\sgo{X_2}}^*$. By Lemma~\ref{Lem: Coef one implies F determin C}, this is well defined.
    \item $\beta_2:X_2\to \bigcup_{h\in X_2}\sg{-h+\sgo{X_1}}^*$. For $h\in X_2$, choose $j\in [n_2]$ such that $F_2(j)=h$, and define $\beta_2(h)=C_2(j)\in \sg{-h+\sgo{X_1}}^*$. By Lemma~\ref{Lem: Coef one implies F determin C}, this is well defined.
\end{itemize}
Since $(C_1,C_2)$ is $(F_1,F_2,\vec{s})$-special, we see that for each $(g,s)\in Y_1$ and $h\in X_2$, we have
\[
\beta_1(g,s)(-g+h)\beta_2(h)(-h+g) \zeta_p^s=1.
\]
Therefore, $(\beta_1,\beta_2)\in \mathcal{D}(Y_1,X_2)$.

Conversely, given $(\beta_1,\beta_2)\in \mathcal{D}(Y_1,X_2)$, we can construct $C_1$ and $C_2$ as $C_1(i)=\beta_1(F_1(i),s_i)$ and $C_2(j)=\beta_2(F_2(j))$.
For $i\in [n_1]$, $j\in[n_2]$, we see that
\begin{align*}
E'[F_1,F_2,C_1,C_2,\vec{s}](i,j)
&=C_1(i)(-F_1(i)+F_2(j))
C_2(j)(-F_2(j)+F_1(i)) \zeta_p^{s_i}\\
&=\beta_1(F_1(i),s_i)(-F_1(i)+F_2(j))
\beta_2(F_2(j)) (-F_2(j)+F_1(i)) \zeta_p^{s_i}
=1.
\end{align*}
Therefore, $(C_1,C_2)$ is $(F_1,F_2,\vec{s})$-special.
\end{proof}

\begin{lemma}\label{Lem: 2 s for same g}
Suppose that we have non-empty subsets $Y_1\subseteq G\times (\Z/p\Z)$ and $X_2\subseteq G$.
If $(g,s), (g,s')\in Y_1$, then for every $(\beta_1,\beta_2)\in \D(Y_1,X_2)$ and every $x\in -g+\sgo{X_2}$, we have
\[
\beta_1(g,s')(x)
=\zeta_p^{s-s'} \beta_1(g,s)(x). 
\]
\end{lemma}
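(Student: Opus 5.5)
The identity follows directly from condition (3) in the definition of $\D(Y_1,X_2)$. That condition ties $\beta_1(g,s)$ and $\beta_1(g,s')$ to the same character $\beta_2(h)$ on each element $-g+h$. So the plan is first to compare the two characters on the generating set $-g+X_2$, and then to extend the comparison to the translated subgroup $-g+\sgo{X_2}$ by writing a general element as an affine combination.

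\emph{Step 1: the generators.} Fix $h\in X_2$. Applying condition (3) to the pairs $((g,s),h)$ and $((g,s'),h)$ gives
\[
\beta_1(g,s)(-g+h)\,\zeta_p^{s}=\beta_2(h)(-h+g)^{-1}=\beta_1(g,s')(-g+h)\,\zeta_p^{s'}.
\]
Hence $\beta_1(g,s')(-g+h)=\zeta_p^{s-s'}\beta_1(g,s)(-g+h)$ for every $h\in X_2$.

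\emph{Step 2: the affine extension.} Let $x\in -g+\sgo{X_2}$. By the definition of $\sgo{\cdot}$ we can write $x=-g+\sum_k a_k h_k$ with $h_k\in X_2$, $a_k\in\Zp$ and $\sum_k a_k=1$. Because the coefficients sum to $1$, this equals $\sum_k a_k(-g+h_k)$. Both $\beta_1(g,s)$ and $\beta_1(g,s')$ are characters of $\sg{-g+X_2}$, which contains every element $-g+h_k$. Using multiplicativity, we get
\[
\beta_1(g,s')(x)=\prod_k \beta_1(g,s')(-g+h_k)^{a_k}=\prod_k \zeta_p^{a_k(s-s')}\beta_1(g,s)(-g+h_k)^{a_k}.
\]
The right-hand side equals $\zeta_p^{(s-s')\sum_k a_k}\beta_1(g,s)(x)=\zeta_p^{s-s'}\beta_1(g,s)(x)$.

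The one point needing care is interpreting powers $a_k\in\Zp$ of roots of unity. This is well defined because the values of these characters are $p$-power roots of unity, so each exponent only matters modulo a power of $p$. In particular, $\zeta_p^{(s-s')\sum_k a_k}$ depends only on $\sum_k a_k \bmod p$, which is $1$. There is no real obstacle beyond this. The argument is exactly the mechanism of Lemma~\ref{Lem: char existence}: an affine combination with coefficient sum $1$ carries a multiplicative factor $\zeta_p^{c}$ on the generators $-g+h$ to the same factor on all of $-g+\sgo{X_2}$.
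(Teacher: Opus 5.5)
Your proof is correct and is essentially the paper's argument. Condition (3) gives the relation on each generator $-g+h$ with $h\in X_2$, and you then extend it to $-g+\sgo{X_2}$ using affine combinations with coefficient sum $1$, a step the paper asserts in one line and you write out explicitly.
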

\begin{proof}
For each $h\in X_2$, we have
\[
\beta_1(g,s)(-g+h) 
=\big( \zeta_p^{s}\beta_2(h)(-h+g)\big)^{-1},
\]
\[
\beta_1(g,s')(-g+h) 
= \big(\zeta_p^{s'}\beta_2(h)(-h+g)\big)^{-1}.
\]
This means that for each $x\in -g+X_2$, we have
\[
\beta_1(g,s')(x)
=\zeta_p^{s-s'} \beta_1(g,s)(x). 
\]
It follows that the same would hold for every $x$ in $\sgo{-g+X_2}=-g+\sgo{X_2}$.
\end{proof}

\begin{lemma}\label{Lem: only one s}
Suppose that we have non-empty subsets $Y_1\subseteq G\times (\Z/p\Z)$ and $X_2\subseteq G$.
Let $X_1=\pi_1(Y_1)$.
Then the following hold.
\begin{enumerate}
    \item If there is some $g\in X_1\cap\sgo{X_2}$, for which there are (at least) two different $(g,s)$ and $(g,s')$ in $Y_1$, then $\D(Y_1,X_2)=\emptyset$.
    \item If there is some $g\in X_1\cap X_2$ with $(g,s)\in Y_1$ and $s\neq 0$, then $\D(Y_1,X_2)=\emptyset$.
\end{enumerate}
\end{lemma}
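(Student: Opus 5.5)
Both parts come down to evaluating the defining relation of $\D(Y_1,X_2)$ at the point $x=0$ and comparing the resulting values. Throughout, suppose $(\beta_1,\beta_2)\in\D(Y_1,X_2)$; we derive a contradiction in each case.

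For part (1), let $g\in X_1\cap\sgo{X_2}$ and suppose $(g,s),(g,s')\in Y_1$ with $s\neq s'$. Lemma~\ref{Lem: 2 s for same g} gives
\[
\beta_1(g,s')(x)=\zeta_p^{s-s'}\beta_1(g,s)(x)\quad\text{for every } x\in -g+\sgo{X_2}.
\]
Since $g\in\sgo{X_2}$, the coset $-g+\sgo{X_2}$ contains $0$, so we may take $x=0$. Both $\beta_1(g,s')$ and $\beta_1(g,s)$ are characters, hence take the value $1$ at $0$. This yields $1=\zeta_p^{s-s'}$, so $s\equiv s'\pmod p$, contradicting $s\neq s'$. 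Therefore $\D(Y_1,X_2)=\emptyset$.

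For part (2), let $g\in X_1\cap X_2$ with $(g,s)\in Y_1$ and $s\neq 0$. Because $g\in X_2$, condition (3) in the definition of $\D(Y_1,X_2)$ may be applied with $h=g$:
\[
\beta_1(g,s)(-g+g)\,\beta_2(g)(-g+g)\,\zeta_p^{s}=1.
\]
Both characters are evaluated at $0$, so both factors equal $1$, and we get $\zeta_p^{s}=1$. This forces $s=0$ in $\Z/p\Z$, a contradiction, so again $\D(Y_1,X_2)=\emptyset$.

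The only point requiring care is part (1). There $g$ is assumed to lie only in $\sgo{X_2}$, not in $X_2$, so condition (3) cannot be applied with $h=g$. This is exactly why we pass through Lemma~\ref{Lem: 2 s for same g}, which extends the relation from $-g+X_2$ to all of $-g+\sgo{X_2}$. Once we are in that coset we have $0$ available, and also $0$ lies in the domain $\sg{-g+X_2}$ of the characters. Apart from this, both arguments are immediate.
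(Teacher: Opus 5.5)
Your proof is correct and follows the paper's argument exactly: in part (1) you apply Lemma~\ref{Lem: 2 s for same g} at $x=0\in -g+\sgo{X_2}$ to get $\zeta_p^{s-s'}=1$, and in part (2) you apply condition (3) with $h=g$ to get $\zeta_p^{s}=1$. Your closing remark also correctly explains why part (1) has to use the extension to $\sgo{X_2}$ instead of condition (3) directly.
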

\begin{proof}
First, suppose that there is some $g\in X_1\cap\sgo{X_2}$ for which there are (at least) two different $(g,s)$ and $(g,s')$ in $Y_1$.
Assume for the sake of contradiction that $\D(Y_1,X_2)$ is not empty, consider $(\beta_1,\beta_2)\in\D(Y_1,X_2)$.
By Lemma~\ref{Lem: 2 s for same g}, we know that for every $x\in -g+\sgo{X_2}$, we have
\[
\beta_1(g,s')(x)
=\zeta_p^{s-s'} \beta_1(g,s)(x). 
\]
Since $g\in \sgo{X_2}$, we know that $0\in -g+\sgo{X_2}$. Taking $x=0$, we see that $1=\zeta_p^{s-s'}$. This contradicts the fact that $s\neq s'$. We conclude that $\D(Y_1,X_2)$ is empty.

Next, suppose that there is some $g\in X_1\cap X_2$ with $(g,s)\in Y_1$ and $s\neq 0$. Assume for the sake of contradiction that $\D(Y_1,X_2)\neq \emptyset$. Consider $(\beta_1,\beta_2)\in \D(Y_1,X_2)$. Since $(g,s)\in Y_1$ and $g\in X_2$, we have
\[
\beta_1(g,s)(-g+g)\beta_2(g)(-g+g) \zeta_p^{s}=1.
\]
This means $\zeta_p^s=1$, which contradicts $s\neq 0$. Therefore, $\D(Y_1,X_2)= \emptyset$. 
\end{proof}

We are now ready to prove the first part of Proposition~\ref{Prop: Count special}.

\begin{proof}[Proof of part (1) of Proposition~\ref{Prop: Count special}]
First we count the number of $\vec{s}\in (\Z/p\Z)^{n_1}$ for which $\D(\set{F_1,\vec{s}}, \set{F_2})$ is non-empty.
By the first part of Lemma~\ref{Lem: only one s},
if $\D(\set{F_1,\vec{s}}, \set{F_2})\neq\emptyset$,
then there is a function $\lambda: \set{F_1}\cap \sgo{F_2}\to\Z/p\Z$
such that for each $i\in [n_1]$,
if $F_1(i)\in \sgo{F_2}$, then $s_i=\lambda(F_1(i))$.

The number of choices for $\lambda$ is at most
\[
p^{\abs{\set{F_1}\cap \sgo{F_2}}}
\leq p^{\abs{G}}.
\]
Once we fix $\lambda$, then the number of choices for $\vec{s}$ is
\[
p^{\abs{\{i\in [n_1]: F_1(i)\notin \sgo{F_2}\} }}.
\]
Once we choose $\vec{s}$, then the number of $(C_1,C_2)$ that are $(F_1,F_2,\vec{s})$-special is
\[
\abs{\D(\set{F_1,\vec{s}}, \set{F_2})}
\leq \abs{G}^{(p+1)\abs{G}}.
\]
The result follows.
\end{proof}

\subsection{Exact count for $\set{F_1}=\set{F_2}=T$}

Given a translated subgroup $T\treq G$, we denote by $D(T,T)$, the set of pairs of functions $(\beta_1,\beta_2)$ such that the following hold:
\begin{enumerate}
    \item $\beta_1$ maps $T$ to $\G(T)^*$;
    \item $\beta_2$ maps $T$ to $\G(T)^*$;
    \item For every $g, h\in T$, we have
    \[
    \beta_1(g)(-g+h)\beta_2(h)(-h+g)=1.
    \]
\end{enumerate}
Clearly, 
\[
\abs{D(T,T)}  \leq \abs{G}^{2\abs{G}}.
\]

\begin{proof}[Proof of part (2) of Proposition~\ref{Prop: Count special}]
If $\vec{s}\neq 0$, then the second part of Lemma~\ref{Lem: only one s} implies that
\[
\abs{\D(\set{F_1,\vec{s}}, T)}=0.
\]
On the other hand, if $\vec{s}=0$, then $\set{F_1,\vec{s}}=T\times\{0\}$. In this case it is clear that
\[
\abs{\D(\set{F_1,\vec{s}}, T)}
=\abs{D(T,T)}.
\]
Therefore, we will be done if we show that $\abs{D(T,T)}=\abs{\Delta^2\G(T)}\times \abs{T}$.

Let $\G(T)=H$. We will denote the class of $x\otimes y$ in $\Delta^2 H$ by $x\wedge y$.
We can see that $\abs{D(T,T)}=\abs{D(H,H)}$, due to the following bijection. Fix $g_0\in T$.
\begin{itemize}
\item If $(\beta_1,\beta_2)\in D(T,T)$, then we can define $\alpha_1:H\to H^*$ and $\alpha_2:H\to H^*$ as $\alpha_1(x)=\beta_1(g_0+x)$, $\alpha_2(x)=\beta_2(g_0+x)$. It is clear that $(\alpha_1,\alpha_2)\in D(H,H)$.
\item If $(\alpha_1,\alpha_2)\in D(H,H)$, then we can define $\beta_1:T\to H^*$ and $\beta_2:T\to H^*$ as $\beta_1(g)=\alpha_1(-g_0+g)$, $\beta_2(g)=\alpha_2(-g_0+g)$.
It is clear that $(\beta_1,\beta_2)\in D(T,T)$.
\end{itemize}

Therefore, it is enough to compute the size of $D(H,H)$, that is, the number of maps $\alpha_1:H\to H^*$ and $\alpha_2:H\to H^*$ such that for every $x, y\in H$, we have
\[
\alpha_1(x)(-x+y)\alpha_2(y)(-y+x)=1.
\]
This is equivalent to saying that for every $y,z\in H$,
\begin{equation}\label{Eqn: C}
\alpha_2(y)(z)
= \alpha_1(z+y)(z).
\end{equation}
So $\alpha_2$ is determined by $\alpha_1$. We want to count how many 
$\alpha_1:H\to H^*$ satisfy the condition that the $\alpha_2(y)$ obtained from \eqref{Eqn: C} are characters.
We are counting the $\alpha_1:H\to H^*$, such that for every $y,z_1,z_2\in H$
\begin{equation}\label{Property 0}
\alpha_1(y+z_1)(z_1) 
\alpha_1(y+z_2)(z_2)
=\alpha_1(y+z_1+z_2)(z_1+z_2).
\end{equation}
Let $A$ be the set of all such $\alpha_1$.

Consider the map $f: H^*\times (\Delta^2(H))^* \to A$ defined as follows. For $\theta\in H^*$, $\iota\in (\Delta^2(H))^*$ and $x,y\in H$, we define
\[
f(\theta,\iota)(x)(y)=\theta(y) \iota(x \wedge y).
\]
Note that $f(\theta,\iota)(x)$ is a character since
\begin{align*}
f(\theta,\iota)(x)(y_1+y_2)
&=\theta(y_1+y_2)\iota(x\wedge (y_1+y_2))\\ &=\theta(y_1)\theta(y_2)\iota(x \wedge y_1)\iota(x \wedge y_2) \\
&=f(\theta,\iota)(x)(y_1)f(\theta,\iota)(x)(y_2).  
\end{align*}

Moreover, $f(\theta,\iota)\in A$ since
\begin{align*}
&\frac{f(\theta,\iota)(y+z_1+z_2)(z_1+z_2)}{f(\theta,\iota)(y+z_1)(z_1)
f(\theta,\iota)(y+z_2)(z_2)}\\
&=\frac{\theta(z_1+z_2) \iota((y+z_1+z_2)\wedge (z_1+z_2))}{ \theta(z_1) \iota((y+z_1)\wedge z_1) \theta(z_2) \iota((y+z_2)\wedge z_2) }\\
&=\iota \Big( (y+z_1+z_2)\wedge (z_1+z_2) - (y+z_1)\wedge z_1- (y+z_2)\wedge z_2 \Big)\\
&=\iota \Big(z_1\wedge z_2+z_2\wedge z_1 \Big)
=\iota(0)=1.
\end{align*}

Next, we will show that the map $f$ is injective. Suppose $f(\theta_1,\iota_1)=f(\theta_2,\iota_2)$. Then
\[
\theta_1(y)
= f(\theta_1,\iota_1)(0)(y) 
=f(\theta_2,\iota_2)(0)(y) 
=\theta_2(y).
\]
This means that $\theta_1=\theta_2$. Therefore it follows that $\iota_1=\iota_2$.

It remains to show that the map $f: H^*\times (\Delta^2(H))^* \to A$ is surjective. Consider $\alpha_1\in A$.
Take $\theta=\alpha_1(0)\in H^*$ and define $\lambda\in (\Delta^2(H))^*$ by 
\[
\lambda(x\wedge y)
=\alpha_1(x)(y)
\alpha_1(0)(-y).
\]
In order for this to be a valid definition, we need to check that 
$\lambda((x_1+x_2)\wedge y)=\lambda(x_1\wedge y)\lambda(x_2\wedge y)$, 
$\lambda(x\wedge (y_1+y_2))=\lambda(x\wedge y_1)\lambda(x\wedge y_2)$
and $\lambda(x\wedge y)\lambda(y\wedge x)=1$.

First, we check that 
$\lambda(x\wedge (y_1+y_2))
=\lambda(x\wedge y_1)
\lambda(x\wedge y_2)$.
\begin{align*}
&\frac{\lambda(x\wedge (y_1+y_2))}{\lambda(x\wedge y_1)\lambda(x\wedge y_2)}\\
&=\alpha_1(x)(y_1+y_2) 
\alpha_1(0)(-y_1-y_2) 
\times \alpha_1(x)(-y_1) 
\alpha_1(0)(y_1) 
\times \alpha_1(x)(-y_2) \alpha_1(0)(y_2)\\
&=1. 
\end{align*}

Next, we check that $\lambda(x\wedge y)\lambda(y\wedge x)=1$.
Note that
\begin{align*}
\lambda(x\wedge y)\lambda(y\wedge x)
&=\alpha_1(x)(y)
\alpha_1(0)(-y) 
\times \alpha_1(y)(x)
\alpha_1(0)(-x) \\
&=\alpha_1(x)(y) 
\alpha_1(y)(x)
\alpha_1(0)(-x-y).
\end{align*}
Now \eqref{Property 0}, tells us that
\begin{align*}
\alpha_1(0)(-x-y)
&=\alpha_1((x+y)-x-y)(-x-y)\\
&= \alpha_1((x+y)-x)(-x)
\alpha_1((x+y)-y)(-y)\\
&= \alpha_1(y)(-x)
\alpha_1(x)(-y).
\end{align*}
It follows that $\lambda(x\wedge y)\lambda(y\wedge x)=1$.

Finally, we show that $\lambda((x_1+x_2)\wedge y)=\lambda(x_1\wedge y)\lambda(x_2\wedge y)$.
Notice that
\[
\lambda((x_1+x_2)\wedge y)
=\frac{1}{\lambda(y \wedge(x_1+x_2))}
=\frac{1}{\lambda(y \wedge x_1) \lambda(y \wedge x_2)}
=\lambda(x_1\wedge y)\lambda(x_2\wedge y).
\]
We have thus shown that $\lambda$ is well defined. This shows that $f$ is surjective and hence bijective.
\end{proof}

\section{Non special $(C_1,C_2)$}\label{Sec: Non special}

In this section we consider the case when $F_1$ and $F_2$ are codes and $(C_1,C_2)$ is not $(F_1,F_2,\vec{s})$-special. Our goal is to show that at least linearly many coefficients $E'[F_1,F_2,C_1,C_2,\vec{s}](i,j)$ are $\neq1$.

\begin{lemma}\label{Lem: Non special lin many non one}
Suppose $F_1\in \Hom([n_1],G)$ is a translated code with distance $\delta n_1$ and image $T_1$, $F_2\in \Hom([n_2],G)$ is a translated code with distance $\delta n_2$ and image $T_2$ and $\vec{s}\in (\Z/p\Z)^{n_1}$.
For $C_1\in \CC^*_{n_1}(\sgo{F_2},F_1)$, $C_2\in \CC^*_{n_2}(\sgo{F_1},F_2)$, if $(C_1,C_2)$ is not $(F_1,F_2,\vec{s})$-special, then there are at least $\delta n_2$ pairs $(i,j)\in [n_1]\times[n_2]$ for which $E'[F_1,F_2,C_1,C_2,\vec{s}]\neq 1$.
\end{lemma}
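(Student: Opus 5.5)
We argue by contrapositive: assume fewer than $\delta n_2$ pairs $(i,j)$ have $E'[F_1,F_2,C_1,C_2,\vec{s}](i,j)\neq 1$, and show that $(C_1,C_2)$ is $(F_1,F_2,\vec{s})$-special. The plan is to work one row at a time. For a fixed $i\in[n_1]$, the number of bad columns $j$ in row $i$ is at most the total number of bad pairs, hence less than $\delta n_2$. Let $\tau_2(i)$ denote the set of good columns for row $i$. Because $F_2$ is a translated code of distance $\delta n_2$ with image $T_2$, we get $\sgo{F_2(j):j\in\tau_2(i)}=T_2$.

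\textbf{Step 1: each $C_1(i)$ is determined on $-F_1(i)+T_2$.} Fix $i$ with $F_1(i)=g$. For $j\in\tau_2(i)$ we have
\[
C_1(i)(-g+F_2(j))=\zeta_p^{-s_i}\,C_2(j)(-F_2(j)+g)^{-1}.
\]
This pins down $C_1(i)$ on the points $-g+F_2(j)$. Since these points affinely generate $-g+T_2$, which generates $\sg{-g+T_2}$, the character $C_1(i)$ is determined by these values.

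\textbf{Step 2: symmetric argument for columns.} For a fixed $j$, the number of bad rows is less than $\delta n_2\le\delta n_1$, because $n_2=\ceil{\alpha n}\le n$. The main subtlety is the hypothesis, so I use that $F_1$ has distance $\delta n_1\ge\delta n_2$. Let $\tau_1(j)$ be the set of good rows for $j$; then $\sgo{F_1(i):i\in\tau_1(j)}=T_1$.

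\textbf{Step 3: show every $E'(i_0,j_0)=1$.} Fix an arbitrary pair $(i_0,j_0)$. The goal is to show that the function
\[
\phi(i,j):=C_1(i)(-F_1(i)+F_2(j))\,C_2(j)(-F_2(j)+F_1(i))\,\zeta_p^{s_i}
\]
equals $1$ at $(i_0,j_0)$. Set $g=F_1(i_0)$ and $h=F_2(j_0)$. The idea is to express $h$ as an affine combination $h=\sum_k b_k F_2(j_k)$ with $\sum_k b_k=1$ and all $j_k\in\tau_2(i_0)$, which is possible by Step 1. Then
\[
C_1(i_0)(-g+h)=\prod_k C_1(i_0)(-g+F_2(j_k))^{b_k}=\prod_k\Big(\zeta_p^{-s_{i_0}}C_2(j_k)(g-F_2(j_k))^{-1}\Big)^{b_k}.
\]
It remains to relate $\prod_k C_2(j_k)(g-F_2(j_k))^{b_k}$ to $C_2(j_0)(g-h)$. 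For this I would expand $g$ as an affine combination of $F_1(i')$ over rows $i'$ that are good simultaneously for $j_0$ and for all the $j_k$. Since there are at most $|G|$ distinct values needed, at most $|G|$ columns are involved, and the relevant rows are chosen as follows.

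\textbf{Main obstacle: the choice of columns and rows.} The danger is that excluding bad rows for several columns at once might exceed the code distance. To avoid this, I would instead pick the columns $j_k$ so that they agree heavily, e.g.\ among the set where rows are good. A cleaner route would be a double-counting argument: since fewer than $\delta n_2$ bad pairs exist in total, removing the union $\sigma_1$ of all bad rows (with $|\sigma_1|<\delta n_2\le \delta n_1$) and the union $\sigma_2$ of all bad columns (with $|\sigma_2|<\delta n_2$) leaves a full good block $\tau_1\times\tau_2$. On this block, $\sgo{F_1(\tau_1)}=T_1$ and $\sgo{F_2(\tau_2)}=T_2$. Lemma~\ref{Lem: Coef one implies F determin C} together with the Lemma~\ref{Lem: special iff factor through}-style construction then gives $\beta_1,\beta_2$ on $\tau_1,\tau_2$ satisfying the special relation on the full affine hulls. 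Any bad index $i\in\sigma_1$ still has its own good columns $\tau_2(i)$ spanning $T_2$, so $C_1(i)$ is forced to agree with the extension of $\beta_1$ to $(F_1(i),s_i)$. The same holds for columns in $\sigma_2$. These forced values satisfy the special identity, so $E'\equiv1$, a contradiction.

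The consistency check in this final extension step, namely that the identity holds for a bad row against a bad column, is where I expect the most care to be needed. It follows by bilinearly expanding both sides in affine combinations over the good block, using that characters are multiplicative and that the affine coefficients sum to $1$.
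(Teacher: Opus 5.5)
Your final route is correct and is essentially the paper's argument. The paper also discards the fewer than $\delta n_2\le\delta n_1$ bad rows and uses the code property of $F_1$ to get $\sgo{F_1(i):i\in\tau}=T_1$. It then runs exactly your bilinear affine expansion: it shows that the set of good values for a bad row is affinely closed inside $\set{F_2}$, and so, by the code property of $F_2$, that row has at least $\delta n_2$ bad columns. You instead apply the code property of $F_2$ up front and check the bad-row/bad-column pairs directly.

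The one point worth stating explicitly is that all bad pairs lie in $\sigma_1\times\sigma_2$. Hence rows of $\tau_1$ and columns of $\tau_2$ are good against every index, and this is what justifies each substitution in your expansion. No genuine extension of $\beta_1,\beta_2$ is needed.
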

\begin{proof}
Denote $X_2=\set{F_2}$, so $\sgo{X_2}=T_2$. Assume for the sake of contradiction that there are some $C_1\in \CC^*_{n_1}(\sgo{F_2},F_1)$, $C_2\in \CC^*_{n_2}(\sgo{F_1},F_2)$ for which
\[
1\leq
\abs{\{(i,j)\in [n_1]\times[n_2]: E'[F_1,F_2,C_1,C_2,\vec{s}](i,j)\neq 1\}}
< \delta n_2.
\]
Denote
\[
\sigma
=\{i\in [n_1] : \exists j\in [n_2], E'[F_1,F_2,C_1,C_2,\vec{s}](i,j)\neq 1\},
\]
so $1\leq \abs{\sigma} <\delta n_2\leq \delta n_1$. 
Let $Y_1=\{(F_1(i),s_i): i\in [n_1]\setminus \sigma\}$ and $X_1=\pi_1(Y_1)$. Since $F_1$ is a translated code of distance $\delta n_1$ with image $T_1$, it follows that $\sgo{X_1}=T_1$.

Denote $\tau=[n_1]\setminus \sigma$, let $F_{\tau}$ be the restriction of $F_1$ to $\tau$ and $F_{\sigma}$ be the restriction of $F_1$ to $\sigma$. Also let $C_{\tau}$ be the restriction of $C_1$ to $\tau$ and $C_{\sigma}$ be the restriction of $C_1$ to $\sigma$.
If we restrict ourselves to $\tau\times[n_2]$ then $(C_\tau,C_2)$ is $(F_{\tau},F_2,\vec{s})$-special on these indices. Therefore, by Lemma~\ref{Lem: special iff factor through} there is $(\beta_1,\beta_2)\in \D(Y_1,X_2)$ such that $C_{\tau}(i)=\beta_1(F_{\tau}(i),s_i)$ and $C_2(j)=\beta_2(F_2(j))$.

Fix some $i_0\in\sigma$. Consider the set
\[
A=
\big\{h\in X_2: C_{\sigma}(i_0)(-F_{\sigma}(i_0)+h) \beta_2(h)(-h+F_{\sigma}(i_0))\zeta_p^{s_{i_0}}=1 \big\}.
\]
By the definition of $\sigma$, we know that $A\subsetneq X_2$.

We know that $F_{\sigma}(i_0)\in T_1=\sgo{X_1}$. Say $F_{\sigma}(i_0)=\sum_{t} a_t g_t$ for $g_t\in X_1$ and $\sum a_t=1$. Recall that $X_1=\pi_1(Y_1)$, so for each $t$, pick some $(g_t,s_t)\in Y_1$.

We claim that $\sgo{A}\cap X_2=A$. Assume for the sake of contradiction that there is some $h\in \sgo{A}\cap X_2$, which is not in $A$.
Since $h\in\sgo{A}$, $h=\sum_k b_k h_k$ for $h_k\in A$ and $\sum b_k=1$.
Now,
\begin{align*}
C_{\sigma}(i_0)(-F_{\sigma}(i_0)+h)
&=C_{\sigma}(i_0)\left(\sum_{k} b_k(-F_{\sigma}(i_0)+h_k)\right)\\
&=\prod_k C_{\sigma}(i_0)(-F_{\sigma}(i_0)+h_k)^{b_k}\\
&=\prod_k \Big( \beta_2(h_k)(-h_k+F_{\sigma}(i_0)) \zeta_p^{s_{i_0}}\Big)^{-b_k}\\
&=\zeta_p^{-s_{i_0}}\prod_k \left( \beta_2(h_k)\left(\sum_t a_t( -h_k+g_t)\right) \right)^{-b_k}\\
&= \zeta_p^{-s_{i_0}}
\prod_k 
\prod_t
\Big( \beta_2(h_k)( -h_k+g_t) \Big)^{-b_ka_t}\\
&= \zeta_p^{-s_{i_0}}
\prod_k 
\prod_t
\Big( \beta_1(g_t,s_t)(-g_t+h_k) \zeta_p^{s_t}  \Big)^{b_ka_t}\\
&= \zeta_p^{-s_{i_0}}
\prod_t
\left( \beta_1(g_t,s_t)\left( \sum_k b_k (-g_t+h_k)\right) \zeta_p^{s_t\sum_k b_k}  \right)^{a_t}\\
&= \zeta_p^{-s_{i_0}}
\prod_t
\left( \beta_1(g_t,s_t) (-g_t+h) \zeta_p^{s_t}  \right)^{a_t}\\
&= \zeta_p^{-s_{i_0}}
\prod_t
\left( \beta_2(h)(-h+g_t)  \right)^{-a_t}\\
&= \zeta_p^{-s_{i_0}}
\left( \beta_2(h) \left( \sum_{t} a_t(-h+g_t) \right) \right)^{-1}\\
&= \zeta_p^{-s_{i_0}}
\left( \beta_2(h) (-h+F_{\sigma}(i_0))  \right)^{-1}.
\end{align*}
This contradicts the fact that $h\notin A$.
We conclude that $\sgo{A}\cap X_2=A$.
Since $A\subsetneq X_2$, this means that $X_2\not\subseteq \sgo{A}$ and hence $\sgo{A}\neq T_2$.

Now consider
\[
\sigma_2
=\{j\in [n_2]: F_2(j)\in X_2\setminus A\}.
\]
Notice that $\sgo{F_2(j):j\in [n_2]\setminus\sigma_2} =\sgo{A}\neq T_2$. Since $F_2$ is a translated code of distance $\delta n_2$ with image $T_2$, this implies $\abs{\sigma_2}\geq \delta n_2$.
For every $j\in \sigma_2$, $E'[F_1,F_2,C_1,C_2,\vec{s}](i_0,j)\neq 1$. This contradicts the choice of $(C_1,C_2)$.
\end{proof}

\section{Computing moments}

In this section, we will prove Proposition~\ref{Prop: moments on parts}, which will complete the proof of Conjecture~\ref{conj: p-Syl_conj} for odd $p$.

\subsection{Estimation for codes}

We start with the case where $F_1$ and $F_2$ are both codes. We combine the results from Section~\ref{Sec: Robust}, Section~\ref{Sec: Special} and Section~\ref{Sec: Non special} to bound the probability
\[
\PPP[(F_1,F_2) \circ \Lnot=(F_1',F_2'), \eta(\Lnot)=\vec{t}].
\]

\begin{proposition}\label{Prop: Individual code bound}
Let $p$ be a prime, $0<\alpha \leq 1$, $\alpha n_1\leq n_2 \leq n_1$, $G$ be a finite abelian $p$-group and $\delta>0$.
Then there is $K>0$, such that for sufficiently large $n_1$,
for every pair of translated subgroups $T_1,T_2\treq G$. 
Suppose 
\begin{itemize}
\item $F_1\in \Hom([n_1],G)$ is a translated code of distance $\delta n_1$ with image $T_1$;
\item $F_2\in \Hom([n_2],G)$ is a translated code of distance $\delta n_2$ with image $T_2$;
\item $\vec{t}\in (\Z/p\Z)^{n_1}$, $F_1'\in \CC_{n_1}(T_2,F_1)$ and $F_2'\in\CC_{n_2}(T_1,F_2)$;
\item For every $i\in [n_1]$ if $F_1(i)\notin T_2$, then $\chi_{T_2,F_1(i)}(F_1'(i))=\zeta_p^{t_i}$.
\end{itemize}
Then we have
\[
\PPP[(F_1,F_2) \circ \Lnot=(F_1',F_2'), \eta(\Lnot)=\vec{t}]
\leq 
\frac{K}{\abs{\CC^*_{n_1}(T_2,F_1)} \abs{\CC^*_{n_2}(T_1,F_2)} p^{n_1}}
p^{\abs{\{i\in [n_1]: F_1(i)\notin T_1\cap T_2\}}}.
\]
On the other hand, if there is some $i\in [n_1]$ for which $F_1(i)\notin T_2$ and $\chi_{T_2,F_1(i)}(F_1'(i))\neq \zeta_p^{t_i}$, then
\[
\PPP[(F_1,F_2) \circ \Lnot=(F_1',F_2'), \eta(\Lnot)=\vec{t}]
=0.
\]
\end{proposition}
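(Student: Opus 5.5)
The second claim is immediate from Lemma~\ref{Lem: ti right value}: on the event $\eta(\Lnot)=\vec{t}$, any realization with $(F_1,F_2)\circ\Lnot=(F_1',F_2')$ forces $\chi_{T_2,F_1(i)}(F_1'(i))=\zeta_p^{t_i}$ whenever $F_1(i)\notin T_2$. So if this fails for some $i$, the event is empty.

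For the main bound, start from the Fourier expansion \eqref{Eqn: prob fourier sum}. Take absolute values term by term, bounding each $|C_1(i)(-F_1'(i))|$, $|C_2(j)(-F_2'(j))|$ and $|\zeta_p^{-\vec{s}\cdot\vec{t}}|$ by $1$. This gives
\[
\PPP[\cdots]\le \frac{1}{\abs{\CC^*_{n_1}(T_2,F_1)}\abs{\CC^*_{n_2}(T_1,F_2)}p^{n_1}}\sum_{\vec{s},C_1,C_2}\prod_{i,j}\big|\E[E'(i,j)^{a_{ij}}]\big|.
\]
Each factor is $1$ when $E'(i,j)=1$. Otherwise $E'(i,j)$ is a nontrivial root of unity of order dividing $\exp(G)p\le |G|p$, so Lemma~\ref{Lem: Wood bound xi^ epsilon balanced} bounds it by $\exp(-\epsilon/(p|G|)^2)$. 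It then suffices to show that the sum over triples $(\vec{s},C_1,C_2)$ of $\exp(-c\cdot\#\{(i,j):E'\ne1\})$, with $c=\epsilon/(p|G|)^2$, is at most $K p^{\abs{\{i:F_1(i)\notin T_1\cap T_2\}}}$. Since $F_1(i)\in T_1$ always, this exponent counts the $i$ with $F_1(i)\notin T_2$.

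Split the triples into three classes.
\begin{enumerate}
\item \emph{Special triples.} By part (1) of Proposition~\ref{Prop: Count special} there are at most $p^{|G|}|G|^{(p+1)|G|}p^{|\{i:F_1(i)\notin T_2\}|}$ of them, and each contributes at most $1$. This class produces exactly the allowed factor.
\item \emph{Robust triples.} These are the triples in which $C_2$ is $(\gamma n_2,F_2)$-robust, or $C_1$ is $(\gamma n_1,F_1,\vec{s})$-robust, or $\vec{s}$ is $(\gamma n_1,F_1,T_2)$-robust, for a small $\gamma>0$ to be fixed. By Lemma~\ref{Lem: quad many non one}, with $d_1=\delta n_k$ and $d_2=\gamma n_{k'}$ and using $n_2\ge\alpha n_1$, each such triple has at least $c'n_1^2$ coefficients not equal to $1$. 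Their total contribution is therefore at most $p^{n_1}|G|^{n_1+n_2}e^{-cc'n_1^2}\to0$.
\item \emph{Non-special, non-robust triples.} By Lemma~\ref{Lem: Non special lin many non one}, each has at least $\delta n_2\ge\delta\alpha n_1$ coefficients not equal to $1$, so each contributes at most $e^{-c\delta\alpha n_1}$.
\end{enumerate}

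The main obstacle is counting the non-robust triples in class (3). Non-robustness of all three objects means there is a set $\sigma$ of size below $\gamma n_1$ (resp.\ $\gamma n_2$) outside of which:
\begin{itemize}
\item $C_2$ is a function of $F_2(j)$;
\item $C_1$ is a function of $(F_1(i),s_i)$;
\item $s_i$ is a function of $F_1(i)$ whenever $F_1(i)\in T_2$.
\end{itemize}
The positions with $F_1(i)\notin T_2$ remain free in $\vec{s}$, and these give the factor $p^{|\{i:F_1(i)\notin T_2\}|}$. The remaining choices are the sets $\sigma$ and the values on them, numbering at most $\binom{n_1}{\gamma n_1}\binom{n_2}{\gamma n_2}(p|G|)^{2\gamma n_1}|G|^{O(|G|)}\le e^{H(\gamma)\cdot O(n_1)}$. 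Choose $\gamma$ small relative to $c\delta\alpha$, so that this growth is beaten by $e^{-c\delta\alpha n_1}$. Class (3) then contributes $o(1)\cdot p^{|\{i:F_1(i)\notin T_2\}|}$.

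Summing the three classes, and using finitely many choices of $(T_1,T_2)$ for uniformity, gives the constant $K$ for $n_1$ large.
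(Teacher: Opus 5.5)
Your proposal is correct and follows essentially the same route as the paper. The vanishing case comes from Lemma~\ref{Lem: ti right value}. The main bound takes absolute values in the Fourier expansion and splits the triples $(\vec{s},C_1,C_2)$ into three groups:
\begin{itemize}
\item special triples, counted by part (1) of Proposition~\ref{Prop: Count special};
\item triples in which $C_1$, $C_2$ or $\vec{s}$ is robust, which have quadratically many nontrivial coefficients by Lemma~\ref{Lem: quad many non one};
\item non-special, non-robust triples, which have linearly many nontrivial coefficients by Lemma~\ref{Lem: Non special lin many non one}, with the count of non-robust objects beaten by taking $\gamma$ small.
\end{itemize}
The differences are cosmetic. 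You lump the three robust cases into one, and you apply Wood's root-of-unity bound with order $p\abs{G}$ rather than the paper's $\abs{G}$, which is if anything slightly more careful.
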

\begin{proof}
Lemma~\ref{Lem: ti right value} tells us that if there is some $i\in [n_1]$ for which $F_1(i)\notin T_2$ and $\chi_{T_2,F_1(i)}(F_1'(i))\neq \zeta_p^{t_i}$, then
\[
\PPP[(F_1,F_2) \circ \Lnot=(F_1',F_2'), \eta(\Lnot)=\vec{t}]
=0.
\]

Now assume that $F_1(i)\notin T_2\implies \chi_{T_2,F_1(i)}(F_1'(i))=\zeta_p^{t_i}$.
Equation \eqref{Eqn: prob fourier sum} implies that
\begin{equation*}
\begin{split}
&\PPP[(F_1,F_2)\circ \Lnot=(F_1',F_2') \text{ and } \eta(\Lnot)=\vec{t}]\\
&\leq 
\frac{1}{\abs{\CC^*_{n_1}(\sgo{F_2},F_1)}}
\frac{1}{\abs{\CC^*_{n_2}(\sgo{F_1},F_2)}}
\frac{1}{p^{n_1}}
\sum_{C_1\in \CC^*_{n_1}(\sgo{F_2},F_1)}
\sum_{C_2\in\CC^*_{n_2}(\sgo{F_1},F_2)}
\sum_{\vec{s}\in (\Z/p\Z)^{n_1}}\\
&\quad\quad\quad\quad
%\zeta_p^{-\vec{s}\cdot\vec{t}}
%\prod_{i=1}^{n_1} C_1(i)(-F_1'(i))
%\prod_{j=1}^{n_2} C_2(j)(-F_2'(j))
\prod_{i=1}^{n_1} \prod_{j=1}^{n_2}
\left|
\E
\left[
\Big(E'[F_1,F_2,C_1,C_2,\vec{s}](i,j) \Big)^{a_{ij}}
\right]
\right|.
\end{split}
\end{equation*}

First, we compute the contribution of $(C_1, C_2, \vec{s})$ for which $(C_1,C_2)$ is $(F_1,F_2,\vec{s})$-special.
By Proposition~\ref{Prop: Count special}, the number of triples $(C_1, C_2, \vec{s})$ for which $(C_1,C_2)$ is $(F_1,F_2,\vec{s})$-special is at most
\[
p^{\abs{G}}
\times \abs{G}^{(p+1)\abs{G}}
\times p^{\abs{\{i\in [n_1]: F_1(i)\notin T_2\}}}.
\]
Thus, their combined contribution is at most
\[
\frac{p^{\abs{G}} 
\times \abs{G}^{(p+1)\abs{G}}}{\abs{\CC^*_{n_1}(T_2,F_1)} \abs{\CC^*_{n_2}(T_1,F_2)} p^{\abs{\{i\in [n_1]: F_1(i)\in T_2\}}}}.
\]

Next, we compute the contribution of triples $(C_1, C_2, \vec{s})$ for which $(C_1,C_2)$ is not $(F_1,F_2,\vec{s})$-special. We divide these into several cases based on a constant $\gamma>0$ that we will pick later.
\begin{enumerate}
\item We compute the contribution of $(C_1, C_2, \vec{s})$ for which $C_1$ is $(\gamma n_1,F_1,\vec{s})$-robust.
Lemma~\ref{Lem: quad many non one} tells us that for such $(\vec{s},C_1,C_2)$, at least $\frac{\delta\gamma \alpha n_1^2}{\abs{G}^2}$ of the coefficients $E'[F_1,F_2,C_1,C_2,\vec{s}](i,j)$ are not $1$. By Lemma~\ref{Lem: Wood bound xi^ epsilon balanced}, for each such coefficient
\[
\left|\E
\left[
\Big(E'[F_1,F_2,C_1,C_2,\vec{s}](i,j) \Big)^{a_{ij}}
\right] \right|
\leq \exp{\left(-\frac{\epsilon}{\abs{G}^2}\right)}.
\]
The number of $(\vec{s},C_1,C_2)$ is at most $p^{n_1}\abs{\CC^*_{n_1}(\sgo{F_2},F_1)} \abs{\CC^*_{n_2}(\sgo{F_1},F_2)}$.
Therefore, the combined contribution of such triples is at most
\[
\frac{\abs{\CC^*_{n_1}(\sgo{F_2},F_1)} \abs{\CC^*_{n_2}(\sgo{F_1},F_2)} p^{n_1}}{\abs{\CC^*_{n_1}(\sgo{F_2},F_1)} \abs{\CC^*_{n_2}(\sgo{F_1},F_2)} p^{n_1}}
\left(\exp{\Big(-\frac{\epsilon}{\abs{G}^2}\Big)}\right)^{\frac{\delta\gamma \alpha n_1^2}{\abs{G}^2}}
= \exp{\left(-\frac{\epsilon \delta\gamma \alpha n_1^2}{\abs{G}^4}\right)}.
\]

\item Next, we compute the contribution of $(\vec{s},C_1,C_2)$ for which $C_2$ is $(\gamma n_2, F_2)$-robust.
Lemma~\ref{Lem: quad many non one} tells us that for such $(\vec{s},C_1,C_2)$, at least $\frac{\delta\gamma \alpha n_1^2}{p\abs{G}^2}$ of the coefficients $E'[F_1,F_2,C_1,C_2,\vec{s}](i,j)$ are not $1$.
Similarly to above, the combined contribution of such triples is at most
\[
\exp{\left(-\frac{\epsilon \delta\gamma \alpha n_1^2}{p\abs{G}^4}\right)}.
\]

\item Next, we compute the contribution of $(\vec{s},C_1,C_2)$ for which $\vec{s}$ is $(\gamma n_1, F_1, T_2)$-robust.
Lemma~\ref{Lem: quad many non one} tells us that for such $(\vec{s},C_1,C_2)$, at least $\frac{\delta\gamma \alpha n_1^2}{\abs{G}^2}$ of the coefficients $E'[F_1,F_2,C_1,C_2,\vec{s}](i,j)$ are not $1$.
Similarly to above, the combined contribution of such triples is at most
\[
\exp{\left(-\frac{\epsilon \delta\gamma \alpha n_1^2}{\abs{G}^4}\right)}.
\]

\item Finally, we compute the contribution of $(\vec{s},C_1,C_2)$ for which $(C_1,C_2)$ is not $(F_1,F_2,\vec{s})$-special, $C_1$ is not $(\gamma n_1, F_1,\vec{s})$-robust, $C_2$ is not $(\gamma n_2, F_2)$-robust and $\vec{s}$ is not $(\gamma n_1, F_1,T_2)$-robust.
Lemma~\ref{Lem: Non special lin many non one} tells us that in this case there will be at least $\delta \alpha n_1$ many coefficients that are not $1$.

Lemma~\ref{Lem: count non robust vec s} tells us that the number of $\vec{s}$ that are not $(\gamma n_1,F_1,T_2)$-robust is at most $p^{\abs{G}} (4p)^{n_1\sqrt{\gamma}} p^{\abs{\{i\in [n_1]:F_1(i)\notin T_2\}}}$.
Lemma~\ref{Lem: count non robust with s} tells us that the number of $C_1\in \CC^*_{n_1}(T,F_1)$ that are not $(\gamma n_1,F_1,\vec{s})$-robust is at most $\abs{G}^{p\abs{G}} \left(4\abs{G}\right)^{n_1\sqrt{\gamma}}$.
Lemma~\ref{Lem: count non robust no s} tells us that the number of $C_2\in \CC^*_{n_2}(T,F_2)$ that are not $(\gamma n_2,F_2)$-robust is at most $\abs{G}^{\abs{G}} \left(4\abs{G}\right)^{n_2\sqrt{\gamma}}$.
Therefore, the combined contribution of such triples is at most
\[
\frac{1}{\abs{\CC^*_{n_1}(T_2,F_1)} \abs{\CC^*_{n_2}(T_1,F_2)}}
\frac{1}{p^{\abs{\{i\in [n_1]:F_1(i)\in T_2\}}}}
\Big(
p^{\abs{G}} 
\abs{G}^{(p+1)\abs{G}}  
\Big)
\left(
\Big( 64p\abs{G}^2 \Big)^{\sqrt{\gamma}}
\exp\Big(-\frac{\epsilon\delta\alpha }{\abs{G}^2}\Big)
\right)^{n_1}.
\]
\end{enumerate}
We choose a sufficiently small $\gamma$ to ensure that
\[
\Big( 64p\abs{G}^2 \Big)^{\sqrt{\gamma}}
\exp\Big(-\frac{\epsilon\delta\alpha }{\abs{G}^2}\Big)
<1.
\]
We fix such a $\gamma$ and denote $c=-\log\left(\Big( 64p\abs{G}^2 \Big)^{\sqrt{\gamma}}
\exp\Big(-\frac{\epsilon\delta\alpha }{\abs{G}^2}\Big)\right) >0$.
For sufficiently large values of $n_1$, we will have
\[
p \abs{G}^{2}
\exp{\left(-\frac{\epsilon \delta\gamma \alpha n_1}{p\abs{G}^4}\right)}
<\exp(-c).
\]
Therefore, for sufficiently large values of $n_1$, the combined contribution of triples $(C_1,C_2,\vec{s})$ coming from $(1)$, $(2)$, $(3)$ and $(4)$ is at most
\[
\frac{1}{\abs{\CC^*_{n_1}(T_2,F_1)} \abs{\CC^*_{n_2}(T_1,F_2)}}
\frac{1}{p^{\abs{\{i\in [n_1]:F_1(i)\in T_2\}}}}
\Big(
3+
p^{\abs{G}} 
\abs{G}^{(p+1)\abs{G}}  
\Big)
\exp({-c n_1}).
\]
We conclude that
\begin{align*}
&\PPP[(F_1,F_2)\circ \Lnot=(F_1',F_2') \text{ and } \eta(\Lnot)=\vec{t}]\\
&\leq
\frac{1}{\abs{\CC^*_{n_1}(T_2,F_1)} \abs{\CC^*_{n_2}(T_1,F_2)}}
\frac{1}{p^{\abs{\{i\in [n_1]:F_1(i)\in T_2\}}}}
\left(
p^{\abs{G}} \abs{G}^{(p+1)\abs{G}}
+\big(
3+
p^{\abs{G}} 
\abs{G}^{(p+1)\abs{G}}  
\big)
\exp({-c n_1})
\right).
\end{align*}
The first part of the result follows by choosing  $K=
3+
2\times p^{\abs{G}} 
\abs{G}^{(p+1)\abs{G}}$.
\end{proof}

\subsection{Main term}

In this subsection, we fix a translated subgroup $T\treq G$ and consider $F_1$ and $F_2$ that are translated codes with the same image $T$.
Note that $\sgo{F_1}=\sgo{F_2}=T$ implies
%$\CC_{n_1}(T,F_1)= \Hom([n_1], \G(T))$,
%$\CC_{n_2}(T,F_2)= \Hom([n_2], \G(T))$, 
$\CC^*_{n_1}(T,F_1)= \Hom([n_1], \G(T)^*)$ and $\CC^*_{n_2}(T,F_2)= \Hom([n_2], \G(T)^*)$.
% Therefore, \eqref{Eqn: prob fourier sum} simplifies to, for any $F_1\in \SSS{n_1}{T}{T}$, $F_2\in \SSS{n_2}{T}{T}$, $F_1'\in \Hom([n_1],\G(T))$, $F_2'\in \Hom([n_2],\G(T))$ and $\vec{t}\in(\Z/p\Z)^{n_1}$, we have
% \begin{equation}\label{Eqn: Fourier T1=T2}
% \begin{split}
% &\PPP[(F_1,F_2)\circ \Lnot=(F_1',F_2') \text{ and } \eta(\Lnot)=\vec{t}]\\
% &=\frac{1}{\abs{T}^{n_1+n_2}}
% \frac{1}{p^{n_1}}
% \sum_{C_1\in \Hom([n_1],\G(T)^*)}
% \sum_{C_2\in \Hom([n_2],\G(T)^*)}
% \sum_{\vec{s}\in (\Z/p\Z)^{n_1}}\\
% &\quad\quad\quad\quad
% \zeta_p^{-\vec{s}\cdot\vec{t}}
% \prod_{i=1}^{n_1} C_1(i)(-F_1'(i))
% \prod_{j=1}^{n_2} C_2(j)(-F_2'(j))
% \prod_{i=1}^{n_1} \prod_{j=1}^{n_2}
% \E
% \left[
% \Big(E'[F_1,F_2,C_1,C_2,\vec{s}](i,j) \Big)^{a_{ij}}
% \right].
% \end{split}
% \end{equation}

\begin{proposition}\label{Prop: Individual code bound same image}
Let $p$ be a prime, $0<\alpha \leq 1$, $\alpha n_1\leq n_2 \leq n_1$, $G$ be a finite abelian $p$-group and $\delta>0$.
Then there are $c>0$ and $K_1>0$, such that for sufficiently large $n_1$,
for every translated subgroup $T\treq G$, 
if $F_1\in \Hom([n_1],G)$ is a translated code of distance $\delta n_1$ with image $T$, $F_2\in \Hom([n_2],G)$ is a translated code of distance $\delta n_2$ with image $T$ such that $\set{F_1}=\set{F_2}=T$ and $\vec{t}\in (\Z/p\Z)^{n_1}$, then we have
\[
\abs{
\PPP[(F_1,F_2) \circ \Lnot=0, \eta(\Lnot)=\vec{t}]
-\frac{\abs{\Delta^2(\G(T))}}{\abs{T}^{n_1+n_2-1}p^{n_1}}
}
\leq \frac{K_1 \exp(-c n_1)}{\abs{T}^{n_1+n_2} p^{n_1}}.
\]
\end{proposition}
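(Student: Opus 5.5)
We start from the exact Fourier expansion \eqref{Eqn: prob fourier sum} with $F_1'=0$ and $F_2'=0$. Since $\sgo{F_1}=\sgo{F_2}=T$ and every $F_1(i),F_2(j)\in T$, each $\sg{-F_1(i)+T}$ and $\sg{-F_2(j)+T}$ equals $\G(T)$. Hence $\CC^*_{n_1}(T,F_1)=\Hom([n_1],\G(T)^*)$ and $\CC^*_{n_2}(T,F_2)=\Hom([n_2],\G(T)^*)$, and the prefactor is exactly
\[
\frac{1}{\abs{T}^{n_1+n_2}p^{n_1}}.
\]
The factors $C_1(i)(-F_1'(i))$ and $C_2(j)(-F_2'(j))$ are all $1$. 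So the probability equals this prefactor times
\[
\sum_{\vec{s},C_1,C_2}\zeta_p^{-\vec{s}\cdot\vec{t}}\prod_{i,j}\E\big[E'(i,j)^{a_{ij}}\big].
\]
We split this sum into special and non-special triples $(\vec{s},C_1,C_2)$.

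\textbf{Special triples.} For these every factor $\E[E'(i,j)^{a_{ij}}]=1$. By part (2) of Proposition~\ref{Prop: Count special}, special pairs exist only for $\vec{s}=0$, where $\zeta_p^{-\vec{s}\cdot\vec{t}}=1$, and there are exactly $\abs{\Delta^2\G(T)}\times\abs{T}$ of them. Multiplying by the prefactor gives exactly the main term
\[
\frac{\abs{\Delta^2(\G(T))}}{\abs{T}^{n_1+n_2-1}p^{n_1}},
\]
for every $\vec{t}$. This part is exact, with no error.

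\textbf{Non-special triples.} We bound the sum of absolute values exactly as in the proof of Proposition~\ref{Prop: Individual code bound}, using the same case split with a small constant $\gamma$.
\begin{enumerate}
\item If $C_1$ is $(\gamma n_1,F_1,\vec{s})$-robust, $C_2$ is $(\gamma n_2,F_2)$-robust, or $\vec{s}$ is $(\gamma n_1,F_1,T)$-robust, then Lemma~\ref{Lem: quad many non one} gives quadratically many coefficients $\neq1$. Lemma~\ref{Lem: Wood bound xi^ epsilon balanced} then bounds each such product by $\exp(-c' n_1^2)$. Since there are at most $p^{n_1}\abs{T}^{n_1+n_2}$ triples, the total contribution relative to the prefactor is $\le 3\exp(-c'n_1^2)\,p^{n_1}\abs{T}^{n_1+n_2}$. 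After dividing by the prefactor this is at most $3\exp(-c'n_1^2)$ as a probability, which is $\ll \exp(-cn_1)/(\abs{T}^{n_1+n_2}p^{n_1})$ for large $n_1$, because $\abs{T}^{n_1+n_2}p^{n_1}\le e^{O(n_1)}$.
\item For the remaining non-special, non-robust triples, Lemma~\ref{Lem: Non special lin many non one} gives at least $\delta\alpha n_1$ coefficients $\neq1$. We count these triples with Lemmas~\ref{Lem: count non robust vec s}, \ref{Lem: count non robust with s} and \ref{Lem: count non robust no s}. Here $F_1(i)\in T$ for all $i$, so the factor $p^{\abs{\{i:F_1(i)\notin T\}}}$ is $1$. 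The count is therefore at most a constant times $(64p\abs{G}^2)^{\sqrt{\gamma}n_1}$. Choosing $\gamma$ small makes the total contribution at most $K'\exp(-cn_1)$ times the prefactor.
\end{enumerate}

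Combining the two parts, the probability differs from the main term by at most $K_1\exp(-cn_1)/(\abs{T}^{n_1+n_2}p^{n_1})$, with constants uniform over the finitely many $T\treq G$.

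The main obstacle is the robust case. There we must check that the superexponential decay $\exp(-c'n_1^2)$ beats the exponential count of triples, so that the result lies below the normalised scale $\abs{T}^{-(n_1+n_2)}p^{-n_1}$ rather than merely being $o(1)$. This holds because $n_2\le n_1$, so the count is only $e^{O(n_1)}$. The other delicate point is confirming that, with $\set{F_1}=T$, the non-robust count carries no extra exponential factor in $n_1$.
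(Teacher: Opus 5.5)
Your proposal is correct and follows essentially the same route as the paper. The paper's proof also takes the special triples exactly from part (2) of Proposition~\ref{Prop: Count special}: only $\vec{s}=0$ contributes, with $\abs{\Delta^2\G(T)}\times\abs{T}$ pairs. It bounds the non-special triples by reusing the four-case robust/non-robust estimate from the proof of Proposition~\ref{Prop: Individual code bound}, which you simply re-derive explicitly. One small wording slip: in the robust case you mean \emph{multiplying} by the prefactor rather than dividing, but the resulting comparison $3\exp(-c'n_1^2)\le K_1\exp(-cn_1)/(\abs{T}^{n_1+n_2}p^{n_1})$ for large $n_1$ is right.
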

\begin{proof}
The proof of Proposition~\ref{Prop: Individual code bound} tells us that
\begin{align*}
&\PPP[(F_1,F_2) \circ \Lnot=0, \eta(\Lnot)=\vec{t}]\\
&= \frac{1}{\abs{T}^{n_1+n_2}}
\frac{1}{p^{n_1}}
\sum_{\substack{C_1\in \CC^*_{n_1}(T,F_1) \\ C_2\in\CC^*_{n_2}(t,F_2) \\ \vec{s}\in (\Z/p\Z)^{n_1}\\ (C_1,C_2)\text{ is }(F_1,F_2,\vec{s})-\text{special} }}
\zeta_p^{-\vec{s}\cdot\vec{t}}
+\text{error term},
\end{align*}
with
\[
\abs{\text{error term}}
\leq \frac{1}{\abs{T}^{n_1+n_2}}
\frac{1}{p^{n_1}}
\Big(
3+
p^{\abs{G}} 
\abs{G}^{(p+1)\abs{G}}  
\Big)
\exp({-c n_1}).
\]

We are given that $\set{F_1}=\set{F_2}=T$.
We compute the contribution of $(C_1,C_2, \vec{s})$ for which $(C_1,C_2)$ is $(F_1,F_2,\vec{s})$-special.
Proposition~\ref{Prop: Count special} tells us that if $\vec{s}\neq0$, then there are no pairs $(C_1,C_2)$ that are $(F_1,F_2,\vec{s})$-special.
On the other hand, for $\vec{s}=0$, the number of pairs $(C_1,C_2)$ that are $(F_1,F_2,\vec{s})$-special is $\abs{\Delta^2\G(T)}\times \abs{T}$.
The result follows.
\end{proof}

\begin{corollary}\label{Cor: main term}
Let $p$ be a prime, $0<\alpha \leq 1$, $\rho, \delta>0$ and $G$ be a finite abelian $p$-group.
Consider a translated subgroup $T\treq G$.
We have
\[
\lim_{n\to\infty}
\sum_{F_1\in\SSS{n,\delta}{T}{T}} \sum_{F_2\in\SSS{\ceil{\alpha n},\delta}{T}{T}} \PPP[(F_1,F_2) \circ \Lna=0, \eta(\Lna)\in \EE_{n,\rho}]
=\abs{\Delta^2 \G(T)} \times \abs{\G(T)}.
\]
\end{corollary}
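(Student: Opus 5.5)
The plan is to sum the estimate of Proposition~\ref{Prop: Individual code bound same image} over $\vec{t}\in\EE_{n,\rho}$ and over the pairs $(F_1,F_2)$ of translated codes with image $T$. A little care is needed with pairs for which $\set{F_1}$ or $\set{F_2}$ is a proper subset of $T$. Write $n_1=n$ and $n_2=\ceil{\alpha n}$.

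First, count the relevant maps. $\SSS{n,\delta}{T}{T}$ is the set of translated codes of distance $\delta n$ with image $T$. Every such $F$ takes values in $T$, so $|\SSS{n,\delta}{T}{T}|\le |T|^n$. Conversely, a map $F:[n]\to T$ fails to be such a code only if some $\sigma$ with $|\sigma|<\log_p(D)\delta n$ drops $\sgo{F}$ to a proper translated subgroup. This forces all but at most $c\,\delta n$ values (with $c=\log_p|G|$) to lie in some proper translated subgroup $T'\subsetneq T$, and $|T'|\le |T|/p$. Hence the number of non-codes is at most $\sum_{T'}\binom{n}{\le c\delta n}|T|^{c\delta n}(|T|/p)^n$, which is $o(|T|^n)$ for small $\delta$. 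Similarly, the maps with $\set{F}\ne T$ number at most $|T|(|T|-1)^n=o(|T|^n)$.

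The main term comes from pairs with $\set{F_1}=\set{F_2}=T$. Summing Proposition~\ref{Prop: Individual code bound same image} over $\vec{t}\in\EE_{n,\rho}$ gives
\[
\sum_{\vec t\in\EE_{n,\rho}}\PPP[\cdots]=\frac{|\EE_{n,\rho}|}{p^{n}}\cdot\frac{|\Delta^2\G(T)|}{|T|^{n_1+n_2-1}}+O\Big(\frac{e^{-cn}}{|T|^{n_1+n_2}}\Big).
\]
The number of $\vec{t}\notin\EE_{n,\rho}$ is exponentially smaller than $p^n$ by Chernoff, so $|\EE_{n,\rho}|/p^n\to1$. Next, sum over the $(1-o(1))|T|^{n_1}|T|^{n_2}$ admissible pairs. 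The main term tends to $|\Delta^2\G(T)|\,|T|=|\Delta^2\G(T)|\,|\G(T)|$, and the error terms total $O(e^{-cn})\to0$.

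The hard part is the remaining pairs, where one of $\set{F_1},\set{F_2}$ is a proper subset of $T$ but both are still codes with image $T$. For these I will use Proposition~\ref{Prop: Individual code bound} with $F_1'=F_2'=0$ and $T_1=T_2=T$. Since $F_1(i)\in T=T_1\cap T_2$ for every $i$, the exceptional exponent vanishes. Summing over $\vec t$ therefore gives a probability of at most $K/|T|^{n_1+n_2}$ per pair. The number of such pairs is $o(|T|^{n_1+n_2})$, so their total contribution tends to $0$. Combining the two cases yields the limit $|\Delta^2\G(T)|\times|\G(T)|$.

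The point to check carefully is that $K$ in Proposition~\ref{Prop: Individual code bound} is uniform in $F_1,F_2$. This should hold, since $K$ depends only on $p$ and $|G|$. The other thing to verify is that the code-counting estimate is genuinely $o(|T|^n)$ with $\delta$ fixed. If it is not, I would instead only need the set of non-codes to be counted inside $\SSS{n,\delta}{T}{T}$ itself, which holds by definition of $\delta$-depth $1$, with the counting burden moved to the other parts.
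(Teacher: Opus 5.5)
Your proposal is correct and follows essentially the same route as the paper. The paper likewise applies Proposition~\ref{Prop: Individual code bound same image} to the pairs with $\set{F_1}=\set{F_2}=T$, controls the remaining pairs (where $\set{F_1}$ or $\set{F_2}$ is a proper subset of $T$) using the uniform constant $K$ from Proposition~\ref{Prop: Individual code bound} together with the exponentially small count of such maps, and uses Lemma~\ref{Lem: count codes} and Lemma~\ref{Lem: size of E rho} for the counting of codes and of $\EE_{n,\rho}$. The paper's proof also invokes Lemma~\ref{Lem: count codes}, which is valid only for $\delta<\frac{1}{9\log_p(\abs{G})}$, so, as you suspected, both arguments need $\delta$ sufficiently small, and the corollary should be read with that restriction.
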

\begin{proof}
Note that
\begin{align*}
&\left| 
\left(
\sum_{F_1\in\SSS{n,\delta}{T}{T}} 
\sum_{F_2\in\SSS{\ceil{\alpha n},\delta}{T}{T}}
\PPP[(F_1,F_2) \circ \Lna=0, \eta(\Lna)\in \EE_{n,\rho}]
\right) -
\abs{\Delta^2 \G(T)} \times \abs{\G(T)}
\right|\\
&\leq 
\sum_{F_1\in\SSS{n,\delta}{T}{T}} 
\sum_{F_2\in\SSS{\ceil{\alpha n},\delta}{T}{T}}
\sum_{\vec{t}\in \EE_{n,\rho}}
\left|
\PPP[(F_1,F_2) \circ \Lna=0, \eta(\Lna)=\vec{t}]
- \frac{\abs{\Delta^2 \G(T)}}{\abs{T}^{n+\ceil{\alpha n}-1}p^{n}}
\right|\\
&\quad\quad+
\left|
\abs{\SSS{n,\delta}{T}{T}}\times 
\abs{\SSS{\ceil{\alpha n},\delta}{T}{T}} \times
\abs{\EE_{n,\rho}}
\frac{\abs{\Delta^2 \G(T)}}{\abs{T}^{n+\ceil{\alpha n}-1}p^{n_1}}
- \abs{\Delta^2 \G(T)} \times \abs{\G(T)}
\right|.
\end{align*}

For the second term note that
\begin{align*}
&\left|
\abs{\SSS{n,\delta}{T}{T}}\times 
\abs{\SSS{\ceil{\alpha n},\delta}{T}{T}} \times
\abs{\EE_{n,\rho}}
\frac{\abs{\Delta^2 \G(T)}}{\abs{T}^{n_1+n_2-1}p^{n_1}}
- \abs{\Delta^2 \G(T)} \times \abs{\G(T)}
\right|\\
&= \abs{\Delta^2 \G(T)} \times \abs{\G(T)}
\left|
\frac{\abs{\SSS{n,\delta}{T}{T}}\times 
\abs{\SSS{\ceil{\alpha n},\delta}{T}{T}} \times
\abs{\EE_{n,\rho}}}{\abs{T}^{n+\ceil{\alpha n} }p^n}
- 1
\right|.
\end{align*}
By Lemma~\ref{Lem: count codes} and Lemma~\ref{Lem: size of E rho}, this goes to $0$ as $n\to\infty$.

Next, Proposition~\ref{Prop: Individual code bound same image} tells us that
\begin{align*}
&\sum_{\substack{F_1\in\SSS{n,\delta}{T}{T} \\ \set{F_1}=T} } 
\sum_{\substack{F_2\in\SSS{\ceil{\alpha n},\delta}{T}{T} \\\set{F_2}=T } }
\sum_{\vec{t}\in \EE_{n,\rho}}
\left|
\PPP[(F_1,F_2) \circ \Lna=0, \eta(\Lna)=\vec{t}]
- \frac{\abs{\Delta^2 \G(T)}}{\abs{T}^{n+\ceil{\alpha n}-1}p^{n}}
\right|\\
&\leq \abs{\SSS{n,\delta}{T}{T}}
\times \abs{\SSS{\ceil{\alpha n},\delta}{T}{T}}
\times \abs{\EE_{n,\rho}}
\times \frac{K_1 \exp(-c(n+\ceil{\alpha n}))}{\abs{T}^{n+\ceil{\alpha n}} p^n}\\ 
&\leq K_1 \exp(-c(n)).
\end{align*}

Finally, consider subsets $X_1, X_2\subseteq T$. Proposition~\ref{Prop: Individual code bound} tells us that
\begin{align*}
&\sum_{\substack{F_1\in\SSS{n,\delta}{T}{T} \\ \set{F_1}=X_1} } 
\sum_{\substack{F_2\in\SSS{\ceil{\alpha n},\delta}{T}{T} \\\set{F_2}=X_2 } }
\sum_{\vec{t}\in \EE_{n,\rho}}
\left|
\PPP[(F_1,F_2) \circ \Lna=0, \eta(\Lna)=\vec{t}]
- \frac{\abs{\Delta^2 \G(T)}}{\abs{T}^{n+\ceil{\alpha n}-1}p^{n}}
\right|\\
&\leq \sum_{\substack{F_1\in\SSS{n,\delta}{T}{T} \\ \set{F_1}=X_1} } 
\sum_{\substack{F_2\in\SSS{\ceil{\alpha n},\delta}{T}{T} \\\set{F_2}=X_2 } }
\sum_{\vec{t}\in \EE_{n,\rho}}
\left(
\PPP[(F_1,F_2) \circ \Lna=0, \eta(\Lna)=\vec{t}]
+\frac{\abs{\Delta^2 \G(T)}}{\abs{T}^{n+\ceil{\alpha n}-1}p^{n}}
\right)\\
&\leq \sum_{\substack{F_1\in\SSS{n,\delta}{T}{T} \\ \set{F_1}=X_1} } 
\sum_{\substack{F_2\in\SSS{\ceil{\alpha n},\delta}{T}{T} \\\set{F_2}=X_2 } }
\sum_{\vec{t}\in \EE_{n,\rho}}
\left(
\frac{K}{\abs{T}^{n+\ceil{\alpha n}}p^{n}}
+\frac{\abs{\Delta^2 \G(T)}}{\abs{T}^{n+\ceil{\alpha n}-1}p^{n}}
\right)\\
&\leq \left(
\frac{K}{\abs{T}^{n+\ceil{\alpha n}}p^{n}}
+\frac{\abs{\Delta^2 \G(T)}}{\abs{T}^{n+\ceil{\alpha n}-1}p^{n}}
\right)
\times \abs{X_1}^{n}
\times \abs{X_2}^{\ceil{\alpha n}}
\times p^n\\
&= \left(
K
+\abs{\Delta^2 \G(T)} \times \abs{T}
\right)
\times \left( \frac{\abs{X_1}}{\abs{T}} \right)^{n}
\times \left(\frac{\abs{X_2}}{\abs{T}}\right)^{\ceil{\alpha n}}.
\end{align*}
If $X_1\subsetneq T$ or $X_2\subsetneq T$, then this limit will be $0$. The result follows.
\end{proof}

\subsection{Error term, Type 1}

In this subsection, we fix chains of translated subgroups $T_1\treq S_1\treq G$ and $T_2\treq S_2\treq G$ such that $T_1$, $T_2$, $S_1$ and $S_2$ are not all the same and $\abs{T_1}\leq \abs{T_2}$.

\begin{lemma}\label{Lem: bound prob withoit t}
Let $p$ be a prime, $0<\alpha \leq 1$, $\alpha n_1\leq n_2 \leq n_1$, $G$ be a finite abelian $p$-group and $\delta>0$.
Then is a constant $K>0$, such that for sufficiently large $n_1$, if
\begin{enumerate}
\item $F_1\in \Hom([n_1],G)$ is a translated code of distance $\delta n_1$ with image $T_1$;
\item $F_2\in \Hom([n_2],G)$ is a translated code of distance $\delta n_2$ with image $T_2$;
\item $F_1'\in \CC_{n_1}(T_2,F_1)$ and $F_2'\in\CC_{n_2}(T_1,F_2)$,
\end{enumerate}
then we have
\[
\PPP[(F_1,F_2) \circ \Lnot=(F_1',F_2')]
\leq 
\frac{K}{\abs{\CC_{n_1}(T_2,F_1)}\times \abs{\CC_{n_2}(T_1,F_2)}}.
\]
\end{lemma}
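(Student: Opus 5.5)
The plan is to obtain this bound by summing the bound of Proposition~\ref{Prop: Individual code bound} over all $\vec{t}\in(\Z/p\Z)^{n_1}$. Write
\[
\PPP[(F_1,F_2)\circ\Lnot=(F_1',F_2')]=\sum_{\vec{t}\in(\Z/p\Z)^{n_1}}\PPP[(F_1,F_2)\circ\Lnot=(F_1',F_2'),\ \eta(\Lnot)=\vec{t}].
\]
By the second part of Proposition~\ref{Prop: Individual code bound} (equivalently Lemma~\ref{Lem: ti right value}), only those $\vec{t}$ contribute for which $\zeta_p^{t_i}=\chi_{T_2,F_1(i)}(F_1'(i))$ for every $i$ with $F_1(i)\notin T_2$. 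For such $i$ the coordinate $t_i$ is therefore determined by $F_1'$. The coordinates $t_i$ with $F_1(i)\in T_2$ remain free. So the number of contributing $\vec{t}$ is at most $p^{\abs{\{i: F_1(i)\in T_2\}}}$.

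For each contributing $\vec{t}$, the first part of Proposition~\ref{Prop: Individual code bound} gives the bound
\[
\frac{K}{\abs{\CC^*_{n_1}(T_2,F_1)}\abs{\CC^*_{n_2}(T_1,F_2)}p^{n_1}}\,p^{\abs{\{i: F_1(i)\notin T_1\cap T_2\}}}.
\]
Since $\sgo{F_1}=T_1$, every $F_1(i)$ lies in $T_1$. Hence $F_1(i)\notin T_1\cap T_2$ is equivalent to $F_1(i)\notin T_2$, and the exponent becomes $\abs{\{i:F_1(i)\notin T_2\}}$. Multiplying by the number of admissible $\vec{t}$, the powers of $p$ combine to $p^{\abs{\{F_1(i)\in T_2\}}+\abs{\{F_1(i)\notin T_2\}}}=p^{n_1}$, which cancels the $p^{n_1}$ in the denominator. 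Finally $\abs{\CC^*_{n}(T,F)}=\abs{\CC_n(T,F)}$, since both equal $\prod_i\abs{\sg{-F(i)+T}}$. This yields the claim with the same constant $K$.

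The only step needing care is the precise form of the exponent in Proposition~\ref{Prop: Individual code bound}. Its proof actually bounds the special contribution by $p^{-\abs{\{i:F_1(i)\in T_2\}}}$ relative to $1/(\abs{\CC^*}\abs{\CC^*})$, and the nonspecial contributions by something no larger. So I will use the bound in that form, which is exactly what makes the count of free coordinates of $\vec{t}$ cancel.

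If one prefers to avoid $\vec{t}$ altogether, the alternative is to rerun the proof of Proposition~\ref{Prop: Individual code bound} directly with $\vec{s}=0$ and no $\eta$-conditioning. In that version Lemma~\ref{Lem: special iff factor through} with $\vec{s}=0$ gives at most $\abs{G}^{(p+1)\abs{G}}$ special pairs. The robust and nonspecial cases are then handled by Lemmas~\ref{Lem: quad many non one} and~\ref{Lem: Non special lin many non one} exactly as before. I expect the summation route to be shorter.
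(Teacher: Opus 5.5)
Your proposal is correct and is essentially the paper's own proof: it sums the bound of Proposition~\ref{Prop: Individual code bound} over the $p^{\abs{\{i: F_1(i)\in T_2\}}}$ admissible vectors $\vec{t}$, namely those whose coordinates on $\{i: F_1(i)\in T_1\setminus T_2\}$ are forced by Lemma~\ref{Lem: ti right value}. It then uses $F_1(i)\in T_1$ to rewrite the exponent so that the powers of $p$ cancel against $p^{n_1}$, and finishes with $\abs{\CC^*_n(T,F)}=\abs{\CC_n(T,F)}$.
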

\begin{proof}
Note that
\[
\PPP[(F_1,F_2) \circ \Lnot=(F_1',F_2')]
=\sum_{\vec{t}\in(\Z/p\Z)^{n_1}}
\PPP[(F_1,F_2) \circ \Lnot=(F_1',F_2'), \eta(\Lnot)=\vec{t}].
\]
Let $A_1=\{i\in [n_1]: F_1(i)\in T_2\}$ and $A_2=\{i\in [n_1]: F_1(i)\in T_1\setminus T_2\}$.
Proposition~\ref{Prop: Individual code bound} tells us that if there is some $i\in A_2$ for which $\chi_{T_2,F_1(i)}(F_1'(i))\neq \zeta_p^{t_i}$, then
\[
\PPP[(F_1,F_2) \circ \Lnot=(F_1',F_2'), \eta(\Lnot)=\vec{t}]
=0.
\]
Let $B$ be the set of vectors in $(\Z/p\Z)^{n_1}$ for which each $i\in A_2$, we have $\chi_{T_2,F_1(i)}(F_1'(i))= \zeta_p^{t_i}$. Clearly $\abs{B}=p^{\abs{A_1}}$.

Moreover, Proposition~\ref{Prop: Individual code bound} tells us that for each $\vec{t}\in B$,
\[
\PPP[(F_1,F_2) \circ \Lnot=(F_1',F_2'), \eta(\Lnot)=\vec{t}]
\leq
\frac{K}{\abs{\CC^*_{n_1}(T_2,F_1)} \abs{\CC^*_{n_2}(T_1,F_2)} p^{A_1}}.
\]
The result follows.
\end{proof}

\begin{proposition}\label{Prop: Error term type 1 to zero}
Let $p$ be a prime, $0<\alpha \leq 1$, $\rho>0$, $G$ be a finite abelian $p$-group.
Suppose we have chains of translated subgroups $T_1\treq S_1\treq G$ and $T_2\treq S_2\treq G$.
If
\begin{enumerate}
\item $T_1$, $T_2$, $S_1$, $S_2$ are not all the same;
\item $\abs{T_1}\leq \abs{T_2}$,
\end{enumerate}
then for sufficiently small $\delta>0$, we have
\[
\lim_{n\to\infty}
\sum_{F_1\in\SSS{n,\delta}{T_1}{S_1}} \sum_{F_2\in\SSS{\ceil{\alpha n},\delta}{T_2}{S_2}} \PPP[(F_1,F_2) \circ \Lna=0, \eta(\Lna)\in \EE_{n,\rho}]
=0.
\]
\end{proposition}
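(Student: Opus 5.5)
We fix $\delta$ small and follow Wood's depth decomposition. For $F_1$ in the first sum, let $\sigma_1\subseteq[n]$ be a witnessing set of the $\delta$-type, with $|\sigma_1|<\log_p(D_1)\delta n$ and $D_1$ the corresponding index. By maximality of the depth, $F_1$ restricted to $\tau_1=[n]\setminus\sigma_1$ should be a translated code of distance $\delta' n$ (for some $\delta'$ comparable to $\delta$) onto the smaller translated subgroup. We define $\sigma_2,\tau_2,D_2$ in the same way for $F_2$. I will refer to these smaller translated subgroups as $T_1,T_2$, as in Lemma~\ref{Lem: bound prob withoit t}.

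Writing $(F_1,F_2)\circ\Lna=0$ row by row, we keep only the equations indexed by $\tau_1\cup\tau_2$. We condition on all $a_{ij}$ with $i\in\sigma_1$ or $j\in\sigma_2$. The remaining equations then read $(F_1|_{\tau_1},F_2|_{\tau_2})\circ L'=(F_1',F_2')$, where $L'$ is the bipartite Laplacian-type matrix on $\tau_1\times\tau_2$ and $(F_1',F_2')$ is determined by the conditioned entries. This is the shape to which Lemma~\ref{Lem: bound prob withoit t} and Proposition~\ref{Prop: Individual code bound} apply. The event $\eta(\Lna)=\vec t$ also translates, after subtracting the conditioned part, into a shifted constraint $\vec t'$ on the $\tau_1$ rows. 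We then sum over $\vec t\in\EE_{n,\rho}$ using Proposition~\ref{Prop: Individual code bound} rather than its $\vec t$-free version. The key point is Lemma~\ref{Lem: ti right value}: every $i\in\tau_1$ with $F_1(i)\notin T_2$ forces $t_i$ to a value determined by $F_1'(i)$. Combined with $\vec t\in\EE_{n,\rho}$, this caps the number of such rows, and for each such row $|\sg{-F_1(i)+T_2}|\ge p|\G(T_2)|$ supplies an extra factor $p^{-1}$ that is not compensated by a free choice of $t_i$.

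Next we count. The number of admissible $F_1$ is at most $\binom{n}{|\sigma_1|}|G|^{|\sigma_1|}$ times the number of maps $\tau_1\to T_1$, and similarly for $F_2$. Multiplying by the probability bound leaves a main ratio of the form
\[
\frac{|T_1|^{n}\,|T_2|^{\lceil\alpha n\rceil}}{|T_2|^{n}\,|T_1|^{\lceil\alpha n\rceil}}\cdot p^{-(\#\{i:F_1(i)\in T_1\setminus T_2\})}\cdot(\text{terms from }\sigma_1,\sigma_2).
\]
Using $|T_1|\le|T_2|$ and $n\ge\lceil\alpha n\rceil$, the first factor is at most $1$. The $\sigma$-terms are at most $e^{O(\delta\log(1/\delta)\log_p(D_1D_2)n)}$. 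Against these, the unconditioned rows $i\in\sigma_1$ and columns $j\in\sigma_2$ should contribute a saving of at least $D_1^{-(1-O(\delta))n}$: the image of those equations must land in a subgroup of index roughly $D_1$. With $\delta$ small, this kills every case with $D_1D_2>1$, at rate $\exp(-cn)$.

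The remaining case, which I expect to be the main obstacle, is $D_1=D_2=1$ with $T_1\neq T_2$ and $|T_1|\le|T_2|$. Here the first factor can equal $1$, so all decay must come from $T_1\not\subseteq T_2$, or from $|T_1|<|T_2|$ when $T_1\subseteq T_2$. If $T_1\subsetneq T_2$, then $|T_1|<|T_2|$ and the ratio $(|T_1|/|T_2|)^{n-\lceil\alpha n\rceil}$ gives no decay when $\alpha=1$. In that subcase the saving must instead come from the second block: for $F_2(j)\notin T_1$ the spaces $\sg{-F_2(j)+T_1}$ are larger than $\G(T_1)$. If $T_1\not\subseteq T_2$, then a positive proportion, about $1-|T_1\cap T_2|/|T_1|$, of rows (up to a code-typicality argument, or after summing over $\set{F_1}$ as in Corollary~\ref{Cor: main term}) have $F_1(i)\notin T_2$. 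Each such row forces $t_i=0$ and gains $p^{-1}$. The restriction $\vec t\in\EE_{n,\rho}$ with $\rho<\alpha-\frac1p$ is exactly what should make this gain beat the loss from the unconstrained coordinates on the $\lceil\alpha n\rceil$ side. I would first try to make this bookkeeping precise by splitting the $\vec t$-sum according to the zero set of $\vec t$, bounding $\#\{i:F_1(i)\notin T_2\}<(\frac1p+\rho)n<\lceil\alpha n\rceil$, and verifying that the resulting exponent is strictly negative.
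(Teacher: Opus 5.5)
Your reduction is the same as the paper's: witness sets $\sigma_1,\sigma_2$, conditioning on the entries they touch, and applying Lemma~\ref{Lem: bound prob withoit t} to the Laplacian on $\tau_1\times\tau_2$ with the independent right-hand side $(F_1',F_2')$. The gap is in where you get the decay.

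In the case $D_1=D_2=1$, $T_1\not\subseteq T_2$ you rely on $\vec t\in\EE_{n,\rho}$ with $\rho<\alpha-\frac{1}{p}$. That is not a hypothesis here: the proposition is asserted for every $\rho>0$ and every $0<\alpha\le 1$, and that restriction is only needed in the type-2 case $\abs{T_1}>\abs{T_2}$. You also leave the final exponent unverified. The missing step is to sum the $\vec t$-free bound $K\prod_{i\in\tau_1}\abs{\sg{-F_1(i)+T_2}}^{-1}\prod_{j\in\tau_2}\abs{\sg{-F_2(j)+T_1}}^{-1}$ coordinate by coordinate over $F_1$ and $F_2$, instead of multiplying a count of maps by a worst-case probability. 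This gives a bound $K e^{O(\sqrt{\delta})n} b^n$ with
\[
b=\Big(\sum_{g\in T_1}\frac{1}{\abs{\sg{-g+T_2}}}\Big)\Big(\sum_{h\in T_2}\frac{1}{\abs{\sg{-h+T_1}}}\Big)^{\alpha}
\le\frac{\abs{T_1}}{\abs{T_2}}\Big(\frac{\abs{T_2}}{\abs{T_1}}\Big)^{\alpha}\le 1 .
\]
Since $\abs{\sg{-g+T_2}}\ge p\abs{T_2}$ for $g\notin T_2$, and symmetrically, this inequality is strict unless $T_1=T_2$. So every case with $T_1\neq T_2$ decays from $b$ alone, for any $\rho$ and $\alpha$; your observation for $T_1\subsetneq T_2$ is a special case. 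There is no loss on the $\ceil{\alpha n}$ side to overcome, because $\abs{T_1}\le\abs{T_2}$. The paper in fact drops the event $\eta(\Lna)\in\EE_{n,\rho}$ altogether.

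Your treatment of $D_1D_2>1$ is also not right as stated. With merely $\epsilon$-balanced entries there is no saving of order $D_1^{-(1-O(\delta))n}$; you only get a factor $1-\epsilon$ per row or column. For $i\in\tau_1$ one needs $F_1'(i)=\sum_{j\in\sigma_2}a_{ij}(F_2(j)-F_1(i))\in\sg{-F_1(i)+T_2}$.
\begin{itemize}
\item If $F_1(i)\in T_2$, this group is $\G(T_2)$, which misses every $F_2(j)-F_1(i)$ with $j\in\sigma_2$. So the admissible values of a single $a_{ij}$ lie in one residue class mod $p$, giving the factor $1-\epsilon$.
\item If $F_1(i)\notin T_2$, the larger group $\sg{-F_1(i)+T_2}$ can contain all these differences (for example when $S_2\subseteq\sgo{T_2\cup\{F_1(i)\}}$). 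Then there is no saving at all.
\end{itemize}
So this saving needs $T_1\subseteq T_2$ when $D_2>1$, or $T_2\subseteq T_1$ when $D_1>1$, and your claim that it ``kills every case with $D_1D_2>1$'' is unjustified.

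The correct case split is:
\begin{itemize}
\item $T_1\neq T_2$, handled by the display above;
\item $T_1=T_2$, where hypothesis (1) forces $D_1>1$ or $D_2>1$, both inclusions hold, and you get $(1-\epsilon)^{\abs{\tau_1}}$ or $(1-\epsilon)^{\abs{\tau_2}}$.
\end{itemize}
In both cases the decay beats the $e^{O(\sqrt{\delta})n}$ cost of choosing $\sigma_1,\sigma_2$ and their values, once $\delta$ is small.
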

\begin{proof}
We will show that
\[
\lim_{n\to\infty}
\sum_{F_1\in\SSS{n,\delta}{T_1}{S_1}} \sum_{F_2\in\SSS{\ceil{\alpha n},\delta}{T_2}{S_2}} \PPP[(F_1,F_2) \circ \Lna=0]
=0,
\]
this will clearly imply the result.

Denote $D_1=[S_1:T_1]$ and $D_2=[S_2:T_2]$.
Further, denote
\begin{align*}
&e_1
=\begin{cases}
    1-\epsilon &\text{if }D_1>1 \text{ and } T_2\subseteq T_1\\
    1 &\text{otherwise}
\end{cases},
&
&e_2
=\begin{cases}
    1-\epsilon &\text{if }D_2>1 \text{ and } T_1\subseteq T_2\\
    1 &\text{otherwise}
\end{cases}.
\end{align*}

Consider $F_1\in\SSS{n,\delta}{T_1}{S_1}$ and $F_2\in\SSS{\ceil{\alpha n},\delta}{T_2}{S_2}$.
Denote
\begin{align*}
\sigma_1&=\{i\in [n]: F_1(i)\notin T_1\},
&
\tau_1=[n]\setminus \sigma_1,\\
\sigma_2&=\{j\in [\ceil{\alpha n}]: F_2(j)\notin T_2\},
&
\tau_2=[\ceil{\alpha n}]\setminus \sigma_2.
\end{align*}
We know that $\abs{\sigma_1}< \delta \log_p(D_1) n$ and $\abs{\sigma_2}< \delta \log_p(D_2) \ceil{\alpha n}$. Moreover, if $D_1>1$, then $\sigma_1\neq\emptyset$ and if $D_2>1$, then $\sigma_2\neq\emptyset$.
Let $F_{\tau_1}$ be the restriction of $F_1$ to $\tau_1$, $F_{\sigma_1}$ be the restriction of $F_1$ to $\sigma_1$, $F_{\tau_2}$ be the restriction of $F_2$ to $\tau_2$ and $F_{\sigma_2}$ be the restriction of $F_2$ to $\sigma_2$.
We know that $F_{\tau_1}:\tau_1\to T_1$ is a translated code of distance $\delta n$ with image $T_1$ and $F_{\tau_2}:\tau_2\to T_2$ is a translated code of distance $\delta\alpha n$ with image $T_2$.

If $Z_1\subseteq [n]$ and $Z_2\subseteq [\ceil{\alpha n}]$, then we denote:
\begin{itemize}
    \item $A_{Z_1,Z_2}$ is $\abs{Z_1}\times\abs{Z_2}$ matrix whose entries are $a_{ij}$ for $i\in Z_1$ and $j\in Z_2$.
    \item $B_{Z_1,Z_2}$ is $\abs{Z_1}\times \abs{Z_1}$ diagonal matrix, whose diagonal entries are $-\sum_{j\in Z_2} a_{ij}$ for $i\in Z_1$.
    \item $C_{Z_1,Z_2}$ is a $\abs{Z_2}\times \abs{Z_2}$ diagonal matrix, whose diagonal entries are $-\sum_{i\in Z_1} a_{ij}$ for $j\in Z_2$.
\end{itemize}
Therefore, $\Lna$ has the block structure
\[
\Lna=\begin{bmatrix}
    B_{\tau_1,\tau_2}+B_{\tau_1,\sigma_2}
    &0
    &A_{\tau_1,\tau_2}
    & A_{\tau_1,\sigma_2}\\
    0
    & B_{\sigma_1,\tau_2}+B_{\sigma_1,\sigma_2}
    &A_{\sigma_1,\tau_2}
    & A_{\sigma_1,\sigma_2}\\
    A_{\tau_1,\tau_2}^T
    &A_{\sigma_1,\tau_2}^T
    &C_{\tau_1,\tau_2}+ C_{\sigma_1,\tau_2}
    &0\\
    A_{\tau_1,\sigma_2}^T
    & A_{\sigma_1,\sigma_2}^T
    &0
    & C_{\tau_1,\sigma_2}+ C_{\sigma_1,\sigma_2}
\end{bmatrix}.
\]
Denote the restriction of $F_1$ to $\tau_1$ as $F_{\tau_1}$ and the restriction to $\sigma_1$ as $F_{\sigma_1}$.
Similarly denote the restriction of $F_2$ to $\tau_2$ as $F_{\tau_2}$ and the restriction to $\sigma_2$ as $F_{\sigma_2}$.
We construct the matrix $\Lna(\tau_1,\tau_2)$ as
\[
\Lna(\tau_1,\tau_2)
=\begin{bmatrix}
B_{\tau_1,\tau_2}
& A_{\tau_1,\tau_2}\\
A_{\tau_1,\tau_2}^T
& C_{\tau_1,\tau_2}
\end{bmatrix}.
\]
Note that if $(F_1,F_2)\circ \Lna=0$, then
\[
(F_{\tau_1},F_{\tau_2}) \circ (\Lna(\tau_1,\tau_2)) 
=(-F_{\tau_1}\circ B_{\tau_1,\sigma_2}
-F_{\sigma_2}\circ A_{\tau_1,\sigma_2}^T,
-F_{\sigma_1}\circ A_{\sigma_1,\tau_2}
-F_{\tau_2}\circ C_{\sigma_1,\tau_2}).
\]
We denote
\begin{align*}
F_1'
&=-F_{\tau_1}\circ B_{\tau_1,\sigma_2}
-F_{\sigma_2}\circ A_{\tau_1,\sigma_2}^T
\in \Hom(\tau_1,G),\\
F_2'
&=-F_{\sigma_1}\circ A_{\sigma_1,\tau_2}
-F_{\tau_2}\circ C_{\sigma_1,\tau_2}
\in \Hom(\tau_2,G).
\end{align*}
The important thing to note here is that $F_1'$, $F_2'$ and $\Lna(\tau_1,\tau_2)$ are independent.
Therefore, by Lemma~\ref{Lem: bound prob withoit t}
\begin{align*}
&\PPP\Big[(F_{\tau_1},F_{\tau_2})(\Lna(\tau_1,\tau_2))=(F_1',F_2') 
\mid F_1' \in \CC_{\tau_1}(T_2,F_{\tau_1}), 
F_2'\in \CC_{\tau_2}(T_1,F_{\tau_2})\Big]\\
&\leq \frac{K}{\abs{\CC_{\tau_1}(T_2,F_{\tau_1})}\times \abs{\CC_{\tau_2}(T_1,F_{\tau_2})}}.
\end{align*}

Next, we bound the probability that $F_1' \in \CC_{\tau_1}(T_2,F_{\tau_1})$ and $F_2'\in \CC_{\tau_2}(T_1,F_{\tau_2})$.
\begin{itemize}
\item Consider the case when $e_2=1-\epsilon$, that is $D_2>1$ and $T_1\subseteq T_2$.
For $i\in\tau_1$,
\[
F_1'(i)
=\sum_{j\in\sigma_2} a_{ij}(F_{\sigma_2}(j)-F_{\tau_1}(i)).
\]
By the definition of $\tau_1$ and $\sigma_2$, we know that $F_{\sigma_2}(j)\notin T_2$ and $F_{\tau_1}(i)\in T_1\subseteq T_2$. This means that $\sg{-F_{\tau_1}(i)+T_2}=\G(T_2)$ and $-F_{\tau_1}(i)+F_{\sigma_2}(j)\notin \G(T_2)$.
Since $\sigma_2\neq \emptyset$ and $a_{ij}$ are $\epsilon$-balanced, the probability of $F_1'(i)\in \G(T_2)$ is at most $1-\epsilon$.
\item Similarly in the case when $e_1=1-\epsilon$, the probability of $F_2'(j)\in \sg{-F_{\tau_2}(j)+T_1}$ is at most $1-\epsilon$.
\item Each of these conditions are independent and hence
\[
\PPP\Big[
F_1' \in \CC_{\tau_1}(T_2,F_{\tau_1})
\text{ and }
F_2'\in \CC_{\tau_2}(T_1,F_{\tau_2})
\Big]
\leq e_1^{\abs{\tau_1}} e_2^{\abs{\tau_2}}.
\]
\end{itemize}
We conclude that
\[
\PPP[(F_1,F_2)\circ \Lna =0]
\leq e_1^{\abs{\tau_1}} e_2^{\abs{\tau_2}}
\times \frac{K}{\abs{\CC_{\tau_1}(T_2,F_{\tau_1})}\times \abs{\CC_{\tau_2}(T_1,F_{\tau_2})}}.
\]
Next, we sum over different $F_1$ and $F_2$ corresponding to a fixed choice of $\tau_1$, $\tau_2$
\begin{align*}
&\sum_{\substack{F_1\in\SSS{n,\delta}{T_1}{S_1}\\ F_1(\tau_1)\subseteq T_1 } } 
\sum_{\substack{F_2\in\SSS{\ceil{\alpha n},\delta}{T_2}{S_2}\\ F_2(\tau_2)\subseteq T_2 }} 
\PPP[(F_1,F_2) \circ \Lna=0, \eta(\Lna)\in \EE_{n,\rho}]\\
&\leq 
\sum_{\substack{F_1\in\SSS{n,\delta}{T_1}{S_1}\\ F_1(\tau_1)\subseteq T_1 } } 
\sum_{\substack{F_2\in\SSS{\ceil{\alpha n},\delta}{T_2}{S_2}\\ F_2(\tau_2)\subseteq T_2 }} 
\PPP[(F_1,F_2) \circ \Lna=0]\\
&\leq K e_1^{\abs{\tau_1}} e_2^{\abs{\tau_2}}
\sum_{F_1\in\SSS{n,\delta}{T_1}{S_1}} 
\sum_{F_2\in\SSS{\ceil{\alpha n},\delta}{T_2}{S_2}}
\prod_{i\in \tau_1}\frac{1}{\abs{\sg{-F_1(i)+T_2}}}
\prod_{j\in \tau_2}\frac{1}{\abs{\sg{-F_2(j)+T_1}}}\\
&\leq K e_1^{\abs{\tau_1}} e_2^{\abs{\tau_2}}
\abs{G}^{\abs{\sigma_1}+\abs{\sigma_2}}
\left(
\sum_{g\in T_1} \frac{1}{\abs{\sg{-g+T_2}}}
\right)^{\abs{\tau_1}}
\left(
\sum_{h\in T_2} \frac{1}{\abs{\sg{-h+T_1}}}
\right)^{\abs{\tau_2}}.
\end{align*}

Note that $\sum_{g\in T_1} \frac{1}{\abs{\sg{-g+T_2}}} \geq \frac{1}{\abs{G}}$ and $\sum_{h\in T_2} \frac{1}{\abs{\sg{-h+T_1}}}\geq \frac{1}{\abs{G}}$.
Therefore,
\begin{align*}
&\sum_{\substack{F_1\in\SSS{n,\delta}{T_1}{S_1}\\ F_1(\tau_1)\subseteq T_1 } } 
\sum_{\substack{F_2\in\SSS{\ceil{\alpha n},\delta}{T_2}{S_2}\\ F_2(\tau_2)\subseteq T_2 }} 
\PPP[(F_1,F_2) \circ \Lna=0, \eta(\Lna)\in \EE_{n,\rho}]\\
&\leq K 
\left(\frac{\abs{G}^2}{1-\epsilon}\right)^{\abs{\sigma_1}}
\left(\frac{\abs{G}^2}{1-\epsilon}\right)^{\abs{\sigma_2}}
e_1^{n} e_2^{\alpha n}
\left(
\sum_{g\in T_1} \frac{1}{\abs{\sg{-g+T_2}}}
\right)^{n}
\left(
\sum_{h\in T_2} \frac{1}{\abs{\sg{-h+T_1}}}
\right)^{\alpha n}\\
&\leq  K 
\left(
\left(
\left(\frac{\abs{G}^2}{1-\epsilon}\right)^{\log_p(D_1)+\alpha\log_p(D_2)}
\right)^{\delta}
e_1 e_2^{\alpha }
\left(
\sum_{g\in T_1} \frac{1}{\abs{\sg{-g+T_2}}}
\right)
\left(
\sum_{h\in T_2} \frac{1}{\abs{\sg{-h+T_1}}}
\right)^{\alpha}
\right)^n.
\end{align*}
Moreover, the number of choices for $\sigma_1$ is at most
\[
\sum_{k\leq \log_p(D_1)\delta n} \binom{n}{k}
\leq \exp\left(2\sqrt{\log_p(D_1)\delta} n\right),
\]
and the number of choices for $\sigma_2$ is at most
\[
\sum_{k\leq \log_p(D_2)\delta \alpha n} \binom{\alpha n}{k}
\leq \exp\left(2\sqrt{\log_p(D_2)\delta} \alpha n\right).
\]
We conclude that
\begin{align*}
&\sum_{F_1\in\SSS{n,\delta}{T_1}{S_1}} \sum_{F_2\in\SSS{\ceil{\alpha n},\delta}{T_2}{S_2}} \PPP[(F_1,F_2) \circ \Lna=0, \eta(\Lna)\in \EE_{n,\rho}]\\
&\leq K 
\left(
\left(
\left(\frac{\abs{G}^2}{1-\epsilon}\right)^{2\log_p(\abs{G})}
\!\!\!\!\!\!\!\!\!\!\!\!
\exp\left(4\sqrt{\log_p(\abs{G})}\right)
\right)^{\sqrt{\delta}}
e_1 e_2^{\alpha }
\left(
\sum_{g\in T_1} \frac{1}{\abs{\sg{-g+T_2}}}
\right)
\left(
\sum_{h\in T_2} \frac{1}{\abs{\sg{-h+T_1}}}
\right)^{\alpha}
\right)^n.
\end{align*}

We know that $e_1\leq 1$ and $e_2\leq 1$. Since $\abs{T_1}\leq \abs{T_2}$, we see that
\begin{equation}\label{Eqn: prod leq 1}
\begin{split}
\left(
\sum_{g\in T_1} \frac{1}{\abs{\sg{-g+T_2}}}
\right)
\left(
\sum_{h\in T_2} \frac{1}{\abs{\sg{-h+T_1}}}
\right)^{\alpha}
&\leq \left(
\sum_{g\in T_1} \frac{1}{\abs{T_2}}
\right)
\left(
\sum_{h\in T_2} \frac{1}{\abs{T_1}}
\right)^{\alpha}\\
&= \frac{\abs{T_1}}{\abs{T_2}}
\left(\frac{\abs{T_2}}{\abs{T_1}}\right)^{\alpha}
\leq 1.
\end{split}
\end{equation}
This means that
\begin{equation}\label{Eqn: <1}
e_1 e_2^{\alpha }
\left(
\sum_{g\in T_1} \frac{1}{\abs{\sg{-g+T_2}}}
\right)
\left(
\sum_{h\in T_2} \frac{1}{\abs{\sg{-h+T_1}}}
\right)^{\alpha}
\leq 1.
\end{equation}

If the inequality in \eqref{Eqn: prod leq 1} was strict, then the inequality in \eqref{Eqn: <1} will also be strict.
On the other hand, if equality holds in \eqref{Eqn: prod leq 1}, then for every $T_1\subseteq T_2$ and $T_2\subseteq T_1$, hence $T_1=T_2$.
This would mean that either $T_1\subsetneq S_1$ or $T_2\subsetneq S_2$. Thus, either $e_1<1$ or $e_2<1$. In either case, the inequality in \eqref{Eqn: <1} is strict.

Therefore, we can pick a sufficiently small $\delta$ to ensure
\[
\left(
\left(\frac{\abs{G}^2}{1-\epsilon}\right)^{2\log_p(\abs{G})}
\!\!\!\!\!\!\!\!\!\!\!\!
\exp\left(4\sqrt{\log_p(\abs{G})}\right)
\right)^{\sqrt{\delta}}
e_1 e_2^{\alpha }
\left(
\sum_{g\in T_1} \frac{1}{\abs{\sg{-g+T_2}}}
\right)
\left(
\sum_{h\in T_2} \frac{1}{\abs{\sg{-h+T_1}}}
\right)^{\alpha}<1.
\]
This completes the proof.
\end{proof}

\subsection{Error term, Type 2}

In this subsection, we fix chains of translated subgroups $T_1\treq S_1\treq G$ and $T_2\treq S_2\treq G$ such that $\abs{T_1}> \abs{T_2}$.
This is the first time that we will use the assumption that $a_{ij}$ are identically distributed.

Given $g\in G$ and $F_2\in \SSS{\ceil{\alpha n},\delta}{T_2}{S_2}$, let $\sigma_2=\{j\in [n_2]: F_2(j)\notin T_2\}$.
We denote
\[
\nu_{F_2}(g)
=\PPP \left[
\sum_{j\in\sigma_2} a_{1j}(-g+F_2(j)) \in \sg{-g+T_2}
\right].
\]
For $g\in G\setminus T_2$ and $l\in \Z/p\Z$,
we denote
\begin{align*}
&\nu_{F_2}(g,l)\\
&=\PPP \left[
\sum_{j\in\sigma_2} a_{1j}(-g+F_2(j)) \in \sg{-g+T_2}
\text{ and }
\chi_{T_2,g}\Big( -\sum_{j\in\sigma_2} a_{1j}(-g+F_2(j)) \Big) \zeta_p^{\sum_{j\in\sigma_2} a_{1j}}
=\zeta_p^{l}
\right].
\end{align*}

\begin{lemma}\label{Lem: l neq0 sum <=1}
Consider $F_2\in \SSS{\ceil{\alpha n},\delta}{T_2}{S_2}$ and $l\in \Z/p\Z$. If $l\neq0$, then
\[
\sum_{g\in T_1\cap T_2} \frac{\nu_{F_2}(g)}{\abs{T_2}}
+\sum_{g\in T_1\setminus T_2} \frac{p\nu_{F_2}(g,l)}{\abs{\sg{-g+T_2}}}
\leq 1.
\]
\end{lemma}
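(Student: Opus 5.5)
The plan is to prove the inequality pointwise, for each fixed realization of the random variables $a_{1j}$ with $j\in\sigma_2$, and then take expectations. By definition, $\nu_{F_2}(g)$ and $\nu_{F_2}(g,l)$ are probabilities of events $\mathcal{A}_g$ and $\mathcal{A}_{g,l}$ that depend only on $(a_{1j})_{j\in\sigma_2}$. Hence the left-hand side equals
\[
\E\Big[\sum_{g\in T_1\cap T_2} \frac{\mathbbm{1}_{\mathcal{A}_g}}{\abs{T_2}}+\sum_{g\in T_1\setminus T_2}\frac{p\,\mathbbm{1}_{\mathcal{A}_{g,l}}}{\abs{\sg{-g+T_2}}}\Big].
\]
It therefore suffices to show that the random quantity inside the expectation is at most $1$ for every realization.

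Fix a realization, and write $a=\sum_{j\in\sigma_2}a_{1j}$ and $W=\sum_{j\in\sigma_2}a_{1j}F_2(j)$. Set $H=\G(T_2)$ and fix $h_0\in T_2$, so that $T_2=h_0+H$. For every $g$,
\[
Y_g:=\sum_{j\in\sigma_2} a_{1j}(-g+F_2(j))=W-ag.
\]
The two kinds of valid $g$ are handled as follows.

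\textbf{Case $g\in T_2$.} Here $\sg{-g+T_2}=H$, so $\mathcal{A}_g$ says $W-ag\in H$.

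\textbf{Case $g\notin T_2$.} Every element of $\sg{-g+T_2}$ can be written as $b(-g+h_0)+h'$ with $h'\in H$, and by Lemma~\ref{Lem: char existence} the character value is $\chi_{T_2,g}$ of that element $=\zeta_p^{b}$, with $b$ well defined mod $p$. Then $\mathcal{A}_{g,l}$ says two things: first $W-ag=-bg+bh_0+h'$, and second $\zeta_p^{-b}\zeta_p^{a}=\zeta_p^l$, i.e.\ $b\equiv a-l \pmod p$. Put $c=a-b$. Since $l\neq 0$, $c$ is a unit, and the condition becomes
\[
cg\in W-bh_0+H .
\]
Also $\sg{-g+T_2}\supsetneq H$, so $\abs{\sg{-g+T_2}}\ge p\abs{H}$. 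Hence every valid $g\notin T_2$ carries weight at most $1/\abs{H}$. Every valid $g\in T_2$ carries weight $1/\abs{T_2}=1/\abs{H}$.

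The inequality will follow once I show that the set $V$ of all valid $g$ (of both kinds) is contained in a single coset of $H$: then $\abs{V}\le\abs{H}$ and the total weight is at most $1$. This is the main obstacle. I would take two valid elements $g,g'$ and show $g-g'\in H$, splitting into cases.

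\emph{Both in $T_2$.} Then $g-g'\in H$ trivially.

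\emph{Both outside $T_2$.} The relations $cg\equiv W-bh_0$ and $c'g'\equiv W-b'h_0 \pmod H$ hold with $b\equiv b'\pmod p$ and $c,c'$ units with $c\equiv c'\pmod p$. Subtracting, and using that $\sg{W,h_0,H}$ contains $g,g'$ modulo $H$ via the units, I would absorb the $p$-multiples. The point to check is that the residual $p(\cdot)$ discrepancy lies in $H$. This uses the freedom in choosing the representative $b$: the representation $b(-g+h_0)+h'$ may be adjusted so that $b$ is a fixed integer in $[0,p)$, at the cost of changing $h'$ only when $p(-g+h_0)\in H$. If that is not automatic, I would instead restrict attention to the fixed representative $b_0\equiv a-l$, $0\le b_0<p$, and re-derive membership using $p(-g+h_0)\in\sg{-g+T_2}$.

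\emph{Mixed ($g\in T_2$, $g'\notin T_2$).} In this case $a\equiv c+b$. Reducing both relations modulo $H$ should either force $a$ to be a unit and $g'\in T_2$, which is a contradiction, or place $g'$ in $g+H$.

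If some case fails to collapse to a single coset, the fallback is to bound directly: count solutions of $cg\in x+H$ over $g\in T_1$, which is at most $\abs{H}$ for a unit $c$. I would then check that the $T_2$-solutions and the non-$T_2$ solutions cannot both be present simultaneously, using the relation between $a$ and $c$ and the fact that $l\neq0$.
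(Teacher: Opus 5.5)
There is a genuine gap. Your claim that all valid $g$ lie in a single coset of $H=\G(T_2)$ is false, so bounding each weight by $1/\abs{H}$ and counting at most $\abs{H}$ valid elements cannot work. The failure is exactly the ``residual $p(\cdot)$ discrepancy'' you flag in the case where both elements lie outside $T_2$. The character condition fixes $b$, and hence $c=a-b$, only modulo $p$. Since $p(-g+h_0)$ need not lie in $H$, different valid $g$ can use different $p$-adic values of $c$ in the same residue class.

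Write bars for images in $G/H$ and set $\bar W'=\bar W-a\bar h_0$. An element $g\notin T_2$ is valid precisely when $\bar W'=c(\bar g-\bar h_0)$ for some $c\equiv l\pmod p$, i.e.\ when $\bar g-\bar h_0\in l^{-1}\bar W'+p\sg{\bar W'}$. When $\bar W'\neq0$ this set consists of $\mathrm{ord}(\bar W')/p$ classes in $G/H$, not one.

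For a concrete instance, take $G=\Z/p^2\Z$, $T_2=\{0\}$, $T_1=G$, and let $F_2$ equal $1$ at a single index $j$ and $0$ elsewhere, with $a_{1j}=1$, so $W=a=1$. The valid $g$ are the $p$ elements with $g\equiv l^{-1}\pmod p$. These lie in $p$ distinct cosets of $H=\{0\}$, and your bound would give a total of $p$. Your fallback has the same defect, because the unit $c$ is not fixed as $g$ varies.

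The pointwise inequality is still true, but you must keep the exact weights. For $g\notin T_2$ one has $\abs{\sg{-g+T_2}}=\abs{H}\cdot\mathrm{ord}(\bar g-\bar h_0)$. Every valid such $g$ satisfies $\bar g-\bar h_0=c^{-1}\bar W'$ with $c$ a unit, so its weight is $p/(\abs{H}\,\mathrm{ord}(\bar W'))$. Summing over at most $\mathrm{ord}(\bar W')/p$ cosets, each with at most $\abs{H}$ elements, gives at most $1$.

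Elements of $T_2$ are valid only if $\bar W'=0$, and then nothing outside $T_2$ is valid. This is your mixed case, which is fine as stated; it gives a total of at most $\abs{T_1\cap T_2}/\abs{T_2}\le 1$. In the example above, each valid $g$ has weight $p/\abs{\sg{g}}=1/p$, and the total is exactly $1$.

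The paper sidesteps this bookkeeping. It introduces independent Haar-uniform $b_h$ for $h\in T_2$ and shows that each summand equals the conditional probability of the event $Ng=\sum_h b_h h+W$, where $N=\sum_h b_h+a\equiv l\pmod p$. Since $l\neq0$, $N$ is a unit, so these events are disjoint as $g$ varies and their probabilities sum to at most $1$. The varying index $\abs{\sg{-g+T_2}}/\abs{H}$ is then absorbed automatically by the uniform distribution of $\sum_h b_h(-g+h)$ on a fiber of $\chi_{T_2,g}$.
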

\begin{proof}
For each $h\in T_2$, let $b_h\in \Z_p$ be a Haar-uniform random variable such that the $b_h$ and $a_{ij}$ are independent.
Note that
\begin{align*}
&\sum_{g\in T_1} 
\PPP\left[ \sum_{h\in T_2} b_h(-g+h) + \sum_{j\in\sigma_2} a_{1j} (-g+F_2(j))
=0
\mid \sum_{h\in T_2} b_h+\sum_{j\in\sigma_2} a_{1j}\equiv l \pmod{p} \right]\\
&=\sum_{g\in T_1} 
\PPP\left[ \sum_{h\in T_2} b_h(h) + \sum_{j\in\sigma_2} a_{1j} (-F_2(j))
=l g
\mid \sum_{h\in T_2} b_h+\sum_{j\in\sigma_2} a_{1j}\equiv l \pmod{p} \right]
\leq 1.
\end{align*}
Also note that
\begin{align*}
&\sum_{g\in T_1} 
\PPP\left[ \sum_{h\in T_2} b_h(-g+h) + \sum_{j\in\sigma_2} a_{1j} (-g+F_2(j))
=0
\mid \sum_{h\in T_2} b_h+\sum_{j\in\sigma_2} a_{1j}\equiv l \pmod{p} \right]\\
&=\sum_{g\in T_1} 
\PPP\left[ \sum_{h\in T_2} b_h(-g+h) =- \sum_{j\in\sigma_2} a_{1j} (-g+F_2(j))
\mid \sum_{h\in T_2} b_h
\equiv l-\sum_{j\in\sigma_2}a_{1j} \pmod{p} \right]
\end{align*}
We break into cases based on $g\in T_1\cap T_2$ or $g\in T_1\setminus T_2$.
\begin{enumerate}
\item Case 1: $g\in T_1\cap T_2$. If $\sum_{h\in T_2}b_h\equiv k\pmod{p}$, then 
$\sum_{h\in T_2} b_h(-g+h)$ is a uniformly random element of $\sg{-g+T_2}=\G(T_2)$.
Therefore,
\begin{align*}
&\PPP\left[ \sum_{h\in T_2} b_h(-g+h) =- \sum_{j\in\sigma_2} a_{1j} (-g+F_2(j))
\mid \sum_{h\in T_2} b_h
\equiv l-\sum_{j\in\sigma_2}a_{1j} \pmod{p} \right]\\
&=\frac{1}{\abs{T_2}} 
\PPP\left[ 
\sum_{j\in\sigma_2} a_{1j} (-g+F_2(j)) \in \G(T_2)
\right]
= \frac{1}{\abs{T_2}} \nu_{F_2}(g).
\end{align*}

\item Case 2: $g\in T_1\setminus T_2$. If $\sum_{h\in T_2}b_h\equiv k\pmod{p}$, then 
$\sum_{h\in T_2} b_h(-g+h)$ is a uniformly random element of
\[
\big\{x\in \sg{-g+T_2} : \chi_{T_2,g}(x)=\zeta_p^k \big\}.
\]
The size of this set is $\frac{\abs{\sg{-g+T_2}}}{p}$. Therefore,
\begin{align*}
&\PPP\left[ \sum_{h\in T_2} b_h(-g+h) =- \sum_{j\in\sigma_2} a_{1j} (-g+F_2(j))
\mid \sum_{h\in T_2} b_h
\equiv l-\sum_{j\in\sigma_2}a_{1j} \pmod{p} \right]\\
&=\frac{p}{\abs{\sg{-g+T_2}}} 
\PPP\left[ 
\sum_{j\in\sigma_2} a_{1j} (-g+F_2(j)) \in \sg{-g+T_2},
\chi_{T_2,g}\Big( -\sum_{j\in\sigma_2} a_{1j}(-g+F_2(j)) \Big)
=\zeta_p^{l-\sum_{j\in\sigma_2} a_{1j}}
\right]\\
&= \frac{p}{\abs{\sg{-g+T_2}}}
 \nu_{F_2}(g,l).
\end{align*}
\end{enumerate}
The result follows.
\end{proof}

\begin{proposition}\label{Prop: Error term type 2 to zero}
Let $p$ be a prime, $\frac{1}{p}<\alpha \leq 1$, $0<\rho<\alpha-\frac{1}{p}$, $G$ be a finite abelian $p$-group.
Suppose we have chains of translated subgroups $T_1\treq S_1\treq G$ and $T_2\treq S_2\treq G$.
If $\abs{T_1}> \abs{T_2}$,
then for sufficiently small $\delta>0$, we have
\[
\lim_{n\to\infty}
\sum_{F_1\in\SSS{n,\delta}{T_1}{S_1}} \sum_{F_2\in\SSS{\ceil{\alpha n},\delta}{T_2}{S_2}} \PPP[(F_1,F_2) \circ \Lna=0, \eta(\Lna)\in \EE_{n,\rho}]
=0.
\]
\end{proposition}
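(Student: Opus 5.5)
Throughout I write $F_1\in\SSS{n,\delta}{S_1}{T_1}$, $F_2\in\SSS{\ceil{\alpha n},\delta}{S_2}{T_2}$, matching the $\delta$-type convention $(\sgo{F},\sgo{F(e_i):i\notin\sigma})$. I will mirror the proof of Proposition~\ref{Prop: Error term type 1 to zero}, but I will keep the event $\eta(\Lna)\in\EE_{n,\rho}$ and use it in an essential way. The point is that when $\abs{T_1}>\abs{T_2}$ the factor $\big(\sum_{g\in T_1}\abs{\sg{-g+T_2}}^{-1}\big)\big(\sum_{h\in T_2}\abs{\sg{-h+T_1}}^{-1}\big)^{\alpha}$ can exceed $1$. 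This is exactly the regime where the raw moments blow up.

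\textbf{Setup.} As before, split $[n]=\tau_1\sqcup\sigma_1$ and $[\ceil{\alpha n}]=\tau_2\sqcup\sigma_2$ according to whether $F_k$ lands in $T_k$, and reduce to $(F_{\tau_1},F_{\tau_2})\circ\Lna(\tau_1,\tau_2)=(F_1',F_2')$, where $F_1',F_2'$ are independent of $\Lna(\tau_1,\tau_2)$. I will also track the residues $\vec t$. Row $i$ has $\eta_i=\sum_{j\in\tau_2}a_{ij}+\sum_{j\in\sigma_2}a_{ij}$, and the $\sigma_2$-part lives in $F_1'$. Instead of Lemma~\ref{Lem: bound prob withoit t}, I apply Proposition~\ref{Prop: Individual code bound} with the target $\vec t'=\eta$ restricted to the $\tau_2$ columns.

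Two facts then control each row $i\in\tau_1$. If $F_1(i)\in T_1\cap T_2$, that row costs $\abs{T_2}^{-1}$ times the probability that $F_1'(i)\in\G(T_2)$. If $F_1(i)\in T_1\setminus T_2$, Lemma~\ref{Lem: ti right value} forces the $\tau_2$-residue, so the row costs $p/\abs{\sg{-F_1(i)+T_2}}$ times a probability. That probability combines the character condition on $F_1'(i)$ with the full residue $\eta_i=l$; this is precisely how $\nu_{F_2}(g,l)$ was defined. Since the $a_{1j}$ are identically distributed, these per-row probabilities depend only on $g=F_1(i)$, which gives $\nu_{F_2}(g)$ and $\nu_{F_2}(g,l)$.

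\textbf{Summing over $F_1$ and $\vec t$.} Summing over the values $F_1(i)$ row by row, the rows with $\eta_i=l\neq0$ contribute a factor of at most $\sum_{g\in T_1\cap T_2}\nu_{F_2}(g)/\abs{T_2}+\sum_{g\in T_1\setminus T_2}p\,\nu_{F_2}(g,l)/\abs{\sg{-g+T_2}}\le1$, by Lemma~\ref{Lem: l neq0 sum <=1}. The rows with $\eta_i=0$ contribute at most $\abs{T_1}/\abs{T_2}\le\abs{G}$ each, with the extra factor $p$ absorbed. On the $F_2$ side, each $j\in\tau_2$ contributes $\sum_{h\in T_2}\abs{\sg{-h+T_1}}^{-1}\le\abs{T_2}/\abs{T_1}\le p^{-1}$. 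This holds because $\abs{T_1}>\abs{T_2}$ and both are $p$-powers, and each $\sg{-h+T_1}\supseteq\G(T_1)$.

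Since $\vec t\in\EE_{n,\rho}$, at most $(\frac1p+\rho)n$ rows have $t_i=0$. I also need a factor $p^{-1}$ per row for the $\vec t$ normalization (the $p^{n_1}$ in Proposition~\ref{Prop: Individual code bound}) against the $p^n$ choices of $\vec t$. The total is then bounded by roughly
\[
\left(\frac{\abs{T_1}}{\abs{T_2}}\right)^{(\frac1p+\rho)n}\left(\frac{\abs{T_2}}{\abs{T_1}}\right)^{\alpha n}\cdot e^{O(\sqrt\delta)n},
\]
times a polynomial factor. Because $\rho<\alpha-\frac1p$ and $\abs{T_1}/\abs{T_2}\ge p$, this is $p^{-(\alpha-\frac1p-\rho)n+O(\sqrt\delta n)}$. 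The choices of $\sigma_1,\sigma_2$ contribute only $\exp(O(\sqrt\delta)n)$, as in Proposition~\ref{Prop: Error term type 1 to zero}. Taking $\delta$ small then gives the limit $0$.

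\textbf{Main obstacle.} The delicate step is making the per-row factorization rigorous. Proposition~\ref{Prop: Individual code bound} gives the bound for a \emph{fixed} $(F_1',F_2',\vec t')$. I then have to average over the random $F_1',F_2'$ and over the $\sigma_2$-parts of the row sums, and obtain exactly the joint events defining $\nu_{F_2}(g,l)$, including the sign convention $\chi_{T_2,g}(-\cdot)\zeta_p^{\sum a_{1j}}$. The plan is to condition on $A_{\tau_1,\sigma_2}$ (rows independent), note that $t_i$ equals the $\tau_2$-residue plus $\sum_{j\in\sigma_2}a_{ij}$, and use the $\epsilon$-balanced bound only through Lemma~\ref{Lem: l neq0 sum <=1}. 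The $F_2'$ side I will simply bound by $1$. I expect the bookkeeping of the $p$ factors between the $\vec t$-sum and the $p^{\abs{\{F_1(i)\notin T_1\cap T_2\}}}$ term to be where errors most easily occur.
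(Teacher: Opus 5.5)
Your proposal is correct and follows the paper's proof essentially step for step. The paper likewise uses the independence of $F_1',F_2'$ and the shifted residues $u_i=t_i-\sum_{j\in\sigma_2}a_{ij}$ from $\Lna(\tau_1,\tau_2)$, and applies Proposition~\ref{Prop: Individual code bound} to obtain the per-row factors $\nu_{F_2}(g)$ and $\nu_{F_2}(g,t_i)$. It then bounds the rows with $t_i\neq 0$ by Lemma~\ref{Lem: l neq0 sum <=1} and those with $t_i=0$ by $\abs{T_1}/\abs{T_2}$, and concludes with the factor $(\abs{T_2}/\abs{T_1})^{(\alpha-\frac1p-\rho)n}$ after summing over $\vec t\in\EE_{n,\rho}$.

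One small correction: Lemma~\ref{Lem: l neq0 sum <=1} uses no $\epsilon$-balancedness. It is a uniqueness argument with auxiliary Haar variables, valid because $l\neq 0$ makes the total coefficient a unit. In this proof, balancedness enters only through Proposition~\ref{Prop: Individual code bound}.
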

\begin{proof}
Let $D_1$, $D_2$, $\sigma_1$, $\sigma_2$, $\tau_1$, $\tau_2$ be as in the proof of Proposition~\ref{Prop: Error term type 1 to zero}. Define $F_{\tau_1}$, $F_{\tau_2}$, block matrix $\Lnot(\tau_1,\tau_2)$ and $F_1'\in\Hom(\tau_1,G)$, $F_2'\in\Hom(\tau_2,G)$ in the same way as in the proof of Proposition~\ref{Prop: Error term type 1 to zero}.
Recall that $F_1'$, $F_2'$ and $\Lna(\tau_1,\tau_2)$ are independent. Also recall that if $(F_1,F_2)\circ \Lna=0$, then $(F_{\tau_1},F_{\tau_2})\circ \Lna(\tau_1,\tau_2)=(F_1',F_2')$.

Pick $\vec{t}\in \EE_{n,\rho}$.
Let $\vec{u}\in (\Z/p\Z)^{\tau_1}$ be the vector whose entries are $u_i=t_i-\sum_{j\in \sigma_2} a_{ij}$ for $i\in \tau_1$. This is a random vector, but it is independent of $\Lna(\tau_1,\tau_2)$.
Note that
\[
\PPP[(F_1,F_2) \circ \Lna=0, \eta(\Lna)=\vec{t}]
\leq 
\PPP[(F_{\tau_1},F_{\tau_2})\circ (\Lna(\tau_1,\tau_2))=(F_1',F_2'), \eta(\Lna(\tau_1,\tau_2)) =\vec{u}].
\]
This probability is zero unless the following conditions hold.
\begin{enumerate}
\item For each $i\in \tau_1$, $F_1'(i)\in \sg{-F_1(i)+T_2}$. 
\item For each $j\in \tau_2$, $F_2'(j)\in \sg{-F_2(j)+T_1}$.
\item For each $i\in \tau_1$, if $F_{\tau_1}(i)\notin T_2$, then $\chi_{T_2,F_1(i)}(F_1'(i))=\zeta_p^{u_i}$.
\end{enumerate}
Note that
\[
F_1'(i)=-\sum_{j\in \sigma_2} a_{ij} (-F_1(i)+F_2(j)).
\]
Since all $a_{ij}$ are independent and identically distributed, the probability for conditions $(1), (3)$ to hold is
\[
\prod_{\substack{i\in \tau_1\\ F_1(i)\in T_2}} \nu_{F_2}(F_1(i))
\prod_{\substack{i\in \tau_1\\ F_1(i)\notin T_2}} \nu_{F_2}(F_1(i),t_i).
\]
The probability for condition $(2)$ to hold is at most $1$.
Proposition~\ref{Prop: Individual code bound} tells us that
\begin{align*}
&\PPP[(F_{\tau_1},F_{\tau_2})\circ (\Lna(\tau_1,\tau_2))=(F_1'F_2'),
\eta(\Lna(\tau_1,\tau_2)) =\vec{u}]\\
&=\PPP[(F_{\tau_1},F_{\tau_2})\circ (\Lna(\tau_1,\tau_2))=(F_1',F_2'), 
\eta(\Lna(\tau_1,\tau_2)) =\vec{u} \mid (1), (2), (3)] 
\times\PPP[(1), (2), (3)]\\
&\leq \frac{K}{\abs{\CC^*_{\abs{\tau_1}}(T_2,F_{\tau_1})} \abs{\CC^*_{\abs{\tau_2}}(T_1,F_{\tau_2})} p^{\abs{\tau_1}}}
p^{\abs{\{i\in \tau_1: F_1(i)\notin  T_2\}}}
\prod_{\substack{i\in \tau_1\\ F_1(i)\in T_2}} \nu_{F_2}(F_1(i))
\prod_{\substack{i\in \tau_1\\ F_1(i)\notin T_2}} \nu_{F_2}(F_1(i),t_i)\\
&=\frac{K}{p^{\abs{\tau_1}}}
\prod_{\substack{i\in \tau_1\\ F_1(i)\in T_2}} \frac{\nu_{F_2}(F_1(i))}{\abs{\sg{-F_1(i)+T_2}}} 
\prod_{\substack{i\in \tau_1\\ F_1(i)\notin T_2}} \frac{p\nu_{F_2}(F_1(i),t_i)}{\abs{\sg{-F_1(i)+T_2}}} 
\prod_{j\in \tau_2} \frac{1}{\abs{\sg{-F_2(j)+T_1}}}.
\end{align*}
Note that if $F_1(i)\in T_2$, then $\abs{\sg{-F_1(i)+T_2}}=\abs{T_2}$.
Therefore,
\begin{align*}
&\PPP[(F_1,F_2) \circ \Lna=0, \eta(\Lna)=\vec{t}]\\
&\leq \frac{K}{p^{\abs{\tau_1}}}
\prod_{\substack{i\in \tau_1\\ F_1(i)\in T_2}} \frac{\nu_{F_2}(F_1(i))}{\abs{T_2}} 
\prod_{\substack{i\in \tau_1\\ F_1(i)\notin T_2}} \frac{p\nu_{F_2}(F_1(i),t_i)}{\abs{\sg{-F_1(i)+T_2}}} 
\prod_{j\in \tau_2} \frac{1}{\abs{\sg{-F_2(j)+T_1}}}.
\end{align*}
If we sum over different $F_1$ corresponding to the same $\tau_1$, we see that
\begin{align*}
&\sum_{\substack{F_1\in\SSS{n,\delta}{T_1}{S_1}\\ F_1(\tau_1)\subseteq T_1 } } 
\PPP[(F_1,F_2) \circ \Lna=0, \eta(\Lna)=\vec{t}]\\
&\leq \frac{K}{p^{\abs{\tau_1}}}
\prod_{j\in \tau_2} \frac{1}{\abs{\sg{-F_2(j)+T_1}}}
\abs{S_1}^{\abs{\sigma_1}}
\prod_{i\in\tau_1}
\left(
\sum_{g\in T_1\cap T_2} \frac{\nu_{F_2}(g)}{\abs{T_2}}
+\sum_{g\in T_1\setminus T_2} \frac{p\nu_{F_2}(g,t_i)}{\abs{\sg{-g+T_2}}} 
\right)\\
&= \frac{K}{p^{n}}
(p\abs{G})^{\abs{\sigma_1}}
\prod_{j\in \tau_2} \frac{1}{\abs{\sg{-F_2(j)+T_1}}}
\prod_{l\in \Z/p\Z}
\left(
\sum_{g\in T_1\cap T_2} \frac{\nu_{F_2}(g)}{\abs{T_2}}
+\sum_{g\in T_1\setminus T_2} \frac{p\nu_{F_2}(g,l)}{\abs{\sg{-g+T_2}}} 
\right)^{\abs{\{i\in \tau_1: t_i=l\}}}.
\end{align*}
By Lemma~\ref{Lem: l neq0 sum <=1}, we see that
\begin{align*}
&\sum_{\substack{F_1\in\SSS{n,\delta}{T_1}{S_1}\\ F_1(\tau_1)\subseteq T_1 } } 
\PPP[(F_1,F_2) \circ \Lna=0, \eta(\Lna)=\vec{t}]\\
&\leq 
\frac{K}{p^{n}}
(p\abs{G})^{\log_p(D_1)\delta n}
\prod_{j\in \tau_2} \frac{1}{\abs{\sg{-F_2(j)+T_1}}}
\left(
\sum_{g\in T_1\cap T_2} \frac{\nu_{F_2}(g)}{\abs{T_2}}
+\sum_{g\in T_1\setminus T_2} \frac{p\nu_{F_2}(g,0)}{\abs{\sg{-g+T_2}}}
\right)^{\abs{\{i\in \tau_1: t_i=0\}}}.
\end{align*}
If $g\notin T_2$, then $\sg{-g+T_2}\supsetneq \G(T_2)$, hence $\abs{\sg{-g+T_2}}\geq p\times \abs{T_2}$.
This implies that
\[
\sum_{g\in T_1\cap T_2} \frac{\nu_{F_2}(g)}{\abs{T_2}}
+\sum_{g\in T_1\setminus T_2} \frac{p\nu_{F_2}(g,0)}{\abs{\sg{-g+T_2}}}
\leq 
\sum_{g\in T_1\cap T_2} \frac{1}{\abs{T_2}}
+\sum_{g\in T_1\setminus T_2} \frac{p}{p\abs{T_2}}
=\frac{\abs{T_1}}{\abs{T_2}}.
\]
We see that
\begin{align*}
&\sum_{\substack{F_1\in\SSS{n,\delta}{T_1}{S_1}\\ F_1(\tau_1)\subseteq T_1 } } 
\PPP[(F_1,F_2) \circ \Lna=0, \eta(\Lna)=\vec{t}]\\
&\leq 
\frac{K}{p^{n}}
(p\abs{G})^{\log_p(D_1)\delta n}
\prod_{j\in \tau_2} \frac{1}{\abs{\sg{-F_2(j)+T_1}}}
\left(
\frac{\abs{T_1}}{\abs{T_2}}
\right)^{\abs{\{i\in \tau_1: t_i=0\}}}.
\end{align*}
Since $\abs{T_1}>\abs{T_2}$ and $\vec{t}\in \EE_{n,\rho}$, we know that
\[
\left(
\frac{\abs{T_1}}{\abs{T_2}}
\right)^{\abs{\{i\in \tau_1: t_i=0\}}}
\leq
\left(
\frac{\abs{T_1}}{\abs{T_2}}
\right)^{\abs{\{i\in [n_1]: t_i=0\}}}
\leq
\left(
\frac{\abs{T_1}}{\abs{T_2}}
\right)^{(\frac{1}{p}+\rho)n}.
\]
The number of choices for $\tau_1$ is at most
\[
\sum_{k\leq \log_p(D_1)\delta n} \binom{n}{k}
\leq \exp\left(2\sqrt{\log_p(D_1)\delta} n\right)
\leq
\exp\left(2\sqrt{\log_p(\abs{G})}\right)^{n\sqrt{\delta}}.
\]
Therefore,
\begin{align*}
&\sum_{F_1\in\SSS{n,\delta}{T_1}{S_1}} 
\PPP[(F_1,F_2) \circ \Lna=0, \eta(\Lna)=\vec{t}]\\
&\leq \frac{K}{p^{n}}
\left(
(p\abs{G})^{\log_p(D_1)}
\exp\left(2\sqrt{\log_p(\abs{G})}\right)
\right)^{\sqrt{\delta}n}
\left(
\frac{\abs{T_1}}{\abs{T_2}}
\right)^{(\frac{1}{p}+\rho)n}
\prod_{j\in \tau_2} \frac{1}{\abs{\sg{-F_2(j)+T_1}}}.
\end{align*}
By summing over different the $F_2$ corresponding to the same $\tau_2$, we see that
\begin{align*}
&\sum_{\substack{F_2\in\SSS{\ceil{\alpha n},\delta}{T_2}{S_2}\\ F_2(\tau_2)\subseteq T_2 } } 
\sum_{F_1\in\SSS{n,\delta}{T_1}{S_1}} 
\PPP[(F_1,F_2) \circ \Lna=0, \eta(\Lna)=\vec{t}]\\
&\leq \frac{K}{p^{n}}
\left(
(p\abs{G})^{\log_p(D_1)}
\exp\left(2\sqrt{\log_p(\abs{G})}\right)
\right)^{\sqrt{\delta}n}
\left(
\frac{\abs{T_1}}{\abs{T_2}}
\right)^{(\frac{1}{p}+\rho)n}
\abs{S_2}^{\abs{\sigma_2}}
\left(
\sum_{h\in T_2} \frac{1}{\abs{\sg{-h+T_1}}} 
\right)^{\abs{\tau_2}}\\
&\leq \frac{K}{p^{n}}
\left(
(p\abs{G}^2)^{\log_p(\abs{G})}
\exp\left(2\sqrt{\log_p(\abs{G})}\right)
\right)^{\sqrt{\delta}n}
\left(
\frac{\abs{T_1}}{\abs{T_2}}
\right)^{(\frac{1}{p}+\rho)n}
\left(
\frac{\abs{T_2}}{\abs{T_1}} 
\right)^{\abs{\tau_2}}.
\end{align*}
Since $\abs{T_2}<\abs{T_1}$, we see that
\[
\left(
\frac{\abs{T_2}}{\abs{T_1}} 
\right)^{\abs{\tau_2}}
\leq
\left(
\frac{\abs{T_2}}{\abs{T_1}} 
\right)^{\alpha n(1-\delta\log_p(D_2))}
\leq 
\left(
\frac{\abs{T_2}}{\abs{T_1}} 
\right)^{\alpha n}
\abs{G}^{n\delta\log_p(\abs{G})}.
\]
Therefore,
\begin{align*}
&\sum_{\substack{F_2\in\SSS{\ceil{\alpha n},\delta}{T_2}{S_2}\\ F_2(\tau_2)\subseteq T_2 } } 
\sum_{F_1\in\SSS{n,\delta}{T_1}{S_1}} 
\PPP[(F_1,F_2) \circ \Lna=0, \eta(\Lna)=\vec{t}]\\
&\leq \frac{K}{p^{n}}
\left(
(p\abs{G}^3)^{\log_p(\abs{G})}
\exp\left(2\sqrt{\log_p(\abs{G})}\right)
\right)^{\sqrt{\delta}n}
\left(
\frac{\abs{T_2}}{\abs{T_1}} 
\right)^{(\alpha-\frac{1}{p}-\rho) n}.
\end{align*}
The number of choices for $\tau_2$ is at most
\[
\sum_{k\leq \log_p(D_2)\delta \alpha n} \binom{\alpha n}{k}
\leq \exp\left(2\sqrt{\log_p(D_2)\delta} \alpha n\right)
\leq \exp\left(2\sqrt{\log_p(\abs{G})}\right)^{n\sqrt{\delta}}.
\]
We conclude that
\begin{align*}
&\sum_{F_2\in\SSS{\ceil{\alpha n},\delta}{T_2}{S_2}}
\sum_{F_1\in\SSS{n,\delta}{T_1}{S_1}} 
\PPP[(F_1,F_2) \circ \Lna=0, \eta(\Lna)=\vec{t}]\\
&\leq \frac{K}{p^{n}}
\left(
(p\abs{G}^3)^{\log_p(\abs{G})}
\exp\left(4\sqrt{\log_p(\abs{G})} \right)
\right)^{\sqrt{\delta}n}
\left(
\frac{\abs{T_2}}{\abs{T_1}} 
\right)^{(\alpha-\frac{1}{p}-\rho) n}.
\end{align*}
Since $\abs{\EE_{n,\rho}}\leq p^n$, we see that
\begin{align*}
&\sum_{F_2\in\SSS{\ceil{\alpha n},\delta}{T_2}{S_2}}
\sum_{F_1\in\SSS{n,\delta}{T_1}{S_1}} 
\PPP[(F_1,F_2) \circ \Lna=0, \eta(\Lna)\in \EE_{n,\rho}]\\
&\leq K
\left(
(p\abs{G}^3)^{\log_p(\abs{G})}
\exp\left(4\sqrt{\log_p(\abs{G})} \right)
\right)^{\sqrt{\delta}n}
\left(
\frac{\abs{T_2}}{\abs{T_1}} 
\right)^{(\alpha-\frac{1}{p}-\rho) n}.
\end{align*}

We know that $\abs{T_2}<\abs{T_1}$ and $\rho<\alpha -\frac{1}{p}$, so
\[
\left(
\frac{\abs{T_2}}{\abs{T_1}} 
\right)^{(\alpha-\frac{1}{p}-\rho) n}
<1.
\]
We can choose a sufficiently small $\delta>0$ to ensure that
\[
\left(
(p\abs{G}^3)^{\log_p(\abs{G})}
\exp\left(4\sqrt{\log_p(\abs{G})} \right)
\right)^{\sqrt{\delta}}
\left(
\frac{\abs{T_2}}{\abs{T_1}} 
\right)^{(\alpha-\frac{1}{p}-\rho)}
<1.
\]
The result follows.
\end{proof}

\begin{proof}[Proof of Proposition~\ref{Prop: moments on parts}]
This follows from Corollary~\ref{Cor: main term}, Proposition~\ref{Prop: Error term type 1 to zero} and Proposition~\ref{Prop: Error term type 2 to zero}.
\end{proof}

\section{Parity of $2$-rank}

In this section $p=2$. We work with matrices over $\mathbb{F}_2$, and ranks of matrices are taken over $\mathbb{F}_2$. For a vector $v$, we write $\Diag(v)$ for the diagonal matrix whose diagonal entries are the entries of $v$, and we write $\mathbf{1}_m$ for the all-ones column vector of length $m$.

Reducing $\Lnot$ modulo $2$, the minus signs disappear, so if $A=(a_{ij})\in \MM_{n_1\times n_2}(\mathbb{F}_2)$, then
\[
\overline{\Lnot}
=
\begin{bmatrix}
\Diag(A\mathbf{1}_{n_2}) & A\\
A^T & \Diag(A^T\mathbf{1}_{n_1})
\end{bmatrix}.
\]

Our goal is to determine the parity of $\rank(\overline{\Lnot})$.

\begin{lemma}\label{lem:diag-in-image}
Let $B\in \MM_k(\mathbb{F}_2)$ be symmetric, and let
\[
v=\begin{bmatrix} b_{11}\\ \vdots \\ b_{kk}\end{bmatrix}\in \mathbb{F}_2^k
\]
be the vector of diagonal entries of $B$. Then $v\in \im(B)$.
\end{lemma}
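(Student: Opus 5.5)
Since $B$ is symmetric, $\im(B)=\ker(B^T)^\perp=\ker(B)^\perp$ with respect to the standard dot product on $\mathbb{F}_2^k$. So I would prove $v\in\im(B)$ by showing that $v\cdot x=0$ for every $x\in\ker(B)$.

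The key identity is that over $\mathbb{F}_2$ the quadratic form of a symmetric matrix only sees the diagonal. For any $x\in\mathbb{F}_2^k$,
\[
x^TBx=\sum_{i}b_{ii}x_i^2+\sum_{i<j}(b_{ij}+b_{ji})x_ix_j=\sum_i b_{ii}x_i = v\cdot x,
\]
because $b_{ij}=b_{ji}$ makes each cross term $2b_{ij}x_ix_j=0$, and $x_i^2=x_i$ in $\mathbb{F}_2$.

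Now take $x\in\ker(B)$. Then $v\cdot x=x^TBx=x^T(Bx)=0$, so $v\in\ker(B)^\perp=\im(B)$.

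The only step needing care is the identification $\im(B)=\ker(B)^\perp$. It holds over any field for the column space: $\im(B)^\perp=\ker(B^T)$, and over a finite-dimensional space $(W^\perp)^\perp=W$ even though the form is degenerate on some subspaces. I expect no real obstacle beyond stating this cleanly.
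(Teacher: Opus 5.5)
Your proposal is correct and is essentially the paper's proof: the paper likewise reduces to showing $yv=0$ whenever $yB=0$ (the left kernel, which equals $\ker(B)$ by symmetry, as you note), and then uses the same identity $yBy^T=yv$ over $\mathbb{F}_2$. Your justification of $\im(B)=\ker(B)^\perp$ via $\im(B)^\perp=\ker(B^T)$ and the dimension count is exactly the fact the paper invokes implicitly.
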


\begin{proof}
It suffices to show that for every $y\in \mathbb{F}_2^{1\times k}$, if $yB=0$, then $yv=0$. We have
\[
yBy^T
=
\sum_{i=1}^k y_i^2 b_{ii}
+
\sum_{1\le i<j\le k} y_i y_j (b_{ij}+b_{ji}).
\]
Over $\mathbb{F}_2$ we have $y_i^2=y_i$ and $b_{ij}+b_{ji}=0$, since $B$ is symmetric. This means that
\[
yBy^T=\sum_{i=1}^k y_i b_{ii}=yv.
\]
Therefore, if $yB=0$, then $yv=0$. This completes the proof.
\end{proof}

\begin{proposition}\label{prop:parity-rank}
Let $A=(a_{ij})\in \MM_{n_1\times n_2}(\mathbb{F}_2)$, and let
\[
L(A)=
\begin{bmatrix}
\Diag(A\mathbf{1}_{n_2}) & A\\
A^T & \Diag(A^T\mathbf{1}_{n_1})
\end{bmatrix}\in \MM_{n_1+n_2}(\mathbb{F}_2).
\]
Then
\[
\rank(L(A))\equiv \sum_{i=1}^{n_1}\sum_{j=1}^{n_2} a_{ij}\pmod 2.
\]
\end{proposition}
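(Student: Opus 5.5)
The plan is induction on the number of nonzero entries of $A$, tracking how $\rank(L(A))$ changes when a single entry of $A$ is flipped. For $A=0$ we have $L(A)=0$, so the rank is $0$ and the identity holds. Fix $i\in[n_1]$ and $j\in[n_2]$, and let $A'$ be $A$ with $a_{ij}$ flipped. Write $j'=n_1+j$ for the index of the $j$-th vertex of $V_2$ in $L(A)$, and set $w=e_i+e_{j'}\in\mathbb{F}_2^{n_1+n_2}$. Flipping $a_{ij}$ changes exactly the entries $(i,i)$, $(j',j')$, $(i,j')$ and $(j',i)$, so
\[
L(A')=L(A)+ww^T,
\]
a symmetric rank-one update. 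It therefore suffices to show that for $B=L(A)$ and every such $w$, $\rank(B+ww^T)=\rank(B)\pm 1$. Then every flip changes the parity of the rank, and each flip also changes the parity of $\sum a_{ij}$.

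\textbf{Rank-one update lemma.} For a symmetric $B$ over $\mathbb{F}_2$ and any $w$, I will prove the following.
\begin{enumerate}
\item If $w\notin\im(B)$, then $\rank(B+ww^T)=\rank(B)+1$.
\item If $w=Bx$, then $\rank(B+ww^T)=\rank(B)-1$ when $x^TBx=1$, and $\rank(B+ww^T)=\rank(B)$ when $x^TBx=0$.
\end{enumerate}
Both parts come from comparing kernels. In case (1), $y\in\ker(B+ww^T)$ means $By=(w^Ty)w$. Since $w\notin\im(B)$ this forces $w^Ty=0$ and $By=0$, so $\ker(B+ww^T)=\ker(B)\cap w^\perp$. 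This kernel has codimension one in $\ker(B)$, because $w\notin\im(B)=\ker(B)^\perp$ means $w$ is not orthogonal to $\ker(B)$. In case (2), writing $By=(w^Ty)w=(w^Ty)Bx$ shows $y-(w^Ty)x\in\ker(B)$. The condition $w^Tx=x^TBx$ then decides whether $x$ itself lies in the new kernel, which gives the stated count.

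\textbf{Computing $x^TBx$.} By the proof of Lemma~\ref{lem:diag-in-image}, $x^TBx=x\cdot v$ over $\mathbb{F}_2$, where $v=(A\mathbf{1}_{n_2},A^T\mathbf{1}_{n_1})$ is the diagonal of $B=L(A)$. Rather than invoking Lemma~\ref{lem:diag-in-image} abstractly, I will exhibit an explicit preimage of $v$. Let $s=(\mathbf{1}_{n_1},0)$ be the indicator vector of $V_1$. A direct computation gives
\[
Ls=(\Diag(A\mathbf{1}_{n_2})\mathbf{1}_{n_1},\,A^T\mathbf{1}_{n_1})=v.
\]
Hence, if $w=Bx$, symmetry of $B$ gives
\[
x\cdot v=x^TBs=(Bx)^Ts=w^Ts=s_i+s_{j'}=1+0=1.
\]
So in case (2) the rank always drops by exactly one, and in case (1) it rises by one. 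This completes the induction.

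The main obstacle is ruling out the ``rank unchanged'' branch of the rank-one update, i.e.\ showing $x^TBx=1$ whenever $w\in\im(B)$. The step that resolves it is the observation that the diagonal vector is the image of the bipartition indicator $s$, and that $s$ pairs to $1$ with each $w=e_i+e_{j'}$ because $w$ joins the two sides of the bipartition. The rest is routine linear algebra over $\mathbb{F}_2$.
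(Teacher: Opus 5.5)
Your proof is correct, and it follows a genuinely different route from the paper's.

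Both arguments induct on the number of nonzero entries by flipping one entry. The paper first deletes the redundant row and column indexed by $n_1+1$ (the $V_2$-endpoint of the flipped entry), so that the flip changes only the $(1,1)$ entry of the symmetric matrix $\widetilde{L}$. The bulk of its proof is then an explicit verification that the off-diagonal column $\vec{x}$ lies in $\im(K)$. That verification uses the index set $I_0$, the subspace $W$ and its orthogonal complement, the map $T$ on $W^\perp$, the Gram matrix $N$, and an application of Lemma~\ref{lem:diag-in-image} to $N$. After that, a congruence splits off a $1\times 1$ block and the flip visibly toggles the rank.

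You keep the full matrix and write the flip as the symmetric rank-one update $L(A)+ww^T$ with $w=e_i+e_{n_1+j}$. You then use the general dichotomy for such updates over $\mathbb{F}_2$. You never need to decide whether $w\in\im(L(A))$; you only need to rule out the ``rank unchanged'' branch. The identity $L(A)s=v$, for the indicator $s$ of $V_1$, does this in one line, since $x^TL(A)x=x\cdot v=w^Ts=1$. In case (2) of your update lemma, state explicitly two immediate facts: $\ker(B)\subseteq\ker(B+ww^T)$ because $w\in\im(B)=\ker(B)^\perp$, and $x\notin\ker(B)$ because $Bx=w\neq 0$.

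Your version is shorter and isolates a reusable lemma. It also shows exactly where bipartiteness enters: a single cut vector $s$ pairs to $1$ with every edge vector simultaneously. For a non-bipartite graph no such $s$ need exist, and the analogous parity statement fails for the triangle, whose mod-$2$ Laplacian has rank $2$ while it has $3$ edges. The paper's version, in exchange, produces an explicit preimage of $\vec{x}$ and an explicit congruence, instead of a case analysis on whether $w$ lies in the image.
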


\begin{proof}
We induct on the number of nonzero entries of $A$.
If $A=0$, then $L(A)=0$, so the base case is clear.

Now assume $A\neq 0$, and choose $(i_0,j_0)$ such that $a_{i_0j_0}=1$. Assume without loss of generality that $i_0=1$ and $j_0=1$. Let $E_{11}$ be the matrix with $1$ at the $(1,1)$ entry and zeros elsewhere. 
Set $A'=A+E_{11}$, so $A'$ has one fewer nonzero entry than $A$. By the induction hypothesis,
\[
\rank(L(A'))\equiv \sum_{i,j} a'_{ij}\equiv \sum_{i,j} a_{ij}+1 \pmod 2.
\]
It therefore suffices to prove that
\[
\rank(L(A))\equiv \rank(L(A'))+1\pmod 2.
\]

We first remove a redundant row and column. Every row sum and every column sum of both $L(A)$ and $L(A')$ is zero. Hence, in each matrix, the $(n_1+1)$-th column is the sum of the other columns, so deleting that column does not change the column rank. After deleting that column, the $(n_1+1)$-th row is the sum of the remaining rows, so deleting that row does not change the rank. Let $\widetilde{L}(A)$ and $\widetilde{L}(A')$ denote the resulting principal submatrices. Then
\[
\rank(L(A))=\rank(\widetilde{L}(A)),
\qquad
\rank(L(A'))=\rank(\widetilde{L}(A')).
\]

There is a scalar $d\in \mathbb{F}_2$ such that
\[
\widetilde{L}(A')=
\begin{bmatrix}
d & \vec{x}^T\\
\vec{x} & K
\end{bmatrix},
\qquad
\widetilde{L}(A)=
\begin{bmatrix}
d+1 & \vec{x}^T\\
\vec{x} & K
\end{bmatrix},
\]
for some symmetric matrix $K$ and some column vector $\vec{x}$.

We now describe $K$ and $\vec{x}$ explicitly. Let $B$ be the $(n_1-1)\times (n_2-1)$ matrix obtained from $A$ by deleting row $1$ and column $1$. Let $\vec{u}\in \mathbb{F}_2^{n_2-1}$
be the first row of $A$ with the first entry removed, turned into a column. 
Let $\vec{b}\in \mathbb{F}_2^{n_1-1}$ be the vector of row sums of $A$, ignoring the first row.
Let $\vec{c}\in \mathbb{F}_2^{n_2-1}$ be the vector of column sums of $A$, ignoring the first column.
Then
\[
K=
\begin{bmatrix}
\Diag(\vec{b}) & B\\
B^T & \Diag(\vec{c})
\end{bmatrix},
\qquad
\vec{x}=
\begin{bmatrix}
0\\
\vec{u}
\end{bmatrix},
\]
where the top zero vector has length $n_1-1$.
Also note that $\mathbf{1}_{n_1-1}^T B +\vec{u}^T=\vec{c}^T$.

We will show that $\vec{x}\in \im(K)$, assume for the moment that this is the case.
Choose $\vec{z}\in\mathbb{F}_2^{n_1+n_2-2}$ such that $K\vec{z}=\vec{x}$. For $e\in \{0,1\}$, we have
\[
\begin{bmatrix}
1 & 0\\
\vec{z} & I
\end{bmatrix}^T
\begin{bmatrix}
d+e & \vec{x}^T\\
\vec{x} & K
\end{bmatrix}
\begin{bmatrix}
1 & 0\\
\vec{z} & I
\end{bmatrix}
=
\begin{bmatrix}
d+e+\vec{z}^T K \vec{z} & \vec{x}^T+\vec{z}^TK\\
\vec{x}+K\vec{z} & K
\end{bmatrix}.
\]
Since $K$ is symmetric and we are working over $\mathbb{F}_2$, we have $\vec{x}+K\vec{z}=0$ and $\vec{x}^T+\vec{z}^TK=0$.
Thus exactly one of $\widetilde{L}(A')$ and $\widetilde{L}(A)$ has rank $\rank(K)$, and the other has rank $\rank(K)+1$. Therefore
\[
\rank(\widetilde{L}(A'))
\equiv
\rank(\widetilde{L}(A))+1 \pmod 2.
\]
This completes the induction step assuming $\vec{x}\in\im(K)$.

Now, we want to show that $\vec{x}\in\im(K)$.
Equivalently, we want to find $\vec{y}_1\in\mathbb{F}_2^{n_1-1}$ and $\vec{y}_2\in\mathbb{F}_2^{n_2-1}$ such that
\begin{align}
\Diag(\vec{b})\vec{y}_1 + B\vec{y}_2 &= 0,\label{eq:first-block}\\
B^T\vec{y}_1 + \Diag(\vec{c})\vec{y}_2 &= \vec{u}.\label{eq:second-block}
\end{align}

Let
\[
I_0=\{r\in \{1,\dots,n_1-1\}:  \vec{b}_r=0\},
\]
and let $W\subseteq \mathbb{F}_2^{n_2-1}$ be the span of the rows of $B$ indexed by $I_0$. 
Note that \eqref{eq:first-block} implies that $\vec{y}_2\in W^{\perp}$.
Here $W^{\perp}$ is defined in terms of the usual dot product on $\mathbb{F}_2^{n_2-1}$.

Define a linear map
\[
T:W^\perp\to \mathbb{F}_2^{n_2-1},
\qquad
T(\vec{y})
=B^TB\vec{y}+\Diag(\vec{c})\vec{y}.
\]
Choose a basis $w_1,\dots,w_k$ of $W^\perp$, and define $N\in \MM_k(\mathbb{F}_2)$ by $N_{l m}:=w_l^T T(w_m)$.
Since
\[
w_l^T T(w_m)
=
(Bw_l)^T(Bw_m)+w_l^T\Diag(\vec{c})w_m,
\]
the matrix $N$ is symmetric.

Next we compute its diagonal. Let $w\in W^\perp$. Since we are working over $\mathbb{F}_2$, we have
\[
w^T T(w)
=
(Bw)^T(Bw)+w^T\Diag(\vec{c})w
=
\mathbf{1}_{n_1-1}^T B w + \vec{c}^T w.
\]
We have $\mathbf{1}_{n_1-1}^T B = (\vec{c}+\vec{u})^T$, so
\[
w^T T(w)
=(\vec{c}+\vec{u})^T w + \vec{c}^T w 
= \vec{u}^T w.
\]
Therefore the diagonal vector of $N$ is
\[
\begin{bmatrix}
\vec{u}^T w_1\\
\vdots\\
\vec{u}^T w_k
\end{bmatrix}.
\]
By Lemma~\ref{lem:diag-in-image}, this vector lies in $\im(N)$. Thus there exists
\begin{align*}
&q=
\begin{bmatrix}
q_1\\
\vdots\\
q_k
\end{bmatrix}\in \mathbb{F}_2^k,
&
&\text{such that}
&
&Nq=
\begin{bmatrix}
\vec{u}^T w_1\\
\vdots\\
\vec{u}^T w_k
\end{bmatrix}.
\end{align*}
Choose
\[
\vec{y}_2
=q_1 w_1+\cdots+q_k w_k\in W^\perp.
\]
Then for each $l$, $w_l^T T(\vec{y}_2)
=\vec{u}^T w_l$, so
\[
\vec{u}-T(\vec{y}_2)\in (W^\perp)^\perp = W.
\]

Since $W$ is spanned by the rows of $B$ indexed by $I_0$, there exists a vector $\vec{g}\in \mathbb{F}_2^{n_1-1}$ supported on $I_0$ such that
\[
B^T \vec{g} 
= \vec{u}-T(\vec{y}_2).
\]
Choose $\vec{y}_1=B\vec{y}_2+\vec{g}$.

We verify \eqref{eq:first-block} and \eqref{eq:second-block}. Since $\vec{y}_2\in W^\perp$, the vector $B\vec{y}_2$ vanishes on $I_0$. 
Since we are working over $\mathbb{F}_2$, $\Diag(\vec{b})B\vec{y}_2=B\vec{y}_2$. Also, $\Diag(\vec{b})\vec{g}=0$ because $\vec{g}$ is supported on $I_0$. Therefore,
\[
\Diag(\vec{b})\vec{y}_1 + B\vec{y}_2
=
\Diag(\vec{b})(B\vec{y}_2+\vec{g})+B\vec{y}_2
=
B\vec{y}_2+B\vec{y}_2
=
0.
\]
This proves \eqref{eq:first-block}. Also,
\[
B^T\vec{y}_1+\Diag(\vec{c})\vec{y}_2
=
B^T(B\vec{y}_2+\vec{g})+\Diag(\vec{c})\vec{y}_2
=
T(\vec{y}_2)+B^T \vec{g}
=
T(\vec{y}_2)+\vec{u}-T(\vec{y}_2)
=
\vec{u},
\]
which proves \eqref{eq:second-block}. Hence $\vec{x}\in \im(K)$.
\end{proof}

\begin{customprop}{\ref{Prop: p=2 rank parity}}
Suppose $p=2$. For any $0<\alpha\leq 1$,
\[
\lim_{n\to\infty}
\PPP
\left[ 
\rank(G(\Lna))
\text{ is odd}
\right]
=
\frac{1}{2}.
\]
\end{customprop}
\begin{proof}
Note that 
\[
\rank(G(\Lna))
=n+\ceil{\alpha n}-1-\rank\left(\overline{\Lna}\right).
\]
Therefore,
\[
\PPP
\left[ 
\rank(G(\Lna))
\text{ is odd}
\right]
=\PPP
\left[
\rank\left(\overline{\Lna}\right)
\equiv n+\ceil{\alpha n} \pmod 2
\right].
\]
By Proposition~\ref{prop:parity-rank}, this is
\[
\PPP
\left[
\sum_{i=1}^n \sum_{j=1}^{\ceil{\alpha n}} a_{ij}
\equiv n+\ceil{\alpha n} \pmod 2
\right].
\]
Since the $a_{ij}$ are independent and $\epsilon$-balanced, \cite[Lemma 2.1]{wood2019random} implies that
\[
\left|
\PPP
\left[
\sum_{i=1}^n \sum_{j=1}^{\ceil{\alpha n}} a_{ij}
\equiv n+\ceil{\alpha n}+1 \pmod 2
\right]
-\frac{1}{2}
\right|
< \exp\Big(-\frac{\epsilon \alpha n^2}{2^2}\Big).
\]
The result follows.
\end{proof}

\section{Raw moments sometimes diverge to infinity}\label{Sec: Raw moment infty}

We consider the special case of $\Lna$ when all $a_{ij}$ are Haar-uniform and show that certain moments blow up to infinity.

\begin{proposition}
Consider $0<\alpha<1$ and $m>\frac{2-\alpha}{1-\alpha}$.
If all $a_{ij}$ in $\Lna$ are Haar-uniform, then we have
\[
\lim_{n\to\infty} 
\E[\abs{\Hom(G(\Lna),(\Z/p\Z)^m))}]
=\infty.
\]
\end{proposition}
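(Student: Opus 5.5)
We bound the moment from below by restricting to a single explicit family of pairs $(F_1,F_2)$ and using the decomposition of $\E[\abs{\Hom(\Coker(\Lna),G)}]$ as a sum over $(F_1,F_2)\in\Hom(\Zp^{n},G)\oplus\Hom(\Zp^{\ceil{\alpha n}},G)$ of $\PPP[(F_1,F_2)\circ\Lna=0]$. We take $G=(\Z/p\Z)^m$ and use $\abs{\Hom(\Coker(\Lna),G)}=\abs{G}\,\abs{\Hom(G(\Lna),G)}$, so it suffices to show the Hom-moment of $\Coker$ diverges. Since all $a_{ij}$ are Haar-uniform, every term can be computed exactly by conditioning, and no error term is needed.

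\textbf{The family.} Fix a nonzero $g\in G$. Let $F_2$ be identically $0$ on $[n_2]$, so $T_2=\sgo{F_2}=\{0\}$. Let $F_1$ be arbitrary with values in $\{0\}\cup(g+\text{anything})$. More concretely, let $F_1:[n]\to G$ be any map with $F_1(i)$ either $0$ or outside $\{0\}$. By \eqref{Eqn: F1, F2, Lnot}, row $i$ of $(F_1,F_2)\circ\Lna$ is $-(\sum_j a_{ij})F_1(i)$. It vanishes in $G=(\Z/p\Z)^m$ precisely when $F_1(i)=0$ or $\sum_j a_{ij}\equiv 0\pmod p$. Column $j$ is $\sum_i a_{ij}F_1(i)$, which must also vanish. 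Set $I=\{i:F_1(i)\neq0\}$ with $\abs{I}=k$.

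\textbf{The probability.} We need $\PPP[\sum_j a_{ij}\equiv0 \ \forall i\in I,\ \sum_{i\in I}a_{ij}F_1(i)=0\ \forall j]$. Reduce mod $p$, so the $a_{ij}$ for $i\in I$ form a uniform $k\times n_2$ matrix $M$ over $\mathbb{F}_p$. The conditions are $M\mathbf{1}=0$ (that is, $k$ conditions) and $F\cdot M=0$, where $F$ is the $m\times k$ matrix of the values $F_1(i)$. The latter is $\rank(F)\cdot n_2$ conditions. Heuristically these are independent, giving probability about $p^{-k}p^{-\rank(F) n_2}\ge p^{-k-mn_2}$. To make this rigorous, note that $M\mapsto (M\mathbf{1},FM)$ is a linear map. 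The probability equals $1/\abs{\text{image}}\ge p^{-k-mn_2}$, since the image has size at most $p^{k}\cdot p^{mn_2}$.

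\textbf{Counting and divergence.} The number of $F_1$ with a given $I$ of size $k$ is $(p^m-1)^k$. Summing over $I$ with $k=n$ gives a contribution of at least
\[
(p^m-1)^n p^{-n-m\ceil{\alpha n}}=\Big(\tfrac{(p^m-1)}{p^{1+m\alpha}}\Big)^{n}p^{O(m)}.
\]
This diverges when $m(1-\alpha)>1$, roughly. The threshold in the statement, $m>\frac{2-\alpha}{1-\alpha}$, is stronger, so after dividing by $\abs{G}$ the slack absorbs the factor $(1-p^{-m})^n$ and the rounding. Indeed, $m(1-\alpha)-1>1-\alpha>0$ exceeds $\log_p(1/(1-p^{-m}))$ for $p^m\geq 4$. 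If the exact threshold requires more care, we instead take $I$ to be all of $[n]$ and optimize the choice of $F$ over full-rank and low-rank options.

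\textbf{Main obstacle.} The main obstacle is verifying the lower bound on the probability via the image-size argument, and checking that the constants match the hypothesis $m>\frac{2-\alpha}{1-\alpha}$ exactly. Both are routine once the linear-map formulation is in place.
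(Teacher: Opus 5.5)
Your argument is correct, apart from one loosely justified numerical step at the end. It takes a different and sharper route than the paper.

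\textbf{The paper's route.} The paper sums over $F_1$ with $\set{F_1}=G$ and $F_2$ valued in a subgroup $T\cong\Z/p\Z$ with $\set{F_2}=T$. It evaluates $\PPP[(F_1,F_2)\circ\Lna=0]$ by counting special character pairs (the set $D(G,T)$, reusing Section 4). This gives growth $p^{(m+\alpha-2-m\alpha)n}$, which is exactly where the hypothesis $m>\frac{2-\alpha}{1-\alpha}$ comes from.

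\textbf{Your route.} You take $F_2\equiv 0$, so the event becomes a system of linear conditions on $A \bmod p$. Your image-size argument then bounds the probability below by $p^{-\abs{I}-mn_2}$ without any characters. Your family is also more efficient. With $T$ of order $p$, the paper pays $\abs{\sg{-g+T}}=p^2$ for most rows, an extra factor $p^{n}$, in exchange for only about $p^{\alpha n}$ choices of $F_2$. With $T_2=\{0\}$, each row costs only $p$.

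\textbf{Sharper threshold.} If you sum over all $I$ rather than only $I=[n]$, you get
\[
\sum_{k=0}^{n}\binom{n}{k}(p^m-1)^k p^{-k-m\ceil{\alpha n}}=\Big(\frac{p^m+p-1}{p}\Big)^{n}p^{-m\ceil{\alpha n}}\geq p^{-m}\,p^{(m-1-m\alpha)n}.
\]
So divergence already holds whenever $m(1-\alpha)>1$, a weaker hypothesis than the paper's. Your family is exactly the Type 2 configuration ($T_1=G$, $T_2=\{0\}$, with all the mass on $\vec{t}$ vanishing wherever $F_1\neq 0$) that the restriction to $\EE_{n,\rho}$ is designed to remove.

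\textbf{The step to repair.} Your reason ``exceeds $\log_p(1/(1-p^{-m}))$ for $p^m\geq 4$'' does not work as written, because $1-\alpha$ can be arbitrarily small. What does work: the hypothesis forces $m\geq 3$ and $m(1-\alpha)-1>1-\alpha>\frac{1}{m-1}$. For $m\geq 3$, $\frac{1}{m-1}$ exceeds $\log_2\frac{1}{1-2^{-m}}$, which in turn is at least $\log_p\frac{1}{1-p^{-m}}$. Alternatively, the sum over all $I$ above removes the factor $(1-p^{-m})^n$ entirely.
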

\begin{proof}
Denote $G=(\Z/p\Z)^m$, $n_1=n$ and $n_2=\ceil{\alpha n}$.
Recall that
\[
\abs{\Hom(G(\Lna),(\Z/p\Z)^m))}
=\frac{1}{p^m} \abs{\Hom(\Coker(\Lna),(\Z/p\Z)^m))}.
\]
Moreover,
\[
\E\left[
\abs{\Hom(\Coker(\Lna),(\Z/p\Z)^m))}
\right]
=\sum_{F_1\in \Hom([n_1],G)}
\sum_{F_2\in \Hom([n_2],G)}
\PPP [ (F_1,F_2)\circ \Lna =0 ].
\]

It follows from \eqref{Eqn: Prob F1 F2 Lnot = F'} that
\begin{align*}
&\PPP [ (F_1,F_2)\circ \Lna =0 ]\\
&=\frac{1}{\abs{\CC^*_{n_1}(\sgo{F_2},F_1)}}
\frac{1}{\abs{\CC^*_{n_2}(\sgo{F_1},F_2)}}
\sum_{C_1\in \CC^*_{n_1}(\sgo{F_2},F_1)}
\sum_{C_2\in \CC^*_{n_2}(\sgo{F_1},F_2)}
\prod_{i=1}^{n_1} \prod_{j=1}^{n_2}
\E\left[E[F_1,F_2,C_1,C_2](i,j)^{a_{ij}}\right].
\end{align*}
Since all the $a_{ij}$ are Haar-uniform, we know that the only $(C_1,C_2)$ that will contribute are the ones for which all coefficients $E[F_1,F_2,C_1,C_2](i,j)=1$.
We call $(C_1,C_2)$ as $(F_1,F_2)$-special if this condition holds.

Let $T$ be a subgroup of $G$ isomorphic to $\Z/p\Z$. Let $D(G,T)$ be the set of pairs of functions $(\beta_1,\beta_2)$ such that the following hold:
\begin{enumerate}
    \item $\beta_1$ maps $G$ to $\bigcup_{g\in G}\sg{-g+ T}^*$ and for every $g\in G$, $\beta_1(g)\in \sg{-g+T}^*$;
    \item 
    $\beta_2$ maps $T$ to $G^*$;
    \item For every $g\in G$ and $h\in T$, we have
    \[
    \beta_1(g)(-g+h)\beta_2(h)(-h+g)
    =1.
    \]
\end{enumerate}

As a lower bound, we only sum over $F_1\in \Hom([n_1],G)$, $F_2\in \Hom([n_2],T)$ with $\set{F_1}=G$ and $\set{F_2}=T$.
For such $F_1$, $F_2$, by an argument identical to Section~\ref{Sec: Special}, we see that the number of pairs $(C_1,C_2)$ that are $(F_1,F_2)$-special is $\abs{D(G,T)}$.
Therefore,
\[
\PPP [ (F_1,F_2)\circ \Lna =0 ]
=\frac{ \abs{ D(G,T)  }  }{\abs{\CC_{n_1}^*(T,F_1)}\times \abs{\CC_{n_2}^*(G,F_2)}}.
\]
Note that
\[
\abs{\CC_{n_2}^*(G,F_2)}
=\abs{G}^{n_2}
=p^{m n_2}.
\]
Moreover,
\[
\abs{\sg{-g+T}}
=\begin{cases}
p &\text{if }g\in T,\\
p^2 &\text{if }g\notin T,
\end{cases}
\]
so
\[
\abs{\CC_{n_1}^*(T,F_1)}
\leq p^{2n_1}.
\]

Finally, note that the number of $F_1\in \Hom([n_1],G)$, with $\set{F_1}=G$ is $\abs{G}^{n_1}(1-o(1))$ and the number of $F_2\in \Hom([n_1],T)$ with $\set{F_2}=T$ is $p^{n_2}(1-o(1))$.
Therefore,
\begin{align*}
\E\left[
\abs{\Hom(\Coker(\Lna),(\Z/p\Z)^m))}
\right]
&\geq p^{mn}(1-o(1))
\times p^{\alpha n}(1-o(1))
\frac{\abs{D(G,T)}}{p^{2n} p^{m\alpha n}}\\
&=(1-o(1)) \abs{D(G,T)} \Big(p^{m+\alpha-2-m\alpha}\Big)^n.
\end{align*}
We know that $D(G,T)\geq 1$.
By choice of $\alpha$, we know that $m+\alpha-2-m\alpha>0$, which completes the proof.
\end{proof}

\section{Lemmas}

Denote the binary entropy function as $H(x)=-x\log_2(x)-(1-x)\log_2(1-x)$. It is known that $\binom{a}{b}\leq 2^{H(\frac{b}{a})a}$ and that $H(\alpha)\leq 2\sqrt{\alpha}$. It follows that
\[
\binom{a}{b}\leq 2^{2\sqrt{ab}}.
\]

\begin{lemma}
For every integer $a \ge 1$ and every real number $0 < \delta \le 1$, we have
\[
\sum_{k=0}^{\floor{\delta a}} \binom{a}{k}
\leq 2^{2\sqrt{\delta} a}.
\]
\end{lemma}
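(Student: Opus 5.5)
We prove the bound through the entropy estimate quoted just before the statement, $H(x)\leq 2\sqrt{x}$, combined with the standard fact that a partial sum of binomial coefficients up to $\delta a$ is at most $2^{H(\delta)a}$ when $\delta\leq\frac12$. The case $\delta>\frac12$ is trivial and we dispose of it first. Indeed, $\sum_{k}\binom{a}{k}\leq 2^a$, and $2\sqrt{\delta}>\sqrt2>1$, so $2^{2\sqrt\delta a}\geq 2^a$.

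For $0<\delta\leq\frac12$ we use the classical argument with parameter $x=\delta$. Since $\delta/(1-\delta)\leq 1$, for every $k\leq\delta a$ we have $\left(\frac{\delta}{1-\delta}\right)^{k}\geq\left(\frac{\delta}{1-\delta}\right)^{\delta a}$. Therefore
\[
1=\sum_{k=0}^{a}\binom{a}{k}\delta^k(1-\delta)^{a-k}
\geq \sum_{k\leq \delta a}\binom{a}{k}(1-\delta)^a\left(\frac{\delta}{1-\delta}\right)^{k}
\geq \sum_{k\leq \delta a}\binom{a}{k}\,\delta^{\delta a}(1-\delta)^{(1-\delta)a}.
\]
The last factor equals $2^{-H(\delta)a}$, so $\sum_{k\leq\delta a}\binom{a}{k}\leq 2^{H(\delta)a}$.

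It remains to verify $H(\delta)\leq 2\sqrt{\delta}$, which the paper states as known. For completeness, we split $H(x)=-x\log_2 x-(1-x)\log_2(1-x)$ into its two terms.

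For the first term, use $-\log_2 x\leq \frac{2}{e\ln 2}x^{-1/2}$. This holds because $y\ln(1/y)\leq 1/e$ for $y=\sqrt{x}$, giving $x\ln(1/x)=2y^2\ln(1/y)\leq \frac{2}{e}y$. Hence the first term is at most $\frac{2}{e\ln2}\sqrt{x}\approx 1.06\sqrt{x}$.

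For the second term, $-(1-x)\log_2(1-x)\leq \frac{x}{\ln 2}$, using $-\ln(1-x)\leq \frac{x}{1-x}$. For $x\leq\frac12$ this gives at most $1.45x\leq 1.45\sqrt{x}\cdot\sqrt{1/2}\approx 1.02\sqrt{x}$.

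Adding the two estimates gives $H(x)\leq 2.08\sqrt{x}$, which is slightly too weak. This sloppy constant is the only real obstacle. The plan to fix it is to tighten the first estimate on the range where it matters, or to treat small and large $x$ separately.

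For small $x$ the second term is really $O(x)$, so it is negligible relative to $\sqrt{x}$.

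For $x\in[\frac1{16},\frac12]$ we use $H(x)\leq 1\leq 2\sqrt{x}$ directly, since $2\sqrt{x}\geq \frac12\cdot 2=1$ there. For $x<\frac1{16}$ the second term is at most $1.45x\leq 1.45\cdot\frac14\sqrt{x}<0.37\sqrt{x}$, and the first is at most $1.07\sqrt{x}$, for a total below $2\sqrt{x}$.

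Combining the range $\delta>\frac12$ with this piecewise verification yields $\sum_{k=0}^{\floor{\delta a}}\binom{a}{k}\leq 2^{2\sqrt\delta a}$.
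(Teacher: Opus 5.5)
Your overall route is the same as the paper's. You dispose of $\delta>\frac12$ with the trivial bound $2^a\le 2^{2\sqrt{\delta}a}$. For $\delta\le\frac12$ you combine the lower-tail entropy bound $\sum_{k\le\delta a}\binom{a}{k}\le 2^{H(\delta)a}$ with $H(\delta)\le 2\sqrt{\delta}$. Your derivation of the tail bound is correct: you weight by $\delta^k(1-\delta)^{a-k}$ and use $\delta/(1-\delta)\le 1$. This goes beyond the paper, which quotes both facts as known.

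The one step that fails is in your verification of $H(x)\le 2\sqrt{x}$. On $[\frac{1}{16},\frac12]$ you assert $2\sqrt{x}\ge 1$. But $\sqrt{1/16}=\frac14$, so there you only have $2\sqrt{x}\ge\frac12$, and $H(x)\le 1$ proves nothing on $[\frac{1}{16},\frac14)$; at $x=\frac{1}{16}$ the target is $2\sqrt{x}=\frac12$. The inequality is still true there ($H(\frac{1}{16})\approx 0.34$). The repair is to move your split point from $\frac{1}{16}$ to $\frac14$:
\begin{itemize}
\item For $x<\frac14$ you have $\sqrt{x}<\frac12$, so the second term is at most $\frac{x}{\ln 2}\le\frac{1}{2\ln 2}\sqrt{x}<0.73\sqrt{x}$. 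Together with your bound $\frac{2}{e\ln 2}\sqrt{x}<1.07\sqrt{x}$ for the first term, this gives $H(x)<1.8\sqrt{x}$.
\item For $x\in[\frac14,\frac12]$, the estimate $H(x)\le 1\le 2\sqrt{x}$ genuinely holds.
\end{itemize}
With that correction the proof is complete. Alternatively, simply cite $H(x)\le 2\sqrt{x}$, as the paper does.
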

\begin{proof}
First assume $0<\delta\leq \frac{1}{2}$. 
Using the standard entropy bound for the lower tail of binomial coefficients together with the estimate $H(\delta)\leq 2\sqrt{\delta}$, we get
\[
\sum_{k=0}^{\floor{\delta a}}\binom{a}{k}
\leq 2^{H(\delta)a}
\leq 2^{2\sqrt{\delta} a}.
\]

For $\frac{1}{2}<\delta\leq 1$, we have the trivial bound $\sum_{k=0}^{\floor{\delta a}}\binom{a}{k}\leq 2^a<2^{2\sqrt{\delta} a}$.
\end{proof}

\subsection{Counting codes and non-codes}

We will obtain bounds on the sizes of $\SSS{n,\delta}{S}{T}$. We start with the case $T\subsetneq S$.

\begin{lemma}\cite[Lemma 5.2]{wood2017distribution}\label{Lem: bound S ST}
Given a chain of translated subgroups $T\treq S\treq G$, denote $D=[S:T]>1$. Then we have
\[
\abs{\SSS{n,\delta}{S}{T}}
\leq 4^{n\sqrt{\delta\log_p(\abs{G})}}
\abs{T}^n D^{\delta\log_p(D)n}.
\]
\end{lemma}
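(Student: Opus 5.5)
The plan is a direct counting argument: choose the set of indices that witnesses the depth, then count the values placed on and off that set. Let $F\in\SSS{n,\delta}{S}{T}$. By definition of $\delta$-type $(S,T)$ with $S\neq T$, we have $\sgo{F}=S$. There is also a set $\sigma\subseteq[n]$ with $\abs{\sigma}<\log_p(D)\delta n$ such that $\sgo{F(e_i):i\in[n]\setminus\sigma}=T$. We record the pair $(\sigma,T)$ and bound the number of $F$ compatible with a fixed $\sigma$.

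\textbf{Step 1 (count supports).} The number of admissible $\sigma$ is at most $\sum_{k\le \log_p(D)\delta n}\binom{n}{k}$. Since $D\le\abs{G}$, the preceding lemma bounds this by
\[
2^{2\sqrt{\delta\log_p(D)}\,n}\le 4^{n\sqrt{\delta\log_p(\abs{G})}}.
\]
If $\log_p(D)\delta>1$, we use the trivial bound $2^n$ instead, which is also dominated.

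\textbf{Step 2 (count values off $\sigma$).} For fixed $\sigma$, each $F(e_i)$ with $i\notin\sigma$ lies in the fixed translated subgroup $T$. This gives at most $\abs{T}^{n-\abs{\sigma}}$ choices.

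\textbf{Step 3 (count values on $\sigma$).} For $i\in\sigma$, each $F(e_i)$ lies in $S$. This gives at most $\abs{S}^{\abs{\sigma}}=\abs{T}^{\abs{\sigma}}D^{\abs{\sigma}}$ choices.

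Combining Steps 2 and 3, the count for fixed $\sigma$ is at most $\abs{T}^nD^{\abs{\sigma}}\le \abs{T}^nD^{\delta\log_p(D)n}$. Multiplying by the bound from Step 1 gives the claimed inequality.

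The main point needing care is that $T$ and $S$ are fixed in the statement, so we do not need to sum over possible $T$. Then each $F$ is counted at least once via some witnessing $\sigma$, and overcounting only helps an upper bound. Beyond this, the only mild obstacle is the binomial tail estimate when $\delta\log_p(D)$ is not small; there we check directly that $2^n\le 4^{n\sqrt{\delta\log_p(\abs{G})}}$ whenever $\delta\log_p\abs{G}\ge 1/4$.
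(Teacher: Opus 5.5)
Your proof is correct and follows the paper's argument. You fix a witnessing $\sigma$, place $F(e_i)\in T$ off $\sigma$ and $F(e_i)\in S$ on $\sigma$ to get at most $\abs{T}^n D^{\abs{\sigma}}$ maps per $\sigma$, and multiply by the number of choices of $\sigma$. The only cosmetic difference is that the paper pads $\sigma$ to exactly $\ceil{\delta\log_p(D)n}-1$ elements and bounds a single binomial coefficient by $2^{2\sqrt{ab}}$; you instead sum over all sizes with the tail lemma and handle the case $\delta\log_p(D)>1$ separately, which is slightly more careful.
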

\begin{proof}
If $F\in \SSS{n,\delta}{S}{T}$, then there is $\sigma\subseteq [n]$ with $\abs{\sigma}<\delta \log_p(D) n$ such that for $i\in [n]\setminus \sigma$ $F(i)\in T$ and for $i\in \sigma$ $F(i)\in S$.
By enlarging $\sigma$, we can assume that $\abs{\sigma}=\ceil{\delta \log_p(D)n}-1$.

We have $\binom{n}{\ceil{\delta \log_p(D)n}-1}$ choices for $\sigma$. Once $\sigma$ is chosen, there are $\abs{T}^{n-(\ceil{\delta \log_p(D)n}-1)} \abs{S}^{\ceil{\delta \log_p(D)n}-1}$ choices for $F$. Therefore,
\[
\abs{\SSS{n,\delta}{S}{T}}
\leq\binom{n}{\ceil{\delta\log_p(D) n}-1} \abs{T}^n D^{\ceil{\delta\log_p(D)n}-1}
\leq 2^{2\sqrt{n\delta\log_p(D) n}} \abs{T}^n D^{\delta\log_p(D) n}.\qedhere
\]
\end{proof}

Given a group $G$, let $K_G$ be the number of translated subgroups of $G$.
We estimate the size of $\SSS{n,\delta}{T}{T}$.

\begin{lemma}\label{Lem: count codes}
Consider $0<\delta<\frac{1}{9\log_p(\abs{G})}$.
There is a constant $c_{G,\delta}>0$ (depending on $G$ and $\delta$), such that for each translated subgroup $T\treq G$
\[
\abs{T}^n \Big(1- K_G \exp(-c_{G,\delta}n) - K_G 2^{-n}\Big)
\leq \abs{\SSS{n,\delta}{T}{T}}
\leq \abs{T}^n.
\]
\end{lemma}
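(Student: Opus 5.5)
The upper bound is immediate. Every $F\in\SSS{n,\delta}{T}{T}$ has $\delta$-type $(T,T)$, so $\sgo{F}=T$ and every $F(i)\in T$. Hence $\SSS{n,\delta}{T}{T}\subseteq\Hom([n],T)$, which has size $\abs{T}^n$.

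For the lower bound I will bound the complement inside $\Hom([n],T)$. A map $F:[n]\to T$ fails to lie in $\SSS{n,\delta}{T}{T}$ for one of two reasons:
\begin{enumerate}
\item $\sgo{F}=T'\subsetneq T$;
\item $\sgo{F}=T$, but $F$ has $\delta$-type $(T,T')$ for some $T'\subsetneq T$.
\end{enumerate}
In case (2) there is a $\sigma$ with $\abs{\sigma}<\log_p(D)\delta n$, $D=[T:T']$, such that $F(i)\in T'$ for all $i\notin\sigma$. Case (1) is included in case (2) with $\sigma=\emptyset$. So in either case $F\in\SSS{n,\delta}{T}{T'}$ viewed inside $\Hom([n],T)$, or $F$ lands entirely in a proper translated subgroup $T'$.

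I will take a union bound over the at most $K_G$ proper translated subgroups $T'\subsetneq T$. For each $T'$ I apply the argument of Lemma~\ref{Lem: bound S ST} with ambient group $T$. The number of such $F$ is at most
\[
\binom{n}{\lceil\delta\log_p(D)n\rceil}\,\abs{T'}^{n}D^{\delta\log_p(D)n}
\le \abs{T}^n\,4^{n\sqrt{\delta\log_p\abs{G}}}\,D^{-n(1-\delta\log_p D)} .
\]
Since $D\ge p\ge2$ and $\log_p D\le\log_p\abs{G}$, this is at most
\[
\abs{T}^n\exp\Big(-n\big(\log 2\,(1-\delta\log_p\abs{G})-2\log 2\sqrt{\delta\log_p\abs{G}}\big)\Big).
\]
With $\delta\log_p\abs{G}<1/9$ the exponent rate is at least $\log 2\,(1-\tfrac19-\tfrac23)>0$. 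This gives the constant $c_{G,\delta}$, and the contribution of these $F$ is at most $K_G\abs{T}^n\exp(-c_{G,\delta}n)$.

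The separate $K_G2^{-n}$ term accounts for the maps with $\set{F}\subseteq T'$ counted naively: there are at most $\abs{T'}^n\le\abs{T}^n2^{-n}$ of them for each $T'$. If the binomial estimate is used with the ceiling, these can be handled this way, or the $\sigma=\emptyset$ case can simply be split off. Summing the two contributions gives the stated lower bound.

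The main obstacle is the bookkeeping in the definition of $\delta$-depth. It takes the \emph{largest} $D$ achievable with $\abs{\sigma}<\log_p(D)\delta n$. I must check that every $F$ which is not a translated code of distance $\delta n$ with image $T$ really is captured by some $T'$ with such a $\sigma$. Conversely, any $F$ with no such $\sigma$ has $\delta$-type $(T,T)$ and is therefore counted in $\SSS{n,\delta}{T}{T}$. Once the complement is correctly identified as the union over $T'$, the entropy estimate is routine.
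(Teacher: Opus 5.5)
Your proposal is correct and follows the paper's own route. The upper bound comes from $\SSS{n,\delta}{T}{T}\subseteq\Hom([n],T)$. The lower bound is a union bound over the at most $K_G$ proper translated subgroups $T'\subsetneq T$: you cover the complement by $\SSS{n,\delta}{T}{T'}\cup\Hom([n],T')$ and then combine the Lemma~\ref{Lem: bound S ST} count with $D^{-1+\delta\log_p D}\le 2^{-1+\delta\log_p\abs{G}}$ and $\delta\log_p\abs{G}<1/9$.

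One off-by-one: enlarge $\sigma$ to size $\lceil\delta\log_p(D)n\rceil-1$ rather than $\lceil\delta\log_p(D)n\rceil$. Then the factor $D^{\abs{\sigma}}$ really is at most $D^{\delta\log_p(D)n}$, and no stray constant factor $D$ appears.
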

\begin{proof}
The upper bound follows from the fact that
\[
\SSS{n,\delta}{T}{T} \subseteq \Hom([n], T).
\]
Moreover,
\[
\Hom([n], T) \setminus \SSS{n,\delta}{T}{T}
=\bigcup_{T'\treq T} \SSS{n,\delta}{T}{T'} \cup \Hom([n],T').
\]
The number of $T'$ is at most $K_G$, and 
\[
\abs{\Hom([n],T')}
= \abs{T'}^n \leq \frac{\abs{T}^n}{2^n}.
\]
Denote $[T:T']=D>1$. Then we know that
\[
\abs{\SSS{n,\delta}{T}{T'}}
\leq 2^{2n\sqrt{\delta\log_p(\abs{G})}}
\abs{T}^n D^{-n+\delta\log_p(D)n}
\leq 
\abs{T}^n \Big(2^{2\sqrt{\delta\log_p(\abs{G})}-1+\delta\log_p(\abs{G})}\Big)^n.
\]
The bound on $\delta$ also ensures that $2\sqrt{\delta\log_p(\abs{G})}-1+\delta\log_p(\abs{G})<0$. The result follows.
\end{proof}

\subsection{Counting non-robust $C$}

\begin{lemma}\label{Lem: count non robust no s}
Given $0<\gamma<1$, $F\in \Hom([n],G)$ and a translated subgroup $T\treq G$, the number of $C\in \CC^*_{n}(T,F)$ that are not $(\gamma n,F)$-robust is at most
\[
\abs{G}^{\abs{G}}
\left(4\abs{G}\right)^{n\sqrt{\gamma}}.
\]
\end{lemma}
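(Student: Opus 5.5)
The plan is to show that any $C$ which fails to be $(\gamma n,F)$-robust is determined, up to a small amount of freedom, by one function on $G$ and by its values on a set of fewer than $\gamma n$ exceptional indices. Then we count each piece.

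\textbf{Step 1: structure of a non-robust $C$.} Suppose $C\in\CC^*_n(T,F)$ is not $(\gamma n,F)$-robust. By definition there is $\sigma\subseteq[n]$ with $\abs{\sigma}<\gamma n$ such that for all $i_1,i_2\in[n]\setminus\sigma$ with $F(i_1)=F(i_2)$ we have $C(i_1)=C(i_2)$. So on $[n]\setminus\sigma$ the map $C$ factors as $C(i)=\beta(F(i))$ for some function $\beta$ defined on $\set{F}\subseteq G$ with $\beta(g)\in\sg{-g+T}^*$. Enlarging $\sigma$ if necessary, we may assume $\abs{\sigma}=\floor{\gamma n}$; this keeps the property and simplifies the counting.

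\textbf{Step 2: counting.}
\begin{itemize}
\item The number of choices of $\beta$ is at most $\prod_{g\in G}\abs{\sg{-g+T}}\le\abs{G}^{\abs{G}}$.
\item The number of choices of $\sigma$ is at most $\binom{n}{\floor{\gamma n}}\le 2^{2\sqrt{\gamma}n}$, by the entropy bound stated at the start of this section.
\item Once $\sigma$ is fixed, the values $C(i)$ for $i\in\sigma$ are arbitrary elements of $\sg{-F(i)+T}^*$, giving at most $\abs{G}^{\gamma n}$ choices.
\end{itemize}
Multiplying, the number of non-robust $C$ is at most $\abs{G}^{\abs{G}}\,2^{2\sqrt{\gamma}n}\,\abs{G}^{\gamma n}$.

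\textbf{Step 3: simplification.} Since $0<\gamma<1$ we have $\gamma\le\sqrt{\gamma}$, so $\abs{G}^{\gamma n}\le\abs{G}^{\sqrt{\gamma}n}$. Also $2^{2\sqrt{\gamma}n}=4^{\sqrt{\gamma}n}$. Together these give the stated bound $\abs{G}^{\abs{G}}(4\abs{G})^{n\sqrt{\gamma}}$.

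The only delicate point is the bound on the number of $\sigma$. It uses the inequality $\binom{a}{b}\le 2^{2\sqrt{ab}}$ with $b\le\gamma n$, which yields $2^{2\sqrt{\gamma}n}$. If one prefers to avoid fixing $\abs{\sigma}=\floor{\gamma n}$, one can instead sum over all sizes of $\sigma$ and apply the lemma bounding $\sum_{k\le\delta a}\binom{a}{k}$ with $\delta=\gamma$; this gives the same bound.
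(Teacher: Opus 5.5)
Your proposal is correct and follows the paper's proof essentially step for step: factor $C$ through a function $\beta$ on $G$ away from an exceptional set $\sigma$, pad $\sigma$ to a fixed size, and bound the choices of $\sigma$, of $\beta$, and of the values of $C$ on $\sigma$ by $2^{2\sqrt{\gamma}n}$, $\abs{G}^{\abs{G}}$ and $\abs{G}^{\gamma n}$, then use $\gamma\le\sqrt{\gamma}$. The only difference is cosmetic. You pad to size $\floor{\gamma n}$ rather than the paper's $\ceil{\gamma n}-1$; when $\gamma n$ is an integer this breaks the strict inequality $\abs{\sigma}<\gamma n$, but that is harmless, because the count only uses that $C$ factors through $F$ off $\sigma$, and that property survives enlarging $\sigma$.
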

\begin{proof}
If $C\in \CC^*_{n}(T,F)$ is not $(\gamma n,F)$-robust, then there is some $\sigma\subseteq [n]$ with $\abs{\sigma}<\gamma n$ such that for $i_1,i_2\in [n]\setminus \sigma$, $F(i_1)=F(i_2)$ implies $C(i_1)=C(i_2)$. 
This means there is some $\beta:G\to \bigcup_{g\in G} \sg{-g+T}^*$, such that for each $i\in [n]\setminus \sigma$ we have $C(i)=\beta(F(i))$.
By enlarging $\sigma$, we can assume that $\abs{\sigma}=\ceil{\gamma n}-1$.

It follows that the number of choices for $\sigma$ is $\binom{n}{\ceil{\gamma n}-1}$. The number of choices for $\beta$ is at most $\abs{G}^{\abs{G}}$. Once we choose $\sigma$ and $\beta$, then $C(i)$ is determined for $i\in [n]\setminus \sigma$. For $i\in \sigma$, $C(i)$ has at most $\abs{G}$ choices. Therefore, the number of $C\in \CC^*_{n}(T,F)$ that are not $(\gamma n,F)$-robust is at most
\[
\binom{n}{\ceil{\gamma n}-1}
\abs{G}^{\abs{G}}
\abs{G}^{\ceil{\gamma n}-1}
\leq 
2^{2\sqrt{n(\gamma n)}}
\abs{G}^{\abs{G}}
\abs{G}^{\gamma n}
\leq \abs{G}^{\abs{G}}
\left(4\abs{G}\right)^{n\sqrt{\gamma}}.\qedhere
\]
\end{proof}

\begin{lemma}\label{Lem: count non robust with s}
Given $0<\gamma<1$, $F\in \Hom([n],G)$, $\vec{s}\in (\Z/p\Z)^n$ and a translated subgroup $T\treq G$, the number of $C\in \CC^*_{n}(T,F)$ that are not $(\gamma n,F,\vec{s})$-robust is at most
\[
\abs{G}^{p\abs{G}}
\left(4\abs{G}\right)^{n\sqrt{\gamma}}.
\]
\end{lemma}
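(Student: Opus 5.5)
The argument copies the proof of Lemma~\ref{Lem: count non robust no s}, with the pair $(F(i),s_i)\in G\times(\Z/p\Z)$ now playing the role that $F(i)$ alone played there. Suppose $C\in\CC^*_n(T,F)$ is not $(\gamma n,F,\vec{s})$-robust. Then there is a set $\sigma\subseteq[n]$ with $\abs{\sigma}<\gamma n$ such that for all $i_1,i_2\in[n]\setminus\sigma$, the conditions $F(i_1)=F(i_2)$ and $s_{i_1}=s_{i_2}$ force $C(i_1)=C(i_2)$. Hence there is a function
\[
\beta: G\times(\Z/p\Z)\to \bigcup_{g\in G}\sg{-g+T}^*
\]
with $\beta(g,s)\in\sg{-g+T}^*$ such that $C(i)=\beta(F(i),s_i)$ for every $i\in[n]\setminus\sigma$. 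Enlarging $\sigma$ if necessary, we may assume $\abs{\sigma}=\ceil{\gamma n}-1$.

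We then count the data $(\sigma,\beta,C|_\sigma)$, which determines $C$. The number of choices for $\sigma$ is $\binom{n}{\ceil{\gamma n}-1}\le 2^{2\sqrt{n\cdot\gamma n}}=2^{2\sqrt{\gamma}\,n}$, by the binomial estimate from the start of this section. The function $\beta$ has domain of size at most $p\abs{G}$, and each value lies in a dual group of size at most $\abs{G}$. So there are at most $\abs{G}^{p\abs{G}}$ choices for $\beta$; only the values on $\set{F,\vec{s}}$ matter, but the crude bound suffices. For each $i\in\sigma$, the value $C(i)\in\sg{-F(i)+T}^*$ has at most $\abs{G}$ choices, which gives at most $\abs{G}^{\gamma n}$ in total.

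Multiplying these three counts gives
\[
2^{2\sqrt\gamma n}\,\abs{G}^{p\abs{G}}\,\abs{G}^{\gamma n}\le \abs{G}^{p\abs{G}}(4\abs{G})^{n\sqrt\gamma},
\]
using $\gamma\le\sqrt\gamma$ for $0<\gamma<1$. This is the claimed bound. The only point needing care is the observation that $\beta$ now depends on the pair $(F(i),s_i)$, which is exactly what produces the factor $p$ in the exponent; everything else is routine.
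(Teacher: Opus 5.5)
Your proposal is correct and follows the paper's proof essentially step for step. Both factor $C$ off $\sigma$ through a function $\beta$ on $G\times(\Z/p\Z)$, normalize to $\abs{\sigma}=\ceil{\gamma n}-1$, and count $(\sigma,\beta,C|_\sigma)$ using $\binom{a}{b}\le 2^{2\sqrt{ab}}$ and $\gamma\le\sqrt{\gamma}$.
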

\begin{proof}
If $C\in \CC^*_{n}(T,F)$ is not $(\gamma n,F,\vec{s})$-robust, then there is some $\sigma\subseteq [n]$ with $\abs{\sigma}<\gamma n$ such that for $i_1,i_2\in [n]\setminus \sigma$, $(F(i_1),s_{i_1})=(F(i_2),s_{i_2})$ implies $C(i_1)=C(i_2)$. 
This means there is some $\beta:G\times(\Z/p\Z)\to \bigcup_{g\in G} \sg{-g+T}^*$, such that for each $i\in [n]\setminus \sigma$, we have $C(i)=\beta(F(i),s_i)$.
By enlarging $\sigma$, we can assume that $\abs{\sigma}=\ceil{\gamma n}-1$.

It follows that the number of choices for $\sigma$ is $\binom{n}{\ceil{\gamma n}-1}$. The number of choices for $\beta$ is at most $\abs{G}^{p\abs{G}}$. Once we choose $\sigma$ and $\beta$, then $C(i)$ is determined for $i\in [n]\setminus \sigma$. For $i\in \sigma$, $C(i)$ has at most $\abs{G}$ choices. Therefore, the number of $C\in \CC^*_{n}(T,F)$ that are not $(\gamma n,F,\vec{s})$-robust is at most
\[
\binom{n}{\ceil{\gamma n}-1}
\abs{G}^{p\abs{G}}
\abs{G}^{\ceil{\gamma n}-1}
\leq 
2^{2\sqrt{n(\gamma n)}}
\abs{G}^{p\abs{G}}
\abs{G}^{\gamma n}
\leq \abs{G}^{p\abs{G}}
\left(4\abs{G}\right)^{n\sqrt{\gamma}}.\qedhere
\]
\end{proof}

\begin{lemma}\label{Lem: count non robust vec s}
Given $0<\gamma<1$, $F\in \Hom([n],G)$ and a translated subgroup $T\treq G$, the number of $\vec{s}\in (\Z/p\Z)^n$ that are not $(\gamma n,F, T)$-robust is at most
\[
p^{\abs{G}}
\left(4p\right)^{n\sqrt{\gamma}}
p^{\abs{\{i\in [n]: F(i)\notin T\}}}.
\]
\end{lemma}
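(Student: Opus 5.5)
The plan is to mirror the proofs of Lemma~\ref{Lem: count non robust no s} and Lemma~\ref{Lem: count non robust with s}, with the roles of characters and of the entries $s_i$ interchanged.

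Suppose $\vec{s}$ is not $(\gamma n,F,T)$-robust. Then there is some $\sigma\subseteq[n]$ with $\abs{\sigma}<\gamma n$ such that for $i_1,i_2\in[n]\setminus\sigma$, the condition $F(i_1)=F(i_2)\in T$ forces $s_{i_1}=s_{i_2}$. Hence there is a function $\lambda: G\cap T\to \Z/p\Z$ (defined on the elements of $T$) with $s_i=\lambda(F(i))$ for every $i\in[n]\setminus\sigma$ satisfying $F(i)\in T$. As in the earlier lemmas, we enlarge $\sigma$ so that $\abs{\sigma}=\ceil{\gamma n}-1$ exactly.

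We then count the data that determine $\vec{s}$:
\begin{enumerate}
\item The set $\sigma$ admits at most $\binom{n}{\ceil{\gamma n}-1}\le 2^{2\sqrt{\gamma}n}$ choices, by the entropy bound stated at the start of this section.
\item The function $\lambda$ admits at most $p^{\abs{T}}\le p^{\abs{G}}$ choices.
\item The entries $s_i$ with $i\in\sigma$ are free, giving at most $p^{\ceil{\gamma n}-1}\le p^{\gamma n}$ choices.
\item The entries $s_i$ with $i\notin\sigma$ and $F(i)\notin T$ are unconstrained, giving at most $p^{\abs{\{i\in[n]:F(i)\notin T\}}}$ choices.
\item The remaining entries are determined by $\lambda$.
\end{enumerate}

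Multiplying these counts gives
\[
p^{\abs{G}}\, 2^{2\sqrt{\gamma}n}\, p^{\gamma n}\, p^{\abs{\{i\in[n]:F(i)\notin T\}}}.
\]
Since $\gamma\le\sqrt{\gamma}$ for $0<\gamma<1$, we have $2^{2\sqrt{\gamma}n}p^{\gamma n}\le (4p)^{n\sqrt{\gamma}}$, which is the claimed bound.

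There is no real obstacle in this argument. The only point needing care is that item (4) counts positions outside $\sigma$ whose image lies outside $T$, and this count is bounded by the full set $\{i\in[n]:F(i)\notin T\}$; the overlap with $\sigma$ is already absorbed by item (3).
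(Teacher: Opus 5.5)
Your proposal is correct and is essentially the paper's own proof. Like the paper, you enlarge $\sigma$ to size $\lceil \gamma n\rceil-1$, encode the entries with $i\notin\sigma$ and $F(i)\in T$ by a function $\lambda\colon T\to\Z/p\Z$ (at most $p^{\abs{G}}$ choices), leave the remaining entries free, and finish with the entropy bound together with $\gamma\le\sqrt{\gamma}$; your explicit split of the free positions into those in $\sigma$ and those outside $\sigma$ with $F(i)\notin T$ simply spells out the paper's ``for other $i$, $s_i$ has at most $p$ choices.''
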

\begin{proof}
If $\vec{s}\in (\Z/p\Z)^n$ is not $(\gamma n,F,T)$-robust, then there is some $\sigma\subseteq [n]$ with $\abs{\sigma}<\gamma n$ such that for $i_1,i_2\in [n]\setminus \sigma$, $F(i_1)=F(i_2)\in T$ implies $s_{i_1}=s_{i_2}$. 
This means there is some $\lambda:T\to (\Z/p\Z)$, such that for each $i\in [n]\setminus \sigma$, if $F(i)\in T$, then $s_i=\lambda(F(i))$.
By enlarging $\sigma$, we can assume that $\abs{\sigma}=\ceil{\gamma n}-1$.

The number of choices for $\sigma$ is $\binom{n}{\ceil{\gamma n}-1}$. The number of choices for $\lambda$ is at most $p^{\abs{G}}$. Once we choose $\sigma$ and $\lambda$, then $s_i$ is determined for $i\in [n]\setminus \sigma$ with $F(i)\in T$. For other $i$, $s_i$ has at most $p$ choices. Therefore, the number of $\vec{s}\in (\Z/p\Z)^n$ that are not $(\gamma n,F,T)$-robust is at most
\[
\binom{n}{\ceil{\gamma n}-1}
p^{\abs{G}}
p^{\abs{\{i\in [n]: F(i)\notin T\}}}
p^{\ceil{\gamma n}-1}
\leq 
2^{2\sqrt{n(\gamma n)}}
p^{\abs{G}}
p^{\gamma n}
p^{\abs{\{i\in [n]: F(i)\notin T\}}}
\leq p^{\abs{G}}
\left(4p\right)^{n\sqrt{\gamma}}
p^{\abs{\{i\in [n]: F(i)\notin T\}}}.\qedhere
\]
\end{proof}

\subsection{Size of $\EE_{n,\rho}$}

\begin{lemma}\label{Lem: size of E rho}
Given $\rho>0$, we have
\[
\lim_{n\to\infty} \frac{\abs{\EE_{n,\rho}}}{p^n}
=1.
\]
\end{lemma}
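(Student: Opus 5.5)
The statement to prove is that $\abs{\EE_{n,\rho}}/p^n\to 1$. The plan is to read $\abs{\EE_{n,\rho}}/p^n$ as a probability. Let $\vec{t}$ be uniform on $(\Z/p\Z)^n$. Then $\abs{\EE_{n,\rho}}/p^n$ is the probability that the number $N$ of zero coordinates of $\vec{t}$ is less than $(\frac1p+\rho)n$. Since the coordinates are i.i.d. uniform, $N$ is a sum of $n$ independent Bernoulli$(1/p)$ variables, so $N\sim\mathrm{Bin}(n,1/p)$ and $\E[N]=n/p$.

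It therefore suffices to show $\PPP[N\ge (\frac1p+\rho)n]\to 0$. Chebyshev's inequality gives
\[
\PPP\Big[N-\tfrac{n}{p}\ge \rho n\Big]\le \frac{\mathrm{Var}(N)}{\rho^2n^2}=\frac{n\frac1p(1-\frac1p)}{\rho^2 n^2}\le \frac{1}{\rho^2 n},
\]
which tends to $0$ as $n\to\infty$. Alternatively, Hoeffding's inequality gives the exponential bound $\exp(-2\rho^2 n)$, which matches the exponential-type error terms used elsewhere in the paper.

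Combining, $1\ge \abs{\EE_{n,\rho}}/p^n\ge 1-\frac{1}{\rho^2 n}\to 1$. This argument has no real obstacle. The only point that needs care is that the bound is uniform in $n$ for fixed $\rho>0$, and that is immediate from the explicit estimate.
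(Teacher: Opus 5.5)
Your proof is correct and is essentially the paper's argument: the paper also rewrites $\abs{\EE_{n,\rho}}/p^n$ as a $\mathrm{Bin}(n,1/p)$ probability and bounds the tail with Hoeffding's inequality, obtaining $2\exp(-2\rho^2 n)$. Your Chebyshev bound gives only the rate $1/(\rho^2 n)$, but that is harmless because the lemma, and its use in Corollary~\ref{Cor: main term}, only needs the limit.
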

\begin{proof}
By Hoeffding's inequality, we see that
\[
\abs{1- \frac{\abs{\EE_{n,\rho}}}{p^n}}
\leq
\abs{1
-\sum_{k:\abs{k-\frac{1}{p}n}\leq \rho n}
\binom{n}{k} \frac{1}{p^k} \Big(1-\frac{1}{p}\Big)^{n-k}}
\leq 2\exp\big(-2\rho^2 n\big).
\]
The result follows.
\end{proof}

\begin{lemma}\label{Lem: Prob E rho to 1}
Given $\rho>0$, we have
\[
\lim_{n\to\infty} \PPP[ \eta(\Lna)\in \EE_{n,\rho} ]
=1.
\]    
\end{lemma}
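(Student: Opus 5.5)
The plan is to show that for $n_2=\ceil{\alpha n}$ large, each row sum $\eta_i=\sum_{j=1}^{n_2}a_{ij}\pmod p$ is nearly uniform on $\Z/p\Z$. We then use the independence of the rows to control the number of zero coordinates of $\eta(\Lna)$ by a Hoeffding bound.

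\textbf{Step 1: near-uniformity of each row sum.} Fix $i$. For each $k\in\Z/p\Z$, Fourier inversion gives
\[
\PPP[\eta_i=k]-\frac{1}{p}=\frac{1}{p}\sum_{s\neq 0}\zeta_p^{-sk}\prod_{j=1}^{n_2}\E\big[\zeta_p^{s a_{ij}}\big].
\]
Apply Lemma~\ref{Lem: Wood bound xi^ epsilon balanced} with $\xi=\zeta_p^s\neq 1$ and $b=p$. Each factor then has absolute value at most $\exp(-\epsilon/p^2)$, so
\[
\Big|\PPP[\eta_i=0]-\frac1p\Big|\le \exp\!\big(-\epsilon\ceil{\alpha n}/p^2\big)=:\kappa_n\to 0 .
\]
(Alternatively, one can cite \cite[Lemma 2.1]{wood2019random} as in the proof of Proposition~\ref{Prop: p=2 rank parity}.)

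\textbf{Step 2: concentration across rows.} The rows $(a_{i1},\dots,a_{in_2})$ for different $i$ are independent. Hence the indicators $X_i=\mathbbm{1}_{\eta_i=0}$, for $i\in[n]$, are independent Bernoulli variables with means $q_n\le \frac1p+\kappa_n$. Choose $n$ large enough that $\kappa_n<\rho/2$. Hoeffding's inequality then gives
\[
\PPP\Big[\sum_i X_i\ge \big(\tfrac1p+\rho\big)n\Big]\le \PPP\Big[\sum_i (X_i-q_n)\ge \tfrac{\rho}{2}n\Big]\le \exp\!\big(-\rho^2 n/2\big).
\]
Since $\eta(\Lna)\notin\EE_{n,\rho}$ exactly when $\sum_i X_i\ge(\frac1p+\rho)n$, this gives $\PPP[\eta(\Lna)\in\EE_{n,\rho}]\ge 1-\exp(-\rho^2n/2)\to1$.

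\textbf{Main obstacle.} The only delicate point is Step 1: the row sums must approach uniformity uniformly in $i$. This is exactly what the $\epsilon$-balanced hypothesis provides through Lemma~\ref{Lem: Wood bound xi^ epsilon balanced}. Identical distribution of the entries is not needed; independence alone suffices for Step 2.
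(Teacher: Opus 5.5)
Your proof is correct. The paper does not argue this lemma itself; it just cites Lemma 8.2 of the companion paper \cite{Singhal_SandpileBipartite_Directed}. Your proposal is therefore a self-contained replacement for that citation, built only from ingredients already in this paper:
\begin{enumerate}
\item Wood's character bound \cite[Lemma 4.2]{wood2017distribution} gives $\abs{\PPP[\eta_i=0]-\frac1p}\le \exp(-\epsilon\ceil{\alpha n}/p^2)$ uniformly in $i$.
\item Your Hoeffding step is the random-vector analogue of the counting in Lemma~\ref{Lem: size of E rho}, which is exactly the case where every $\eta_i$ is uniform.
\end{enumerate}
Your route makes the hypotheses transparent. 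The lemma needs only independence, $\epsilon$-balancedness and some fixed $\alpha>0$; it needs neither $\alpha>\frac1p$ nor identically distributed $a_{ij}$. You also get an explicit rate: $\PPP[\eta(\Lna)\notin\EE_{n,\rho}]\le \exp(-\rho^2 n/2)$ as soon as $\exp(-\epsilon\ceil{\alpha n}/p^2)<\rho/2$. The citation keeps the paper short, but leaves the reader to check that the companion paper's statement applies verbatim in this setting.

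One cosmetic point: since you stress that identical distribution is unnecessary, write the means as $q_{n,i}\le \frac1p+\kappa_n$ rather than a single $q_n$. Hoeffding for independent $[0,1]$-valued variables with different means gives the same bound. Chebyshev would also suffice here, since only the limit is needed.
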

\begin{proof}
This follows from \cite[Lemma 8.2]{Singhal_SandpileBipartite_Directed}.
\end{proof}

\section*{Acknowledgments}
We thank Nathan Kaplan for introducing me to this problem and for many helpful discussions about the problem.
We acknowledge support from NSF Grant DMS 2154223.

\bibliographystyle{plain}
\bibliography{Bibliography}

\end{document}